\documentclass[11pt,reqno]{amsart}

\numberwithin{equation}{section}

\usepackage{amsmath}
\usepackage{amsthm}
\usepackage{amsfonts}
\usepackage{amssymb}
\usepackage[margin=1.0in]{geometry}
\usepackage{graphicx}
\usepackage[unicode=true,bookmarks=false,breaklinks=false,pdfborder={0 0 1},backref=page,colorlinks=true]{hyperref}
\usepackage[mathscr]{euscript}
\usepackage{todonotes}

\usepackage{setspace}
\onehalfspacing

\definecolor{skyblue}{rgb}{0.85,0.85,1} 
\newcommand{\GC}[1]{\todo[inline,color=skyblue]{GC: #1}}

\newcommand{\ran}{\operatorname{ran}}
\usepackage{stackengine}

\newtheorem{lemma}{Lemma}[section]
\newtheorem{prop}[lemma]{Proposition}

\newtheorem{theorem}[lemma]{Theorem}
\newtheorem{cor}[lemma]{Corollary}

\theoremstyle{definition}
\newtheorem{rem}[lemma]{Remark}
\newtheorem{define}[lemma]{Definition}
\newtheorem{example}[lemma]{Example}
\newtheorem{hyp}[lemma]{Hypothesis}

\setcounter{tocdepth}{1}

\DeclareMathOperator{\Sp}{span}

\DeclareMathOperator{\tr}{Tr}
\DeclareMathOperator{\dom}{dom}
\DeclareMathOperator{\sinc}{sinc}
\DeclareMathOperator{\sflow}{sf}
\DeclareMathOperator{\Mas}{Mas}

\newcommand{\bbC}{\mathbb{C}}
\newcommand{\bbN}{\mathbb{N}}
\newcommand{\bbR}{\mathbb{R}}

\newcommand{\bbT}{\mathbb{T}}
\newcommand{\bbZ}{\mathbb{Z}}

\renewcommand{\Re}{\operatorname{Re}}
\renewcommand{\Im}{\operatorname{Im}}

\newcommand{\cJ}{\mathcal{J}}

\newcommand{\cB}{\mathcal{B}}
\newcommand{\cD}{\mathcal{D}}
\newcommand{\cE}{\mathcal{E}}
\newcommand{\cF}{\mathcal{F}}
\newcommand{\cG}{\mathcal{G}}
\newcommand{\cL}{\mathcal{L}}
\newcommand{\cH}{\mathcal{H}}
\newcommand{\cK}{\mathcal{K}}
\newcommand{\pO}{\partial \Omega}

\newcommand{\Hp}{H^{1/2}(\pO)}
\newcommand{\HpR}{H^{1/2}(\pO,\bbR)}
\newcommand{\Hm}{H^{-1/2}(\pO)}
\newcommand{\HmR}{H^{-1/2}(\pO,\bbR)}

\newcommand{\p}{\partial}

\newcommand{\R}{\mathcal R}
\newcommand{\J}{\mathcal J}

\newcommand{\gW}{\mathbf W}

\newcommand{\llangle}{\langle\!\langle}
\newcommand{\rrangle}{\rangle\!\rangle}

\newcommand{\gaD}{\gamma_{{}_D}}

\newcommand{\gaN}{\gamma_{{}_N}}

 \allowdisplaybreaks

\begin{document}

\title[Fredholm determinants, Evans functions and Maslov indices]{Fredholm determinants, Evans functions and Maslov indices for partial differential equations}
\author[G. Cox]{Graham Cox}\email{gcox@mun.ca}\address{Department of Mathematics and Statistics, Memorial University of Newfoundland, St. John's, NL A1C 5S7, Canada}
\author[Y. Latushkin]{Yuri Latushkin}\email{latushkiny@missouri.edu}\address{Department of Mathematics, University of Missouri, Columbia, MO 65211, USA}
\author[A. Sukhtayev]{Alim Sukhtayev}\email{sukhtaa@miamioh.edu}\address{Department of Mathematics, Miami University, Oxford, OH 45056, USA}
\date{\today}

\begin{abstract}
The Evans function is a well known tool for locating spectra of differential operators in one spatial dimension. In this paper we construct a multidimensional analogue as the modified Fredholm determinant of a ratio of Dirichlet-to-Robin operators on the boundary. This gives a tool for studying the eigenvalue counting functions of second-order elliptic operators that need not be self-adjoint. To do this we use local representation theory for meromorphic operator-valued pencils, and relate the algebraic multiplicities of eigenvalues of elliptic operators to those of the Robin-to-Robin and Robin-to-Dirichlet operator pencils.  In the self-adjoint case we relate our construction to the Maslov index, another well known tool in the spectral theory of differential operators. This gives new insight into the Maslov index and allows us to obtain crucial monotonicity results by complex analytic methods.
\end{abstract}

\maketitle
\tableofcontents

\section{Introduction}
Let $\Omega \subset \bbR^n$ be a bounded Lipschitz domain, and consider the differential expression
\begin{equation}\label{Ldef}
	Lu = - \sum_{j,k=1}^n \p_j (a_{jk} \p_k u) + \sum_{j=1}^n b_j \p_j u - \sum_{j=1}^n\partial_j (d_j u)+ qu,
\end{equation}
where the coefficients $a_{jk}, b_j, d_j, q$ are complex-valued functions on $\Omega$. We are interested in the Dirichlet eigenvalue problem
\begin{equation}
	Lu = \lambda u, \qquad u\big|_{\pO} = 0,
\end{equation}
so we let $\cL^D$ denote \ the Dirichlet realization of $L$. Our goal is to construct a function that is analytic (expect for isolated singularities) whose zeros coincide with the eigenvalues of $\cL^D$, with the order of each zero equaling its algebraic multiplicity. This is achieved in Theorem \ref{thm:WEvans}, where we obtain such a function as the modified determinant of a certain ratio of Robin-to-Dirichlet operators. In Section~\ref{sec:background} we review the history of using  perturbation determinants for such questions, and put our results in context by comparing them to previous constructions in the literature.

We start with some standard assumptions on $L$.
\begin{hyp}
\label{hyp:L}
The coefficients of $L$ satisfy:
\begin{enumerate}
	\item $a_{jk}, b_j, d_j, q \in L^\infty(\Omega)$ for each $1 \leq j,k \leq n$;
	\item there is a constant $C>0$ such that 
\[
	\Re\sum_{j,k=1}^n a_{jk}(x)\bar{\xi}_j\xi_k\ge C\sum_{j=1}^n|\xi_j|^2 
\]
for all $(\xi_1,\dots,\xi_n)\in\bbC^n$ and $x\in\Omega$.
\end{enumerate}
\end{hyp}

We next fix a compact\footnote{The operator $\cL^\Theta$ can be defined under less restrictive assumptions on $\Theta$; see Hypothesis~\ref{hypT}. However, our main theorem requires the stronger Hypothesis~\ref{hyp:Bp}, which implies compactness of $\Theta$.} operator $\Theta \colon \Hp \to \Hm$, and let $\cL^\Theta$ denote the realization of $L$ with the generalized Robin boundary condition
\begin{equation}\label{cRc}
	\gaN^L u + \Theta \gaD u = 0,
\end{equation}
where $\gaN^L$ denotes the conormal derivative associated to $L$ and $\gaD$ is the Dirichlet trace (restriction to the boundary). 

For any complex number $\lambda$ not in the spectrum of $\cL^\Theta$, we define the Robin-to-Dirichlet operator $N_\Theta(\lambda) \colon \Hm \to \Hp$ as follows. For $g \in \Hm$ there exists a unique $u \in H^1(\Omega)$ such that $Lu = \lambda u$ weakly in $\Omega$ and $\gaN^L u +\Theta\gaD u = g$, so we define $N_\Theta(\lambda)g = \gaD u \in \Hp$. It is easy to see that $N_\Theta(\lambda)$ is bounded, and depends analytically on $\lambda$ (see Lemma \ref{lemRtoD}) in the resolvent set $\rho(\cL^\Theta)$. 

Similarly, for any $\lambda$ not in the spectrum of $\cL^D$, we define the Dirichlet-to-Robin operator $M_\Theta(\lambda) \colon \Hp \to \Hm$ by $M_\Theta(\lambda) g = \gaN^L u + \Theta \gaD u$, where $u \in H^1(\Omega)$ is the unique solution to the Dirichlet problem $Lu = \lambda u$ and $\gaD u = g$. If $\lambda \in \rho(\cL^\Theta) \cap \rho(\cL^D)$, then $N_\Theta(\lambda)$ and $M_\Theta(\lambda)$ are both defined and are mutually inverse.

We are interested in defining a functional determinant that vanishes at the eigenvalues of $\cL^D$. A natural choice would be the modified Fredholm determinant of $\J N_\Theta(\lambda)$, where $\J \colon \Hp \to \Hm$ denotes inclusion, as this operator is meromorphic in $\lambda$ and is not invertible at the eigenvalues of $\cL^D$. This is not possible, however, since $\J N_\Theta(\lambda) - I$ is not contained in the Schatten--von Neumann ideal $\cB_p\big(\Hm\big)$ for any $p>0$, so its $p$-modified Fredholm determinant is not defined. We therefore regularize $N_\Theta(\lambda)$ by dividing by the Robin-to-Dirichlet map for an auxiliary differential expression.

To that end, we let $\widehat L$ denote the differential expression
\begin{equation}\label{Lhdef}
	\widehat Lu = - \sum_{j,k=1}^n \p_j (\hat a_{jk} \p_k u) + \sum_{j=1}^n \hat b_j \p_j u - \sum_{j=1}^n\partial_j (\hat d_j u)+ \hat qu,
\end{equation}
with complex-valued coefficients $\hat a_{jk}, \hat b_j, \hat d_j, \hat q$ satisfying Hypothesis \ref{hyp:L}, and let $\widehat\Theta \colon \Hp \to \Hm$ be compact.
We thus obtain a Dirichlet-to-Robin map $\widehat M_{\widehat\Theta}(\lambda)$ that is analytic in $\rho(\widehat\cL^D)$ and fails to be invertible precisely at the eigenvalues of $\widehat\cL^\Theta$. Using this, we define the operator
\begin{equation}\label{eq:Edef}
	E(\lambda) = \widehat M_{\widehat\Theta} (\lambda) N_\Theta(\lambda)  \in \cB\big(\Hm\big).
\end{equation}
This is well defined and analytic for $\lambda \in \rho(\cL^\Theta) \cap \rho(\widehat \cL^D)$. Moreover, we have that $E(\lambda)-I$ is contained in $\cB_p\big(\Hm\big)$ for sufficiently large $p$, as long as the following assumption holds.

\begin{hyp}\label{hyp:Bp}
Assume, in addition to Hypothesis~\ref{hyp:L}, that:
\begin{enumerate}
	\item $a_{jk}$ are real-valued, Lipschitz and symmetric, i.e. $a_{jk} = a_{kj}$ for $1 \leq j,k \leq n$;
	\item $\Theta = \iota^*\tilde\Theta$, where $\tilde\Theta \in \cB\big(\Hp,L^2(\pO)\big)$ and \[\iota \colon \Hp \to L^2(\pO),\quad \iota^* \colon L^2(\pO)\to H^{-1/2}(\pO)\] are inclusions.
\end{enumerate}
\end{hyp}

A particular case is $\Theta = \iota^*\theta\iota$, where $\theta \in\cB\big(L^2(\pO)\big)$. Letting $\theta$ be the multiplication operator corresponding to a bounded function on $\pO$, we recover the classical Robin boundary condition as a special case.

Finally, we define the \emph{multiplicity} of a point $\lambda_0$ for a function with isolated zeros and singularities, following \cite[Section~4]{Howland}. Suppose $f\colon \mathcal \bbC \to\bbC$ is analytic except at isolated singularities and its zeros do not accumulate in $\bbC$. For each $\lambda_0 \in \bbC$ we define
\begin{equation}
\label{m:def}
	m(\lambda_0;f)=\frac{1}{2\pi i}\int_{\partial D(\lambda_0;\varepsilon)} \frac{f'(\lambda)}{f(\lambda)} \,d\lambda,
\end{equation}
with $\varepsilon$ chosen small enough that the punctured disk $D'(\lambda_0;\varepsilon)$ contains no zeros or singularities of $f$. In particular, if $f$ is meromorphic, then
\begin{equation}\label{m:def2}
	m(\lambda_0;f)=\begin{cases}
		k & \text{if $\lambda_0$ is a zero of order $k$},\\
		-k & \text{if $\lambda_0$ is a pole of order $k$},\\
		0 & \text{otherwise}.
	\end{cases}
\end{equation}
The generalization \eqref{m:def} of the multiplicity function in \eqref{m:def2} is relevant for our analysis, since the $p$-modified determinant of $E(\lambda)$ that we define in \eqref{def:cE} may have essential singularities; see Remark~\ref{rem:essential} and the discussion in Section~\ref{sec:prelimBp}.

We now state our main result.

\begin{theorem}\label{thm:WEvans}
Suppose $\Omega \subset \bbR^n$ is a bounded Lipschitz domain, and let $L$, $\Theta$ and $\widehat L$, $\widehat\Theta$ satisfy Hypothesis \ref{hyp:Bp}, with $d_j - \hat d_j$ Lipschitz and $a_{jk} = \hat a_{jk}$ for all $1 \leq j,k \leq n$.
\begin{enumerate}
	\item For each $\lambda \in \rho(\cL^\Theta) \cap \rho(\widehat \cL^D)$ and $p > 2(n-1)$ the operator $E(\lambda) - I$ is in the Schatten--von Neumann ideal $\cB_p\left(\Hm\right)$.
	\item If $p>2(n-1)$ is an integer, then
\begin{equation}
\label{def:cE}
	\cE(\lambda) := \operatorname{det}_p E(\lambda)
\end{equation}
defines an analytic function on $\rho(\cL^\Theta) \cap \rho(\widehat \cL^D)$.
	\item For each $\lambda_0 \in \bbC$ there exists a meromorphic function $\varphi$, defined in a neighbourhood of $\lambda_0$, such that
\begin{equation}\label{detorder2}
	\cE(\lambda)=(\lambda-\lambda_0)^d e^{\varphi(\lambda)},
\end{equation}
where
\begin{equation}
\label{detorder}
	d=m_a(\lambda_0, \cL^D) - m_a(\lambda_0, \widehat \cL^D) + m_a(\lambda_0, \widehat\cL^\mu) - m_a(\lambda_0, \cL^\mu)
\end{equation}
and $m_a(\lambda_0, \, \bullet )$ denotes the algebraic multiplicity of $\lambda_0$ for the operator $\bullet$. In particular, the multiplicity satisfies $m(\lambda_0;\cE)=d$.
\end{enumerate}
\end{theorem}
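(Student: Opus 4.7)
The plan is to prove the three parts in sequence: boundary Schatten-class estimates for (1), standard analyticity of the modified determinant for (2), and Gohberg--Sigal theory for meromorphic operator pencils for (3).

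For part (1), I would begin from the algebraic identity $M_\Theta(\lambda)N_\Theta(\lambda)=I_{\Hm}$ on $\rho(\cL^\Theta)\cap\rho(\cL^D)$ to rewrite
\[
E(\lambda)-I=\bigl(\widehat M_{\widehat\Theta}(\lambda)-M_\Theta(\lambda)\bigr)N_\Theta(\lambda).
\]
The crucial point is that both $M_\Theta$ and $\widehat M_{\widehat\Theta}$ are first-order boundary operators whose principal symbols depend only on the top-order coefficients $a_{jk}$. The hypothesis $a_{jk}=\hat a_{jk}$, together with Lipschitz regularity of $d_j-\hat d_j$ (which controls the relevant subprincipal contribution of the conormal derivative), ensures that $\widehat M_{\widehat\Theta}-M_\Theta$ has order strictly less than one and hence gains one order of Sobolev regularity on $\pO$. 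Composing with the order $-1$ map $N_\Theta$ produces an operator that improves regularity by one full step, and Weyl-type singular-value asymptotics on the $(n-1)$-dimensional Lipschitz boundary then yield $E(\lambda)-I\in\cB_p(\Hm)$ for $p>2(n-1)$. Part (2) is immediate: $\det_p$ is entire on $I+\cB_p$ for integer $p$, and $E(\lambda)$ depends analytically on $\lambda\in\rho(\cL^\Theta)\cap\rho(\widehat\cL^D)$ as an $I+\cB_p$-valued map.

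The heart of the argument is part (3), where I would invoke multiplicativity of the pencil multiplicity for meromorphic Fredholm families. Writing $E(\lambda)=\widehat M_{\widehat\Theta}(\lambda)N_\Theta(\lambda)$ gives
\[
m(\lambda_0;E)=m(\lambda_0;\widehat M_{\widehat\Theta})+m(\lambda_0;N_\Theta),
\]
so it suffices to evaluate each factor. Since $N_\Theta(\lambda)$ is analytic on $\rho(\cL^\Theta)$ with poles at eigenvalues of $\cL^\Theta$ and becomes non-invertible precisely at Dirichlet eigenvalues (where the Dirichlet trace of the solution drops), the local representation theory for meromorphic pencils identifies
\[
m(\lambda_0;N_\Theta)=m_a(\lambda_0,\cL^D)-m_a(\lambda_0,\cL^\Theta),
\]
and analogously
\[
m(\lambda_0;\widehat M_{\widehat\Theta})=m_a(\lambda_0,\widehat\cL^{\widehat\Theta})-m_a(\lambda_0,\widehat\cL^D).
\]
I would take these identities as preliminary lemmas advertised in the abstract (their proofs amount to matching Jordan chains of the boundary pencils with root functions for the elliptic operators). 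Adding the two expressions recovers \eqref{detorder}.

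To conclude with $m(\lambda_0;\cE)=d$ and the factorization \eqref{detorder2}, I would differentiate the modified determinant to obtain
\[
\frac{\cE'(\lambda)}{\cE(\lambda)}=(-1)^{p-1}\tr\bigl[(E(\lambda)-I)^{p-1}E(\lambda)^{-1}E'(\lambda)\bigr],
\]
and binomially expand $(E-I)^{p-1}$. All but one of the resulting terms have the form $\tr[E^jE']=\tfrac{1}{j+1}\tfrac{d}{d\lambda}\tr[E^{j+1}]$, i.e.\ derivatives of single-valued meromorphic scalar functions, and therefore integrate to zero around $\p D(\lambda_0;\varepsilon)$; the remaining term $\tr[E^{-1}E']$ has contour integral equal to $m(\lambda_0;E)$ by the Gohberg--Sigal residue theorem, giving $m(\lambda_0;\cE)=d$. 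The factorization \eqref{detorder2} then follows by observing that $\cE(\lambda)(\lambda-\lambda_0)^{-d}$ has zero multiplicity at $\lambda_0$ and therefore admits a meromorphic logarithm $\varphi$ in a punctured neighbourhood (the possible pole of $\varphi$ accounts for the essential singularity of $\cE$ noted in Remark~\ref{rem:essential}). I anticipate the main obstacle to be the sharp Schatten estimate in part (1), since the Lipschitz regularity of $\pO$ precludes direct use of the smooth pseudodifferential calculus and forces a more delicate analysis via layer potentials or explicit boundary parametrices.
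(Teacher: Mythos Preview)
Your approach to part~(1) via the factorization $E-I=(\widehat M_{\widehat\Theta}-M_\Theta)N_\Theta$ is dual to the paper's choice $E-I=\widehat M_{\widehat\Theta}(N_\Theta-\widehat N_{\widehat\Theta})$ and works equally well; you will just need the same density/continuity argument the paper uses to pass from $\rho(\cL^\Theta)\cap\rho(\cL^D)\cap\rho(\widehat\cL^D)$ to $\rho(\cL^\Theta)\cap\rho(\widehat\cL^D)$. Part~(2) is fine.

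The real problem is in part~(3). Your binomial expansion of $(E-I)^{p-1}E^{-1}E'$ produces terms $E^{j-1}E'$ for $0\le j\le p-1$. Since $E'=K'\in\cB_p$ with $K=E-I$, and $E^{j-1}$ is merely bounded, the operator $E^{j-1}E'$ lies only in $\cB_p$, not in $\cB_1$, for each individual $j$. Thus neither $\tr[E^jE']$ nor the ``remaining term'' $\tr[E^{-1}E']$ is defined, so you cannot split the trace, argue that some pieces are total derivatives of $\tr[E^{j+1}]$ (which is itself undefined), or invoke the Gohberg--Sigal residue theorem on $\tr[E^{-1}E']$. The full expression $(E-I)^{p-1}E^{-1}E'$ is trace class only because it contains $p$ factors from $\cB_p$; breaking it apart destroys this.

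A related gap: the identities $m(\lambda_0;N_\Theta)=m_a(\lambda_0,\cL^D)-m_a(\lambda_0,\cL^\Theta)$ that you invoke are not what the paper's preliminary lemmas (Theorem~\ref{multR}) actually provide. Those results say $m_a(\lambda_0,N_{\Theta}(\cdot))=m_a(\lambda_0,\cL^D)$ only for $\lambda_0\in\rho(\cL^\Theta)$, i.e.\ where $N_\Theta$ is \emph{analytic}. At a point where $N_\Theta$ has both a zero and a pole, one must first separate these, and that is precisely what the paper's auxiliary factorization accomplishes: choose $\Theta_1,\widehat\Theta_1$ with $\lambda_0\in\rho(\cL^{\Theta_1})\cap\rho(\widehat\cL^{\widehat\Theta_1})$ (Lemma~\ref{T0}) and write
\[
E=\widehat R_{\widehat\Theta_1,\widehat\Theta}\,(\R\widehat N_{\widehat\Theta_1})^{-1}\,\R N_{\Theta_1}\,(R_{\Theta_1,\Theta})^{-1},
\]
so that each factor is either analytic at $\lambda_0$ or the inverse of such. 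Each factor then has a Gohberg--Sigal normal form $GG_2DG_1$ with $D-I,G_1-I,G_2-I$ of \emph{finite rank}; commuting all the analytic invertible pieces to one side reduces $E$ to an invertible $S$ times a finite-rank perturbation of the identity, and Lemma~\ref{lem:lem5.3} relates $\det_pE$ to an ordinary finite-dimensional determinant, bypassing all trace-class issues. This step is where the auxiliary boundary operators and the complete meromorphy of $E$ do essential work; your outline does not supply a substitute for it.
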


For instance, if $\lambda_0 \in \rho(\widehat\cL^D) \cap \rho(\cL^\Theta) \cap \rho(\widehat\cL^{\widehat\Theta})$, we have that $\cE(\lambda_0) = 0$ if and only if $\lambda_0$ is an eigenvalue of $\cL^D$, with the order of the zero equal to the algebraic multiplicity of $\lambda_0$. More generally, $\cE$ contains information about the spectra of all four operators appearing in \eqref{detorder}. We will explain below how to extract information from this formula by making appropriate choices of $\Theta$, $\widehat\Theta$ and $\widehat L$.

\begin{rem}
\label{rem:essential}
In Section~\ref{sec:det} we will see that the function $\lambda \mapsto E(\lambda)$ is completely meromorphic on $\bbC$. While the determinant $\cE$ is analytic wherever $E$ is, it is possible for $\cE$ to have essential singularities at the poles of $E$. This is a consequence of the definition of the $p$-modified determinant when $p>1$; see \cite[p.~333]{Howland} and Section \ref{sec:prelimBp} for further discussion.
\end{rem}

\begin{rem}
\label{rem:betterp}
If we assume additional regularity of $\pO$ and $L$, and better mapping properties of $\Theta$ and $\widehat\Theta$, the condition $p>2(n-1)$ can be weakened to $p > n-1$; see Hypothesis~\ref{hyp:smallerp} and Propositions~\ref{propBp} and~\ref{RBp}. The requirement $p>n-1$ cannot be improved without imposing further assumptions; in Section \ref{sec:disc} we give an example with analytic boundary for which $E(\lambda) - I \notin \cB_{n-1}$.
\end{rem}

\begin{rem}\label{MNNM0}
In Remark \ref{MNNM2} we will show that $E(\lambda)$ has the same $p$-modified Fredholm determinant as $N_\Theta(\lambda) \widehat M_{\widehat\Theta} (\lambda) \in \cB\big(\Hp\big)$. While the latter operator is perhaps more appealing, as it acts on functions in $\Hp$, rather than distributions in $\Hm$, we have defined $E(\lambda)$ by \eqref{eq:Edef} because this arises naturally in our discussion of the Maslov index; see Theorem~\ref{thm:WEequal}.
\end{rem}

There are two major steps in the proof of Theorem~\ref{thm:WEvans}:
\begin{enumerate}
	\item In Section \ref{sec:multiplicity} we relate the eigenvalues of $\cL^D$, $\widehat \cL^D$, $\cL^\Theta$ and $\widehat \cL^{\widehat\Theta}$ to the corresponding Dirichlet-to-Robin, Robin-to-Dirichlet and Robin-to-Robin maps. While the equality of geometric multiplicities is straightforward, the equality of algebraic multiplicities is significantly more involved, see Theorem \ref{multR}.
	\item In Section \ref{sec:Evans} we show that the $p$-modified Fredholm determinant of $E$ is well defined, and relate the order of its zeros, poles and essential singularities to the multiplicities of the corresponding eigenvalues.
\end{enumerate}

Combining Theorem~\ref{thm:WEvans} with the definition of $m$ in \eqref{m:def}, we get an eigenvalue counting formula.

\begin{cor}\label{cor:argument}
Let $K \subset \bbC$ be a compact set with rectifiable boundary. If $\p K$ is disjoint from $\sigma(\cL^D)  \cup \sigma(\widehat\cL^D) \cup \sigma(\cL^\Theta) \cup \sigma(\widehat\cL^{\widehat\Theta})$, then
\begin{equation}
\label{count}
	\frac{1}{2\pi i} \int_{\p K} \frac{\cE'(\lambda)}{\cE(\lambda)} \,d\lambda = m_a(K,\cL^D) - m_a(K,\widehat\cL^D) + m_a(K,\widehat\cL^{\widehat\Theta})- m_a(K,\cL^\Theta),
\end{equation}
where $m_a(K,\cL^\bullet)$ is the number of eigenvalues of $\cL^\bullet$ in $K$, counted with algebraic multiplicity.
\end{cor}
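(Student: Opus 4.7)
The plan is to apply a generalized argument principle to $\cE$ on $K$, using part (3) of Theorem~\ref{thm:WEvans} to identify the local contribution at each exceptional point. First I would observe that, by hypothesis, $\partial K$ avoids all four spectra. By part (2) of Theorem~\ref{thm:WEvans}, $\cE$ is analytic on $\rho(\cL^\Theta)\cap\rho(\widehat\cL^D)$, in particular on a neighborhood of $\partial K$; and by part (3), at any $\lambda_0 \notin \sigma(\cL^D)\cup\sigma(\widehat\cL^D)\cup\sigma(\cL^\Theta)\cup\sigma(\widehat\cL^{\widehat\Theta})$ the exponent $d$ in \eqref{detorder} vanishes, so $\cE(\lambda_0)\neq 0$. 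Hence $\cE'/\cE$ is holomorphic in a neighborhood of $\partial K$, and the integral on the left of \eqref{count} is well defined.

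Next I would use that each of the four operators $\cL^D$, $\widehat\cL^D$, $\cL^\Theta$, $\widehat\cL^{\widehat\Theta}$ is an elliptic operator on a bounded domain with compact resolvent, so its spectrum is discrete with each eigenvalue of finite algebraic multiplicity. Therefore the set
\[
	S := K \cap \bigl(\sigma(\cL^D)\cup\sigma(\widehat\cL^D)\cup\sigma(\cL^\Theta)\cup\sigma(\widehat\cL^{\widehat\Theta})\bigr)
\]
is finite, say $S=\{\lambda_1,\dots,\lambda_N\}$. On the complement $K\setminus S$ the function $\cE'/\cE$ is holomorphic (zeros, poles and the potential essential singularities flagged in Remark~\ref{rem:essential} can only occur at points of $S$), so Cauchy's theorem lets us deform $\partial K$ into a disjoint union of small positively oriented circles $\partial D(\lambda_j;\varepsilon)$, yielding
\[
	\frac{1}{2\pi i}\int_{\p K}\frac{\cE'(\lambda)}{\cE(\lambda)}\,d\lambda
	= \sum_{j=1}^N \frac{1}{2\pi i}\int_{\p D(\lambda_j;\varepsilon)}\frac{\cE'(\lambda)}{\cE(\lambda)}\,d\lambda
	= \sum_{j=1}^N m(\lambda_j;\cE),
\]
by the very definition \eqref{m:def}. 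Finally, part (3) of Theorem~\ref{thm:WEvans} identifies each local contribution as $m(\lambda_j;\cE)=m_a(\lambda_j,\cL^D)-m_a(\lambda_j,\widehat\cL^D)+m_a(\lambda_j,\widehat\cL^{\widehat\Theta})-m_a(\lambda_j,\cL^\Theta)$. Summing over $j$ and recalling that $m_a(\lambda,\bullet)=0$ when $\lambda\notin\sigma(\bullet)$ gives exactly the right-hand side of \eqref{count}.

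The only genuine subtlety, and what would otherwise be the main obstacle, is the possibility that $\cE$ has essential singularities at poles of $E$, so that the classical argument principle does not directly apply. This is handled precisely by using Howland's definition \eqref{m:def} of the multiplicity function, which is tailored to such singularities, together with the already established identity $m(\lambda_0;\cE)=d$ from Theorem~\ref{thm:WEvans}(3); the contour-deformation step above only requires $\cE'/\cE$ to be holomorphic on $K\setminus S$, which is true regardless of the nature of the singularities at points of $S$.
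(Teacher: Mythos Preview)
Your proof is correct and follows exactly the approach the paper intends: the corollary is stated as an immediate consequence of Theorem~\ref{thm:WEvans} together with the definition of $m$ in \eqref{m:def}, and you have spelled out precisely that argument---contour deformation to small circles around the finitely many exceptional points in $K$, then invoking $m(\lambda_j;\cE)=d_j$ from part~(3). Your remark about essential singularities being handled by Howland's multiplicity \eqref{m:def} rather than the classical argument principle is exactly the point.
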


In general we are not interested in all four operators appearing on the right-hand side of \eqref{count}. However, we have considerable flexibility in applying this result, since $\Theta$, $\widehat\Theta$ and $\widehat L$ can be chosen as desired, subject to the hypotheses of the theorem. Depending on the application at hand, we can choose these to make one or more of the terms on the right-hand side vanish.

For instance, adding a positive constant to the coefficient $\hat q$ simply translates the spectra of $\widehat \cL^D$ and $\widehat\cL^{\widehat\Theta}$ to the right. Therefore, given a compact set $K$, we can choose $\hat q$ to ensure that $\sigma(\widehat\cL^D)$ and $\sigma(\widehat\cL^{\widehat\Theta})$ are disjoint from $K$, in which case \eqref{count} reduces to
\begin{equation}
\label{DNcounting}
	\frac{1}{2\pi i} \int_{\p K} \frac{\cE'(\lambda)}{\cE(\lambda)} \,d\lambda = m_a(K,\cL^D) - m_a(K,\cL^\Theta).
\end{equation}
When $\Theta=0$ this is simply the difference of the Dirichlet and Neumann counting functions, a quantity that has received much attention over the years; see, for instance \cite{AM12,BRS18,Fil04,F91,GM09,LR17,S08} and references therein. These papers only consider the self-adjoint case, whereas our counting formula \eqref{DNcounting} is valid for any $L$ and $\widehat L$ satisfying Hypothesis \ref{hyp:Bp}.

If we are only interested in the Dirichlet spectrum, we can gain further control by adjusting the operators $\Theta$ and $\widehat\Theta$. For a self-adjoint operator this is particularly easy, since the spectrum is real, and so to count eigenvalues it suffices to consider sets $K$ of the form $[\lambda_1,\lambda_2] \times [-\delta,\delta]$ for $0 < \delta \ll 1$.

We recall that $L$ is \emph{symmetric} if its coefficients satisfy $\bar a_{jk} = a_{kj}$, $\bar b_j = d_j$ and $\bar q = q$ for all $1 \leq j,k \leq n$. In this case the Dirichlet realization $\cL^D$ is selfadjoint, so the algebraic and geometric multiplicities coincide. We continue to denote this common multiplicity by $m_a$ to avoid confusion with the multiplicity function $m$ in \eqref{m:def}. Since we are interested in the spectrum of $\cL^D$, which is real, we will choose $\Theta$ so that the spectrum of $\cL^\Theta$ is \emph{not} on the real axis, and hence does not contribute to the right-hand side of \eqref{count}; see Lemma~\ref{resolventLTheta}.

An operator $\Theta \colon \Hp \to \Hm$ is said to be \emph{non-real} if $\Im\llangle \Theta g,g\rrangle \neq 0$ for any nonzero $g \in \Hp$, where $\llangle \cdot,\cdot\rrangle$ denotes the dual pairing of $\Hm$ and $\Hp$. Two important examples of non-real $\Theta$ are $i\R$ and $i\J$, where $\R \colon \Hp \to \Hm$ is the Riesz isomorphism and $\J=\iota^*\iota \colon \Hp \to \Hm$ is the inclusion.

\begin{theorem}\label{thm:SA}
Suppose, in addition to the hypotheses of Theorem~\ref{thm:WEvans}, that $L$ and $\widehat L$ are symmetric and $\Theta$, $\widehat\Theta$ are non-real. For any real numbers $\lambda_1 < \lambda_2$ in $\rho(\cL^D) \cap \rho(\widehat\cL^D)$, there exists $\delta>0$ such that $K = [\lambda_1,\lambda_2] \times [-\delta,\delta]$ is contained in $\rho(\cL^\Theta) \cap \rho(\widehat\cL^\Theta)$, and hence
\begin{equation}
\label{corLDsa}
	\frac{1}{2\pi i} \int_{\p K} \frac{\cE'(\lambda)}{\cE(\lambda)} \,d\lambda = m_a\big([\lambda_1, \lambda_2],\cL^D\big) - m_a\big([\lambda_1, \lambda_2],\widehat\cL^D\big).
\end{equation}
\end{theorem}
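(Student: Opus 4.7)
The plan is to apply Corollary~\ref{cor:argument} to the rectangle $K$ and then use the extra hypotheses (symmetry of $L$, $\widehat L$ and non-realness of $\Theta$, $\widehat\Theta$) to kill every term in \eqref{count} except the two Dirichlet contributions. The heart of the matter is showing that $\sigma(\cL^\Theta)$ and $\sigma(\widehat\cL^{\widehat\Theta})$ stay away from the real axis, so that a thin rectangle around $[\lambda_1,\lambda_2]$ fits inside their common resolvent set.

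First I would invoke Lemma~\ref{resolventLTheta}, whose content is foreshadowed in the text just before the theorem statement. The underlying argument is a Green's identity computation: if $\cL^\Theta u = \lambda u$ for some nonzero $u$ and real $\lambda$, then symmetry of $L$ gives $\Im\langle Lu, u\rangle_{L^2} = \Im\llangle \gaN^L u, \gaD u\rrangle$. Since $\lambda$ is real the left-hand side vanishes, and substituting the Robin condition $\gaN^L u = -\Theta \gaD u$ leaves $\Im\llangle \Theta \gaD u, \gaD u\rrangle = 0$. Non-realness of $\Theta$ then forces $\gaD u = 0$, and the Robin condition in turn forces $\gaN^L u = 0$; unique continuation from zero Cauchy data, which is available since $a_{jk}$ is Lipschitz by Hypothesis~\ref{hyp:Bp}, implies $u \equiv 0$. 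The identical argument applies to $\widehat\cL^{\widehat\Theta}$, so both spectra miss $\bbR$.

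Second, since the resolvent sets are open and contain the compact real interval $[\lambda_1,\lambda_2]$, a routine compactness argument produces $\delta > 0$ so that the closed $\delta$-neighborhood of $[\lambda_1,\lambda_2]$ lies in $\rho(\cL^\Theta)\cap \rho(\widehat\cL^{\widehat\Theta})$, and so that the vertical edges $\{\Re\lambda = \lambda_j\}\cap K$ (for $j=1,2$) stay in $\rho(\cL^D)\cap\rho(\widehat\cL^D)$, which is possible because the real points $\lambda_1,\lambda_2$ are in these sets by hypothesis. The horizontal edges $\{\Im\lambda = \pm\delta\}$ are off the real axis and therefore avoid the real spectra of $\cL^D$ and $\widehat\cL^D$ automatically. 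Hence $\p K$ is disjoint from all four spectra, and Corollary~\ref{cor:argument} applies to the rectangle $K = [\lambda_1,\lambda_2]\times[-\delta,\delta]$.

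Finally, in the resulting identity \eqref{count}, the terms $m_a(K,\cL^\Theta)$ and $m_a(K,\widehat\cL^{\widehat\Theta})$ vanish by the first step, while self-adjointness of $\cL^D$ and $\widehat\cL^D$ localises their spectra to the real axis, giving $m_a(K,\cL^D) = m_a([\lambda_1,\lambda_2],\cL^D)$ and similarly for $\widehat\cL^D$. This produces \eqref{corLDsa}. The only genuinely nontrivial input is the unique continuation step that underlies Lemma~\ref{resolventLTheta}; everything else is a bookkeeping application of the argument principle once the contour has been positioned appropriately.
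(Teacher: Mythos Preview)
Your proposal is correct and follows essentially the same approach as the paper: the paper does not give an explicit proof block for this theorem, but the surrounding discussion makes clear that it is meant to follow from Corollary~\ref{cor:argument} together with Lemma~\ref{resolventLTheta}, exactly as you outline. Your sketch of the argument behind Lemma~\ref{resolventLTheta} also matches the paper's proof (via the quadratic form $\Phi_\Theta$ and unique continuation), and your bookkeeping for the four spectra on $\partial K$ is accurate.
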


Finally, in the case that both $L$ and $\widehat L$ are symmetric, we discuss the relationship between our multi-dimensional Evans function $\cE(\lambda)$ and the Maslov index, a well-known tool for counting eigenvalues of self-adjoint operators.

The Maslov index is a topological invariant that counts intersections, with sign and multiplicity, of Lagrangian subspaces in a symplectic Hilbert space. This was first described in \cite{A67} in the finite-dimensional case, and applied to Sturm--Liouville problems in \cite{A85}. A survey of the Maslov index in infinite dimensions is given in \cite{F04} and \cite{BBZ18}. Much work has been done recently to apply this machinery to selfadjoint eigenvalue problems, starting with \cite{DJ11} and continuing in \cite {BCJLMS, CJM17,CJLS16,CJM15,CM19,HoSu20, HoSu22, LS18, LSS}.

To count eigenvalues for the Dirichlet problem, we consider the subspaces
\begin{equation}
\label{Fdef1}
	\cG(\lambda) = \big\{ (\gaD u, \gaN^L u) : u \in H^1(\Omega,\bbR), \ Lu = \lambda u \big\}
\end{equation}
and
\begin{equation}
	D = \big\{(0,g) : g \in \HmR \},
\end{equation}
so $\cG(\lambda)$ encodes solutions to $Lu = \lambda u$ with no boundary conditions imposed, and $D$ encodes the Dirichlet boundary condition. It follows that $\cG(\lambda)$ intersects $D$ nontrivially whenever $\lambda$ is a Dirichlet eigenvalue, and the multiplicity of $\lambda$ equals the dimension of $\cG(\lambda) \cap D$. 

When $\lambda$ is real these are Lagrangian subspaces in the symplectic Hilbert space $\cH = \HpR \oplus \HmR$. The Maslov index $\Mas(\cdot)$ of the path $\cG(\lambda)$ with respect to $D$ is then defined as the spectral flow $\sflow(\cdot)$ (through $-1$) of the family $W(\lambda)$ of unitary operators, called the {\em Souriau map},
\begin{equation}
\label{Souriaudef1}
	W(\lambda) = - (I-2P_{\cG(\lambda)})(I - 2P_D),
\end{equation}
where the $P_\bullet$ denote $\cH$-orthogonal projections \cite{BF98, BBFO01,F04}. Defining subspaces $\widehat \cG(\lambda)$ for $\widehat L$, analogous to \eqref{Fdef1}, we can now state our main result connecting the Evans function and the Maslov index.

\begin{theorem}
\label{thm:MorseEvans}
With all notation and assumptions as in Theorem \ref{thm:SA}, we have
\begin{equation}
\label{eq:EM}
	\textrm{Winding number of $\cE$ around $\p K$} = \Mas\left(\widehat \cG\big|_{\lambda_1}^{\lambda_2},D\right) - \Mas\left(\cG\big|_{\lambda_1}^{\lambda_2},D\right).
\end{equation}
\end{theorem}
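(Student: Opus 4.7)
The plan is to combine Theorem~\ref{thm:SA} with the standard identification, in the selfadjoint setting, of the Maslov index along an eigenvalue path with a signed count of Dirichlet eigenvalues. By Theorem~\ref{thm:SA}, for $\delta$ small enough,
\[
\frac{1}{2\pi i}\int_{\p K}\frac{\cE'(\lambda)}{\cE(\lambda)}\,d\lambda = m_a\bigl([\lambda_1,\lambda_2],\cL^D\bigr) - m_a\bigl([\lambda_1,\lambda_2],\widehat\cL^D\bigr),
\]
so it suffices to prove the individual identity
\[
\Mas\left(\cG\big|_{\lambda_1}^{\lambda_2},D\right) = -\,m_a\bigl([\lambda_1,\lambda_2],\cL^D\bigr),
\]
together with the corresponding identity for $\widehat\cG$ and $\widehat\cL^D$, and then subtract.

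To establish this Maslov--eigenvalue identity, I would invoke the Robbin--Salamon crossing form formalism. A crossing $\lambda_*\in(\lambda_1,\lambda_2)$ of the path $\lambda\mapsto\cG(\lambda)$ with $D$ occurs exactly when $\lambda_*$ is a Dirichlet eigenvalue of $\cL^D$, and $\dim(\cG(\lambda_*)\cap D)$ equals its geometric multiplicity, which coincides with the algebraic multiplicity by selfadjointness. The hypothesis $\lambda_1,\lambda_2\in\rho(\cL^D)$ rules out crossings at the endpoints. At an interior crossing $\lambda_*$ with normalized Dirichlet eigenfunction $u_*$, pick a smooth curve $u(\lambda)$ of solutions to $Lu(\lambda)=\lambda u(\lambda)$ with $u(\lambda_*)=u_*$. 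Differentiating in $\lambda$, applying Green's identity, and using $\gaD u_* = 0$ together with the symmetry of $L$ gives, by a direct computation, that the crossing form on $\cG(\lambda_*)\cap D$ equals $-\|u_*\|_{L^2(\Omega)}^2$ (up to an overall sign fixed by the orientation of the symplectic form on $\cH$). Negative-definiteness then implies that every crossing is regular and contributes exactly $-\dim(\cG(\lambda_*)\cap D)$ to the spectral flow of the Souriau map \eqref{Souriaudef1}; summing over $[\lambda_1,\lambda_2]$ gives the claimed identity.

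Running the same argument verbatim with $L$ replaced by $\widehat L$ yields $\Mas(\widehat\cG\big|_{\lambda_1}^{\lambda_2},D)=-m_a([\lambda_1,\lambda_2],\widehat\cL^D)$. Subtracting this from the identity for $\cG$ and comparing with Theorem~\ref{thm:SA} proves \eqref{eq:EM}.

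The main obstacle will be the careful bookkeeping of three distinct sign conventions that must be consistently aligned: the winding number of $\cE$ (fixed by Theorem~\ref{thm:SA}), the spectral-flow-through-$-1$ definition of $\Mas$ via the Souriau map, and the Robbin--Salamon sign of the crossing form along a Lagrangian path. A secondary technical point is that the crossing form must be computed in the $\HpR\oplus\HmR$ symplectic scale (rather than $L^2\oplus L^2$), and one must verify that $\cG(\lambda)$ is a $C^1$ path of Lagrangian subspaces on $[\lambda_1,\lambda_2]$; both points are handled using the symmetry of $L$ and $\widehat L$ together with the Lipschitz regularity of the $a_{jk}$ assumed in Hypothesis~\ref{hyp:Bp}.
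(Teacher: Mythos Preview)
Your proposal is correct and matches the paper's first proof: the paper also derives \eqref{eq:EM} as an immediate consequence of Theorem~\ref{thm:SA} together with the known identity $\Mas(\cG|_{\lambda_1}^{\lambda_2},D) = -m_a([\lambda_1,\lambda_2],\cL^D)$, which it cites from \cite{CJLS16,CJM15,DJ11,LS18} rather than reproving. Your crossing-form sketch is exactly how those references establish the identity, so the argument is sound (the regularity of the crossing form indeed follows from the Green's identity computation you indicate, and the sign is fixed by the orientation of $\omega$).

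Where the paper goes further is in giving a \emph{second}, independent proof that avoids crossing forms entirely. Instead of computing the sign of each crossing via $-\|u_*\|_{L^2}^2$, the paper analytically continues the Souriau map $W(\lambda)$ to complex $\lambda$ via an explicit Cayley-transform formula in terms of the Neumann-to-Dirichlet map, and then shows that $\sigma(W(\lambda))$ lies strictly outside the unit circle for $\Im\lambda>0$ and strictly inside for $\Im\lambda<0$ (a Nevanlinna-type property). This spectral confinement forces every eigenvalue of $W(\lambda)$ to pass through $-1$ clockwise as $\lambda$ increases along $\bbR$, yielding monotonicity of the spectral flow by a purely complex-analytic argument. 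What this buys is a direct algebraic link between $E(\lambda)$ and $W(\lambda)$ (they show $(I+\widehat W)^{-1}(I+W)$ is equivalent to $E$), and a proof of monotonicity that does not require the Robbin--Salamon machinery or $C^1$ regularity of the Lagrangian path. Your crossing-form route is more standard and arguably more elementary; the paper's route gives additional structural information and a new tool (the analytic continuation of $W$) that may be of independent interest.
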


That is, the winding of our multi-dimensional Evans function equals the difference of Maslov indices for the perturbed and unperturbed operators.  This formula is an immediate consequence of Theorem~\ref{thm:SA}, since it says the left-hand side of \eqref{eq:EM} equals the difference of counting functions, $m\big([\lambda_1, \lambda_2],\cL^D\big) - m\big([\lambda_1, \lambda_2],\widehat\cL^D\big)$, and it is known that the Maslov index satisfies
\begin{equation}
\label{eq:mMas}
	\Mas\left(\cG\big|_{\lambda_1}^{\lambda_2},D\right)
	= -m\big([\lambda_1, \lambda_2],\cL^D\big),
\end{equation}
and likewise for $\widehat\cL^D$; see \cite[Lemma~5.3]{CJLS16}, \cite[Lemma~4.1]{CJM15}, \cite[Lemma~4.7]{DJ11} or \cite[Theorem 3.3]{LS18}.

While this establishes the equality \eqref{eq:EM}, it does not given much insight into why it is true.
To better understand this connection, we prove this equality directly by deriving an explicit algebraic connection between the Evans function and the Maslov index. This is done in Sections~\ref{sec:Sou} and~\ref{sec:SA}; see in particular Theorem~\ref{thm:WEequal}, where we relate the operators $W$, $\widehat W$ and $E$.
This demonstrates a fundamental connection between the Evans function and the Maslov index, as opposed to the mere equality of indices seen in \eqref{eq:EM}. In terms of the family $W(\lambda)$ of unitary operators in \eqref{Souriaudef1}, the Maslov index is defined to be
\begin{equation}
\label{sfMaslov1}
	\Mas\left(\cG\big|_{\lambda_1}^{\lambda_2},D\right) = \sflow \left(W \big|_{\lambda_1}^{\lambda_2} , -1 \right).
\end{equation}
As a consequence of this definition and \eqref{corLDsa}, Theorem \ref{thm:MorseEvans} is equivalent to
\begin{equation}
\label{eq:MasEvans1}
	\frac{1}{2\pi i} \int_{\p K} \frac{\cE'(\lambda)}{\cE(\lambda)} \,d\lambda = 
	\sflow \left(\widehat W \big|_{\lambda_1}^{\lambda_2} , -1 \right)
	- \sflow \left(W \big|_{\lambda_1}^{\lambda_2} , -1 \right).
\end{equation}
This is the result we prove in Section \ref{sec:SA} by directly relating $W$, $\widehat{W}$ and $E$.

There are two steps to the proof. First, in Theorem \ref{thm:cont}, we give an explicit formula for $W$ in terms of the Robin-to-Dirichlet map $N_\Theta$. In addition to clarifying the relationship between these objects, this explicit formula allows us to define $W(\lambda)$ for complex values of $\lambda$. 
The second step is to prove that the spectral flow for $W(\lambda)$ is monotone, in the sense that its eigenvalues (which must lie on the unit circle whenever $\lambda$ is real) always pass though $-1$ in the same direction as $\lambda$ increases. We prove this monotonicity using the analytic continuation of $W(\lambda)$. This continuation has the property that its spectrum lies outside the unit circle whenever $\lambda$ is in the upper half plane, and inside the unit circle whenever $\lambda$ is in the lower half plane. Combined with analyticity, this spectral mapping property gives the desired monotonicity of the spectral flow and completes the proof of \eqref{eq:MasEvans1}, and hence of Theorem~\ref{thm:MorseEvans}.

We expect that the analytic continuation of $W(\lambda)$, which to the best of our knowledge has not previously appeared in the literature, will be a valuable new tool for studying the Maslov index. For instance, it gives an elegant proof of monotonicity for the spectral flow that seems completely different from the existing proofs in the literature, which use the method of crossing forms, cf. \cite{CJLS16,CJM15,DJ11,HoSu20, HoSu22, LS18, LSS}.

\subsection*{Outline of paper}
In Section~\ref{sec:background} we put our results in context by recalling relevant literature on perturbation determinants in mathematical physics.
In Section~\ref{sec:examples} we illustrate our constructions and results using simple examples where everything can be computed explicitly.  In particular, we show how our construction is related the ``standard'' Evans function in one spatial dimension. In Section~\ref{sec:prelim} we precisely define all of the relevant operators, and establish needed properties of the Dirichlet-to-Robin and Robin-to-Dirichlet maps. In Section~\ref{sec:multiplicity} we discuss the connection between 
these maps and the eigenvalues of $\cL^D$ and $\cL^\Theta$. In Section \ref{sec:Evans} we establish the $\cB_p$ properties needed to define the modified Fredholm determinant of $E(\lambda)$, then study the zeros and singularities of the resulting Evans function $\cE(\lambda)$, culminating in the proof of Theorem~\ref{thm:WEvans}. Finally, in Sections~\ref{sec:Sou} and~\ref{sec:SA} we discuss the connection to the Maslov index, proving Theorem~\ref{thm:MorseEvans}.

\subsection*{Notation and conventions}
Unless explicitly stated otherwise, all functions are complex-valued, so we abbreviate $H^1(\Omega,\bbC) = H^1(\Omega)$ etc. Sesquilinear forms, such as the inner product, are always linear in the first argument and antilinear in the second. The inner product in $L^2(\Omega)$ will always be denoted $\langle\cdot\, ,\, \cdot\rangle$.

Following the convention of \cite{K76}, we define the adjoint $X^*$ of a Banach space $X$ to be the set of bounded \emph{antilinear} functionals on $X$. In particular, we let $\Hm = \Hp^*$, and use the notation $\llangle \varphi, g \rrangle = \varphi(g)$ to denote the action of $\varphi \in \Hm$ on $g\in \Hp$. The Riesz representation theorem therefore gives a \emph{linear} map $\R \colon \Hp \to \Hm$ satisfying
\begin{equation}
	\llangle \R f, g \rrangle = \left<f,g\right>_{\Hp}.
\end{equation}
There is also a linear map from $\Hp$ to $\Hp^{**} = \Hm^*$ that sends $f \in \Hp$ to the antilinear functional $\Hm \ni \varphi \mapsto \overline{\llangle \varphi, f\rrangle}$. Therefore, given a bounded operator $\Theta \colon \Hp \to \Hm$, we can identify its adjoint with a map $\Theta^* \colon \Hp \to \Hm$ satisfying
\begin{equation}
	\llangle\Theta f,g\rrangle = \overline{\llangle \Theta^* g,f\rrangle}
\end{equation}
for all $f,g \in \Hp$.
Finally, we recall the compact embedding $\iota \colon \Hp \to L^2(\pO)$. Using the Riesz theorem to identify $L^2(\pO)$ with $L^2(\pO)^*$, we identify $\iota^*$ with the map $L^2(\pO) \to \Hm$ that sends $f \in L^2(\pO)$ to the functional $g \mapsto \left<f,\iota g\right>$ on $\Hm$. We thus obtain a compact linear operator $\J := \iota^* \iota \colon \Hp \to \Hm$, acting as
\begin{equation}
	\llangle \J f,g \rrangle = \left<\iota f, \iota g\right>_{L^2(\pO)}
\end{equation}
for all $f,g \in \Hp$.

We denote by $\sigma(\cdot)$ and $\rho(\cdot)$ the spectrum and resolvent set of an operator. The algebraic multiplicity of an eigenvalue is denoted $m_a(\lambda, \cdot)$, and the $p$-modified Fredholm determinant is $\det_p(\cdot)$.

\section{Background and motivation}
\label{sec:background}
Perturbation determinants are among the most popular tools of modern mathematical physics \cite{GoKr, K76, Yaf}. In particular, for a given perturbed operator on a Hilbert space, they provide an analytic (or meromorphic) function of the spectral parameter whose zeros (or poles) give the location of discrete eigenvalues. A typical example of this is an application of the important Birman--Schwinger principle for the Schr\"odinger operator $\cL=-\Delta+V$, whose discrete spectrum under appropriate assumptions is given by zeros of the perturbation determinant $\det\big(I+V(-\Delta-z)^{-1}\big)$; this follows from the identity $\cL-z=\big(I+V (-\Delta-z)^{-1}\big)(-\Delta-z)$ for $z$ in the resolvent set of $-\Delta$. To avoid citing quite a number of influential papers on this classical topic we refer the reader to \cite{BTG20} where one can find an extensive bibliography. 

The perturbation determinants of this type are quite ``large" as the operators involved are acting on the whole Hilbert space, say $L^2(\Omega)$ for a domain $\Omega \subset \bbR^n$. It is therefore natural to attempt to reduce the space by passing to a ``smaller" space of functions on the boundary $\pO$.
A typical example of this reduction in the ODE case is furnished by one-dimensional scattering theory \cite[Chapter~17]{ChadSab}, where the discrete spectrum of a one-dimensional 
Schr\"odinger operator is given by the zeros of the classical Jost function, an analytic function of the spectral parameter constructed by means of solutions to the spectral problem with appropriate asymptotic behavior.

More generally, in the theory of one-dimensional ODEs with asymptotically constant or periodic coefficients, the spectrum of differential operators is given by zeros of the Evans function, a powerful and popular tool in the stability theory of traveling waves \cite{AlGaJo90, KPP13, PeWe92, S02}. An important fact that goes back to the classical work of Jost and Pais, cf.\ \cite{GLMZ05, GLMZ20, MR2347795,  GMZ1}, and was established in a general Evans function setting in \cite{GLM07}, is that the Birman--Schwinger-type perturbation determinant is actually equal to the Jost and Evans functions. Unfortunately, the one-dimensional construction of the Evans function does not admit an easy multidimensional generalization, although, in a slightly different direction, there were several attempts to construct direct analogs of the Evans function for PDE situations \cite{DN06,DN08} and, in particular, for infinite cylinders \cite{LP15}. 

Amid the attempts to generalize the notions of the Jost and Evans function to the PDE context, yet another classical set of objects took a prominent role, those being the Dirichlet-to-Neumann and Robin-to-Robin maps. We refer to \cite{GLMZ05, GLMZ20, GMZ1},   with a later spectacular development of this topic in \cite{MR3831271,BTG20,MR2347795,MR2569392,MR2732078}. This is quite helpful as the Robin-to-Robin maps and their abstract versions, such as abstract Weyl functions in the theory of abstract boundary triplets \cite{Behrndt_2020,Schm12}, are acting on ``smaller" boundary spaces, for instance $L^2(\partial\Omega)$ as described above.

In this paper we construct an analog of the Evans function for multidimensional elliptic operators that need not be self-adjoint. In some sense the current paper stems from \cite{GLMZ05, GMZ1}, except instead of using the Birman--Schwinger-type perturbation determinants we use the Dirichlet-to-Neuman and Robin-to-Robin maps to directly construct meromorphic operator valued functions whose zeros and poles are the eigenvalues of the respective differential operators. We construct a function (which we call an analog of the Evans function) as a Fredholm determinant of a particular Robin-to-Robin map. Because these are infinite dimensional operators we must use a suitable regularization to be in the class of operators for which Fredholm determinants are defined.  As a result, our Evans function is analytic except at isolated singularities that could be either poles or essential singularities.

In addition, we need to relate the spectra of the Robin and Dirichlet realizations of $L$ to the spectra of the respective Dirichlet-to-Robin, Robin-to-Robin and Robin-to-Dirichlet maps. This relation is well known in the selfadjoint case, for instance when $L$ is the Laplacian \cite{GMZ1}. It was recently studied in an important paper \cite{BE19} for non-selfadjoint operators, with a particular choice of Robin boundary condition. We prove here quite general results relating the Jordan chains (and hence algebraic multiplicities of eigenvalues) of the Robin realization of $L$ and the Robin-to-Robin operator pencil. Our approach stems from \cite{BE19} but provides a unified approach to all types of boundary conditions for general (not necessarily selfadjoint) elliptic operators. Our main technical tool is the theory of local equivalency of nonlinear operator pencils that goes back to \cite{GohSig}, see also \cite[Chapter IX]{GGK90}, \cite{Howland}, \cite{BTG20}, \cite{LS10} and the vast literature therein.

Yet another interesting connection we explore is related to the Maslov index, a notion from infinite-dimensional symplectic geometry \cite{BBZ18,F04} that has been widely used in stability theory and related problems when the operator in question is selfadjoint \cite{BCJLMS, CJM17,CJLS16,CJM15,DJ11,HoSu20, HoSu22,LS18, LSS}. There one uses the Souriau map \cite{F04}, a double reflection operator whose spectrum detects the fact that two Lagrangian subspaces have a nontrivial intersection. (This detection is exactly what the standard ODE Evans function does, though in the non-selfadjoint case there is no Lagrangian structure.) The Lagrangian subspaces that we encounter happen to be the graphs of certain Robin-to-Robin maps, and we notice and exploit a relationship between the Souriau map and the Cayley transform of the Robin-to-Robin operator to establish appropriate properties of the Evans function. Even for the well-studied ODE situation we establish a new, direct relation between the standard Evans function and the Maslov index.

\section{Examples}
\label{sec:examples}

Before proving the main results we give a few examples. The first two are one-dimensional, and hence avoid the technical difficulties associated with determinants in infinite dimensions. We start with the Laplacian on an interval, for which the Dirichlet-to-Neumann can be explicitly computed. This example demonstrates the role of the boundary operator $\Theta$, and shows how it can be chosen to separate the spectrum of $\cL^D$ from that of $\cL^\Theta$.

The second example is a matrix-valued Schr\"odinger operator on an interval. This example clarifies the relationship between our function $\cE$ and the ``standard"  Evans function in one dimension, as described in \cite{AlGaJo90,KPP13, S02}, for instance. In particular, we show that $\cE$ can be written explicitly in terms of four separate Evans functions, corresponding to a perturbed and unperturbed differential operator with two different sets of boundary conditions.

Our final example is the Laplacian on the unit disc. Unlike the first two examples, the Dirichlet-to-Neumann map is now an operator acting on an infinite-dimensional space, as opposed to a matrix, and so one needs to worry about its $\cB_p$ properties. Using known asymptotic formulas for the eigenvalues of the Dirichlet-to-Neumann map, we find that $E(\lambda) - I$ is of class $\cB_p$ if and only if $p>1$, as claimed in Remark \ref{rem:betterp}. We also discuss Schr\"odinger operators with radial potentials, and relate $E(\lambda)$ to an infinite collection of one-dimensional Evans functions.

\subsection{The interval}
\label{sec:interval}
We first consider $L = -(d/dx)^2$ on the unit interval $\Omega = (0,1)$. Since $\pO = \{0,1\}$, the Robin-to-Dirichlet map $N_\Theta(\lambda)$ is a $2\times2$ matrix. It thus has a well-defined determinant, so there is no need to introduce a reference operator $\widehat L$ and the corresponding Dirichlet-to-Robin map, as in \eqref{eq:Edef}, and we can illustrate our results by simply computing $\det N_\Theta(\lambda)$.

The Dirichlet spectrum is $\sigma(\cL^D) = \{ (n\pi)^2 : n \in \bbN \}$. For $\lambda \in \rho(\cL^D)$, the Dirichlet-to-Neumann map $M(\lambda)$ is given by
\begin{equation}\label{defMLAM}
	M(\lambda) = \begin{bmatrix} a & - b \\ -b & a \end{bmatrix},
\end{equation}
where
\[
	a = \frac{\cos \sqrt\lambda}{\sinc \sqrt\lambda}, \qquad b = \frac{1}{\sinc \sqrt\lambda},
\]
and $\sinc z = z^{-1} \sin z$ is the unnormalized sinc function; see \cite[Section~3]{Daners}. Here $\cos\sqrt\lambda$ and $\sinc\sqrt\lambda$ are shorthand for the power series
\begin{equation}
\label{series}
	\cos \sqrt\lambda = \sum_{n=0}^\infty \frac{(-1)^n \lambda^n}{(2n)!} , \qquad \sinc\sqrt\lambda = \sum_{n=0}^\infty \frac{(-1)^n \lambda^n}{(2n+1)!},
\end{equation}
which define entire functions of $\lambda$ and do not depend on a choice of square root. Note that $\sinc\sqrt\lambda = 0$ if and only if $\lambda \in \sigma(\cL^D)$.

For any matrix $\Theta = \left[ \begin{smallmatrix} t_{11} & t_{12} \\ t_{21} & t_{22} \end{smallmatrix}\right]$, the Dirichlet-to-Robin map $M_\Theta(\lambda)$ is given by
\[
	M_\Theta(\lambda) = M(\lambda) + \Theta = \begin{bmatrix} a + t_{11} & - b + t_{12} \\ -b + t_{21} & a + t_{22} \end{bmatrix}.
\]
It is easy verified that $b^2 - a^2 = \lambda$. Since $N_\Theta(\lambda)$ is inverse to $M_\Theta(\lambda)$ whenever both operators are defined, we find that
\begin{equation}
\label{NdetFD}
	\det  N_\Theta(\lambda) = \big(\det M_\Theta(\lambda)\big)^{-1} = \frac{\sinc\sqrt\lambda}{(\det\Theta-\lambda) \sinc\sqrt\lambda + \operatorname{tr}\Theta \cos\sqrt\lambda + (t_{12} + t_{21})}.
\end{equation}

As noted above, the numerator vanishes on the spectrum of $\cL^D$, and it can be shown that the denominator vanishes on the spectrum of $\cL^\Theta$. Therefore, the determinant has zeros in $\sigma(\cL^D) \setminus \sigma(\cL^\Theta)$, poles in $\sigma(\cL^\Theta) \setminus \sigma(\cL^D)$, and removable singularities in $\sigma(\cL^D) \cap \sigma(\cL^\Theta)$, as expected from Theorem~\ref{thm:WEvans}. 

This example demonstrates the importance of the boundary operator $\Theta$. If we had chosen $\Theta=0$, so that $\cL^\Theta = \cL^N$ is just the Neumann Laplacian on $(0,1)$, the determinant would have no zeros, because $\sigma(\cL^D) \subset \sigma(\cL^N)$, and its only pole would be at $\sigma(\cL^N) \setminus \sigma(\cL^D) = \{0\}$. Indeed, it follows immediately from \eqref{NdetFD} with $\Theta=0$ that
\begin{equation}
	\det N(\lambda) = - \frac{1}{\lambda}.
\end{equation}

As noted in the introduction, it is advantageous to choose $\Theta$ so that $\sigma(\cL^D)$ and $\sigma(\cL^\Theta)$ are disjoint. This is easily done in the symmetric case, where $\cL^D$ is self-adjoint and hence has real eigenvalues; see Theorem \ref{thm:SA}. For instance, choosing $\Theta=i I_{2\times2}$, we find that $\lambda$ is an eigenvalue of $\cL^\Theta$ if and only if $(\lambda+1) \sinc\sqrt\lambda - 2i \cos\sqrt\lambda=0$,
which is clearly not possible for any real $\lambda$. It follows that $\det N_\Theta$ is analytic in an open neighborhood of the real axis, and in this neighborhood we have $\det N_\Theta(\lambda) = 0$ if and only if $\lambda$ is an eigenvalue of $\cL^D$.

\subsection{The Evans function for a Schr\"odinger operator}
We next consider the Schr\"odinger operator $Lu=-u''+Qu$, with continuous $\bbC^{n\times n}$-matrix valued potential $Q$.
Since in the main part of the paper we consider only bounded domains, in this section we deal with the bounded interval $\Omega=(-1,1)$; however, the generalization to the line $(-\infty, \infty)$ is not hard. 

Our main result for this problem compares our generalized Evans function $\cE(\lambda)$ to the ``standard" Evans functions for the Dirichlet and Robin problems,  as defined in \cite{AlGaJo90, KPP13, S02}, which we now recall. For the Dirichlet Evans function we let $Y_\pm(\cdot,\lambda)$ be the $\mathbb{C}^{n\times n}$-matrix valued solutions of $LY_\pm = \lambda Y_\pm$ with boundary conditions
\[
	Y_-(-1,\lambda) = Y_+(1,\lambda) = 0_{n\times n} \qquad\text{and}\qquad 
Y_-'(-1,\lambda) = Y_+'(1,\lambda)=I_{n\times n}.
\]
The Dirichlet Evans function, which we denote $\mathsf E_{_D}$, is then defined to be the determinant of the Evans matrix
\begin{equation}
\label{def:EDmatrix}
	\mathbf{E}_D(x,\lambda)= \begin{bmatrix} Y_-(x,\lambda) & Y_+(x,\lambda) \\
	Y'_-(x,\lambda) & Y'_+(x,\lambda)\end{bmatrix}.
\end{equation}
The determinant is easily shown to be independent of $x$, so we have
\begin{equation}
\label{ED}
	\mathsf E_{_D}(\lambda) = \det\begin{bmatrix} Y_-(1,\lambda) &0_{n\times n}\\ Y'_-(1,\lambda) &I_{n\times n} \end{bmatrix} = \det Y_-(1,\lambda).
\end{equation}
Similarly, we let $V_\pm(\cdot,\lambda)$ be the matrix-valued solutions of $LV_\pm = \lambda V_\pm$ with boundary conditions
\[
	V_-(-1,\lambda) = - V_+ (1,\lambda) = I_{n\times n}\qquad\text{and}\qquad 
	V_-'(-1,\lambda) = V_+'(1,\lambda) = 0_{n\times n}.
\]
For the Robin problem we fix $n\times n$ matrices $\Theta_\pm \in M_n(\bbC)$ and let $\Theta={\rm diag } \{ \Theta_+, \Theta_-\}$. Then $W_\pm := V_\pm + Y_\pm \Theta_\pm$ satisfy the eigenvalue equation $LW_\pm = \lambda W_\pm$ and the Robin 
boundary conditions
\[
	-W'(-1,\lambda)+\Theta_-W(-1,\lambda) = 
	W'(1,\lambda)+\Theta_+W(1,\lambda) = 
	0_{n\times n},
\]
so we define the Evans function for the $\Theta$-Robin eigenvalue problem as
\begin{equation}
\label{ETheta}
	\mathsf E_{\Theta}(\lambda) = \det\begin{bmatrix}W_-(x,\lambda) &
			W_+(x,\lambda) \\ W'_-(x,\lambda) & W'_+(x,\lambda)
			\end{bmatrix},
\end{equation}
which is again independent of $x$.

We now relate these Evans functions to the function $\cE$ defined by \eqref{def:cE} with $p=1$. As in the previous example, the Dirichlet-to-Robin and Robin-to-Dirichlet maps are matrices, and hence have well-defined determinants, so we can write
\begin{equation}
\label{detE1D}
    	\cE(\lambda) = \det N_\Theta(\lambda)  \det \widehat M_{\widehat\Theta}(\lambda)
\end{equation}
for all $\lambda \in \rho(\cL^\Theta) \cap \rho(\widehat \cL^D)$.

\begin{theorem}
\label{thm:fdEvans}
For any continuous matrix-valued potential $Q$ and matrices $\Theta_\pm \in M_n(\bbC)$ we have
\begin{equation}
\label{detN1D}
	\det N_\Theta(\lambda) = \frac{\mathsf E_{_D}(\lambda) }{\mathsf E_{\Theta}(\lambda)}
	\ \text{ for all } \lambda \in \rho(\cL^\Theta), \qquad
	\det M_\Theta(\lambda) = \frac{\mathsf E_{\Theta}(\lambda)}{\mathsf E_{_D}(\lambda) }
	\ \text{ for all } \lambda \in \rho(\cL^D).
\end{equation}
Given an auxiliary potential $\widehat Q$ and boundary matrices $\widehat\Theta_\pm$, we thus have
\begin{equation}
	\cE(\lambda) 
	= \frac{\mathsf E_{_D}(\lambda) \mathsf {\widehat E}_{\widehat\Theta}(\lambda)}{\mathsf E_{\Theta}(\lambda) \mathsf{\widehat E}_{_D}(\lambda)}
\end{equation}
for all $\lambda \in \rho(\cL^\Theta) \cap \rho(\widehat\cL^D)$.
\end{theorem}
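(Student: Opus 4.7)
The strategy is to derive both identities in \eqref{detN1D} from the single algebraic relation
\[
\mathsf E_\Theta(\lambda) = \mathsf E_D(\lambda) \, \det M_\Theta(\lambda), \qquad \lambda \in \rho(\cL^D),
\]
after which the formula for $\cE$ follows immediately by substituting into \eqref{detE1D}. To prove this, fix $\lambda \in \rho(\cL^D)$; evaluating $\det \mathbf E_D(x,\lambda)$ at $x = -1$ and $x = 1$ and using the initial conditions on $Y_\pm$ shows that $\mathsf E_D(\lambda) = \det Y_-(1,\lambda) = (-1)^n \det Y_+(-1,\lambda)$, so both $Y_-(1,\lambda)$ and $Y_+(-1,\lambda)$ are invertible. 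Matching initial conditions at the opposite endpoints lets me expand $V_\pm$ in the basis $(Y_-, Y_+)$, and substituting these expressions into $W_\pm = V_\pm + Y_\pm \Theta_\pm$ yields a change-of-basis factorization
\[
\mathbf E_\Theta(x,\lambda) = \mathbf E_D(x,\lambda) \, T(\lambda), \qquad T(\lambda) = \begin{bmatrix} \Theta_- - Y_+'(-1)Y_+(-1)^{-1} & -Y_-(1)^{-1} \\ Y_+(-1)^{-1} & \Theta_+ + Y_-'(1)Y_-(1)^{-1} \end{bmatrix},
\]
valid for every $x \in [-1,1]$.

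The second step is to identify $T(\lambda)$ with $M_\Theta(\lambda)$. Any Dirichlet datum $g = (g_-, g_+)$ has the unique extension $u = Y_+ Y_+(-1)^{-1} g_- + Y_- Y_-(1)^{-1} g_+$ to a solution of $Lu = \lambda u$; computing $\gaN^L u = (-u'(-1), u'(1))$ produces an explicit $2n \times 2n$ matrix expression for $M(\lambda)$, and adding $\operatorname{diag}(\Theta_-, \Theta_+)$ gives precisely $T(\lambda)$. Taking determinants in the factorization above then yields the second identity in \eqref{detN1D}.

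Finally, on $\rho(\cL^D) \cap \rho(\cL^\Theta)$ the maps $N_\Theta$ and $M_\Theta$ are mutual inverses, so $\det N_\Theta(\lambda) = \mathsf E_D(\lambda)/\mathsf E_\Theta(\lambda)$ there; since $\mathsf E_\Theta$ vanishes exactly on $\sigma(\cL^\Theta)$ (the defining property of the Evans function), the right-hand side is in fact analytic throughout $\rho(\cL^\Theta)$, and the identity theorem extends the equality to all of $\rho(\cL^\Theta)$. Applying the two identities to both $L$ and $\widehat L$ in \eqref{detE1D} then yields the formula for $\cE$. No step is genuinely difficult; the main content is simply the recognition that the transition matrix from the Evans basis $(Y_-, Y_+)$ to the Robin basis $(W_-, W_+)$ coincides with the Dirichlet-to-Robin map $M_\Theta(\lambda)$.
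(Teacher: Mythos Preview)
Your argument is correct and is a genuinely different route from the paper's. The paper works with the \emph{one-sided} frame built from $Y_-$ and $V_-$ alone: via Lemma~\ref{1+2} it writes $M_\Theta(\lambda) = (Z+\Theta X)X^{-1}$, observes $\det X = \mathsf E_D(\lambda)$ immediately, and then spends most of the effort proving $\det(Z+\Theta X)=\mathsf E_\Theta(\lambda)$ by manipulating $2n\times 2n$ block determinants (using the identities $\det\left[\begin{smallmatrix}A&B\\C&D\end{smallmatrix}\right]=\det(AD-BC)$ under commutativity hypotheses). Your approach instead uses the \emph{two-sided} Dirichlet basis $(Y_-,Y_+)$, expands $W_\pm$ in it, and recognizes the change-of-basis matrix $T(\lambda)$ as $M_\Theta(\lambda)$ directly; the factorization $\mathbf E_\Theta = \mathbf E_D\, T$ then gives the determinant identity with no block-determinant algebra at all. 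This is arguably cleaner and more conceptual, since the key content (that $M_\Theta$ is the transition matrix between the Dirichlet and Robin Evans bases) is made explicit. The paper's approach has the minor advantage of never needing to invert $Y_+(-1,\lambda)$, only $X$, though this is immaterial since both are invertible exactly on $\rho(\cL^D)$.

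One small remark: your ordering convention for the trace maps, $\gamma_D u = (u(-1),u(1))$ and $\gamma_N u = (-u'(-1),u'(1))$, is the reverse of the paper's $\gamma_D u = (u(1),u(-1))$ and $\gamma_N u = (u'(1),-u'(-1))$, and correspondingly your $\Theta$ is $\operatorname{diag}(\Theta_-,\Theta_+)$ rather than $\operatorname{diag}(\Theta_+,\Theta_-)$. Since both the domain and range orderings are swapped simultaneously, your $M_\Theta$ is conjugate to the paper's by a block permutation and the determinants agree, so this causes no error---but it is worth flagging if you intend to align notation with the surrounding text.
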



Before proving the theorem, we recall the definitions of $M_\Theta$ and $N_\Theta$. For a differentiable function $u \colon [-1,1] \to \bbC^n$ we define the Dirichlet and Neumann traces
\[
	\gaD u= \begin{bmatrix} u(1)\\u(-1)\end{bmatrix}\in\bbC^{2n},\quad
	\gaN u= \begin{bmatrix} u'(1)\\-u'(-1)\end{bmatrix}\in\bbC^{2n}.
\]
Assuming that $\lambda \in \rho(\cL^D) $ and $f\in\mathbb{C}^{2n}$, we let $u$ be the solution of the boundary value problem \begin{equation}
\label{DBVP}
-u''+Qu=\lambda u, \,\,\,\gamma_{{}_D}u=f,
\end{equation}
and then define the Dirichlet-to-Neumann map $M(\lambda)$ by $M(\lambda)f=\gaN u$. The Dirichlet-to-Robin map $M_\Theta(\lambda)$ is defined by $M_\Theta(\lambda)f = \gaN u + \Theta \gaD u$, so $M_\Theta(\lambda) = M(\lambda) + \Theta$.
For $\lambda \in \rho(\cL^\Theta)$ the Robin-to-Dirichlet map $N_\Theta(\lambda)$ is defined similarly, and satisfies $N_\Theta(\lambda) = M_\Theta(\lambda)^{-1}$ for all $\lambda \in \rho(\cL^\Theta) \cap \rho(\cL^D)$.

Next, we let
\begin{equation}
	\cG(\lambda) := \big\{ (\gaD u, \gaN u)\in\mathbb{C}^{4n}: -u''+Qu=\lambda u \big\}
\end{equation}
denote the set of traces of solutions to the eigenvalue equation, with no boundary conditions imposed. Since the set $\{u : -u''+Qu=\lambda u \}$ is spanned by the columns of the matrices $Y_-(\cdot,\lambda)$ and $V_-(\cdot,\lambda)$, we find that $\cG(\lambda)$ is spanned by the columns of the  \emph{frame matrix}
\begin{equation}
\label{frame}
	\begin{bmatrix} X \\ Z \end{bmatrix} 
	:=\begin{bmatrix} \gaD Y_-& \gaD V_- \\ \gaN Y_- & \gaN V_-\end{bmatrix}
	= \begin{bmatrix} Y_-(1,\lambda)& V_-(1,\lambda)\\Y_-(-1,\lambda) & V_-(-1,\lambda)\\Y_-'(1,\lambda) &V_-'(1,\lambda)\\-Y_-'(-1,\lambda)& -V_-'(-1,\lambda) \end{bmatrix}
	=\begin{bmatrix} Y_-(1,\lambda)& V_-(1,\lambda) \\ 0_{n\times n} & I_{n\times n} \\Y_-'(1,\lambda) &V_-'(1,\lambda)\\ -I_{n\times n} & 0_{n\times n} \end{bmatrix},
\end{equation}
which has size $4n \times 2n$.

We now relate the frame matrix to the the Dirichlet-to-Neumann map.

\begin{lemma}\label{1+2}
If $Q$ is a continuous matrix-valued potential, then $\lambda\in\rho(\cL^D)$ if and only if $X$ is invertible, in which case the Dirichlet-to-Neumann map is given by $M(\lambda) = ZX^{-1}$.
\end{lemma}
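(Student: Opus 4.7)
The plan is to parametrize the space of solutions to $-u''+Qu=\lambda u$ and read off $X$ and $Z$ as the coordinate expressions of the Dirichlet and Neumann trace maps, respectively. Since $Q$ is continuous, the initial value problem is uniquely solvable, so the solution space is $2n$-dimensional, and the fundamental matrix solutions $Y_-(\cdot,\lambda)$ and $V_-(\cdot,\lambda)$ provide a basis via the initial conditions $Y_-(-1,\lambda)=0$, $Y_-'(-1,\lambda)=I$, $V_-(-1,\lambda)=I$, $V_-'(-1,\lambda)=0$. Every solution can thus be written uniquely as $u=Y_-c_1+V_-c_2$ for some $c_1,c_2\in\bbC^n$.

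First I would compute the traces explicitly. Evaluating at the endpoints,
\[
\gaD u=\begin{bmatrix}u(1)\\u(-1)\end{bmatrix}=\begin{bmatrix}Y_-(1,\lambda)&V_-(1,\lambda)\\0&I\end{bmatrix}\begin{bmatrix}c_1\\c_2\end{bmatrix}=X\begin{bmatrix}c_1\\c_2\end{bmatrix},
\]
and similarly
\[
\gaN u=\begin{bmatrix}u'(1)\\-u'(-1)\end{bmatrix}=\begin{bmatrix}Y_-'(1,\lambda)&V_-'(1,\lambda)\\-I&0\end{bmatrix}\begin{bmatrix}c_1\\c_2\end{bmatrix}=Z\begin{bmatrix}c_1\\c_2\end{bmatrix},
\]
so $X$ and $Z$ are precisely the matrix representations of $\gaD$ and $\gaN$ restricted to the solution space in the $(Y_-,V_-)$ basis.

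Next I would establish the equivalence. Note $\lambda\in\sigma(\cL^D)$ iff there exists a nontrivial $u$ with $-u''+Qu=\lambda u$ and $\gaD u=0$, which by the identification above occurs iff there is a nonzero $(c_1,c_2)^\top$ in the kernel of $X$. Since $X$ is a $2n\times 2n$ matrix, this is equivalent to $X$ being noninvertible. Taking contrapositives gives $\lambda\in\rho(\cL^D)$ iff $X$ is invertible.

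Finally, when $X$ is invertible, for any prescribed Dirichlet data $f\in\bbC^{2n}$ the unique solution of \eqref{DBVP} corresponds to $(c_1,c_2)^\top=X^{-1}f$, and its Neumann trace is $\gaN u=Z(c_1,c_2)^\top=ZX^{-1}f$, yielding $M(\lambda)=ZX^{-1}$. There is no significant obstacle here; the only subtlety is bookkeeping the signs and endpoint evaluations in the definitions of $\gaD$, $\gaN$ and the initial data of $Y_-$, $V_-$ at $x=-1$, which produce the block structure of $X$ and $Z$ exhibited in \eqref{frame}.
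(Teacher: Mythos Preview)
Your proposal is correct and follows essentially the same approach as the paper: parametrize the solution space by the columns of $Y_-$ and $V_-$, identify $X$ and $Z$ as the matrices of $\gaD$ and $\gaN$ in this basis, and read off both the equivalence and the formula $M(\lambda)=ZX^{-1}$. The paper's proof additionally passes through the Evans matrix $\mathbf{E}_D(x,\lambda)$ to note $\det X=\det Y_-(1,\lambda)=\mathsf{E}_D(\lambda)$, but this is only a cosmetic detour exploiting the block-triangular form of $X$, not a different argument.
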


Since the columns of  $\left[ \begin{smallmatrix} X \\ Z \end{smallmatrix}\right]$ and $\left[ \begin{smallmatrix} I_{2n\times2n} \\ Z X^{-1} \end{smallmatrix}\right]$ span the same subspace, the lemma implies that $\cG(\lambda)$ is the graph of the Dirichlet-to-Neumann map.

\begin{proof}
Note that $\lambda\in\sigma(\cL^D)$ if and only if there is a nontrivial solution to $-u''+Qu=\lambda u$ that satisfies Dirichlet boundary conditions at both $-1$ and $+1$. From the definition of $Y_-$ and $Y_+$, this happens if and only if the columns of the Evans matrix $\mathbf{E}_D(x,\lambda)$ in \eqref{def:EDmatrix} are linearly dependent, i.e. $\det \mathbf{E}_D(x,\lambda) = \det Y_-(1,\lambda)=0$, and it follows from \eqref{frame} that $\det Y_-(1,\lambda) = \det X$.
	
Assuming $\lambda \in \rho(\cL^D) $, we choose $f\in\mathbb{C}^{2n}$ and let $u$ be the solution of \eqref{DBVP}.  Since $\{u : -u''+Qu=\lambda u \}$ is spanned by the columns of $Y_-$ and $V_-$, there exists a vector $\mathbf{c}\in\mathbb{C}^{2n}$ such that $u(x)=[Y_-(x,\lambda)\ \, V_-(x,\lambda)] \mathbf{c}$ for all $x\in[-1,1]$. Then $\gaD u=X\mathbf{c}$ and $\gaN u=Z\mathbf{c}$ by the definition of the frame. Since $X$ is invertible, this implies $\mathbf{c}=X^{-1}f$ and hence $M(\lambda)f=\gaN u=Z\mathbf{c}=ZX^{-1}f$.
\end{proof}

We are now ready to prove the main result.

\begin{proof}[Proof of Theorem~\ref{thm:fdEvans}]

Using Lemma~\ref{1+2} we obtain $M_\Theta = Z X^{-1} + \Theta = (Z+\Theta X) X^{-1}$. To prove \eqref{detN1D}, we will show that $\det X = \mathsf E_{_D}(\lambda)$ and $\det (Z + \Theta X) = \mathsf E_{\Theta}(\lambda)$. The first claim follows immediately from \eqref{ED} and \eqref{frame}, since
\[
	\mathsf E_{_D}(\lambda) = \det Y_-(1,\lambda) = \det X.
\]
To prove the second claim, we notice that 
\begin{align}
\begin{split}\label{ER1}
	\det(Z+\Theta X)&=\det\begin{bmatrix}  Y_-'(1,\lambda)+\Theta_+ Y_-(1,\lambda) & V_-'(1,\lambda)+\Theta_+ V_-(1,\lambda)\\ -I_{n\times n} &\Theta_- \end{bmatrix}\\
	&=\det\big((Y_-'(1,\lambda)+\Theta_+ Y_-(1,\lambda))\Theta_-+V_-'(1,\lambda)+\Theta_+ V_-(1,\lambda)\big),
\end{split}
\end{align}
where the second equality follows from the fact that if $AC=CA$, then
\begin{equation*}
	\det\begin{bmatrix}A&B\\
		C&D\end{bmatrix}=\det(AD-BC).
\end{equation*}
On the other hand, letting $x=1$ in \eqref{ETheta} gives

\begin{align}
\begin{split}\label{ER2}
	\mathsf E_{\Theta}(\lambda)
	&=\det\begin{bmatrix}V_-(1,\lambda)+Y_-(1,\lambda)\Theta_-&
		-I_{n\times n} \\V'_-(1,\lambda)+Y'_-(1,\lambda)\Theta_-&\Theta_+\end{bmatrix} \\
		& =\det\big(\Theta_+(V_-(1,\lambda)+Y_-(1,\lambda)\Theta_-)+V'_-(1,\lambda)+Y'_-(1,\lambda)\Theta_-\big),
\end{split}
\end{align}
where the second equality follows from the fact that if $BD=DB$, then
\begin{equation*}
	\det\begin{bmatrix}A&B\\
		C&D\end{bmatrix}=\det(DA-BC).
\end{equation*}

Comparing \eqref{ER1} and \eqref{ER2} proves that $\det (Z + \Theta X) = \mathsf E_{\Theta}(\lambda)$, and hence $\det M_\Theta(\lambda) = \mathsf E_{\Theta}(\lambda) / \mathsf E_{_D}(\lambda)$ for all $\lambda \in \rho(\cL^D)$. If $\lambda \in \rho(\cL^\Theta) \cap \rho(\cL^D)$ we can invert this to obtain $\det N_\Theta(\lambda) = \mathsf E_{_D}(\lambda) / \mathsf E_{\Theta}(\lambda)$, and by continuity we conclude that this holds for all $\lambda \in \rho(\cL^\Theta)$.
\end{proof}

\subsection{The unit disc}
\label{sec:disc}
We finally consider $L = -\Delta$ on the unit disc. For a reference operator we take $\widehat L = -\Delta + \gamma$ for some $\gamma \in \bbR$, and we choose boundary operators $\Theta = \mu \J$ and $\widehat\Theta = \hat\mu\J$, where $\mu$ and $\hat\mu$ are complex numbers and $\J \colon \Hp \to \Hm$ is the inclusion. With these choices the eigenvalues of $E(\lambda)$ can be computed explicitly, and we obtain the following.

\begin{theorem}\label{thm3.3}
If $\lambda > \max\{\gamma,0\}$ and $\mu \neq \hat \mu$, then $E(\lambda) - I \in \cB_p$ if and only if $p > 1$. 
\end{theorem}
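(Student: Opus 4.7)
The plan is to diagonalize $E(\lambda)$ in the Fourier basis on $\pO = S^1$, reduce the $\cB_p$ question to the summability of an explicit sequence, and then use classical Bessel asymptotics to extract the leading behavior $|n|^{-1}$ at infinity.

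First I would introduce polar coordinates $(r,\theta)$ and the basis $\psi_n(\theta) = e^{in\theta}/\sqrt{2\pi}$, noting that the rescaled functions $(1+n^2)^{-1/4}\psi_n$ and $(1+n^2)^{1/4}\psi_n$ are orthonormal bases of $\Hp$ and $\Hm$ respectively, and that the inclusion $\J = \iota^*\iota$ acts diagonally as $\J\psi_n = \psi_n$ in the dual pairing. Using separation of variables, the unique solution to $-\Delta u = \lambda u$ in the disc with $u|_{\pO}= \psi_n$ is $u(r,\theta) = J_n(\sqrt\lambda r)\psi_n(\theta)/J_n(\sqrt\lambda)$, so
\[
	M(\lambda)\psi_n = m_n(\lambda)\psi_n, \qquad m_n(\lambda) = \frac{\sqrt{\lambda}\, J_n'(\sqrt\lambda)}{J_n(\sqrt\lambda)},
\]
and similarly $\widehat M(\lambda)\psi_n = \widehat m_n(\lambda)\psi_n$ with $\widehat m_n(\lambda)$ obtained by replacing $\lambda$ with $\lambda-\gamma$. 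Consequently $M_\Theta(\lambda)$ and $\widehat M_{\widehat\Theta}(\lambda)$ are simultaneously diagonalized in this basis with eigenvalues $m_n(\lambda) + \mu$ and $\widehat m_n(\lambda) + \hat\mu$, so
\[
	E(\lambda)\psi_n = e_n(\lambda)\psi_n, \qquad e_n(\lambda) = \frac{\widehat m_n(\lambda) + \hat\mu}{m_n(\lambda) + \mu}.
\]
Because this diagonalization is in an orthonormal basis of $\Hm$, the singular values of $E(\lambda) - I$ are precisely the numbers $|e_n(\lambda) - 1|$, and $E(\lambda) - I \in \cB_p$ iff $\sum_{n\in\bbZ}|e_n(\lambda)-1|^p < \infty$.

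The second step is to extract the asymptotics of $e_n(\lambda)-1$ as $|n|\to\infty$. Using the Bessel recurrence $z J_n'(z) = n J_n(z) - z J_{n+1}(z)$ together with $J_{n+1}(z)/J_n(z) = z/(2(n+1)) + O(n^{-3})$ (from the power series, valid for fixed $z$), I would obtain
\[
	m_n(\lambda) = n - \frac{\lambda}{2(n+1)} + O(n^{-3}), \qquad \widehat m_n(\lambda) = n - \frac{\lambda-\gamma}{2(n+1)} + O(n^{-3}),
\]
for $n \to +\infty$, and invoke $J_{-n} = (-1)^n J_n$ to see $m_{-n} = m_n$ and $\widehat m_{-n} = \widehat m_n$. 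The hypothesis $\lambda > \max\{\gamma,0\}$ guarantees that $\sqrt\lambda$ and $\sqrt{\lambda-\gamma}$ are real, keeping all these Bessel values real and nonzero for $|n|$ large (so $m_n + \mu$ cannot vanish outside a finite set, confirming $\lambda$ is not in the spectrum of $\cL^\Theta$ or $\widehat\cL^{\widehat\Theta}$ for large $|n|$). Substituting gives
\[
	e_n(\lambda) - 1 = \frac{(\widehat m_n(\lambda) - m_n(\lambda)) + (\hat\mu - \mu)}{m_n(\lambda) + \mu} = \frac{\hat\mu - \mu}{|n|} + O(|n|^{-2})
\]
as $|n|\to\infty$.

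The conclusion is then immediate: since $\hat\mu \neq \mu$, we have $|e_n(\lambda)-1| \asymp |n|^{-1}$ as $|n|\to\infty$, so $\sum_n |e_n(\lambda)-1|^p$ converges if and only if $p > 1$. I expect the main obstacle to be the bookkeeping in step one, namely making sure that $E(\lambda)$ really is diagonal with the claimed eigenvalues in an \emph{orthonormal} basis of $\Hm$ (so that Schatten norms reduce to $\ell^p$ norms of the $e_n(\lambda)-1$), rather than in the non-orthonormal Fourier basis $\{\psi_n\}$; the nontrivial rescalings required to pass between the natural basis for separation of variables and the orthonormal basis for $\Hm$ must be tracked carefully, but they cancel out because $E(\lambda)$ is a scalar multiple of the identity on each Fourier mode. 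The Bessel asymptotics themselves are standard and pose no real difficulty.
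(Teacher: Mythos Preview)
Your proposal is correct and follows essentially the same approach as the paper: diagonalize $E(\lambda)$ in the Fourier basis on $S^1$, compute the eigenvalues via the Dirichlet-to-Neumann eigenvalues $m_n(\lambda) = \sqrt\lambda\, J_{|n|}'(\sqrt\lambda)/J_{|n|}(\sqrt\lambda)$, and use Bessel asymptotics to show $|e_n(\lambda)-1| \asymp |n|^{-1}$. The only minor differences are that the paper cites the asymptotic $m_n(\lambda) - |n| \to 0$ from \cite[Lemma~4.2]{Daners} rather than deriving it from the recurrence, and uses this cruder statement (together with $m_n(\lambda) - m_n(\lambda-\gamma) \to 0$) to sandwich $|e_n(\lambda)-1|$ between constant multiples of $1/|n|$, whereas you extract a sharper expansion; both routes give the same conclusion, and your observation that the rescalings between the Fourier basis and the orthonormal basis of $\Hm$ cancel is exactly what makes the reduction to $\ell^p$ legitimate.
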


This verifies the claim made in Remark~\ref{rem:betterp} about the optimality of $p>n-1$.

\begin{proof}
From \cite[Section~4]{Daners} we know that the Dirichlet-to-Neumann map $M(\lambda)$ satisfies
\[
	M(\lambda) e^{ik\vartheta} = d_k(\lambda) e^{ik\vartheta}, \ \vartheta\in[0,2\pi),
\]
for all $k \in \bbZ$, where
\begin{equation}
\label{dklambda}
	d_k(\lambda) = \frac{\sqrt\lambda J'_{|k|}(\sqrt\lambda)}{J_{|k|}(\sqrt\lambda)}
\end{equation}
and $J_k$ are the Bessel functions of the first kind.
If $k$ is even, both $J_{|k|}(z)$ and $z J'_{|k|}(z)$ are entire functions, given by convergent power series only having even powers of $z$, and hence can be evaluated at $z = \sqrt\lambda$ for any $\lambda \in \bbC$, independent of the choice of square root. When $k$ is odd, the same is true of $J_{|k|}(z)/z$ and $J'_{|k|}(z)$. Therefore, as in \eqref{series}, we see that \eqref{dklambda} unambiguously defines $d_k(\lambda)$ for every $\lambda \in \bbC$.

%
%

It follows that $\widehat M(\lambda) = M(\lambda-\gamma)$ has eigenvalues $d_k(\lambda-\gamma)$, so the operator $E(\lambda) = N_\Theta(\lambda) \widehat M_{\widehat\Theta}(\lambda)$ has eigenvalues
\begin{equation}
	\frac{d_k(\lambda - \gamma) + \hat\mu}{d_k(\lambda) + \mu},
\end{equation}
and hence $E(\lambda) - I$ has eigenvalues
\[
	\frac{d_k(\lambda - \gamma) - d_k(\lambda) + \hat\mu - \mu}{d_k(\lambda) - \mu},
\]
indexed by $k \in \bbZ$.

To analyze the $\cB_p$ properties, we recall \cite[Lemma~4.2]{Daners}, which implies
\begin{equation}
	\lim_{k \to \infty} \big(d_k(\lambda) - k\big) = \lim_{k \to \infty} \big(d_k(\lambda - \gamma) - k\big) = 0.
\end{equation}
Therefore, there exists a natural number $N$ so that $|d_k(\lambda) - k| < 1$ and $|d_k(\lambda) - d_k(\lambda-\gamma)| < \tfrac12 |\mu - \hat\mu|$ for all $k \geq N$. Using this, we obtain
\begin{equation}
	\frac12 \frac{|\mu - \hat\mu|}{k + (|\mu| + 1)} \leq 
	\left|\frac{d_k(\lambda - \gamma) - d_k(\lambda) + \hat\mu - \mu}{d_k(\lambda) - \mu}\right| \leq \frac32 \frac{|\mu - \hat\mu|}{k - (|\mu| + 1)}
\end{equation}
for $k \geq N$ and the result follows.
\end{proof}

We conclude this section with a discussion of the Schr\"odinger operator $L=-\Delta+q(|x|)$ on the unit disc with continuous radial potential. We will show that the function $E$ from \eqref{eq:Edef} can be represented as an infinite diagonal matrix whose entries are one-dimensional Evans functions for Schr\"odinger operators on a half-line. However, this does not imply that $\cE = \det_p E$ is the product of one-dimensional Evans functions, on account of the regularizing exponential factors in $\det_p$ when $p>1$; see \eqref{defpdet}.

Passing to polar coordinates $(r,\vartheta)\in(0,1]\times[0,2\pi)$, defining $t\ge0$ by $r=e^{-t}$ and separating variables in the eigenvalue equation $Lu=\lambda u$ produces solutions $v_k(t,\lambda)e^{ik\vartheta}$ for $k \in \bbZ$,  where $v_k(\cdot\,, \lambda)$ is a solution to the one-dimensional Schr\"odinger equation
 \begin{equation}\label{odimsch}
 -v''+Q(t,\lambda)v+k^2v=0, \qquad Q(t,\lambda):=e^{-2t}\big(q(e^{-t})-\lambda\big)
 \end{equation}
 that is bounded on $[0,\infty)$.
With a suitable normalization we can assume that $v_k(\cdot,\lambda)$ is the Jost solution to \eqref{odimsch}, that is, the unique solution asymptotic to the plane wave $t\mapsto e^{-|k|t}$ corresponding to $Q\equiv 0$; see, e.g., \cite[Chapter 17]{ChadSab}. 
For instance, if $q\equiv 0$ as in Theorem \ref{thm3.3}, we may choose $v_k(t,\lambda)=(\sqrt{\lambda})^{-|k|}J_{|k|}\big(\sqrt{\lambda}e^{-t}\big)$.

As for the Laplacian, for general potentials we have $\sigma(\cL^D)=\{\lambda: v_k(0,\lambda)=0 \text{ for some } k=0,1,\dots\}$
and for $\lambda\notin\sigma(\cL^D)$ the Dirichlet-to-Neuman operator $M(\lambda)$ maps the function $e^{ik\vartheta}$ to the function $d_k(\lambda)e^{ik\vartheta}$, where we have introduced the notation $d_k(\lambda)=-{v'_k(0,\lambda)}/{v_k(0,\lambda)}$. (When $q\equiv0$ this reduces to \eqref{dklambda}; the minus sign appears here because we are differentiating with respect to $t = - \log r$.) We remark that the one-dimensional operator $d_k(\lambda)$ is the Dirichlet-to-Neuman map for the one-dimensional Schr\"odinger equation \eqref{odimsch}. Defining $\widehat v_k$ analogously, and letting $\Theta=\mu \cJ$ and $\widehat{\Theta}=\widehat{\mu}\cJ$ as before, we arrive at the following description of $E$ on the disc:
\begin{equation}
	E(\lambda)=\mathbf{F} {\rm diag } \left\{ \frac{\widehat d_k(\lambda) + \widehat\mu}{d_k(\lambda) + \mu} \right\}_{k\in\bbZ}\mathbf{F}^{-1},
\end{equation}
where
\begin{align*}
	{d}_k(\lambda)=-\frac{{v}'_k(0,\lambda)}{{v}_k(0,\lambda)}, \qquad
	\widehat{d}_k(\lambda)=-\frac{\widehat{v}'_k(0,\lambda)}{\widehat{v}_k(0,\lambda)}
\end{align*}
and
\[
	\mathbf{F}:(c_k)_{k\in\bbZ}\mapsto\sum_{k\in\bbZ} c_k e^{ik\vartheta}
\]
 is the discrete Fourier transform.

\section{Preliminaries}
\label{sec:prelim}
We now define all of the operators needed for the statement and proof of Theorem~\ref{thm:WEvans}. In particular, we define the Dirichlet and Robin realizations, $\cL^D$ and $\cL^\Theta$, as well as the Dirichlet-to-Robin and Robin-to-Dirichlet maps, $M_\Theta$ and $N_\Theta$. We also define the Robin-to-Robin map $R_{\Theta_1,\Theta_2}$. While this does not appear in the statement of the theorem, it is needed for the proof, which is given in Section~\ref{sec:det}. The constructions in Sections \ref{sec:domains} and \ref{sec:Robin} are standard, and are included for the sake of completeness. In Section~\ref{sec:resTheta} we prove an auxiliary result on the spectrum of $\cL^\Theta$ that will be useful in the proof of Theorem~\ref{thm:WEvans}.

\subsection{Operators and domains}
\label{sec:domains}
Recall the differential expression
\[Lu = - \sum_{j,k=1}^n \p_j (a_{jk} \p_k u) + \sum_{j=1}^n b_j \p_j u - \sum_{j=1}^n\partial_j(d_ju)+ qu,\] defined in \eqref{Ldef}. We define the associated sesquilinear form
\begin{equation}\label{defPhi}
	\Phi(u,v) = \sum_{j,k=1}^n \left< a_{jk} \p_k u, \p_j v \right> + \sum_{j=1}^n \left< b_j \p_j u, v \right> + \sum_{j=1}^n\langle d_ju,\p_jv\rangle+ \left<qu,v\right>
\end{equation}
where $\left<\cdot,\cdot \right>$ denotes the $L^2(\Omega)$ inner product, so that $\Phi(u,v) = \left< Lu,v \right>$ for all $u,v \in C^\infty_c(\Omega)$ compactly supported in $\Omega$.  We will also consider the formal adjoint differential expression
\begin{equation}\label{Ldaddef} 
L^\dag u = - \sum_{j,k=1}^n \p_k (\bar{a}_{jk}\p_j u) + \sum_{j=1}^n \bar{d}_j \p_j u - \sum_{j=1}^n\partial_j(\bar{b}_i)u+ \bar{q}u,
\end{equation}
where bar stands for complex conjugation. The adjoint form $\Phi^*$ is defined by $\Phi^*(u,v)=\overline{\Phi(v,u)}$ and corresponds to $L^\dag$ in the sense that $\Phi^*(u,v) = \left< L^\dag u,v \right>$ for all $u,v \in C^\infty_c(\Omega)$. We adhere to the standard notation $\Phi[u]=\Phi(u,u)$ for the quadratic form associated with $\Phi$.

To describe the domains of the Dirichlet, Neumann and Robin realizations of the operator $L$, we will need the following brief discussion of the Dirichlet and Neumann traces; see, for instance, \cite{McL00}. By the standard trace theorem, the linear mapping $H^1(\Omega) \cap C(\overline\Omega) \ni u \mapsto u\big|_{\pO} \in  C(\partial\Omega)$ can be extended to a bounded linear operator
\begin{equation}\label{e9}
	\gaD \colon H^1(\Omega) \longrightarrow \Hp.
\end{equation} 
To define the conormal derivative operator for $L$, we introduce the function space 
\begin{align}
& D^1_{L}(\Omega):=\{u\in H^1(\Omega): L u\in L^2(\Omega)\}, \label{cc4}
\end{align}
equipped with the natural graph norm of $L$,
\begin{equation}\label{e1}
\|u\|_{L,1}:=\left(\|u\|_{H^1({\Omega})}^2+\|L u\|_{L^2({\Omega})}^2\right)^{1/2}, 
\end{equation}
where $L u$ should be understood in the sense of distributions. We define $D^1_{L^\dag}(\Omega)$ analogously.

\begin{prop} \cite[Theorem 4.4]{McL00}\label{e10}
Assuming Hypothesis \ref{hyp:L}, there exist bounded linear operators $\gaN^{L}\in\cB\big(\cD^1_{L}(\Omega), H^{-1/2}(\partial\Omega)\big)$ and $\gaN^{L^\dag}\in\cB\big(\cD^1_{L^\dag}(\Omega), H^{-1/2}(\partial\Omega)\big)$ such that Green's first identity holds, that is,
\begin{align}
	&\Phi(u,v)=\langle L u, v\rangle_{L^2(\Omega)}+\llangle {\gaN^L}u,\gaD v\rrangle
\quad \text{ for all } u\in\cD^1_{L}(\Omega), v\in H^1{(\Omega)},\label{e14}\\
	&\Phi(u,v)=\langle  u, L^\dag v\rangle_{L^2(\Omega)}+\overline{\llangle \gaN^{L^\dag}v, \gaD u\rrangle}
\quad \text{ for all } u\in H^1{(\Omega)}, v\in\cD^1_{L^\dag}(\Omega),
\end{align}
and hence Green's second identity
\begin{align}\label{e13}
\langle L u, v\rangle_{L^2(\Omega)}- \langle u,L^\dag v\rangle_{L^2(\Omega)}=\overline{\llangle{\gaN^{L^\dag}} v ,\gaD u \rrangle}-\llangle{\gaN^L} u ,\gaD v \rrangle \quad
\end{align}
holds for all $u\in\cD^1_{L}(\Omega)$ and $v\in\cD^1_{L^\dag}(\Omega)$.
\end{prop}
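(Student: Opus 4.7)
The plan is to construct $\gamma_N^L$ as the distribution defined by Green's identity itself, show it is well-defined and bounded, and then derive the second identity from the first applied to $L^\dag$. First, I would recall that the Dirichlet trace $\gamma_D \colon H^1(\Omega) \to H^{1/2}(\partial\Omega)$ is surjective and admits a bounded right inverse, i.e.\ an extension operator $\mathcal{E} \colon H^{1/2}(\partial\Omega) \to H^1(\Omega)$ with $\gamma_D \circ \mathcal{E} = \mathrm{id}$. This is the standard trace theorem for Lipschitz domains.

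For $u \in \mathcal{D}^1_L(\Omega)$ and $\varphi \in H^{1/2}(\partial\Omega)$, I would define
\begin{equation*}
\llangle \gamma_N^L u, \varphi \rrangle := \Phi(u, \mathcal{E}\varphi) - \langle Lu, \mathcal{E}\varphi\rangle_{L^2(\Omega)}.
\end{equation*}
The first task is to show this is independent of the choice of extension. Equivalently, I need to verify that the right-hand side vanishes whenever $v \in H^1_0(\Omega)$. This follows because $Lu$ is interpreted as a distribution, and for $v \in H^1_0(\Omega)$ one has $\langle Lu, v \rangle_{L^2(\Omega)} = \Phi(u,v)$ by the definition of the distributional action of $L$, combined with density of $C^\infty_c(\Omega)$ in $H^1_0(\Omega)$ and continuity of both $\Phi(u, \cdot)$ and $\langle Lu, \cdot \rangle$ on $H^1(\Omega)$ (the latter because $Lu \in L^2(\Omega)$).

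Next I would establish boundedness. By Hypothesis~\ref{hyp:L} the coefficients of $L$ lie in $L^\infty(\Omega)$, so $\Phi(u,\cdot)$ is bounded on $H^1(\Omega)$ with norm controlled by $\|u\|_{H^1(\Omega)}$, and $\langle Lu,\cdot\rangle$ is bounded with norm controlled by $\|Lu\|_{L^2(\Omega)}$. Combining with the bound $\|\mathcal{E}\varphi\|_{H^1(\Omega)} \lesssim \|\varphi\|_{H^{1/2}(\partial\Omega)}$ yields
\begin{equation*}
|\llangle \gamma_N^L u, \varphi \rrangle| \lesssim \|u\|_{L,1}\, \|\varphi\|_{H^{1/2}(\partial\Omega)},
\end{equation*}
so $\gamma_N^L u \in H^{-1/2}(\partial\Omega)$ and $\gamma_N^L \in \mathcal{B}(\mathcal{D}^1_L(\Omega), H^{-1/2}(\partial\Omega))$. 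Green's first identity \eqref{e14} then holds by construction: for general $v \in H^1(\Omega)$, write $v = (v - \mathcal{E}\gamma_D v) + \mathcal{E}\gamma_D v$, note that $v - \mathcal{E}\gamma_D v \in H^1_0(\Omega)$ so the definition of $L$ as a distribution handles that piece, and use the defining formula on $\mathcal{E}\gamma_D v$.

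The operator $\gamma_N^{L^\dag}$ is constructed by the same procedure applied to the form $\Phi^*$ and the expression $L^\dag$, producing its own first identity. Taking the complex conjugate of that identity with roles of $u$ and $v$ swapped gives the formula stated in \eqref{e14} for $v \in \mathcal{D}^1_{L^\dag}(\Omega)$. Finally, Green's second identity \eqref{e13} follows by subtracting the two first identities when $u \in \mathcal{D}^1_L(\Omega)$ and $v \in \mathcal{D}^1_{L^\dag}(\Omega)$, since the form terms $\Phi(u,v)$ cancel.

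The main technical nuance is the well-definedness step: one must be careful to interpret $Lu$ distributionally on $H^1_0(\Omega)$ and then promote it to an $L^2$-pairing using $Lu \in L^2(\Omega)$, so that the defining expression for $\gamma_N^L u$ is genuinely independent of extension. The Lipschitz regularity of $\partial\Omega$ enters only through the existence of the bounded extension operator $\mathcal{E}$; no further regularity of $\partial\Omega$ or of the coefficients (beyond $L^\infty$) is needed.
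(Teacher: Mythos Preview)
The paper does not prove this proposition; it simply cites \cite[Theorem 4.4]{McL00} and states the result. Your proposal is the standard construction of the weak conormal derivative (essentially what McLean does): define $\gamma_N^L u$ so that Green's first identity holds tautologically, check independence of the chosen extension via the identity $\Phi(u,v)=\langle Lu,v\rangle$ for $v\in H^1_0(\Omega)$, and read off boundedness from the boundedness of $\Phi$ and of the extension operator. This is correct and complete.
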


We will frequently use the fact that 
\begin{equation}\label{UCP}
	\big\{u\in H^1(\Omega): L u=0, \gaD u = 0 \text{ and } \gaN^L u = 0   \big\}=\{0\},
\end{equation}
which follows from the unique continuation principle, cf. \cite[Theorem 3.2.2]{I06}.

When $u$ is sufficiently smooth, say in $H^2(\Omega)$, and the coefficients $b_j$ and $d_j$ are Lipschitz, the conormal derivatives can be computed by the usual formulas
\begin{align}
	{\gaN^{L}}u &=\sum_{j,k=1}^{n} a^{jk}\nu_j\gaD(\partial_k u)+\sum_{j=1}^n \nu_j\gaD (d_ju), \label{e2}\\
	{\gaN^{L^\dag}}u &=\sum_{j,k=1}^{n} \bar{a}^{jk}\nu_j\gaD(\partial_k u)+\sum_{j=1}^n \nu_j\gaD (\bar{b}_ju),
\end{align}
with $\nu=(\nu_1,\ldots, \nu_n)$ denoting the outward unit normal to $\partial\Omega$. We will also need the following identity for the difference of Neumann traces when the principal parts of $L$ and $\widehat L$ coincide.

\begin{lemma}
\label{lem:gaNdiff}
Let $L$ and $\widehat L$ both satisfy Hypothesis \ref{hyp:L}, with $a_{jk} = \hat a_{jk}$ for all $j,k$, and $d_j - \hat d_j$ Lipschitz. If $u \in D^1_L(\Omega)$, then $\widehat L u \in L^2(\Omega)$, and
\begin{equation}
\label{gaNdiff}
	\gaN^L u - \gaN^{\widehat L} u = \sum_{j=1}^k \nu_j \gaD \big((d_j - \hat d_j) u\big) \in L^2(\pO).
\end{equation}
\end{lemma}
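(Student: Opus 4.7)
The plan is to reduce the identity to Green's first identity (Proposition~\ref{e10}) applied to both $L$ and $\widehat L$, and then extract the boundary term coming from the lower-order divergence terms $-\p_j(d_j u)$ via an integration by parts justified by the Lipschitz assumption on $d_j-\hat d_j$.

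First I would verify that $\widehat L u \in L^2(\Omega)$. Since $a_{jk}=\hat a_{jk}$, the differential expression $L-\widehat L$ has only lower-order terms:
\begin{equation*}
(L-\widehat L)u = \sum_{j=1}^n (b_j-\hat b_j)\p_j u - \sum_{j=1}^n \p_j\big((d_j-\hat d_j)u\big) + (q-\hat q)u.
\end{equation*}
Because $d_j-\hat d_j$ is Lipschitz, the product rule gives $\p_j((d_j-\hat d_j)u) = (\p_j(d_j-\hat d_j))u + (d_j-\hat d_j)\p_j u$, which lies in $L^2(\Omega)$ whenever $u\in H^1(\Omega)$. Hence $(L-\widehat L)u\in L^2(\Omega)$, so $u\in D^1_{\widehat L}(\Omega)$.

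Next, for an arbitrary test function $v\in H^1(\Omega)$, I would apply Green's first identity \eqref{e14} to both forms and subtract, obtaining
\begin{equation*}
\llangle \gaN^L u - \gaN^{\widehat L} u,\gaD v\rrangle = \big[\Phi(u,v)-\widehat\Phi(u,v)\big] - \big\langle (L-\widehat L)u,v\big\rangle.
\end{equation*}
Using $a_{jk}=\hat a_{jk}$ in the definition \eqref{defPhi} and substituting the expression for $(L-\widehat L)u$ above, the $b_j$ and $q$ contributions cancel, and the right-hand side collapses to
\begin{equation*}
\sum_{j=1}^n \langle (d_j-\hat d_j)u,\p_j v\rangle + \sum_{j=1}^n \langle \p_j((d_j-\hat d_j)u),v\rangle.
\end{equation*}

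Finally I would recognize this as the integrated divergence $\sum_j \int_\Omega \p_j\big((d_j-\hat d_j)u\,\bar v\big)\,dx$. Since $(d_j-\hat d_j)u\,\bar v\in W^{1,1}(\Omega)$ (the Lipschitz factor preserves $H^1$, and the product of two $H^1$ functions is in $W^{1,1}$), the divergence theorem on the Lipschitz domain $\Omega$ converts this to $\sum_j \int_{\pO}\nu_j(d_j-\hat d_j)\gaD u\cdot\overline{\gaD v}\,dS$. Under the identification $L^2(\pO)\hookrightarrow \Hm$ via $\iota^*$, this is precisely $\llangle\sum_j \nu_j\gaD((d_j-\hat d_j)u),\gaD v\rrangle$. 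Since $\gaD\colon H^1(\Omega)\to\Hp$ is surjective and $v\in H^1(\Omega)$ is arbitrary, the identity \eqref{gaNdiff} follows as an equation in $\Hm$. The membership in $L^2(\pO)$ is then automatic since $\gaD((d_j-\hat d_j)u)\in\Hp\subset L^2(\pO)$ and $\nu_j\in L^\infty(\pO)$.

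The only delicate point is the justification of the integration by parts with merely Lipschitz coefficients on a Lipschitz domain; this is a standard consequence of the $W^{1,1}$ version of the divergence theorem, and is the main obstacle to making the argument fully rigorous.
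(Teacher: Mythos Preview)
Your proof is correct and follows essentially the same approach as the paper: subtract the two Green identities, use the Lipschitz assumption to expand $\partial_j((d_j-\hat d_j)u)$, and apply the divergence theorem to the remaining terms. The only organizational difference is that the paper first derives the pointwise formula for $(L-\widehat L)u$ by testing against $v\in H^1_0(\Omega)$ and extending by density to $L^2(\Omega)$, whereas you establish $u\in D^1_{\widehat L}(\Omega)$ directly at the outset and then apply Green's identity for $\widehat L$; both routes lead to the same cancellation and boundary term.
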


If $u \in H^2(\Omega)$, this formula would follow immediately from subtracting \eqref{e2} and the corresponding expression for $\gaN^{\widehat L} u$. The advantage of Lemma~\ref{lem:gaNdiff} is that it only requires $u \in D^1_L(\Omega)$. For a Lipschitz domain we can only guarantee that the right-hand side of \eqref{gaNdiff} is in $L^2(\pO)$, even though each $\gaD \big((d_j - \hat d_j) u\big)$ is in $\Hp$, since we only know that the $\nu_j$ are bounded.

\begin{proof}
For any $v \in H^1_0(\Omega)$ we have
\begin{align*}
	\big<L u,v\big> - \big<\widehat L u,v\big> &= \Phi(u,v) - \widehat\Phi(u,v) \\
	&= \sum_{j=1}^n \left< (b_j - \hat b_j) \p_j u, v \right> + \sum_{j=1}^n\langle (d_j - \hat d_j) u,\p_jv\rangle+ \left<(q - \hat q)u,v\right>
\end{align*}
and hence
\begin{equation}
\label{Ldiff}
	\big<L u,v\big> - \big<\widehat L u,v\big> = \sum_{j=1}^n \left< (b_j - \hat b_j) \p_j u, v \right> - \sum_{j=1}^n\langle \p_j ((d_j - \hat d_j) u),v\rangle+ \left<(q - \hat q)u,v\right>,
\end{equation}
where in the last line we have integrated by parts, using the fact that $(d_j - \hat d_j) u \in H^1(\Omega)$. By density, this last equality holds for all $v \in L^2(\Omega)$, and in particular for $v \in H^1(\Omega)$. For $v \in H^1(\Omega)$ we thus compute $\Phi$ and $\widehat\Phi$ using \eqref{defPhi}, and subtract \eqref{Ldiff} to obtain
\begin{align*}
	\Phi(u,v) - \widehat\Phi(u,v) - \big(\big<L u,v\big> - \big<\widehat L u,v\big>\big) &= \sum_{j=1}^n\langle (d_j - \hat d_j) u,\p_jv\rangle + \langle \p_j (d_j - \hat d_j) u,v\rangle \\
	&=  \sum_{j=1}^k \llangle \nu_j \gaD \big((d_j - \hat d_j) u\big), \gaD v \rrangle,
\end{align*}
using the divergence theorem in the second line. On the other hand, \eqref{e14} says that
\begin{align*}
	\Phi(u,v) - \widehat\Phi(u,v) - \big(\big<L u,v\big> - \big<\widehat L u,v\big>\big)
	=  \llangle {\gaN^L}u - {\gaN^{\widehat L}}u,\gaD v\rrangle
\end{align*}
for all $v \in H^1(\Omega)$. Comparing these formulas completes the proof.
\end{proof}

We now define the Dirichlet, Robin and Neumann realizations of $L$ using the theory of sectorial forms. We first recall some standard facts, which can be found in \cite[Chapter~11]{Schm12}. Let $V\subset H^1(\Omega)$ be a closed subspace that contains $H^1_0(\Omega)$. The form $\Phi$ is said to be \emph{elliptic} on $V$ if it is bounded and satisfies the abstract G\r{a}rding inequality. That is, there is a constant $C>0$ such that $|\Phi(u,v)|\le C\|u\|_V\|v\|_V$ for all $u,v\in V$, and constants $\gamma > 0$ and $c\in\bbR$ such that 
	\begin{equation}\label{Gin}
		\Re \Phi[u] \geq \gamma \|u\|_{V}^2 + c \|u\|_{L^2(\Omega)}^2
	\end{equation}
	holds for all $u \in V$.
The form $\Phi$ is elliptic if and only if it is closed and sectorial, in which case it 
generates a sectorial operator $\cL^\Phi$ on $L^2(\Omega)$ satisfying $\Phi(u,v)=\langle\cL^\Phi u,v\rangle$ for all $u,v\in V$. The operator $\cL^\Phi$ is given by 
\begin{align}
\begin{split}\label{domLV}
\dom(\cL^\Phi)&=\big\{u\in V: \text{ there exists $w_u\in L^2(\Omega)$ such that $\Phi(u,v)=\langle w_u,v\rangle$ for all $v\in V$}\big\} \\
	\cL^\Phi u &= w_u.
\end{split}
\end{align}

We now construct the Dirichlet realization of $L$.

 \begin{prop}\label{domD}
Assuming Hypothesis \ref{hyp:L}, the form $\Phi$ defined in \eqref{defPhi} is elliptic on $V= H^1_0(\Omega)$, and the corresponding sectorial operator $\cL^D$ satisfies $\cL^D u=Lu$ for all $u$ in its domain
\begin{equation}
\label{dom:LD}
	\dom(\cL^D) = \big\{u \in H^1_0(\Omega) : Lu\in L^2(\Omega) \big\}.
\end{equation}
Moreover, $\cL^D$ has compact resolvent, so its spectrum consists entirely of isolated eigenvalues of finite algebraic multiplicity, and $\cL^D - \lambda$ is Fredholm of index zero for any $\lambda \in \bbC$.
\end{prop}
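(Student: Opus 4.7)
The plan proceeds in four steps. First, I would verify that the form $\Phi$ is elliptic on $V = H^1_0(\Omega)$. Boundedness $|\Phi(u,v)| \leq C \|u\|_{H^1}\|v\|_{H^1}$ follows from Cauchy--Schwarz together with the $L^\infty$ bounds on $a_{jk}, b_j, d_j, q$ furnished by Hypothesis~\ref{hyp:L}(1). For the G\r{a}rding inequality \eqref{Gin}, the uniform ellipticity in Hypothesis~\ref{hyp:L}(2) gives
\[
	\Re \sum_{j,k=1}^n \langle a_{jk}\p_k u, \p_j u\rangle \geq C \|\nabla u\|_{L^2(\Omega)}^2,
\]
while the cross terms $\langle b_j\p_j u, u\rangle$, $\langle d_j u, \p_j u\rangle$, and $\langle qu, u\rangle$ are controlled by Young's inequality $2|ab| \leq \varepsilon a^2 + \varepsilon^{-1}b^2$ with $\varepsilon$ small; this absorbs a fraction of $\|\nabla u\|_{L^2}^2$ into the principal-part estimate and leaves a residual term $c\|u\|_{L^2}^2$ with some $c \in \bbR$ (possibly negative). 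Hence $\Phi$ is closed and sectorial, and the abstract construction \eqref{domLV} produces a sectorial operator which I label $\cL^D$.

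Second, to identify the domain as \eqref{dom:LD}, I would exploit \eqref{domLV} directly. If $u \in H^1_0(\Omega)$ with $Lu \in L^2(\Omega)$ in the distributional sense, integration by parts against test functions $v \in C_c^\infty(\Omega)$ (the boundary contributions vanish because $v$ is compactly supported) shows $\Phi(u,v) = \langle Lu, v\rangle$. Both sides are continuous in $v$ with respect to the $H^1$ norm, and $C_c^\infty(\Omega)$ is dense in $H^1_0(\Omega)$, so the identity extends to every $v \in H^1_0(\Omega)$; thus $u \in \dom(\cL^D)$ with $\cL^D u = Lu$. Conversely, if $u \in \dom(\cL^D)$ satisfies $\Phi(u,v) = \langle w_u, v\rangle$ for all $v \in H^1_0(\Omega)$, then testing against $v \in C_c^\infty(\Omega)$ identifies $Lu = w_u$ as distributions, so $Lu \in L^2(\Omega)$.

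Third, since $\Phi$ is sectorial its resolvent set contains a half-plane, so $\rho(\cL^D) \neq \varnothing$; pick any $\lambda_0 \in \rho(\cL^D)$. The resolvent $(\cL^D - \lambda_0)^{-1}$ maps $L^2(\Omega)$ continuously into $\dom(\cL^D) \subset H^1_0(\Omega)$, and the Rellich--Kondrachov embedding $H^1_0(\Omega) \hookrightarrow L^2(\Omega)$ is compact on any bounded Lipschitz domain. Composing, $(\cL^D - \lambda_0)^{-1}$ is compact on $L^2(\Omega)$, whence standard spectral theory for operators with compact resolvent gives that $\sigma(\cL^D)$ consists of isolated eigenvalues of finite algebraic multiplicity. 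Finally, for arbitrary $\lambda \in \bbC$, the factorization
\[
	\cL^D - \lambda = (\cL^D - \lambda_0)\bigl[I + (\lambda_0 - \lambda)(\cL^D - \lambda_0)^{-1}\bigr]
\]
exhibits $\cL^D - \lambda$ as the product of an invertible operator with a compact perturbation of the identity; both factors are Fredholm of index zero, hence so is the product.

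The only real subtlety lies in the first step: the drift terms $b_j \p_j u$ and $-\p_j(d_j u)$ must be handled so that the $\|\nabla u\|_{L^2}^2$ contributions they generate do not overwhelm the principal-part estimate. This is routine once Young's inequality is applied with a sufficiently small parameter, but it is the only place where the $L^\infty$ hypothesis on $b_j$ and $d_j$ is genuinely essential. Everything else (characterization of the domain, compactness of the resolvent, the Fredholm index argument) is standard sectorial-form machinery.
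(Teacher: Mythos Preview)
Your proof is correct and follows essentially the same approach as the paper, which simply cites references for ellipticity (\cite[Proposition~11.10]{Schm12}), the domain characterization (Green's identity \eqref{e14} and \cite[Lemma~11.11]{Schm12}), and the compact-resolvent/Fredholm conclusions (\cite[Theorem~11.8(iii)]{Schm12}, \cite[Theorem~IX.3.1]{EE87}). The only cosmetic difference is that for the inclusion $\{u \in H^1_0 : Lu \in L^2\} \subset \dom(\cL^D)$ the paper invokes Green's first identity \eqref{e14} directly (the boundary term vanishes since $\gaD v = 0$ for $v \in H^1_0$), whereas you test against $C_c^\infty$ and pass to the $H^1_0$-closure; these are equivalent here.
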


\begin{proof}
It is well known that $\Phi$ is elliptic under Hypothesis \ref{hyp:L}, see, e.g., \cite[Proposition~11.10]{Schm12}. If $u \in H^1_0(\Omega)$ and $Lu \in L^2(\Omega)$, it follows from the first Green formula \eqref{e14} that $\Phi(u,v)  = \left<Lu,v\right>$ for all $v \in H^1_0(\Omega)$, and so \eqref{domLV}  implies that $u \in \dom(\cL^D)$. On the other hand, if $u \in \dom(\cL^D)$, then \cite[Lemma~11.11]{Schm12} implies $Lu \in L^2(\Omega)$, thus proving \eqref{dom:LD}.

Since $H^1(\Omega)$ is compactly embedded in $L^2(\Omega)$,  \cite[Theorem~11.8(iii)]{Schm12} implies that $\cL^D$ has compact resolvent, and the remaining statements follow from \cite[Theorem~IX.3.1]{EE87}.
\end{proof}

Next, we fix an operator $\Theta \colon \Hp \to \Hm$ and construct the Robin realization, $\cL^\Theta$, of $L$, corresponding to the boundary condition $\gaN^L u + \Theta\gaD u = 0$. For $\Theta=0$ this is the Neumann condition $\gaN^L u = 0$, and the corresponding operator will be denoted $\cL^N$. To construct $\cL^\Theta$, we let $V=H^1(\Omega)$ and, using $\Phi$ from \eqref{defPhi}, define the form
\begin{equation}\label{dfnPhimu}
	\Phi_\Theta(u,v):=\Phi(u,v) + \llangle \Theta\gaD u,\gaD v\rrangle, \qquad \dom(\Phi_\Theta):=H^1(\Omega).
\end{equation}
To define $\cL^\Theta$, we need to know that $\Phi_\Theta$ is elliptic. This requires an extra assumption on $\Theta$.

\begin{hyp}\label{hypT}
Assume, in addition to Hypothesis~\ref{hyp:L}, that $\Theta \colon \Hp \to \Hm$ is a bounded operator and the form $\Phi_\Theta$ in \eqref{dfnPhimu} is elliptic.
\end{hyp}

The following lemma gives some easily verified conditions under which this hypothesis is satisfied. 
\begin{lemma}
Assume Hypothesis~\ref{hyp:L} and let $\Theta \colon \Hp \to \Hm$ be a bounded operator. The form $\Phi_\Theta$ in \eqref{dfnPhimu} is elliptic if either
	\begin{enumerate}
		\item $\Re \llangle \Theta \gaD u, \gaD u \rrangle \geq 0$ for all $u\in H^1(\Omega)$, or
		\item $\Theta$ is compact.
	\end{enumerate}
\end{lemma}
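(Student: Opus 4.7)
My plan is to verify ellipticity in each case by treating $\Phi_\Theta$ as a perturbation of $\Phi$ by the boundary form $b(u,v) := \llangle \Theta \gaD u, \gaD v \rrangle$. First, observe that $\Phi$ itself is elliptic on all of $H^1(\Omega)$ under Hypothesis~\ref{hyp:L}: boundedness follows from the $L^\infty$ assumption on the coefficients together with Cauchy--Schwarz, while the Gårding inequality $\Re \Phi[u] \geq \gamma \|u\|_{H^1(\Omega)}^2 - c \|u\|_{L^2(\Omega)}^2$ follows from uniform ellipticity of $(a_{jk})$, cf.\ \cite[Proposition~11.10]{Schm12} and the proof of Proposition~\ref{domD}. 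Boundedness of $b$ is immediate from the boundedness of $\Theta$ and of the trace $\gaD$, so $\Phi_\Theta = \Phi + b$ is bounded in both cases, and only the Gårding inequality for $\Phi_\Theta$ remains to be established.

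Case (1) is immediate: since $\Re b[u] \geq 0$, we have $\Re \Phi_\Theta[u] \geq \Re \Phi[u]$, and the Gårding inequality for $\Phi$ transfers verbatim with the same constants.

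For case (2), my plan is to reduce ellipticity to the absorption bound
\begin{equation*}
	|b[u]| \leq \varepsilon \|u\|_{H^1(\Omega)}^2 + C_\varepsilon \|u\|_{L^2(\Omega)}^2, \qquad u \in H^1(\Omega),
\end{equation*}
valid for every $\varepsilon > 0$ with some constant $C_\varepsilon$. Once this is in hand, choosing $\varepsilon = \gamma/2$ and combining with the Gårding inequality for $\Phi$ yields $\Re\Phi_\Theta[u] \geq (\gamma/2)\|u\|_{H^1}^2 - (c+C_{\gamma/2})\|u\|_{L^2}^2$, which is the required Gårding inequality for $\Phi_\Theta$.

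The absorption bound is the crux of the argument, and I would prove it by contradiction. If it failed, there would exist $\varepsilon_0 > 0$ and a sequence $u_n \in H^1(\Omega)$ with $\|u_n\|_{H^1} \leq 1$, $\|u_n\|_{L^2} \to 0$, and $|b[u_n]| \geq \varepsilon_0$. Extracting a weakly convergent subsequence $u_n \rightharpoonup u$ in $H^1(\Omega)$, the Rellich--Kondrachov theorem forces $u_n \to u$ strongly in $L^2(\Omega)$, so $u = 0$; continuity of $\gaD$ then gives $\gaD u_n \rightharpoonup 0$ in $\Hp$. The essential use of compactness occurs at this step: $\Theta \gaD u_n \to 0$ \emph{strongly} in $\Hm$, and the dual pairing is continuous under strong-weak convergence, so $b[u_n] = \llangle \Theta \gaD u_n, \gaD u_n \rrangle \to 0$, contradicting $|b[u_n]| \geq \varepsilon_0$. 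The main obstacle is precisely this passage to the limit: without compactness of $\Theta$ one would only have weak convergence of $\Theta \gaD u_n$ in $\Hm$, which is insufficient to pass to the limit in the pairing against a second weakly convergent sequence. This is why hypothesis (2) is needed, and also explains why hypothesis (1) provides an independent path that bypasses the absorption bound entirely.
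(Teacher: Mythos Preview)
Your proposal is correct and follows essentially the same approach as the paper: both argue boundedness via the trace map and $\Theta$, handle case (1) trivially, and for case (2) reduce to an absorption bound $|\llangle\Theta\gaD u,\gaD u\rrangle| \leq \varepsilon\|u\|_{H^1}^2 + C_\varepsilon\|u\|_{L^2}^2$, which is then proved by the same contradiction argument using weak compactness in $H^1(\Omega)$, Rellich in $L^2(\Omega)$, weak continuity of $\gaD$, and the fact that $\Theta$ upgrades weak to strong convergence. The only cosmetic difference is the order in which the contradiction is derived: you first conclude $u=0$ from $\|u_n\|_{L^2}\to 0$ and then show $b[u_n]\to 0$, whereas the paper first shows $b[u_n]$ has a nonzero limit (forcing $u\neq 0$) and then shows $\|u_n\|_{L^2}\to 0$ (forcing $u=0$).
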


\begin{proof}
Boundedness of $\Phi_\Theta$ follows from the boundedness of the form $\Phi$, the operator $\Theta$, and the trace map $\gaD$, since
\[
	\big| \llangle \Theta \gaD u, \gaD v \rrangle \big| \leq C \|\Theta \gaD u\|_{\Hm} \|\gaD v\|_{\Hp}
	\leq C  \| u\|_{H^1(\Omega)} \| v\|_{H^1(\Omega)}.
\]
To establish the G\r{a}rding inequality \eqref{Gin} for $\Phi_\Theta$, it is enough to show that for any $\epsilon>0$, there exists a constant $C = C(\epsilon)$ such that
\begin{equation}
\label{thetabound}
	-\Re \llangle \Theta \gaD u, \gaD u \rrangle \leq \epsilon\| u\|^2_{H^1(\Omega)} + C(\epsilon)\|u\|^2_{L^2(\Omega)}
\end{equation}
for all $u\in H^1(\Omega)$. To see that this suffices,
we use \eqref{thetabound} and the G\r{a}rding inequality for $\Phi$ to compute
\begin{align*}
	\Re\Phi_\Theta[u] &= \Re\Phi[u] + \Re \llangle \Theta \gaD u, \gaD u \rrangle \\
	&\geq \gamma \|u\|_{H^1(\Omega)}^2 + c \|u\|_{L^2(\Omega)}^2 
	-\epsilon\| u\|^2_{H^1(\Omega)} - C(\epsilon)\|u\|^2_{L^2(\Omega)}
\end{align*}
and then choose $\epsilon < \gamma$.

The condition \eqref{thetabound} is trivially satisfied if $\Re \llangle \Theta \gaD u, \gaD u \rrangle \geq 0$ for all $u\in H^1(\Omega)$, so to complete the proof we just need to show that it also holds if $\Theta$ is compact.
Suppose there is an $\epsilon>0$ for which no such $C(\epsilon)$ exists. This means there is a sequence $\{u_j\}$ in $H^1(\Omega)$ with
\begin{equation}
\label{contradict}
	\big|\llangle \Theta \gaD u_j, \gaD u_j \rrangle\big| \geq \epsilon\| u_j \|^2_{H^1(\Omega)} + j\|u_j \|^2_{L^2(\Omega)}
\end{equation}
for each $j$, and we can assume that $\|u_j\|_{H^1(\Omega)} = 1$. Then there is a subsequence (still denoted $\{u_j\}$) and a function $u \in H^1(\Omega)$ so that $u_j \rightharpoonup u$ in $H^1(\Omega)$, and hence
\[
	u_j \to u \text{ in } L^2(\Omega), \qquad \gaD u_j \rightharpoonup \gaD u \text{ in } \Hp, 
	\qquad \Theta\gaD u_j \to \Theta\gaD u \text{ in } \Hm.
\]
It follows that $\llangle \Theta \gaD u_j, \gaD u_j \rrangle \to \llangle \Theta \gaD u, \gaD u \rrangle$. From \eqref{contradict} we get $|\llangle \Theta \gaD u, \gaD u \rrangle| \geq \epsilon$, which implies $u$ is nonzero. On the other hand, \eqref{contradict} also gives
\[
	\|u_j \|^2_{L^2(\Omega)} \leq j^{-1}\left(\big|\llangle \Theta \gaD u_j, \gaD u_j \rrangle\big| - \epsilon\right) \to 0
\]
and hence $u = 0$, a contradiction.
\end{proof}

We now construct the operator $\cL^\Theta$.

\begin{prop}\label{prop:domLmu}
Assume Hypothesis~\ref{hypT}, so the form $\Phi_\Theta$ defined in \eqref{dfnPhimu} is elliptic. The corresponding sectorial operator $\cL^\Theta$ satisfies $\cL^\Theta u=Lu$ for all $u$ in its domain
\begin{equation}\label{domLmutr}
	\dom(\cL^\Theta)=\big\{ u \in H^1(\Omega) : Lu \in L^2(\Omega) \text{ and } \gaN^Lu + \Theta\gaD u = 0\big\}.
\end{equation}
Moreover,  $\cL^\Theta$ has compact resolvent, so its spectrum consists entirely of isolated eigenvalues of finite algebraic multiplicity, and $\cL^\Theta - \lambda$ is Fredholm of index zero for any $\lambda \in \bbC$.
\end{prop}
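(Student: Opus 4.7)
The plan is to run the same argument as in Proposition~\ref{domD}, keeping careful track of the extra boundary contribution in $\Phi_\Theta$. Under Hypothesis~\ref{hypT} the form $\Phi_\Theta$ is elliptic on $V=H^1(\Omega)$, so the abstract machinery recalled around \eqref{domLV} produces a sectorial operator $\cL^\Theta$ whose domain is described intrinsically by \eqref{domLV}. The entire task is then to identify this abstract domain with the explicit set on the right-hand side of \eqref{domLmutr} and to read off that $\cL^\Theta u = Lu$ there.

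For the forward inclusion, I would take $u \in \dom(\cL^\Theta)$ with $\cL^\Theta u = w_u \in L^2(\Omega)$ and first test the defining identity $\Phi_\Theta(u,v)=\langle w_u,v\rangle$ against $v \in C^\infty_c(\Omega) \subset H^1(\Omega)$. Since $\gaD v = 0$ the Robin correction drops out, leaving $\Phi(u,v) = \langle w_u, v\rangle$; this forces $Lu = w_u$ in the distributional sense and hence $Lu \in L^2(\Omega)$. Only now am I entitled to invoke Proposition~\ref{e10} to define $\gaN^L u \in \Hm$. Inserting Green's first identity \eqref{e14} into $\Phi_\Theta(u,v) = \langle w_u, v\rangle$ for arbitrary $v \in H^1(\Omega)$ then reduces everything to $\llangle \gaN^L u + \Theta\gaD u, \gaD v\rrangle = 0$, and surjectivity of $\gaD \colon H^1(\Omega) \to \Hp$ yields the Robin boundary condition. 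The reverse inclusion is immediate: given $u$ in the explicit domain, Green's identity rewrites $\Phi_\Theta(u,v)$ as $\langle Lu, v \rangle + \llangle \gaN^L u + \Theta \gaD u, \gaD v \rrangle = \langle Lu, v\rangle$ for every $v\in H^1(\Omega)$, so \eqref{domLV} places $u$ in $\dom(\cL^\Theta)$ with $\cL^\Theta u = Lu$.

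The remaining spectral statements follow verbatim from the Dirichlet case: compactness of the embedding $H^1(\Omega) \hookrightarrow L^2(\Omega)$ on a bounded Lipschitz domain combined with \cite[Theorem~11.8(iii)]{Schm12} gives the compact resolvent, and then \cite[Theorem~IX.3.1]{EE87} supplies discreteness of the spectrum, finiteness of algebraic multiplicities, and the Fredholm index zero property of $\cL^\Theta - \lambda$. I do not anticipate any serious obstacle; the single subtlety to flag is that one must not try to speak of $\gaN^L u$ for $u \in \dom(\cL^\Theta)$ until $Lu \in L^2(\Omega)$ has been established, which is precisely why the forward inclusion must begin with compactly supported test functions before Green's identity can be brought in for general $v \in H^1(\Omega)$.
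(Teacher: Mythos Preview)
Your proposal is correct and follows essentially the same route as the paper's proof. The only cosmetic difference is that, for the inclusion $\dom(\cL^\Theta)\subset\cD$, the paper invokes \cite[Lemma~11.11]{Schm12} to obtain $Lu\in L^2(\Omega)$, whereas you unpack this step directly by testing against $v\in C^\infty_c(\Omega)$; the remaining use of Green's identity, the surjectivity of $\gaD$, and the spectral conclusions via \cite[Theorem~11.8(iii)]{Schm12} and \cite[Theorem~IX.3.1]{EE87} are identical.
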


\begin{proof}
To describe the domain of $\cL^\Theta$, we denote by $\cD$ the right-hand side of \eqref{domLmutr}. If $u\in\cD$ then $Lu\in L^2(\Omega)$ satisfies, for any $v\in H^1(\Omega)$,
\begin{align*}
\langle Lu, v\rangle&=\Phi(u,v)-\llangle\gaN^L u,\gaD v\rrangle \qquad\text{(by the first Green formula \eqref{e14})}\\
&=\Phi(u,v) + \llangle\Theta \gaD u,\gaD v\rrangle
\qquad\text{(because $u\in\cD$)}\\
&=\Phi_\Theta(u,v).
\end{align*}
Thus, $u\in\dom(\cL^\Theta)$ according to \eqref{domLV} with $\Phi$ replaced by $\Phi_\Theta$. This proves $\cD\subset\dom(\cL^\Theta)$.

To prove the inverse inclusion, we take $u\in\dom(\cL^\Theta)$ and notice that $\cL^\Theta u=Lu\in L^2(\Omega)$, by \cite[Lemma~11.11]{Schm12}. For all $v\in H^1(\Omega)$ we have the equalities
\begin{align*}
\langle Lu, v\rangle&=\Phi(u,v)-\llangle\gaN^L u,\gaD v\rrangle &\text{(by the first Green formula \eqref{e14})}\\
\langle Lu, v\rangle&=\Phi_\Theta(u,v)=\Phi(u,v) + \llangle\Theta\gaD u,\gaD v\rrangle &\text{(by \eqref{domLV} applied to $\Phi_\Theta$)}.
\end{align*}
Comparing these yields $\gaN^Lu + \Theta\gaD u = 0$, completing the proof of \eqref{domLmutr}.
The final assertions are proved as in Proposition~\ref{domD}.
\end{proof}

By the general relation between the adjoint form and adjoint operator, see \cite[Theorem 11.8]{Schm12}, we have $(\cL^\Theta)^*=\cL^{\Phi_\Theta^*}$, where $\Phi_\Theta^*(u,v)=\Phi^*(u,v) + \llangle \Theta^*\gaD u,\gaD v\rrangle$.
Analogously to Proposition~\ref{prop:domLmu}, one can then prove that
\begin{equation}\label{domadj}
	\dom \big((\cL^\Theta)^* \big)= 
\big\{u\in H^1(\Omega) : L^\dag u \in L^2(\Omega) \text{ and } \gaN^{L^\dag}u + \Theta^* \gaD u = 0\big\}.
\end{equation}
Finally, we observe that the unique continuation result \eqref{UCP} is equivalent to
\begin{equation}\label{UCP2}
	\ker \cL^\Theta \cap \ker \cL^D =\{0\}
\end{equation}
for any $\Theta$ satisfying Hypothesis~\ref{hypT}.

\subsection{The Robin-to-Dirichlet and Dirichlet-to-Robin maps}\label{sec:Robin}
We now define the Robin-to-Dirichlet and Dirichlet-to-Robin operators, $N_\Theta(\lambda)$ and $M_\Theta(\lambda)$, associated to $L$. These are the main ingredients in the definition of the operator family $E(\lambda)$ in \eqref{eq:Edef}, and the properties established in this section will be used throughout.

First, we recall some results on the generalized Robin boundary value problem. Fix an operator $\Theta \colon \Hp \to \Hm$ and consider the boundary value problem
\begin{equation}\label{inhRbvp}
	Lu-\lambda u = f \text{ in } \Omega,  \quad \gaN^Lu + \Theta\gaD u = g \text{ on } \pO,
\end{equation}
with $f\in H^1(\Omega)^*$ and $g\in H^{-1/2}(\pO)$, recalling that $H^1(\Omega)^*$ is a proper subspace of $H^{-1}(\Omega) = H^1_0(\Omega)^*$. Using \eqref{domLmutr}, we see that the homogeneous problem (with $f=0$ and $g=0$) admits a nontrivial solution if and only if $\lambda \in \sigma(\cL^\Theta)$.

\begin{prop}\cite[Theorem~4.11]{McL00}\label{TH4.11}
Assume Hypothesis~\ref{hypT}, and suppose that $\lambda\in \rho(\cL^\Theta)$.
Then, for any $f\in H^1(\Omega)^*$ and $g\in H^{-1/2}(\pO)$, the inhomogeneous problem \eqref{inhRbvp} has a unique solution $u\in H^1(\Omega)$, which satisfies the estimate
\begin{equation}\label{estinhpr}
	\|u\|_{H^1(\Omega)}\le c \left(\|f\|_{H^{-1}(\Omega)} + \|g\|_{H^{-1/2}(\pO)} \right).
\end{equation}
\end{prop}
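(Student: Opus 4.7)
The plan is to solve the problem variationally using the elliptic form $\Phi_\Theta$, then invoke the Fredholm alternative on the $H^1$-to-$H^1(\Omega)^*$ extension of $\cL^\Theta-\lambda$. First I would reformulate \eqref{inhRbvp} in weak form: combining Green's first identity \eqref{e14} with the boundary condition, a function $u\in H^1(\Omega)$ solves \eqref{inhRbvp} if and only if
\begin{equation*}
\Phi_\Theta(u,v) - \lambda\langle u,v\rangle = f(v) + \llangle g,\gaD v\rrangle \quad \text{for all } v\in H^1(\Omega),
\end{equation*}
where the right-hand side defines an antilinear functional $F\in H^1(\Omega)^*$ whose norm is bounded by $\|f\|_{H^1(\Omega)^*}+c\|g\|_{H^{-1/2}(\pO)}$ thanks to continuity of the Dirichlet trace.

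Next I would define the extended operator $\widetilde{\cL}^\Theta\colon H^1(\Omega)\to H^1(\Omega)^*$ by $(\widetilde{\cL}^\Theta u)(v)=\Phi_\Theta(u,v)$, so the weak problem becomes $(\widetilde{\cL}^\Theta - \lambda J)u = F$ where $J\colon H^1(\Omega)\to H^1(\Omega)^*$ is the map induced by the $L^2$ inner product. By Hypothesis~\ref{hypT}, the form $\Phi_\Theta$ satisfies the G\r{a}rding inequality \eqref{Gin}, so $\widetilde{\cL}^\Theta + cJ$ with $c$ sufficiently large is coercive and hence a Banach-space isomorphism by the standard Lax--Milgram theorem. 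Writing
\begin{equation*}
\widetilde{\cL}^\Theta - \lambda J = (\widetilde{\cL}^\Theta+cJ)\bigl(I - (\lambda+c)(\widetilde{\cL}^\Theta+cJ)^{-1}J\bigr),
\end{equation*}
and noting that $J$ factors through the compact embedding $H^1(\Omega)\hookrightarrow L^2(\Omega)\hookrightarrow H^1(\Omega)^*$ (Rellich--Kondrachov), the operator $(\widetilde{\cL}^\Theta+cJ)^{-1}J$ is compact on $H^1(\Omega)$. Thus $\widetilde{\cL}^\Theta - \lambda J$ is Fredholm of index zero.

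To upgrade this to invertibility it suffices to show injectivity. If $u\in H^1(\Omega)$ lies in the kernel, then testing against $v\in C_c^\infty(\Omega)\subset H^1_0(\Omega)$ gives $Lu=\lambda u$ in $\mathcal{D}'(\Omega)$, so in particular $Lu\in L^2(\Omega)$ and $u\in D^1_L(\Omega)$. Then Green's identity \eqref{e14} applied to arbitrary $v\in H^1(\Omega)$ yields $\llangle\gaN^L u+\Theta\gaD u,\gaD v\rrangle=0$, and surjectivity of $\gaD$ onto $\Hp$ forces $\gaN^L u+\Theta\gaD u=0$. By \eqref{domLmutr} this means $u\in\dom(\cL^\Theta)$ with $\cL^\Theta u=\lambda u$, which is impossible for nonzero $u$ since $\lambda\in\rho(\cL^\Theta)$. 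The estimate \eqref{estinhpr} then comes directly from bounded invertibility of $\widetilde{\cL}^\Theta - \lambda J$ combined with the norm bound on $F$.

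The main technical hurdle I anticipate is the identification of the kernel of $\widetilde{\cL}^\Theta-\lambda J$ with $\ker(\cL^\Theta-\lambda)$, because one must go from the weak formulation on all of $H^1(\Omega)$ back to the strong domain characterization \eqref{domLmutr}; this is where Green's identity and the surjectivity of $\gaD\colon H^1(\Omega)\to\Hp$ play essential roles. A secondary subtlety is the discrepancy between $H^1(\Omega)^*$ and $H^{-1}(\Omega)$ noted in the statement, which means the estimate \eqref{estinhpr} should be understood with $\|f\|_{H^{-1}(\Omega)}$ replaced by the (equivalent, on $H^1(\Omega)^*$) norm controlling $F$.
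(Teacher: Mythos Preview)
Your proof is correct and is precisely the standard Lax--Milgram/Fredholm argument that underlies \cite[Theorem~4.11]{McL00}. The paper does not give its own proof of this proposition; it simply cites McLean and remarks that the argument goes through verbatim because it only uses ellipticity of $\Phi_\Theta$, which is exactly what your proof confirms. Your observation about the $H^1(\Omega)^*$ versus $H^{-1}(\Omega)$ norm on the right-hand side is a legitimate subtlety in the statement as written, though it is immaterial for every application in the paper (in particular Lemma~\ref{lemRtoD}, where $f=0$).
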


This is a slight generalization of the result appearing in \cite{McL00}, but the proof is identical, since it only relies on the ellipticity of $\Phi_\Theta$. This will be sufficient to construct the operator $N_\Theta(\lambda)$. In Section \ref{sec:Bp}, when we establish $\cB_p$ mapping properties of $E(\lambda)$, we will need the following refinement.

\begin{prop}\cite[Theorem~4.24]{McL00}\label{TH4.24}
Assume, in addition to the hypotheses of Proposition~\eqref{TH4.11}, that Hypothesis~\ref{hyp:Bp} is satisfied. If $f \in L^2(\Omega)$ and $g \in L^2(\pO)$, then the unique solution $u$ to \eqref{inhRbvp} has $\gaD u \in H^1(\pO)$, with the estimate
\begin{equation}\label{MC:424}
	\|\gaD u\|_{H^1(\pO)}\le c\left( \|f\|_{L^2(\Omega)} + \| g \|_{L^2(\pO)} \right).
\end{equation}
\end{prop}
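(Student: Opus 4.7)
The strategy is to recast the Robin problem as an effective Neumann problem with $L^2(\pO)$ boundary data, so that the cited regularity result \cite[Theorem~4.24]{McL00} can be applied directly. The essential role of Hypothesis~\ref{hyp:Bp} is twofold: part~(1) supplies the real, Lipschitz, symmetric principal coefficients needed for Lipschitz-domain regularity, while part~(2) provides the factorization $\Theta=\iota^*\tilde\Theta$ that promotes the Robin term $\Theta\gaD u$ from a mere $\Hm$-distribution to the image under $\iota^*$ of an $L^2(\pO)$-function.

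First, Proposition~\ref{TH4.11} gives the unique solution $u\in H^1(\Omega)$ and the bound
\begin{equation*}
	\|u\|_{H^1(\Omega)} \le c\bigl(\|f\|_{H^{-1}(\Omega)}+\|g\|_{H^{-1/2}(\pO)}\bigr)
	\le c'\bigl(\|f\|_{L^2(\Omega)}+\|g\|_{L^2(\pO)}\bigr),
\end{equation*}
where I use the continuous embeddings $L^2(\Omega)\hookrightarrow H^{-1}(\Omega)$ and $L^2(\pO)\hookrightarrow H^{-1/2}(\pO)$. By the trace theorem, $\gaD u\in\Hp$ with $\|\gaD u\|_{\Hp}\le C\|u\|_{H^1(\Omega)}$, and by Hypothesis~\ref{hyp:Bp}(2) the element $\tilde\Theta\gaD u$ lies in $L^2(\pO)$ with $\|\tilde\Theta\gaD u\|_{L^2(\pO)}\le C\|u\|_{H^1(\Omega)}$.

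Next I rewrite the boundary condition in \eqref{inhRbvp} as
\begin{equation*}
	\gaN^L u = g - \iota^*\tilde\Theta \gaD u = \iota^*\hat g \quad\text{in } \Hm,\qquad \hat g := g - \tilde\Theta \gaD u \in L^2(\pO),
\end{equation*}
with $\|\hat g\|_{L^2(\pO)}\le \|g\|_{L^2(\pO)}+C\|u\|_{H^1(\Omega)}$. Thus $u$ is a weak solution of the \emph{Neumann-type} problem $Lu-\lambda u = f$ on $\Omega$ and $\gaN^L u = \iota^*\hat g$ on $\pO$, in which both data are now square-integrable. At this point Hypothesis~\ref{hyp:Bp}(1) becomes essential: the principal part of $L$ has real, Lipschitz, symmetric coefficients, which is precisely the setting of \cite[Theorem~4.24]{McL00}. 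Applying that theorem yields $\gaD u\in H^1(\pO)$ together with an estimate of the form
\begin{equation*}
	\|\gaD u\|_{H^1(\pO)} \le C\bigl(\|f\|_{L^2(\Omega)}+\|\hat g\|_{L^2(\pO)}+\|u\|_{H^1(\Omega)}\bigr).
\end{equation*}
Substituting the bounds on $\hat g$ and $u$ derived above produces the claimed estimate \eqref{MC:424}.

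The main technical point is the reduction to an $L^2$-Neumann problem, which is not automatic because a priori $\Theta\gaD u$ is only in $\Hm$; it is Hypothesis~\ref{hyp:Bp}(2) that promotes it into $L^2(\pO)$ (via the $\iota^*$ identification) and thereby unlocks the Lipschitz-domain boundary regularity machinery of \cite{McL00}. Once this reduction is in hand, the proof is a direct invocation of the cited theorem, with the absorption of the $\|u\|_{H^1(\Omega)}$ term handled by Proposition~\ref{TH4.11}.
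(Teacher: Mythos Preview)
Your proof is correct and follows essentially the same route as the paper's: rewrite the Robin condition as a Neumann condition $\gaN^L u = g - \Theta\gaD u$ with $L^2(\pO)$ data (using Hypothesis~\ref{hyp:Bp}(2)), apply \cite[Theorem~4.24]{McL00}, and absorb the residual $\|u\|_{H^1(\Omega)}$ term via Proposition~\ref{TH4.11}. Your handling of the $\iota^*$ identification is slightly more explicit than the paper's, but the argument is the same.
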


\begin{proof}
Since $Lu - \lambda u = f \in L^2(\Omega)$ and $\gaN^L u = g - \Theta \gaD u \in L^2(\pO)$, we can apply \cite[Theorem~4.24]{McL00} to obtain $\gaD u \in H^1(\pO)$, with
\[
	\|\gaD u\|_{H^1(\pO)} \le c\left( \|u\|_{H^1(\Omega)} +  \|f\|_{L^2(\Omega)} + \| \gaN^L u\|_{L^2(\pO)} \right).
\]
Hypothesis~\ref{hyp:Bp}  implies
$
	\| \Theta \gaD u\|_{L^2(\pO)} \leq c \| \gaD u\|_{\Hp} \leq c \|u\|_{H^1(\Omega)}
$
and hence
\[
	\| \gaN^L u\|_{L^2(\pO)} = \| g - \Theta \gaD u\|_{L^2(\pO)} \leq \| g \|_{L^2(\pO)} + c \|u\|_{H^1(\Omega)},
\]
so we get
\[
	\|\gaD u\|_{H^1(\pO)} \le c\left( \|u\|_{H^1(\Omega)} +  \|f\|_{L^2(\Omega)} + \| g \|_{L^2(\pO)} \right).
\]
Finally, Proposition \ref{TH4.11} implies $\|u\|_{H^1(\Omega)} \le c \left(\|f\|_{L^2(\Omega)} + \|g\|_{L^2(\pO)} \right)$, completing the proof.
\end{proof}

To define the Robin-to-Dirichlet map $N_\Theta(\lambda)$ for $\lambda$ outside of the spectrum of $\cL^\Theta$, we fix $g\in \Hm$, let $u\in\cD_L^1(\Omega)$ denote the unique solution to 
\begin{equation}\label{sRbvp}
	Lu=\lambda u, \quad \gaN^Lu + \Theta\gaD u=g,
\end{equation}
as guaranteed by Proposition \ref{TH4.11}, and then define
\begin{equation}\label{dfnRtoD}
	N_\Theta(\lambda)g:=\gaD u.
\end{equation} 
If $\Theta=0$, then $N_0(\lambda)$ is the 
Neumann-to-Dirichlet map, which we abbreviate as $N(\lambda)$. We will need the following standard properties of the Robin-to-Dirichlet operator, cf. \cite{Behrndt_2020,GM08} and the literature cited therein.

\begin{lemma}\label{lemRtoD} 
Assume Hypothesis~\ref{hypT}.
\begin{enumerate}
	\item[(i)] If $\lambda \in \rho(\cL^\Theta)$, then $N_\Theta(\lambda)\in\cB \big(\Hm, \Hp \big)$.
	\item[(ii)] The map $\lambda \mapsto N_\Theta(\lambda)$ is analytic in $\rho(\cL^\Theta)$.
\end{enumerate}
\end{lemma}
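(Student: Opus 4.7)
My plan is to deduce part (i) directly from Proposition~\ref{TH4.11} and then prove part (ii) by deriving a resolvent-type identity for $N_\Theta(\lambda)$ and expanding in a Neumann series.

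For part (i), given $g \in \Hm$, let $u \in H^1(\Omega)$ be the unique solution to \eqref{sRbvp} guaranteed by Proposition~\ref{TH4.11} applied with $f = 0$. The estimate \eqref{estinhpr} yields $\|u\|_{H^1(\Omega)} \leq c \|g\|_{\Hm}$. Composing with the bounded trace map $\gaD \colon H^1(\Omega) \to \Hp$ from \eqref{e9} gives $\|N_\Theta(\lambda) g\|_{\Hp} \leq c' \|g\|_{\Hm}$, establishing $N_\Theta(\lambda) \in \cB(\Hm,\Hp)$.

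For part (ii), fix $\lambda_0 \in \rho(\cL^\Theta)$ and denote by $u_\lambda$ the solution to \eqref{sRbvp} for spectral parameter $\lambda$. The key observation is that the difference $w := u_\lambda - u_{\lambda_0}$ satisfies
\begin{equation*}
	Lw - \lambda w = (\lambda - \lambda_0) u_{\lambda_0}, \qquad \gaN^L w + \Theta \gaD w = 0.
\end{equation*}
The homogeneous boundary condition together with the characterization \eqref{domLmutr} puts $w \in \dom(\cL^\Theta)$, so $(\cL^\Theta - \lambda) w = (\lambda - \lambda_0) u_{\lambda_0}$, and hence
\begin{equation*}
	u_\lambda = u_{\lambda_0} + (\lambda - \lambda_0)(\cL^\Theta - \lambda)^{-1} u_{\lambda_0}
\end{equation*}
for all $\lambda \in \rho(\cL^\Theta)$. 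Denoting by $S(\lambda) \colon \Hm \to H^1(\Omega)$ the bounded solution operator $g \mapsto u_\lambda$ from part (i), this reads $S(\lambda) = [I + (\lambda - \lambda_0)(\cL^\Theta - \lambda)^{-1}] S(\lambda_0)$ as an identity in $\cB(\Hm, H^1(\Omega))$.

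Now I would apply $\gaD$ and expand the resolvent. Writing $A := (\cL^\Theta - \lambda_0)^{-1}$, the standard Neumann series gives $(\cL^\Theta - \lambda)^{-1} = \sum_{n \geq 0} (\lambda - \lambda_0)^n A^{n+1}$, convergent in $\cB(L^2(\Omega))$ for $|\lambda - \lambda_0|$ small. The subtle point, which is the only mild obstacle, is to upgrade this to norm-convergence into $\Hp$: here I would use that $A$ maps $L^2(\Omega)$ boundedly into $\dom(\cL^\Theta) \subset H^1(\Omega)$ (by the closed graph theorem together with the fact that on $\dom(\cL^\Theta)$ the $H^1$-norm is controlled by the graph norm, which is standard for operators generated by elliptic sectorial forms, see \cite[Theorem~11.8]{Schm12}). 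Consequently each $\gaD A^n S(\lambda_0) \in \cB(\Hm, \Hp)$ with operator norm bounded by $\|\gaD\|_{H^1 \to \Hp} \|A\|_{L^2 \to H^1} \|A\|_{L^2}^{n-1} \|S(\lambda_0)\|_{\Hm \to L^2}$, yielding the operator-valued power series
\begin{equation*}
	N_\Theta(\lambda) = N_\Theta(\lambda_0) + \sum_{n=1}^\infty (\lambda - \lambda_0)^n \gaD A^n S(\lambda_0)
\end{equation*}
convergent in $\cB(\Hm,\Hp)$ on a neighbourhood of $\lambda_0$. This establishes analyticity on all of $\rho(\cL^\Theta)$.
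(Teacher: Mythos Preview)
Your proof is correct and follows essentially the same approach as the paper: part (i) is identical, and for part (ii) both you and the paper derive the same identity $u_\lambda = u_{\lambda_0} + (\lambda-\lambda_0)(\cL^\Theta-\lambda)^{-1}u_{\lambda_0}$ by observing that the difference satisfies the homogeneous Robin problem. The only cosmetic difference is that the paper then invokes analyticity of the resolvent $(\cL^\Theta-\lambda)^{-1}\in\cB(L^2(\Omega),D^1_L(\Omega))$ and passes through strong analyticity for each fixed $g$, whereas you expand the resolvent explicitly as a Neumann series to obtain a norm-convergent power series directly; both routes are standard and equally valid.
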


\begin{proof} To prove (i), we use the estimate from Proposition \ref{TH4.11} with $f=0$, to conclude that the solution $u$ of the boundary value problem  \eqref{sRbvp} satisfies $\|u\|_{H^1(\Omega)}\le c\|g\|_{H^{-1/2}(\p\Omega)}$. We then compute
\[
	\|N_\Theta(\lambda)g\|_{\Hp} = \|\gaD u\|_{H^{1/2}(\Omega)}\le 
	c \|u\|_{H^1(\Omega)}
	\leq c \|g\|_{\Hm}
\]
and the result follows.

To prove (ii), fix $\lambda_0 \in \rho(\cL^\Theta)$ and $g \in \Hm$. By Proposition \ref{TH4.11} there exists a unique solution $v \in H^1(\Omega)$ to the boundary value problem
\[
	Lv = \lambda_0 v, \quad \gaN^L v + \Theta\gaD v = g.
\]
Now for any $\lambda \in \rho(\cL^\Theta)$ we have $(\lambda - L)v = (\lambda - \lambda_0) v \in L^2(\Omega)$, and so $w(\lambda) := (\cL^\Theta - \lambda)^{-1} \big( (\lambda - L)v \big) \in \dom(\cL^\Theta)$ satisfies
\[
	(L - \lambda) w = (\lambda - L)v , \quad \gaN^L w + \Theta\gaD w = 0.
\]
Since $\cL^\Theta - \lambda$ is bounded from $\dom(\cL^\Theta)$ (equipped with the $D^1_L(\Omega)$ norm) into $L^2(\Omega)$, the resolvent $(\cL^\Theta - \lambda)^{-1} \in \cB\big( L^2(\Omega), D^1_L(\Omega)\big)$ is analytic in $\lambda$. Therefore, $\lambda \mapsto w(\lambda)$ defines an analytic function $\rho(\cL^\Theta) \to H^1(\Omega)$.
It follows that $u(\lambda) := v + w(\lambda)$ solves the boundary value problem
\[
	Lu = \lambda u, \quad \gaN^L u + \Theta \gaD u = g,
\]
and so
\[
	N_\Theta(\lambda) g = \gaD u(\lambda) \in \Hp
\]
depends analytically on $\lambda$. This is the case for each $g \in \Hm$, so $\lambda \mapsto N_\Theta(\lambda)$ is strongly analytic and hence analytic.
\end{proof}

The Dirichlet-to-Robin map $M_\Theta(\lambda)$ is defined similarly, for any 
 $\Theta\in\cB(\Hp, \Hm)$. Assuming $\lambda \in \rho(\cL^D)$, the boundary value problem
\[
	Lu = \lambda u, \quad \gaD u = g
\]
has a unique solution $u \in D^1_L(\Omega)$ for each $g \in \Hp$, so we define
\begin{equation}
	M_\Theta(\lambda) g := \gaN^L u + \Theta \gaD u.
\end{equation}
For $\Theta=0$, $M_0(\lambda)$ is the 
Dirichlet-to-Neumann map, and we abbreviate it as $M(\lambda)$.

The following lemma (and its proof) is analogous to Lemma \ref{lemRtoD}.

\begin{lemma}\label{lemDtoR} 
Assume Hypotheses \ref{hyp:L} and let $\Theta \in \cB\big(\Hp,\Hm\big)$.
\begin{enumerate}
	\item[(i)] If $\lambda \in \rho(\cL^D)$, then $M_\Theta(\lambda)\in\cB \big(\Hp, \Hm \big)$.
	\item[(ii)] The map $\lambda \mapsto M_\Theta(\lambda)$ is analytic in $\rho(\cL^D)$.
\end{enumerate}
\end{lemma}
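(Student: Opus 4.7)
The plan is to mirror the proof of Lemma \ref{lemRtoD} exactly, swapping the roles of the Dirichlet and Robin boundary conditions. Note that in the present lemma one only needs Hypothesis~\ref{hyp:L} (not Hypothesis~\ref{hypT}) because the underlying boundary value problem is the Dirichlet problem, whose well-posedness rests on ellipticity of $\Phi$ on $H^1_0(\Omega)$, for which $\Theta$ plays no role.

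For part (i), I would first establish an $H^1$-bound on the solution $u$ of the Dirichlet problem $Lu = \lambda u$, $\gaD u = g$. Lift $g$ to $\tilde g \in H^1(\Omega)$ with $\|\tilde g\|_{H^1(\Omega)} \le c \|g\|_{\Hp}$ via a bounded right inverse of $\gaD$, then solve $(L - \lambda) v = -(L-\lambda)\tilde g$ for $v \in H^1_0(\Omega)$. This last step uses the Dirichlet analogue of Proposition~\ref{TH4.11}: because $\lambda \in \rho(\cL^D)$, the operator $\cL^D - \lambda$ extends to an isomorphism $H^1_0(\Omega) \to H^{-1}(\Omega)$, a classical consequence of ellipticity of $\Phi$. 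Setting $u = v + \tilde g$ gives $\|u\|_{H^1(\Omega)} \le c \|g\|_{\Hp}$, and since $Lu = \lambda u \in L^2(\Omega)$ this upgrades to $\|u\|_{L,1} \le c \|g\|_{\Hp}$. Applying the bounded map $\gaN^L \colon \cD^1_L(\Omega) \to \Hm$ from Proposition~\ref{e10} yields $\|\gaN^L u\|_{\Hm} \le c \|g\|_{\Hp}$, and combining with $\|\Theta \gaD u\|_{\Hm} \le c \|g\|_{\Hp}$ gives boundedness of $M_\Theta(\lambda) \in \cB(\Hp, \Hm)$.

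For part (ii), I would copy the argument from Lemma~\ref{lemRtoD}(ii). Fix $\lambda_0 \in \rho(\cL^D)$ and $g \in \Hp$, and let $v \in H^1(\Omega)$ solve $Lv = \lambda_0 v$, $\gaD v = g$ as in part (i). For any $\lambda \in \rho(\cL^D)$, define
\begin{equation*}
w(\lambda) := (\cL^D - \lambda)^{-1}\bigl((\lambda - L) v\bigr),
\end{equation*}
which is well-defined because $(\lambda - L)v = (\lambda - \lambda_0)v \in L^2(\Omega)$. Then $w(\lambda) \in \dom(\cL^D)$ satisfies $(L-\lambda)w = (\lambda - L)v$ and $\gaD w = 0$, so $u(\lambda) := v + w(\lambda)$ solves $Lu = \lambda u$, $\gaD u = g$, giving $M_\Theta(\lambda) g = \gaN^L v + \gaN^L w(\lambda) + \Theta g$. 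Since $\cL^D - \lambda$ is bounded from $\dom(\cL^D)$ (with the graph norm inherited from $\cD^1_L(\Omega)$) into $L^2(\Omega)$, the resolvent $(\cL^D-\lambda)^{-1}$ is analytic as a map $L^2(\Omega) \to \cD^1_L(\Omega)$; composing with the continuous operator $\gaN^L$ shows $\lambda \mapsto M_\Theta(\lambda) g$ is analytic into $\Hm$ for each $g$, and strong analyticity upgrades to analyticity in operator norm.

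No step presents a genuine obstacle, since the argument is a direct transposition of Lemma~\ref{lemRtoD}; the only ingredient not already recorded in the excerpt is the classical solvability of the inhomogeneous Dirichlet problem for $\lambda \in \rho(\cL^D)$, used to produce the base solution $v$ at $\lambda_0$ and to extract the initial $H^1$-estimate in part (i).
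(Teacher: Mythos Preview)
Your proposal is correct and matches the paper's approach exactly: the paper simply states that the lemma ``(and its proof) is analogous to Lemma~\ref{lemRtoD}'' without writing out the details, and what you have done is precisely that transposition. Your observation that only Hypothesis~\ref{hyp:L} is needed (since $\Theta$ enters merely as a bounded perturbation of $\gaN^L$) and your identification of the one missing ingredient---solvability and $H^1$-estimates for the inhomogeneous Dirichlet problem, the Dirichlet counterpart of Proposition~\ref{TH4.11}---are both accurate.
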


We remark that $N_\Theta(\lambda)M_\Theta(\lambda)=I_{H^{1/2}(\pO)}$ and 
$M_\Theta(\lambda)N_\Theta(\lambda)=I_{H^{-1/2}(\pO)}$ for $\lambda\in\rho(\cL^D)\cap\rho(\cL^\Theta)$.

\begin{rem}\label{signconv} In the definitions of the Neumann-to-Dirichlet and Dirichlet-to-Neumann maps we choose to use the sign convention adapted in \cite{Behrndt_2020} rather than in \cite{GM08}. That is, we write $M_0(\lambda)g=\gaN^Lu$ rather than  $M_0(\lambda)g=-\gaN^Lu$. As a result, for  symmetric $L$ the function $\lambda\mapsto N_\Theta(\lambda)$ is a Nevanlinna function (as in \cite{Behrndt_2020}), instead of $\lambda\mapsto M_\Theta(\lambda)$ (as in \cite{GM08}), cf.\ Proposition \ref{prop8.8} below.
\end{rem}

Finally, given $\Theta_1$ satisfying Hypothesis~\ref{hypT} and any bounded $\Theta_2$, we introduce the Robin-to-Robin map $R_{\Theta_1,\Theta_2}$ (which we abbreviate as $R_{1,2}$) as follows. If $\lambda \in \rho(\cL^{\Theta_1})$, then for each $g \in \Hm$ the Robin boundary value problem 
\[
	Lu=\lambda u, \quad \gaN^Lu + \Theta_1 \gaD u=g,
\]
has a unique solution $u\in\cD_L^1(\Omega)$, and so we define
\begin{equation}
	R_{1,2}(\lambda) g := \gaN^L u +\Theta_2 \gaD u.
\end{equation}
It follows immediately that $R_{\Theta_1,\Theta_1} = I_{\Hm}$. Moreover, if $\Theta_2$ also satisfies Hypotheses~\ref{hypT} and $\lambda \in \rho(\cL^{\Theta_1}) \cap \rho(\cL^{\Theta_2})$, then $R_{1,2}(\lambda) = R_{2,1}(\lambda)^{-1}$. The following properties are easily verified, as above.

\begin{lemma}\label{lemRtoR} 
Assume that $\Theta_1$ satisfies Hypotheses~\ref{hypT} and $\Theta_2 \in \cB\big(\Hp,\Hm\big)$.
\begin{enumerate}
	\item[(i)] If $\lambda \in \rho(\cL^{\Theta_1})$, then $R_{1,2}(\lambda)\in\cB \big(\Hm \big)$.
	\item[(ii)] The map $\lambda \mapsto R_{1,2}(\lambda)$ is analytic in $\rho(\cL^{\Theta_1})$.
\end{enumerate}
\end{lemma}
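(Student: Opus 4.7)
The plan is to reduce the lemma to the already established Lemma~\ref{lemRtoD} for the Robin-to-Dirichlet map by writing $R_{1,2}(\lambda)$ as an algebraic combination of $N_{\Theta_1}(\lambda)$ and the boundary operators. Specifically, for $g \in \Hm$ and $\lambda \in \rho(\cL^{\Theta_1})$, let $u\in\cD^1_L(\Omega)$ be the unique solution guaranteed by Proposition~\ref{TH4.11} to $Lu = \lambda u$ with $\gaN^L u + \Theta_1 \gaD u = g$. Then by the very definition of $N_{\Theta_1}(\lambda)$ we have $\gaD u = N_{\Theta_1}(\lambda) g$, and substituting $\gaN^L u = g - \Theta_1 \gaD u$ in the definition of $R_{1,2}$ gives the identity
\begin{equation}
\label{RtoRformula}
	R_{1,2}(\lambda) = I_{\Hm} + (\Theta_2 - \Theta_1)\, N_{\Theta_1}(\lambda).
\end{equation}

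With \eqref{RtoRformula} in hand, part (i) is immediate: $N_{\Theta_1}(\lambda) \in \cB(\Hm,\Hp)$ by Lemma~\ref{lemRtoD}(i), and both $\Theta_1,\Theta_2$ lie in $\cB(\Hp,\Hm)$ (the former by Hypothesis~\ref{hypT}), so the composition $(\Theta_2 - \Theta_1) N_{\Theta_1}(\lambda) \in \cB(\Hm)$ and hence $R_{1,2}(\lambda) \in \cB(\Hm)$. Part (ii) likewise follows instantly, since Lemma~\ref{lemRtoD}(ii) asserts that $\lambda \mapsto N_{\Theta_1}(\lambda)$ is analytic on $\rho(\cL^{\Theta_1})$, and composition on the left with the fixed bounded operator $\Theta_2-\Theta_1$ preserves analyticity, as does adding the constant operator $I_{\Hm}$.

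There is no real obstacle here; the only small point requiring care is verifying that the formal manipulation leading to \eqref{RtoRformula} is legitimate, i.e., that all traces involved are well defined in the correct spaces. This is ensured because Proposition~\ref{TH4.11} gives $u \in H^1(\Omega)$ with $Lu = \lambda u \in L^2(\Omega)$, so $u \in D^1_L(\Omega)$ and both $\gaD u \in \Hp$ and $\gaN^L u \in \Hm$ are defined by Proposition~\ref{e10}. Then $\Theta_j \gaD u \in \Hm$ makes sense because $\Theta_j \in \cB(\Hp,\Hm)$, and the identity $\gaN^L u = g - \Theta_1\gaD u$ holds as an equality in $\Hm$. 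Hence the proof consists of writing down \eqref{RtoRformula} and citing Lemma~\ref{lemRtoD}.
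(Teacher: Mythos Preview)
Your proof is correct and in the same spirit as the paper's. The paper does not write out a proof, saying only that the properties ``are easily verified, as above'' (i.e., as in Lemma~\ref{lemRtoD}); your argument makes this explicit by deriving the identity $R_{1,2}(\lambda) = I_{\Hm} + (\Theta_2-\Theta_1)N_{\Theta_1}(\lambda)$---which the paper records separately as Proposition~\ref{propRtoR}(i)---and then invoking Lemma~\ref{lemRtoD} directly.
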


The introduction of $R_{1,2}$ is motivated by the following factorization property. For convenience we write $N_j(\lambda) = N_{\Theta_j}(\lambda)$ for $j=1,2$, and similarly for $M_j(\lambda)$.

\begin{prop}\label{propRtoR} 
Assume that $\Theta_1$ satisfies Hypotheses~\ref{hypT} and $\Theta_2 \in \cB\big(\Hp,\Hm\big)$.
\begin{enumerate}
	\item[(i)]  If $\lambda \in\rho(\cL^{\Theta_1})$, then $R_{1,2}(\lambda)= I_{\Hm}+(\Theta_2 - \Theta_1) N_1(\lambda)$.
	\item[(ii)] If $\lambda \in \rho(\cL^D) \cap \rho(\cL^{\Theta_1})$, then $M_2(\lambda) = R_{1,2}(\lambda) M_1(\lambda) $.
	\end{enumerate}
	If $\Theta_2$ also satisfies Hypotheses~\ref{hypT} and $\lambda \in \rho(\cL^{\Theta_1}) \cap \rho(\cL^{\Theta_2})$, then 
	\begin{enumerate}
	\item[(iii)] $N_1(\lambda) = N_2(\lambda) R_{1,2}(\lambda)$,
	\item[(iv)] $N_1(\lambda)-N_2(\lambda)=N_2(\lambda)(\Theta_2-\Theta_1)N_1(\lambda)$,
	\item[(v)] $R_{\Theta_1, - \Theta_2}(\lambda)R_{\Theta_2,\Theta_1}(\lambda)=I_{\Hm}-2\Theta_2N_2(\lambda)$.
\end{enumerate}
\end{prop}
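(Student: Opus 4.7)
The unifying observation behind all five claims is that the various maps $N_j$, $M_j$, $R_{i,j}$ are all read off from the traces $(\gaD u, \gaN^L u)$ of a \emph{single} solution $u$ of $Lu = \lambda u$; whenever a composition like $R \circ N$ or $N \circ R$ appears, we can arrange for the same $u$ to witness both operators, and the claim reduces to a tautology in boundary data. The key technical input is the uniqueness clause of Proposition~\ref{TH4.11}, which lets us identify two different solutions.

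I would proceed in the order (i), (ii), (iii), (iv), (v). For (i), fix $g\in\Hm$ and let $u$ solve $Lu=\lambda u$, $\gaN^L u + \Theta_1 \gaD u = g$. Then by definition $N_1(\lambda)g = \gaD u$, and
\[
 R_{1,2}(\lambda)g = \gaN^L u + \Theta_2 \gaD u = g + (\Theta_2-\Theta_1)\gaD u = \bigl(I + (\Theta_2-\Theta_1)N_1(\lambda)\bigr)g.
\]
For (ii), given $g\in\Hp$ let $u$ be the Dirichlet solution with $\gaD u = g$; then $M_1(\lambda)g = \gaN^L u + \Theta_1 \gaD u$, and this same $u$ satisfies $Lu=\lambda u$ and $\gaN^L u + \Theta_1 \gaD u = M_1(\lambda)g$, so by the very definition of $R_{1,2}$ we read off $R_{1,2}(\lambda)M_1(\lambda)g = \gaN^L u + \Theta_2 \gaD u = M_2(\lambda)g$.

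For (iii), I would not route through (ii) (which requires $\lambda\in\rho(\cL^D)$ and would need analytic continuation to conclude), but instead argue directly: take $g\in\Hm$ and let $u$ be the solution associated to $\Theta_1$ as in (i), so that $N_1(\lambda)g=\gaD u$ and $R_{1,2}(\lambda)g = \gaN^L u + \Theta_2 \gaD u$. The same function $u$ then solves the $\Theta_2$-problem with boundary data $R_{1,2}(\lambda)g$; uniqueness (Proposition~\ref{TH4.11}, using $\lambda\in\rho(\cL^{\Theta_2})$) forces $u$ to be the Robin solution used to define $N_2(\lambda)R_{1,2}(\lambda)g$, giving $N_2(\lambda)R_{1,2}(\lambda)g = \gaD u = N_1(\lambda)g$. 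Then (iv) is immediate from (i) and (iii):
\[
 N_1 - N_2 = N_2 R_{1,2} - N_2 = N_2(R_{1,2}-I) = N_2(\Theta_2-\Theta_1)N_1.
\]
For (v), I again pick a single witness: let $u$ solve $Lu=\lambda u$, $\gaN^L u + \Theta_2 \gaD u = g$, so $N_2(\lambda)g = \gaD u$ and $R_{\Theta_2,\Theta_1}(\lambda)g = \gaN^L u + \Theta_1 \gaD u$. The same $u$ feeds the definition of $R_{\Theta_1,-\Theta_2}$, and produces $\gaN^L u - \Theta_2 \gaD u = g - 2\Theta_2\gaD u$, which equals $(I - 2\Theta_2 N_2(\lambda))g$.

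I do not anticipate any genuine obstacle here, since every claim is a short consequence of the definitions once uniqueness is invoked. The only subtle point to watch is ensuring that each relevant boundary value problem is actually well-posed: parts (i), (ii), (iv) only require the hypotheses as stated, while part (iii) must be established directly (rather than by inversion of (ii)) so that $\rho(\cL^D)$ need not enter, and part (v) needs $\Theta_2$ to satisfy Hypothesis~\ref{hypT} so that $R_{\Theta_1,-\Theta_2}$ can be evaluated on $R_{\Theta_2,\Theta_1}(\lambda)g$ via the $\Theta_2$-problem for the \emph{same} function $u$.
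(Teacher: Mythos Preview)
Your proof is correct and for parts (i)--(iv) is essentially identical to the paper's: the paper writes out (iii) explicitly via the ``single witness'' $u$, says (i) and (ii) follow by the same computation, and then derives (iv) by multiplying (i) on the left by $N_2(\lambda)$ and invoking (iii)---exactly as you do.

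The one genuine difference is part (v). The paper proceeds algebraically: it expands each factor using (i), namely $R_{\Theta_1,-\Theta_2} = I - (\Theta_2+\Theta_1)N_1$ and $R_{\Theta_2,\Theta_1} = I + (\Theta_1-\Theta_2)N_2$, multiplies out, and then applies (iv) to collapse the cross terms. Your argument instead reuses the single-witness idea once more: the $u$ solving the $\Theta_2$-problem with data $g$ also solves the $\Theta_1$-problem with data $R_{\Theta_2,\Theta_1}(\lambda)g$ (uniqueness using $\lambda\in\rho(\cL^{\Theta_1})$), so $R_{\Theta_1,-\Theta_2}$ simply outputs $\gaN^L u - \Theta_2\gaD u = g - 2\Theta_2 N_2(\lambda)g$. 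Your route is shorter and more in keeping with the rest of the proof; the paper's route has the mild advantage of showing that (v) is a formal consequence of (i) and (iv) without revisiting the PDE. One small wording correction: in your final paragraph, the hypothesis on $\Theta_2$ is needed to pose the $\Theta_2$-problem (i.e.\ to define $R_{\Theta_2,\Theta_1}$ and $N_2$), whereas evaluating $R_{\Theta_1,-\Theta_2}$ uses the $\Theta_1$-problem and the assumption $\lambda\in\rho(\cL^{\Theta_1})$.
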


This is an important ingredient in the proof of Theorem \ref{thm:WEvans}, since it will allow us to separate the zeros and poles of $\widehat M_\Theta$ and $N_\Theta$, and hence deal with the case that both terms in \eqref{eq:Edef} are singular. The last assertion is used in the proof of Theorem \ref{thm:cont}.

\begin{proof}
We prove (iii);  assertions (i) and (ii) follow from similar computations. Fix $g \in \Hm$, and let $u$ denote the unique solution to
\[
	Lu=\lambda u, \quad \gaN^Lu + \Theta_1 \gaD u=g,
\]
so that $R_{1,2}(\lambda) g = \gaN^L u +\Theta_2 \gaD u$ and $N_1(\lambda) g = \gaD u$. Since $u$ also solves the boundary value problem
\[
	Lu=\lambda u, \quad \gaN^Lu + \Theta_2\gaD u = R_{1,2}(\lambda) g,
\]
we have
\[
	N_2(\lambda) \big(R_{1,2}(\lambda) g\big) = \gaD u = N_1(\lambda) g
\]
for arbitrary $g$, and so $N_2(\lambda) R_{1,2}(\lambda) = N_1(\lambda)$.

To prove (iv), we multiply (i) by $N_2(\lambda)$ and use (iii) to infer
\[N_2(\lambda)+N_2(\lambda)(\Theta_2-\Theta_1)N_1(\lambda)=N_2(\lambda)R_{1,2}(\lambda)=N_1(\lambda).\]
Assertion (v) follows from (i) and (iv),
\begin{align*}
R_{\Theta_1, - \Theta_2}&(\lambda)R_{\Theta_2,\Theta_1}(\lambda)=\big(I_{H^{-1/2}}+(-\Theta_2-\Theta_1)N_1(\lambda)\big)\big(I_{H^{-1/2}}+(\Theta_1-\Theta_2)N_2(\lambda)\big)\\ &=I_{H^{-1/2}}+(\Theta_1-\Theta_2)N_2(\lambda)-(\Theta_2+\Theta_1)N_1(\lambda)
-(\Theta_2+\Theta_1)N_1(\lambda)(\Theta_1-\Theta_2)N_2(\lambda)\\
&=I_{H^{-1/2}}-2\Theta_2N_2(\lambda),
\end{align*}
thus finishing the proof.
\end{proof}

\subsection{Controlling the resolvent set of $\cL^\Theta$}
\label{sec:resTheta}
In our proof of Theorem~\ref{thm:WEvans} we will need to factor $\widehat M_{\widehat\Theta}$ and $N_\Theta$ through the Robin-to-Robin map, making use of Proposition~\ref{propRtoR} for some auxiliary boundary operator. This is made possible by the following result.

\begin{lemma}\label{T0}
Assume Hypothesis \ref{hyp:L}. For each $\lambda_0 \in \bbC$ there exists a finite rank operator $\Theta_0 \colon \Hp \to \Hm$ such that $\lambda_0 \in \rho(\cL^{\Theta_0})$.
\end{lemma}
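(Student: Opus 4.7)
The approach is to start from the Neumann realization and push $\lambda_0$ out of the spectrum by an analytic perturbation along a finite-rank boundary direction. Set $\Theta_1=0$, so $\cL^{\Theta_1}=\cL^N$ has compact resolvent by Proposition~\ref{prop:domLmu} and hence discrete spectrum of finite algebraic multiplicity. If $\lambda_0\in\rho(\cL^N)$, take $\Theta_0=0$ and we are done. Otherwise, $\lambda_0$ is an eigenvalue of $\cL^N$ of some finite algebraic multiplicity $k\geq 1$.

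Consider the one-parameter family $z\mapsto \cL^{zP}$ for $z\in\bbC$ and $P\colon \Hp\to\Hm$ a finite-rank operator to be chosen. The forms $\Phi_{zP}(u,v)=\Phi(u,v)+z\llangle P\gaD u,\gaD v\rrangle$ depend analytically on $z$ on the fixed form-domain $H^1(\Omega)$, so $\{\cL^{zP}\}$ is a Kato analytic family of type (B) of operators with compact resolvent. By the analytic Fredholm alternative, the set
\[
    S_P:=\{z\in\bbC:\lambda_0\in\sigma(\cL^{zP})\}
\]
is either all of $\bbC$ or a discrete subset. It therefore suffices to exhibit one finite-rank $P$ and one $z_0\in\bbC\setminus S_P$; the operator $\Theta_0:=z_0P$ then has finite rank, is compact (so Hypothesis~\ref{hypT} is satisfied by the second criterion of the lemma after its statement), and satisfies $\lambda_0\in\rho(\cL^{\Theta_0})$.

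To produce such a $P$, pick nonzero $\varphi_1,\dots,\varphi_k$ spanning the root space $R_{\lambda_0}$ of $\cL^N$ at $\lambda_0$, and nonzero $\psi_1,\dots,\psi_k$ spanning the root space of $(\cL^N)^*$ at $\bar\lambda_0$, biorthogonally normalized in $L^2(\Omega)$. Unique continuation \eqref{UCP}, applied to any honest eigenfunction (which annihilates the relevant Neumann trace), forces the corresponding Dirichlet traces to be nonzero. We then tailor the finite-rank operator $P$ so that the reduced perturbation matrix
\[
    B_{ij}:=\llangle P\gaD\varphi_j,\gaD\psi_i\rrangle
\]
is invertible. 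Kato's theorem on the perturbation of an isolated eigenvalue \cite[Ch.~VII]{K76} then implies that the characteristic function of $\cL^{zP}-\lambda_0$ restricted to the total invariant subspace near $\lambda_0$ has the form $cz^k+O(z^{k+1})$ with $c\neq 0$, so $\lambda_0\notin\sigma(\cL^{z_0P})$ for every sufficiently small nonzero $z_0$, and the conclusion follows.

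The main obstacle is the construction of $P$ when $\lambda_0$ is not semisimple for $\cL^N$, i.e., when its algebraic multiplicity exceeds its geometric multiplicity. The UCP argument gives linear independence only of the Dirichlet traces of genuine eigenfunctions, whereas the traces of generalized eigenfunctions in the Jordan chains are not controlled a priori and can even vanish. One must therefore use the Jordan structure of $(\cL^N-\lambda_0)|_{R_{\lambda_0}}$ together with Kato's formula for the reduced operator to verify that a finite-rank $P$ of rank at most $k$ can be arranged so that $B$ is nonsingular; this is a finite-dimensional linear algebra problem on $R_{\lambda_0}$ and, once resolved, the remainder of the argument is routine.
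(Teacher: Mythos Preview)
Your approach is genuinely different from the paper's and, as you yourself flag in the last paragraph, is incomplete in the non-semisimple case. The paper avoids perturbation theory entirely and gives a direct one-shot construction: it takes bases $u_1,\ldots,u_k$ for $\ker(\cL^N-\lambda_0)$ and $v_1,\ldots,v_k$ for $\ker((\cL^N)^*-\bar\lambda_0)$ (only the \emph{geometric} eigenspaces, no root spaces), defines $\Theta_0(\gaD u_i)=\R(\gaD v_i)$ on $\operatorname{span}\{\gaD u_i\}$ and zero on the orthogonal complement, and then checks by a short Green's-identity computation that $\ker(\cL^{\Theta_0}-\lambda_0)=\{0\}$. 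Since $\cL^{\Theta_0}-\lambda_0$ is Fredholm of index zero, trivial kernel already gives invertibility. The verification uses only that the sets $\{\gaD u_i\}$ and $\{\gaD v_i\}$ are linearly independent, which UCP guarantees for honest eigenfunctions. No analytic family, no reduced operator, no Jordan structure.

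Your perturbative route has two concrete problems beyond being longer. First, the claimed form $cz^k+O(z^{k+1})$ for the characteristic determinant is wrong when $\lambda_0$ is not semisimple: the nilpotent part $N_0$ of the reduced operator can interact with the first-order term to produce a leading term of order strictly less than $k$ (for a $2\times2$ Jordan block with $N_0=\left[\begin{smallmatrix}0&1\\0&0\end{smallmatrix}\right]$ and first-order correction $\left[\begin{smallmatrix}0&0\\1&0\end{smallmatrix}\right]$ the determinant is $-z$, not $cz^2$). This alone is not fatal, since any nonzero leading term would do, but it shows the implication ``$B$ invertible $\Rightarrow$ determinant $\sim cz^k$'' you invoke is not what Kato's theory actually gives. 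Second, and more seriously, the matrix $B_{ij}=\llangle P\gaD\varphi_j,\gaD\psi_i\rrangle$ may be unachievable as invertible for \emph{any} finite-rank $P$: if some generalized eigenfunction has Dirichlet trace lying in the span of the eigenfunction traces (UCP does not rule this out, since $(L-\lambda_0)\varphi_j\neq 0$ for $j\geq 2$), then $\varphi_j\mapsto\gaD\varphi_j$ has rank less than $k$ and no $P$ makes $B$ nonsingular. So the ``finite-dimensional linear algebra problem'' you defer is not merely unresolved---it may have no solution along the line you propose. The paper's insight, which you are missing, is that one only needs to kill the \emph{actual} eigenfunctions, not the full root space, and this can be done in one stroke rather than by perturbation.
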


\begin{proof}
If $\lambda \in \rho(\cL^N)$ it suffices to choose $\Theta = 0$, so we assume for the rest of the proof that $\lambda \in \sigma(\cL^N)$. Let $u_1, \ldots, u_k \in H^1(\Omega)$ be a basis for $\ker(\cL^N - \lambda)$, and similarly let $v_1, \ldots, v_k$ be a basis for the kernel of $(\cL^N)^* - \bar\lambda$. We define $\Theta$ on $S_L:= \Sp\{\gaD u_1, \ldots, \gaD u_k\}$ by
\[
	\Theta(\gaD u_i) = \R (\gaD v_i), \quad 1 \leq i \leq k,
\]
where $\R \colon \Hp \to \Hm$ is the Riesz map, and extend $\Theta$ to the rest of $\Hp$ by defining it to be zero on $S_L^{\perp}$. Note that $\operatorname{ran}\Theta = \R(S_{L^\dag})$, where $S_{L^\dag} := \Sp\{\gaD v_1, \ldots, \gaD v_k\}$.

To see that $\lambda$ is not an eigenvalue of $\cL^\Theta$, suppose there exists $u \in H^1(\Omega)$ such that
\[
	Lu = \lambda u, \qquad \gaN^L u + \Theta \gaD u=0.
\]
From the definition of $\R$, Green's second identity \eqref{e13}, and $\gaN^{L^\dagger}v_i=0$, we have
\[
	 \left< \gaN^L u, \R(\gaD v_i) \right>_{\Hm} = \llangle \gaN^L u, \gaD v_i \rrangle = \overline{\llangle \gaN^{L^\dag} v_i, \gaD u \rrangle} = 0,
\]
for each $i$, which means $\gaN^L u$ is $\Hm$-orthogonal to the subspace $\R(S_{L^\dag})$. On the other hand, we have $-\gaN^L u = \Theta \gaD u \in \R(S_{L^\dag})$, and so $-\gaN^L u = \Theta \gaD u = 0$. The fact that $\gaN^L u = 0$ means $u = c_1 u_1 + \cdots + c_k u_k$ for some constants $c_i$, and hence the definition of $\Theta$ yields
\[
	\Theta\gaD u = c_1 \mathcal{R} (\gaD v_1) + \cdots +  c_k \mathcal{R} (\gaD v_k).
\]
Since $\Theta\gaD u = 0$, this implies $c_1 = \cdots = c_k = 0$, hence $u = 0$ and $\lambda$ is not an eigenvalue of $\cL^\Theta$.
\end{proof}

\begin{rem}
\label{rem:Rohleder}
In the special case that $L$ is symmetric, it is possible to find a real number $\mu$ such that a given $\lambda_0 \in \bbC$ is not in the spectrum of $\cL^{\Theta_\mu}$, where $\Theta_\mu := \mu \J$. If $\lambda_0 \in \rho(\cL^N)$ then we choose $\mu = 0$, as in the proof of Lemma~\ref{T0}. If $\lambda_0 \in \sigma(\cL^N)$ we can use \cite[Theorem~3.2]{MR3206692}, which says that the eigenvalues of $\cL^{\Theta_\mu}$ are strictly monotone in $\mu$, and hence guarantees $\lambda_0$ is not an eigenvalue of $\cL^{\Theta_\mu}$ for small, nonzero $\mu$. Note that $\Theta_\mu = \mu \J$ is compact (because $\J$ is) and hence satisfies Hypothesis~\ref{hypT}. This construction will be used below, in the proof of Proposition~\ref{prop:Wmult}.
\end{rem}

\section{Eigenvalues of elliptic operators and Robin-to-Robin maps}
\label{sec:multiplicity}

In this section we relate the eigenvalues of the linear operators $\cL^D$, $\cL^{\Theta_1}$ and $\cL^{\Theta_2}$ to the eigenvalues of the nonlinear operator pencils $N_{\Theta_1}$, $M_{\Theta_2}$ and $R_{1,2}$ defined in the previous section. Here and below, we say that $\lambda\in\bbC$ is an eigenvalue of a pencil $T(\cdot)$ if $\ker T(\lambda)\neq\{0\}$. This material will be used in the proof of Theorem \ref{thm:WEvans} given in the next section, but it is also of independent interest as a further development of several known results that can be found in \cite{BE19} and the bibliography therein.
While it is not hard to show that the eigenvalues and their geometric multiplicities coincide, relating the algebraic multiplicities is significantly more involved.
Indeed, even defining the algebraic multiplicity $m_a\big(\lambda, T(\cdot)\big)$ of an eigenvalue of an operator pencil requires some work; see Definition \ref{JC} below. We summarize the main results of this section as follows.

\begin{theorem}\label{multR}
	Assume Hypothesis \ref{hyp:L} and let $\Theta_1$ and $\Theta_2$ satisfy Hypothesis \ref{hypT}.
	\begin{enumerate}
		\item If $\lambda\in\rho(\cL^{\Theta_1})$, then  $\lambda\in\sigma(\cL^{\Theta_2})$ if and only if $\lambda$ is an eigenvalue of the pencil $R_{1,2}(\cdot)$; moreover, $m_a(\lambda,\cL^{\Theta_2})=m_a\big(\lambda, R_{1,2}(\cdot)\big)$.
		\item  If $\lambda\in\rho(\cL^{\Theta_1})$, then  $\lambda\in\sigma(\cL^D)$ if and only if $\lambda$ is an eigenvalue of the pencil $N_{\Theta_1}(\cdot)$; moreover, $m_a(\lambda,\cL^D)=m_a\big(\lambda, N_{\Theta_1}(\cdot)\big)$.
		\item If $\lambda\in\rho(\cL^D)$, then  $\lambda\in\sigma(\cL^{\Theta_2})$ if and only if $\lambda$ is an eigenvalue of the pencil $M_{\Theta_2}(\cdot)$; moreover, $m_a(\lambda,\cL^{\Theta_2})=m_a\big(\lambda, M_{\Theta_2}(\cdot)\big)$.
	\end{enumerate}
\end{theorem}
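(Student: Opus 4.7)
The plan is to prove all three parts by establishing local equivalence of operator pencils at each prospective eigenvalue $\lambda_0$, in the Gohberg--Sigal sense (cf.\ \cite[Chapter~IX]{GGK90}), following the strategy of \cite{BE19}. I will sketch part~(1) in detail; parts~(2) and~(3) follow by the same template with different decompositions.

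First I would verify the kernel correspondence, which simultaneously establishes the eigenvalue equivalence and equality of geometric multiplicities. In part~(1), with $\lambda \in \rho(\cL^{\Theta_1})$, if $u \in \ker(\cL^{\Theta_2} - \lambda)$ and $g := \gaN^L u + \Theta_1 \gaD u$, then $\gaN^L u = -\Theta_2 \gaD u$ gives $g = (\Theta_1 - \Theta_2) \gaD u = (\Theta_1 - \Theta_2) N_1(\lambda) g$, so $R_{1,2}(\lambda) g = 0$ by Proposition~\ref{propRtoR}(i). Conversely, for $g \in \ker R_{1,2}(\lambda)$, the solution $u$ to the $\Theta_1$-Robin boundary value problem with data $g$ automatically satisfies $\gaN^L u + \Theta_2 \gaD u = 0$, hence $u \in \ker(\cL^{\Theta_2} - \lambda)$. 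The analogous bijections in~(2) and~(3) use $u \mapsto \gaN^L u$ on Dirichlet eigenfunctions and $u \mapsto \gaD u$ on $\cL^{\Theta_2}$-eigenfunctions.

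For the equality of algebraic multiplicities in part~(1), the key tool is the analytic-in-$\lambda$ isomorphism
$$\Xi(\lambda) : \Hm \oplus \dom(\cL^{\Theta_1}) \longrightarrow D^1_L(\Omega), \qquad \Xi(\lambda)(g, w) := V(\lambda) g + w,$$
where $V(\lambda) g \in H^1(\Omega)$ is the unique solution to $Lu = \lambda u$ with $\gaN^L u + \Theta_1 \gaD u = g$. Invertibility is verified by recovering $g = \gaN^L u + \Theta_1 \gaD u$ and $w = u - V(\lambda) g \in \dom(\cL^{\Theta_1})$. Using Proposition~\ref{propRtoR}(i) and $\gaN^L w = -\Theta_1 \gaD w$ on $\dom(\cL^{\Theta_1})$, one computes that the augmented pencil
$$\mathbb A(\lambda) u := \bigl((L - \lambda) u,\ \gaN^L u + \Theta_2 \gaD u\bigr),$$
whose Jordan chains coincide with those of $\cL^{\Theta_2} - \lambda$, satisfies
$$\mathbb A(\lambda) \circ \Xi(\lambda) = \begin{pmatrix} 0 & \cL^{\Theta_1} - \lambda \\ R_{1,2}(\lambda) & (\Theta_2 - \Theta_1) \gaD \end{pmatrix}.$$
A row swap followed by left-multiplication by the analytic family
$$\begin{pmatrix} I & -(\Theta_2 - \Theta_1) \gaD (\cL^{\Theta_1} - \lambda)^{-1} \\ 0 & I \end{pmatrix},$$
which is well defined and invertible near $\lambda_0$ since $\lambda_0 \in \rho(\cL^{\Theta_1})$, reduces the block matrix to $\operatorname{diag}\bigl(R_{1,2}(\lambda),\, \cL^{\Theta_1} - \lambda\bigr)$. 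The second factor is invertible at $\lambda_0$ and so contributes nothing to the algebraic multiplicity, yielding $m_a(\lambda_0, \cL^{\Theta_2}) = m_a\bigl(\lambda_0, R_{1,2}(\cdot)\bigr)$ by Gohberg--Sigal invariance under local equivalence.

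Parts~(2) and~(3) follow the same pattern. For~(2), I would use the same $\Xi(\lambda)$, but the Dirichlet condition $\gaD u = 0$ becomes $N_1(\lambda) g + \gaD w = 0$ under the decomposition, producing the local equivalence between $\cL^D - \lambda$ and $\operatorname{diag}\bigl(N_1(\lambda),\, \cL^{\Theta_1} - \lambda\bigr)$. For~(3), I replace $V(\lambda)$ by the Dirichlet solution operator $U(\lambda): \Hp \to H^1(\Omega)$ and decompose $u = U(\lambda) g + w$ with $w \in \dom(\cL^D)$, so that the $\Theta_2$-Robin condition becomes $M_2(\lambda) g + \gaN^L w = 0$ and block reduction (now using invertibility of $\cL^D - \lambda$ at $\lambda_0 \in \rho(\cL^D)$) gives equivalence between $\cL^{\Theta_2} - \lambda$ and $\operatorname{diag}\bigl(M_2(\lambda),\, \cL^D - \lambda\bigr)$. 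The main obstacle is verifying that Gohberg--Sigal invariance applies in this mixed setting, where pencils act between different Banach spaces and a linear operator pencil is being compared with nonlinear boundary pencils; this requires the natural Jordan-chain bijection between $\cL^\bullet - \lambda$ on $\dom(\cL^\bullet)$ and the augmented pencil $\mathbb A(\lambda)$ on $D^1_L(\Omega)$, together with careful bookkeeping of the analyticity and invertibility of all auxiliary operator families at $\lambda_0$.
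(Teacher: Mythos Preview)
Your approach is correct and takes a genuinely different route from the paper. Where you establish local equivalence of pencils via the decomposition $u = V(\lambda)g + w$ and block-triangularize the augmented pencil $\mathbb A(\lambda)$ to $\operatorname{diag}(R_{1,2}(\lambda),\, \cL^{\Theta_1}-\lambda)$, the paper instead constructs an explicit bijection between Jordan chains. Its key technical step (Lemma~\ref{lemma1}) shows, via Green's identity and an inductive power-series argument, that if $(L-\lambda_0)u_j = u_{j-1}$ and $(\gaN^L + \Theta_1\gaD)u_j = f_j$, then $(\gaN^L + \Theta_2\gaD)u_j = \sum_{l=0}^j \tfrac{1}{l!} R_{1,2}^{(l)}(\lambda_0) f_{j-l}$; parts~(2) and~(3) are then derived from this by a limiting argument ($\Theta_2 = \nu\cJ$, $\nu\to\infty$) and by factoring through an auxiliary $\Theta_1$ chosen via Lemma~\ref{T0}. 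Your structural argument is cleaner and avoids these computations, while the paper's approach yields the explicit chain correspondences (Theorems~\ref{Rchain}, \ref{Nchain}, \ref{Jordan chainsM}) as standalone results. The obstacle you flag---matching Jordan chains of $\cL^{\Theta_2}-\lambda$ with those of the augmented pencil $\mathbb A(\lambda)$---is real but routine: since $\mathbb A'(\lambda)=(-I,0)$ and all higher derivatives vanish, the conditions of Definition~\ref{JC} reduce exactly to $(L-\lambda_0)f_j=f_{j-1}$ with $(\gaN^L+\Theta_2\gaD)f_j=0$, and the analyticity and boundedness of $\Xi(\lambda)$ and its inverse follow from Proposition~\ref{TH4.11} and Proposition~\ref{e10}.
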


	The relationship between the Robin eigenvalues and the Dirichlet-to-Robin map was recently described in an important paper \cite{BE19} that was a major step in our understanding of the subject. In fact, a version of the main tool that we employ here, Lemma \ref{lemma1}, is already contained in the proofs of \cite{BE19} for the case of Dirichlet-to-Robin maps.

The proof of Theorem \ref{multR} proceeds in two steps. First, in Section \ref{sec:geo}, we show that the geometric multiplicities agree. Then, in Section \ref{sec:alg}, we give a one-to-one correspondence between Jordan chains, which implies that the algebraic multiplicities agree as well. Throughout, we assume Hypothesis \ref{hyp:L}.

\subsection{Geometric multiplicity}
\label{sec:geo}
If $\lambda_0\in\bbC$ is an eigenvalue of a nonlinear pencil $T(\cdot)$, then its \emph{geometric multiplicity} $m_g\big(\lambda_0, T(\cdot)\big)$ is defined to be $\dim \ker T(\lambda_0)$. We set $m_g\big(\lambda_0, T(\cdot)\big)=0$ if  $\lambda_0$ is not an eigenvalue. If $T_0$ is a closed linear operator then the geometric multiplicity $m_g(\lambda_0, T_0)$ of $\lambda_0$ as an eigenvalue of $T_0$ is equal to the geometric multiplicity $m_g\big(\lambda_0, T(\cdot)\big)$ of the linear pencil $T(\lambda)=T_0-\lambda$.
We begin by relating the geometric multiplicities of eigenvalues of elliptic operators and eigenvalues of Robin-to-Dirichlet, Robin-to-Robin and Dirichlet-to-Robin maps.

\begin{lemma}\label{LNFred}
	Assume that $\Theta_1$ and $\Theta_2$ satisfy Hypothesis \ref{hypT}. If $\lambda_0\in\rho(\cL^{\Theta_1})$, then $N_{\Theta_1}(\lambda_0)$ and $R_{1,2}(\lambda_0)$ are Fredholm operators of index $0$, and the maps
	\begin{align*}
		(\gaN^L+\Theta_1\gaD)\big|_{\ker(\cL^D-\lambda_0)} \colon &\ker(\cL^D-\lambda_0)\longrightarrow\ker N_{\Theta_1}(\lambda_0), \\
		(\gaN^L+\Theta_1\gaD)\big|_{\ker(\cL^{\Theta_2}-\lambda_0)} \colon &\ker(\cL^{\Theta_2}-\lambda_0)\longrightarrow\ker R_{1,2}(\lambda_0),
	\end{align*}
	are bijections. In particular,
	\[
	\dim\ker (\cL^D-\lambda_0) = \dim\ker N_{\Theta_1}(\lambda_0), \quad 
	\dim\ker (\cL^{\Theta_2}-\lambda_0) = \dim\ker R_{1,2}(\lambda_0),
	\]
	and
	\begin{align*}
		\lambda_0\in\rho(\cL^D)\cap\rho(\cL^{\Theta_1}) \ &\Longleftrightarrow \ N_{\Theta_1}(\lambda_0) \text{ is invertible,} \\
		\lambda_0\in\rho(\cL^{\Theta_2})\cap\rho(\cL^{\Theta_1}) \ &\Longleftrightarrow \ R_{1,2}(\lambda_0) \text{ is invertible.}
	\end{align*}
Similarly, if $\lambda_0\in\rho(\cL^D)$, then $M_{\Theta_2}(\lambda_0)$ is a Fredholm operator of index $0$ and the map
	\[
	\gaD\big|_{\ker(\cL^{\Theta_2}-\lambda_0)} \colon \ker(\cL^{\Theta_2}-\lambda_0)\longrightarrow\ker M_{\Theta_2}(\lambda_0)
	\]
	is a bijection, hence
	\begin{align*}
		\lambda_0\in\rho(\cL^{\Theta_2})\cap\rho(\cL^D) \ &\Longleftrightarrow \ R_{1,2}(\lambda_0) \text{ is invertible.}
	\end{align*}
%
%
\end{lemma}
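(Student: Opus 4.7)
The proof has two parts: three parallel bijection constructions that yield finite-dimensional kernels, and a Fredholm-theoretic argument that upgrades these to Fredholmness of index zero.

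All three bijections go through the same template, which I illustrate for $(\gaN^L + \Theta_1 \gaD) \colon \ker(\cL^D - \lambda_0) \to \ker N_{\Theta_1}(\lambda_0)$. For $u \in \ker(\cL^D - \lambda_0)$ we have $\gaD u = 0$, so setting $g := \gaN^L u$ and invoking $\lambda_0 \in \rho(\cL^{\Theta_1})$ together with Proposition \ref{TH4.11} identifies $u$ as the unique solution of $Lu = \lambda_0 u$ with $\gaN^L u + \Theta_1 \gaD u = g$; hence $N_{\Theta_1}(\lambda_0)g = \gaD u = 0$ by definition. Injectivity is the unique continuation principle \eqref{UCP}, and surjectivity is immediate from the definition of $N_{\Theta_1}(\lambda_0)$: any $g \in \ker N_{\Theta_1}(\lambda_0)$ produces a unique solution with $\gaD u = 0$, which therefore lies in $\ker(\cL^D - \lambda_0)$ with $(\gaN^L + \Theta_1 \gaD)u = g$. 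The bijections for $R_{1,2}(\lambda_0)$ and $M_{\Theta_2}(\lambda_0)$ are constructed identically, with injectivity now following from $\lambda_0 \in \rho(\cL^{\Theta_1})$ and $\lambda_0 \in \rho(\cL^D)$, respectively, rather than from UCP. Combining these bijections with Propositions \ref{domD} and \ref{prop:domLmu} yields the claimed finite-dimensionality of the pencil kernels.

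To upgrade to Fredholmness of index zero, I would pass through an auxiliary weak operator. Illustrating for $R_{1,2}(\lambda_0) \colon \Hm \to \Hm$: the weak operator $\mathbf T(\lambda_0) \colon H^1(\Omega) \to H^1(\Omega)^*$ corresponding to the Robin problem with boundary operator $\Theta_2$ is Fredholm of index zero, since at any $\lambda' \in \rho(\cL^{\Theta_2})$ it is an isomorphism (Proposition \ref{TH4.11}) and the difference $\mathbf T(\lambda_0) - \mathbf T(\lambda')$ factors through the compact Rellich embedding $H^1(\Omega) \hookrightarrow L^2(\Omega)$. Pulling back the finite-codimensional range of $\mathbf T(\lambda_0)$ to $\Hm$ along the natural boundary embedding then forces $\ran R_{1,2}(\lambda_0)$ to have finite codimension in $\Hm$ and hence to be closed. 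For index zero, Green's second identity \eqref{e13} identifies the Banach-space adjoint $R_{1,2}(\lambda_0)^*$ with the analogous Robin-to-Robin map $R^{L^\dag}_{\Theta_2^*, \Theta_1^*}(\bar\lambda_0)$ for the formally adjoint expression $L^\dag$; the bijection argument applied to this adjoint pencil gives
\[
	\dim \ker R_{1,2}(\lambda_0)^* = \dim \ker((\cL^{\Theta_2})^* - \bar\lambda_0) = \dim \ker(\cL^{\Theta_2} - \lambda_0) = \dim \ker R_{1,2}(\lambda_0),
\]
using Fredholmness of index zero of $\cL^{\Theta_2}$ in the middle equality. Analogous arguments, with the weak operator modified to incorporate Dirichlet data where needed, handle $N_{\Theta_1}(\lambda_0) \colon \Hm \to \Hp$ and $M_{\Theta_2}(\lambda_0) \colon \Hp \to \Hm$. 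The invertibility equivalences stated at the end of the lemma are then immediate consequences of the kernel bijections together with Fredholmness of index zero.

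The main technical obstacle is verifying the adjoint identification $R_{1,2}(\lambda_0)^* = R^{L^\dag}_{\Theta_2^*, \Theta_1^*}(\bar\lambda_0)$ (and its $N$- and $M$-analogs). This requires carefully tracking the antilinear dual pairings from the paper's Notation section through Green's second identity in Proposition \ref{e10}; once this is in place, everything else is routine elliptic Fredholm theory.
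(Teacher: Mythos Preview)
Your bijection argument is essentially identical to the paper's. The Fredholm argument, however, takes a genuinely different route. The paper characterizes $\ran N_{\Theta_1}(\lambda_0)$ directly: by \cite[Theorem~4.10]{McL00}, the inhomogeneous Dirichlet problem $(L-\lambda_0)u=0$, $\gaD u=g$ is solvable if and only if $g$ annihilates $\{\gaN^{L^\dag}v_1,\ldots,\gaN^{L^\dag}v_p\}$, where the $v_j$ span $\ker\big((\cL^D)^*-\bar\lambda_0\big)$; unique continuation for the adjoint problem makes these $p$ functionals independent, so $\operatorname{codim}\ran N_{\Theta_1}(\lambda_0)=p=\dim\ker N_{\Theta_1}(\lambda_0)$, giving closed range and index zero in one stroke. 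Your approach instead passes through the weak operator $\mathbf T(\lambda_0)$, obtains its Fredholmness by a compact-perturbation argument, pulls the range back along the boundary embedding, and then computes the index by identifying the Banach-space adjoint $R_{1,2}(\lambda_0)^*$ with the Robin-to-Robin map for $L^\dag$. Both are valid; the paper's route is shorter because a single citation to McLean simultaneously yields closed range, the cokernel dimension, and hence index zero, whereas your route is more self-contained but requires the adjoint identification you flag as the main obstacle (which, as you note, is a careful but routine application of Green's second identity).
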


\begin{proof} We will prove the items concerning $\cL^D$ and $N_{\Theta_1}$; the remaining items can be shown in the same way using Proposition~\ref{TH4.11}.
	
	By Proposition \ref{domD} we know that  the operator $\cL^D-\lambda_0$ is Fredholm of index $0$, and its spectrum consists of eigenvalues of finite algebraic multiplicity, so $p := \dim\ker(\cL^D-\lambda_0)$ is finite.
Now let $\lambda_0\in\rho(\cL^{\Theta_1})$.  If $u\in\ker(\cL^D-\lambda_0)$ and $f=(\gaN^{L}+\Theta_1\gaD)u\in H^{-1/2}(\pO)$, then $N_{\Theta_1}(\lambda_0)f=\gaD u=0$, therefore $f\in\ker N_{\Theta_1}(\lambda_0)$ 
	and the map $(\gaN^L+\Theta_1\gaD)\big|_{\ker(\cL^D-\lambda_0)}$ from the statement of the theorem is well-defined. Since $\lambda_0\in\rho(\cL^{\Theta_1})$, the map $(\gaN^L+\Theta_1\gaD)\big|_{\ker(\cL^D-\lambda_0)}$ is injective.
	On the other hand, if $f\in\ker N_{\Theta_1}(\lambda_0)$, then there exists a unique $u\in \cD_L^1(\Omega)$ so that $(L-\lambda_0)u=0$ and $(\gaN^L+\Theta_1\gaD)u=f$; moreover, $\gaD u=N_{\Theta_1}(\lambda_0)f=0$. Therefore, $u\in\ker(\cL^D-\lambda_0)$ and the map $(\gaN^L+\Theta_1\gaD)\big|_{\ker(\cL^D-\lambda_0)}$ is surjective. In particular, $\dim\ker N_{\Theta_1}(\lambda_0)=p$. 
	
	Moreover, according to \cite[Theorem 4.10]{McL00},  there are $p$ linearly independent solutions to the adjoint homogeneous Dirichlet problem, denoted here by $v_1, \ldots, v_p$. The inhomogeneous problem $(L-\lambda_0)u=0$, $\gaD u=g\in H^{1/2}(\pO)$ is solvable if only if $\llangle \gaN^{L^\dag}v_j,  g \rrangle=0$ for each $j$. By the unique continuation principle \cite{BR12} associated with the adjoint problem, $\dim \operatorname{span}\big\{\gaN^{L^\dag}v_1, \ldots, \gaN^{L^\dag}v_p\big\}=p$.  Since $g\in\ran N_{\Theta_1}(\lambda_0)$ if and only if there is a solution of the inhomogeneous problem $(L-\lambda_0)u=0$, $\gaD u=g$, the codimension of $\ran N_{\Theta_1}(\lambda_0)$ is $p$. Hence, the range of $N_{\Theta_1}(\lambda_0)$ is closed and the operator $N_{\Theta_1}(\lambda_0)$ is Fredholm of index $0$.
	
	Finally, if $\lambda_0\in\rho(\cL^D)\cap\rho(\cL^{\Theta_1})$, then $N_{\Theta_1}(\lambda_0)$ is Fredholm of index $0$ and 
	$\dim \ker N_{\Theta_1}(\lambda_0)=\dim\ker(\cL^D-\lambda_0)=0$, therefore $N_{\Theta_1}(\lambda_0)$ is invertible. 
\end{proof}

\subsection{Algebraic multiplicity}
\label{sec:alg}

We first recall the definition of algebraic multiplicity for eigenvalues of nonlinear operator pencils; cf. \cite{BTG20} and the bibliography therein.

\begin{define}\label{JC}
Let $\mathcal U$ be an open subset of $\bbC$, and $T \colon \mathcal U \to \cB(\mathcal{H},\mathcal{K})$  be an analytic family of bounded operators between Hilbert spaces $\cH$ and $\cK$.
\begin{enumerate}
	\item Given $k\in\bbN$, we say that vectors $f_0,\ldots,f_{k-1} \in \cH $ form a {\em Jordan chain of length $k$} for $T$ at $\lambda_0 \in \mathcal U$ if $f_0 \neq 0$ and
	\begin{align}\label{kchain}
		\sum_{l=0}^j\frac{1}{l!}T^{(l)}(\lambda_0)f_{j-l}&=0,\quad 0 \leq j \leq k-1.
	\end{align}
	\item The rank $r(f_0)$ of a nonzero vector $f_0\in\ker T(\lambda_0)$ is the supremum of the lengths of all Jordan chains starting at $f_0$.
	\item If $\lambda_0$ is an eigenvalue and $\{f_{0,i}\}$ is a basis for $\ker T(\lambda_0)$, the \emph{algebraic multiplicity} of $\lambda_0$ is defined to be
	\begin{equation}
		m_a\big(\lambda_0,T(\cdot)\big) = \sum_{i=1}^{m_g(\lambda_0, T(\cdot))} r(f_{0,i}).
	\end{equation}
	If $\lambda_0$ is not an eigenvalue we set $m_a\big(\lambda_0,T(\cdot)\big) = 0$.
\end{enumerate}
\end{define}

Note that the algebraic multiplicity will be infinite if $\ker T(\lambda_0)$ is infinite-dimensional or if any eigenvector admits arbitrarily long Jordan chains. In the special case when $\mathcal{H} = \mathcal{K}$ and $T(\lambda) = T_0 - \lambda I$ for some $T_0 \in \cB(\cH)$, the definition of a Jordan chain reduces to the familiar one from linear algebra:
\[
(T_0 - \lambda_0) f_0 = 0, \quad (T_0 - \lambda_0) f_1 = f_0, \quad
(T_0 - \lambda_0) f_2 = f_1, \quad \ldots, \quad (T_0 - \lambda_0) f_{k-2} = f_{k-1}.
\]
However, Jordan chains for a nonlinear pencil can behave quite differently than their linear counterparts; see Example \ref{ex1} for an illustration.

\begin{rem}
\label{rem:mult_notation}
Two different notions of algebraic multiplicity occur in this paper: 1) for an eigenvalue of a linear operator; and 2) for an eigenvalue of a (possibly nonlinear) pencil. For a \emph{linear} pencil $T(\lambda) = T_0 - \lambda I$ we always have $m_a\big(\lambda_0,T(\cdot)\big) = m_a(\lambda_0,T_0)$ but in general these two notions of algebraic multiplicity are not the same.
Note that $\lambda_0$ is an eigenvalue of the pencil $T(\cdot)$ if and only if $0$ is an eigenvalue of the linear operator $T(\lambda_0)$. However, the algebraic multiplicity of $\lambda_0$ for the pencil $T(\cdot)$ does not necessarily coincide with the algebraic multiplicity of $0$ for the linear operator $T(\lambda_0)$, as the following example demonstrates.
\end{rem}

\begin{example}\label{ex1}
	Consider the nonlinear pencil 
	$D(\lambda) = \left[\begin{smallmatrix}1&0\\0&\lambda^2\end{smallmatrix}\right]$. 
	Then $f_0 = \left[ \begin{smallmatrix} 0 \\ 1 \end{smallmatrix}\right]$ forms the basis of $\ker D(0)$. The next vector $f_1$ in the Jordan chain associated with $f_0$ at $\lambda=0$ should satisfy the equation $D(0)f_1=0$, which implies $f_1=\alpha f_0$ for arbitrary $\alpha \in \bbC$. In particular, we can choose $f_1=f_0$ or $f_1=0$, so we see that vectors in the Jordan chain do not need to be linearly independent, and can even be zero, except for the eigenvector $f_0$ at the beginning of the chain. The next vector $f_2$ in the chain at $\lambda=0$ should satisfy the equation $D(0)f_2+2f_0=0$ which is impossible because $f_0\notin\ran D(0)$. Therefore, we obtain the absence of a chain beyond the generalized eigenvector $f_1$, implying $r(f_0) = 2$ and hence $m_a\big(0,D(\cdot)\big)=2$. On the other hand, $0$ is a simple eigenvalue of $D(0)$, so for $\lambda_0 = 0$ we have $m_a\big(\lambda_0,D(\cdot)\big) \neq m_a\big(0,D(\lambda_0)\big)$.
\end{example}

We now turn to the proof of Theorem \ref{multR}. As we will see,  this theorem is an almost immediate consequence of the following Lemma \ref{lemma1}
and its analogues, Lemmas \ref{lemma2-2} and \ref{lemma3}.

\begin{lemma}\label{lemma1}
	Assume that $\Theta_1$ satisfies Hypothesis \ref{hypT}, $\Theta_2 \in \cB\big(\Hp,\Hm\big)$ and $\lambda_0\in\rho(\cL^{\Theta_1})$. If $u_{-1}=0$, $u_0,\ldots,u_{k-1}\in D_L^1(\Omega)$ and $f_0,\ldots,f_{k-1}\in H^{-1/2}(\pO)$ satisfy
	\begin{equation}\label{chain1}
		(L-\lambda_0)u_j=u_{j-1},\quad (\gaN^L+\Theta_1\gaD) u_j=f_j
	\end{equation} 
	for $0\leq j\leq k-1$, then
		 \begin{equation}\label{Ruj}
			(\gaN^L+\Theta_2\gaD) u_j=\sum_{l=0}^j\frac{1}{l!}R^{(l)}_{1,2}(\lambda_0)f_{j-l}
		\end{equation}
	for $0\leq j\leq k-1$.
\end{lemma}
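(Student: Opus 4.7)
My plan is to package the Jordan chain data into polynomial-valued functions of $\lambda$ and then apply the identity $R_{1,2}(\lambda) = I + (\Theta_2 - \Theta_1)N_1(\lambda)$ from Proposition~\ref{propRtoR}(i), together with the analyticity of $N_1(\lambda)$ on $\rho(\cL^{\Theta_1})$ from Lemma~\ref{lemRtoD}.

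Concretely, I will set
\[
u(\lambda) := \sum_{j=0}^{k-1} u_j\,(\lambda-\lambda_0)^j, \qquad f(\lambda) := \sum_{j=0}^{k-1} f_j\,(\lambda-\lambda_0)^j,
\]
both viewed as polynomials with values in $D^1_L(\Omega)$ and $\Hm$ respectively. Using the convention $u_{-1}=0$ and a direct telescoping computation with the first relation in \eqref{chain1}, I will obtain
\[
(L-\lambda)u(\lambda) = -(\lambda-\lambda_0)^k\,u_{k-1},
\]
while the second relation in \eqref{chain1} gives $(\gaN^L+\Theta_1\gaD)u(\lambda) = f(\lambda)$ identically in $\lambda$.

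Next I will introduce the companion analytic family $\tilde u(\lambda)\in D^1_L(\Omega)$ defined, on a small enough disk around $\lambda_0$ contained in $\rho(\cL^{\Theta_1})$, as the unique solution to
\[
(L-\lambda)\tilde u(\lambda)=0, \qquad (\gaN^L+\Theta_1\gaD)\tilde u(\lambda)=f(\lambda),
\]
furnished by Proposition~\ref{TH4.11}; analyticity of $\tilde u(\lambda)$ in $H^1(\Omega)$ is the content (indeed the proof) of Lemma~\ref{lemRtoD}(ii), and by construction $\gaD \tilde u(\lambda) = N_1(\lambda)f(\lambda)$. The difference $u(\lambda)-\tilde u(\lambda)\in\dom(\cL^{\Theta_1})$ satisfies $(\cL^{\Theta_1}-\lambda)(u-\tilde u)=-(\lambda-\lambda_0)^k u_{k-1}$, so
\[
u(\lambda)-\tilde u(\lambda) = -(\lambda-\lambda_0)^k\,(\cL^{\Theta_1}-\lambda)^{-1}u_{k-1} = O\!\big((\lambda-\lambda_0)^k\big)
\]
in $H^1(\Omega)$. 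Taking $\gaD$ and then applying the bounded operator $\Theta_2-\Theta_1$ transfers this $k$-th order vanishing to $\Hm$.

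Finally I will rewrite
\[
(\gaN^L+\Theta_2\gaD)u(\lambda) = f(\lambda) + (\Theta_2-\Theta_1)\gaD u(\lambda) = f(\lambda) + (\Theta_2-\Theta_1)\gaD\tilde u(\lambda) + O\!\big((\lambda-\lambda_0)^k\big),
\]
which, by Proposition~\ref{propRtoR}(i), equals $R_{1,2}(\lambda)f(\lambda) + O\!\big((\lambda-\lambda_0)^k\big)$. The left-hand side is a polynomial of degree at most $k-1$, so matching the coefficient of $(\lambda-\lambda_0)^j$ on each side for $0\leq j\leq k-1$, and using the Cauchy product expansion of $R_{1,2}(\lambda)f(\lambda)$ around $\lambda_0$, produces exactly \eqref{Ruj}.

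The only real subtlety is keeping careful track of the spaces involved so that the $O\!\big((\lambda-\lambda_0)^k\big)$ error in $H^1(\Omega)$ survives the application of $\gaD$ and $\Theta_2-\Theta_1$ into $\Hm$; everything else is bookkeeping with the Taylor expansion of $R_{1,2}$ at $\lambda_0$.
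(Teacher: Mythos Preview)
Your proof is correct and takes a genuinely different route from the paper's argument. The paper proceeds via an auxiliary lemma (Lemma~\ref{lemma2}) that introduces a solution $v(\lambda)$ of the adjoint problem $(L^\dag-\bar\lambda)v=0$ with a carefully chosen Robin datum, applies Green's second identity to derive an integral identity relating $\langle (L-\lambda)u_j,v(\lambda)\rangle$ to boundary pairings with $R_{1,2}(\lambda)f_j$, and then runs a somewhat delicate induction on $j$ using tail remainders $R_{1,2}(l;\lambda)$ of the Taylor expansion of $R_{1,2}$.

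Your approach is more direct and conceptual: you assemble the chain into the generating polynomial $u(\lambda)$, observe that $(L-\lambda)u(\lambda)$ vanishes to order $k$, and compare $u(\lambda)$ with the exact solution $\tilde u(\lambda)$ of the Robin problem with data $f(\lambda)$ via the resolvent identity $u-\tilde u=-(\lambda-\lambda_0)^k(\cL^{\Theta_1}-\lambda)^{-1}u_{k-1}$. This bypasses the adjoint problem and the induction entirely, reducing the lemma to reading off Taylor coefficients of the analytic identity $(\gaN^L+\Theta_2\gaD)u(\lambda)=R_{1,2}(\lambda)f(\lambda)+O((\lambda-\lambda_0)^k)$. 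The paper's route makes the duality with $L^\dag$ explicit, which may be illuminating in other contexts, but your generating-function argument is shorter, avoids the auxiliary Lemma~\ref{lemma2}, and would adapt with no change to prove Lemmas~\ref{lemma2-2} and~\ref{lemma3} directly (rather than deducing them from Lemma~\ref{lemma1} as the paper does).
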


In turn, the proof of  Lemma \ref{lemma1}  is based on the following lemma.

\begin{lemma}\label{lemma2}
	Under the assumptions in Lemma \ref{lemma1}, for $\lambda$ near $\lambda_0$ and any $h\in H^{1/2}(\pO)$ let $v(\lambda)$ denote the unique solution to the boundary value problem
	\begin{equation}\label{chain2}
		(L^\dag-\bar\lambda)v(\lambda)=0,\quad (\gaN^{L^\dag}+\Theta^*_1\gaD) v(\lambda)=-(\Theta^*_2-\Theta^*_1) h. 	
	\end{equation}  Then
	\begin{align}\label{LR}
		\begin{split}
			\langle (L-\lambda)u_{j},v(\lambda)\rangle_{L^2(\Omega)}&={\llangle  R_{1,2}(\lambda)f_j-(\gaN^L+\Theta_2\gaD)u_j , h \rrangle}
		\end{split}	
	\end{align}
	for $0\leq j\leq k-1$.
\end{lemma}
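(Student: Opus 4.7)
The plan is to obtain the identity by two applications of Green's second identity \eqref{e13} combined with the factorization $R_{1,2}(\lambda) = I + (\Theta_2 - \Theta_1) N_1(\lambda)$ from Proposition~\ref{propRtoR}(i).

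First, I note that for $\lambda$ near $\lambda_0$ the problem \eqref{chain2} is well-posed: by \eqref{domadj} we have $\bar\lambda_0 \in \rho\bigl((\cL^{\Theta_1})^*\bigr)$, and since $-(\Theta_2^* - \Theta_1^*)h \in \Hm$, Proposition~\ref{TH4.11} applied to $L^\dag$ yields a unique $v(\lambda) \in H^1(\Omega) \cap D^1_{L^\dag}(\Omega)$.

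Next I apply Green's second identity to $u_j \in D^1_L(\Omega)$ and $v(\lambda) \in D^1_{L^\dag}(\Omega)$. Because $L^\dag v(\lambda) = \bar\lambda v(\lambda)$, the $L^2$ pairing on the right of \eqref{e13} collapses to $\langle (L-\lambda)u_j,v(\lambda)\rangle$, leaving
\[
\langle (L-\lambda)u_j,v(\lambda)\rangle = \overline{\llangle \gaN^{L^\dag} v(\lambda), \gaD u_j \rrangle} - \llangle \gaN^L u_j, \gaD v(\lambda)\rrangle.
\]
Now I substitute the boundary condition $\gaN^{L^\dag} v(\lambda) = -\Theta_1^*\gaD v(\lambda) - (\Theta_2^* - \Theta_1^*)h$ and use the adjoint identity $\overline{\llangle \Theta^* g, f\rrangle} = \llangle \Theta f, g\rrangle$ to rewrite the first term as $-\llangle \Theta_1\gaD u_j, \gaD v(\lambda)\rrangle - \llangle (\Theta_2 - \Theta_1)\gaD u_j, h\rrangle$. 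Combining with the second term and invoking $(\gaN^L + \Theta_1\gaD)u_j = f_j$ collapses the $\Theta_1$ contributions to yield
\[
\langle (L-\lambda)u_j,v(\lambda)\rangle = -\llangle f_j, \gaD v(\lambda)\rrangle - \llangle (\Theta_2 - \Theta_1)\gaD u_j, h\rrangle. \tag{$\ast$}
\]

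Second, I apply Green's second identity to the auxiliary function $w \in D^1_L(\Omega)$ defined as the unique solution of $(L-\lambda)w = 0$ with $(\gaN^L + \Theta_1\gaD)w = f_j$, so that $N_1(\lambda)f_j = \gaD w$. Now both $Lw = \lambda w$ and $L^\dag v(\lambda) = \bar\lambda v(\lambda)$, so the $L^2$ side of \eqref{e13} vanishes and the same manipulation as above produces
\[
0 = -\llangle f_j, \gaD v(\lambda)\rrangle - \llangle (\Theta_2 - \Theta_1)\gaD w, h\rrangle,
\]
which, upon using Proposition~\ref{propRtoR}(i) to identify $(\Theta_2 - \Theta_1)N_1(\lambda) = R_{1,2}(\lambda) - I$, rearranges to $-\llangle f_j, \gaD v(\lambda)\rrangle = \llangle (R_{1,2}(\lambda) - I)f_j, h\rrangle$. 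Substituting this into $(\ast)$ and using $(\gaN^L + \Theta_2\gaD)u_j = f_j + (\Theta_2 - \Theta_1)\gaD u_j$ to combine the remaining terms yields exactly \eqref{LR}.

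I don't anticipate any real obstacle; the only subtlety is bookkeeping the antilinear conventions so that the $\Theta_1^*$ contributions cancel and the $(\Theta_2^* - \Theta_1^*)h$ contributions reassemble into the pairing with $h$ on the right-hand side. The role of the auxiliary function $w$ is precisely to convert the unwanted $\llangle f_j, \gaD v(\lambda)\rrangle$ term into a boundary quantity involving $R_{1,2}(\lambda)$.
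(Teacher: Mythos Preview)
Your proof is correct and follows essentially the same route as the paper. The only cosmetic difference is in how the term $\llangle f_j,\gaD v(\lambda)\rrangle$ is converted: the paper invokes the adjoint relation $(N_{\Theta_1}(\lambda))^*(\Theta_2^*-\Theta_1^*)h=-\gaD v(\lambda)$ directly, whereas you obtain the same identity by a second application of Green's formula to the pair $(w,v(\lambda))$; the subsequent algebra (using $R_{1,2}=I+(\Theta_2-\Theta_1)N_1$ and $(\gaN^L+\Theta_2\gaD)u_j=f_j+(\Theta_2-\Theta_1)\gaD u_j$) is equivalent to the paper's add-and-subtract of $\gaN^L$ terms.
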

\begin{proof}
	Green's identity \eqref{e13}, \eqref{chain1} and \eqref{chain2} yield
	\begin{align*}
		\begin{split}
			\langle (L-\lambda)u_{j},v(\lambda)\rangle_{L^2(\Omega)}&=\langle L u_{j},v(\lambda)\rangle_{L^2(\Omega)}-\langle  u_{j}, L^\dag v(\lambda)\rangle_{L^2(\Omega)}\\
			&=\overline{\llangle \gaN^{L^\dag}v(\lambda),  \gaD u_j \rrangle}-{\llangle \gaN^{L}u_j,  \gaD v(\lambda) \rrangle}\\
			&=\overline{\llangle -\Theta^*_1 \gaD v(\lambda)-(\Theta^*_2-\Theta^*_1) h,  \gaD u_j \rrangle}
			-{\llangle -\Theta_1\gaD u_j+f_j,  \gaD v(\lambda) \rrangle}\\
			&=-\llangle (\Theta_2-\Theta_1) \gaD u_j,  h \rrangle-{\llangle f_j,  \gaD v(\lambda) \rrangle}.
		\end{split}	
	\end{align*}
	Since $\lambda_0\in\rho(\cL^{\Theta_1})$, for $\lambda$ near $\lambda_0$ we also have $\lambda\in\rho(\cL^{\Theta_1})$ and so $N_{\Theta_1}(\lambda)$ is well-defined. Moreover, \eqref{chain2} yields $(N_{\Theta_1}(\lambda))^*(\Theta^*_2-\Theta^*_1) h=-\gaD v(\lambda)$. Therefore,
	\begin{align*}
		{\llangle f_j,  \gaD v(\lambda) \rrangle}&=-{\llangle f_j,  (N_{\Theta_1}(\lambda))^*(\Theta^*_2-\Theta^*_1) h \rrangle}=-\overline{\llangle  (\Theta^*_2-\Theta^*_1) h, N_{\Theta_1}(\lambda)f_j \rrangle}\\
		&=-\llangle   (\Theta_2-\Theta_1)N_{\Theta_1}(\lambda)f_j,  h \rrangle=-\llangle   (\Theta_2-\Theta_1)\gaD w_j,  h \rrangle,
	\end{align*}
	where $w_j(\lambda)$ denotes the solution to the boundary value problem
	\begin{equation}
		(L-\lambda)w_j(\lambda)=0,\quad (\gaN^L+\Theta_1\gaD) w_j(\lambda)=f_j. 	
	\end{equation} 
	Combining these equations, and adding and subtracting $\gaN^L$-terms, we get
	\begin{align*}
		\begin{split}
			&\langle (L-\lambda)u_{j},v(\lambda)\rangle_{L^2(\Omega)}=-\llangle   (\Theta_2-\Theta_1)(\gaD u_j-\gaD w_j),  h \rrangle\\
			&=\llangle  (\gaN^L+\Theta_1 \gaD) u_j,  h \rrangle-\llangle  (\gaN^L+\Theta_2 \gaD) u_j,  h \rrangle-\llangle  (\gaN^L+\Theta_1 \gaD) w_j,  h \rrangle+\llangle  (\gaN^L+\Theta_2 \gaD) w_j,  h \rrangle\\
			&=\llangle  f_j,  h \rrangle-\llangle  (\gaN^L+\Theta_2 \gaD) u_j,  h \rrangle+\llangle R_{1,2}(\lambda)f_j,  h \rrangle-\llangle  f_j,  h \rrangle\\
			&=\llangle R_{1,2}(\lambda)f_j
			- (\gaN^L+\Theta_2 \gaD) u_j,  h \rrangle,
		\end{split}	
	\end{align*}
which gives \eqref{LR}.
\end{proof}

\begin{proof}[Proof of Lemma \ref{lemma1}]
	Adding and subtracting $\lambda_0$ and using \eqref{chain1}, we rewrite \eqref{LR} as
	\begin{align}\label{LR1}
		\begin{split}
			\langle u_{j-1},v(\lambda)\rangle_{L^2(\Omega)}+{\llangle  (\gaN^L+\Theta_2\gaD)u_j , h \rrangle}={\llangle  R_{1,2}(\lambda)f_j , h \rrangle}+(\lambda-\lambda_0)&\langle u_{j},v(\lambda)\rangle_{L^2(\Omega)}.
		\end{split}
	\end{align}
	In particular, for $j=0$ the relation $u_{-1}=0$ yields
	\begin{align}\label{LR2}
		\begin{split}
			{\llangle  (\gaN^L+\Theta_2\gaD)u_0 , h \rrangle}&={\llangle  R_{1,2}(\lambda)f_0 , h \rrangle}+(\lambda-\lambda_0)\langle u_{0},v(\lambda)\rangle_{L^2(\Omega)}.
		\end{split}
	\end{align}
	Since $h$ is arbitrary, letting $\lambda=\lambda_0$ in \eqref{LR2} proves \eqref{Ruj} for $j=0$. 
	
	On the other hand, setting $\lambda=\lambda_0$ in \eqref{LR1} gives
	\begin{align}\label{LR3}
		\begin{split}
			\langle u_{j-1},v(\lambda_0)\rangle_{L^2(\Omega)}+{\llangle  (\gaN^L+\Theta_2\gaD)u_j , h \rrangle}={\llangle  R_{1,2}(\lambda_0)f_j , h \rrangle}.
		\end{split}
	\end{align}
	Moreover, subtracting \eqref{LR3} from \eqref{LR1} yields
	\begin{align}\label{minus}
		\begin{split}
			\llangle  (R_{1,2}(\lambda)-R_{1,2}(\lambda_0))f_j , h \rrangle=\langle u_{j-1},(v(\lambda)-v(\lambda_0))\rangle_{L^2(\Omega)}-(\lambda-\lambda_0)\langle &u_{j},v(\lambda)\rangle_{L^2(\Omega)}.
		\end{split}
	\end{align}
	To begin the proof of \eqref{Ruj} for $1\leq j\leq k-1$, we decompose
	\begin{equation*}
		R_{1,2}(\lambda)=\sum_{s=0}^{l-1}\frac{1}{s!}(\lambda-\lambda_0)^sR^{(s)}_{1,2}(\lambda_0)+R_{1,2}(l;\lambda),
	\end{equation*}
	where we have introduced $R_{1,2}(l;\lambda):=\sum_{s=l}^{\infty}\frac{1}{s!}(\lambda-\lambda_0)^sR^{(s)}_{1,2}(\lambda_0)$.
	In particular,
	\begin{align}\label{R1}
		&R_{1,2}(1;\lambda)=R_{1,2}(\lambda)-R_{1,2}(\lambda_0),\\\label{limR}
		&\lim_{\lambda\to\lambda_0}(\lambda-\lambda_0)^{-l}R_{1,2}(l;\lambda)=\frac{1}{l!}R_{1,2}^{(l)}(\lambda_0),\\\label{l+1}
		&R_{1,2}(l+1;\lambda)=R_{1,2}(l;\lambda)-\frac{1}{l!}(\lambda-\lambda_0)^lR^{(l)}_{1,2}(\lambda_0).
	\end{align}
	The proof of \eqref{Ruj} for $1\leq j\leq k-1$ is based on \eqref{LR1} and the following assertion:
	\begin{equation}\label{assert}
		-\langle u_{j-1},v(\lambda)\rangle_{L^2(\Omega)}=\sum_{l=1}^{j}(\lambda-\lambda_0)^{-l}{\llangle  R_{1,2}(l;\lambda)f_{j-l} , h \rrangle},\quad1\leq j\leq k.
	\end{equation}
Indeed, assuming \eqref{assert} and letting $\lambda\to\lambda_0$, \eqref{limR} yields
	\begin{equation}\label{assert0}
		-\langle u_{j-1},v(\lambda_0)\rangle_{L^2(\Omega)}=\sum_{l=1}^{j}\frac{1}{l!}{\llangle  R_{1,2}^{(l)}(\lambda_0)f_{j-l} , h \rrangle},\quad1\leq j\leq k.
	\end{equation}
	Adding \eqref{LR3} and \eqref{assert0} and using that $h$ is arbitrary yields \eqref{Ruj} for $1\leq j\leq k-1$. 
	
	So, it remains to prove \eqref{assert}. We use induction. For $j=1$ we use \eqref{minus} and \eqref{R1} as follows:
	\begin{align*}
		(\lambda-\lambda_0)^{-1}{\llangle  R_{1,2}(1;\lambda)f_{0} , h \rrangle} &=	(\lambda-\lambda_0)^{-1}({\llangle  R_{1,2}(\lambda)f_{0} , h \rrangle}-{\llangle  R_{1,2}(\lambda_0)f_{0} , h \rrangle})\\
		&=(\lambda-\lambda_0)^{-1}(	-(\lambda-\lambda_0)\langle u_{0},v(\lambda)\rangle_{L^2(\Omega)}+0) \\
		&=-\langle u_{0},v(\lambda)\rangle_{L^2(\Omega)},
	\end{align*}
	which gives \eqref{assert} for $j=1$.
	
	Let us assume that  \eqref{assert} holds for $j=m$. Taking the limit in \eqref{assert} and using  \eqref{limR}, we obtain
	\begin{equation}\label{m}
		-\langle u_{m-1},v(\lambda_0)\rangle_{L^2(\Omega)}=\sum_{l=1}^{m}\frac{1}{l!}{\llangle  R_{1,2}^{(l)}(\lambda_0)f_{j-l} , h \rrangle}.
	\end{equation}
	
	Now we prove  \eqref{assert} for $j=m+1$. Changing the summation index and using \eqref{R1} and  \eqref{l+1} yields
	\begin{align*}
		&\sum_{l=1}^{m+1}(\lambda-\lambda_0)^{-l}  R_{1,2}(l;\lambda)f_{m+1-l}=\sum_{l=2}^{m+1}(\lambda-\lambda_0)^{-l}  R_{1,2}(l;\lambda)f_{m+1-l}+(\lambda-\lambda_0)^{-1}  R_{1,2}(1;\lambda)f_{m}\\
		&=(\lambda-\lambda_0)^{-1}\left(\sum_{l=1}^{m}(\lambda-\lambda_0)^{-l}  R_{1,2}(l+1;\lambda)f_{m-l}+ R_{1,2}(\lambda)f_{m}-R_{1,2}(\lambda_0)f_{m}\right)\\
		&=(\lambda-\lambda_0)^{-1}\left(\sum_{l=1}^{m}(\lambda-\lambda_0)^{-l}  R_{1,2}(l;\lambda)f_{m-l}-\sum_{l=1}^{m}\frac{1}{l!}  R_{1,2}^{(l)}(\lambda_0)f_{m-l}+ R_{1,2}(\lambda)f_{m}-R_{1,2}(\lambda_0)f_{m}\right).
	\end{align*}
	Using the induction assumption, \eqref{minus} and \eqref{m}, we have
	\begin{align*}
		&\sum_{l=1}^{m+1}(\lambda-\lambda_0)^{-l}{\llangle  R_{1,2}(l;\lambda)f_{m+1-l} , h \rrangle}=(\lambda-\lambda_0)^{-1}\big(-\langle u_{m-1},v(\lambda)\rangle_{L^2(\Omega)}+\langle u_{m-1},v(\lambda_0)\rangle_{L^2(\Omega)}\\
		&+\langle u_{m-1},(v(\lambda)-v(\lambda_0))\rangle_{L^2(\Omega)}-(\lambda-\lambda_0)\langle u_{m},v(\lambda)\rangle_{L^2(\Omega)}\big)=-\langle u_{m},v(\lambda)\rangle_{L^2(\Omega)}.
	\end{align*}
	This proves \eqref{assert} for $j=m+1$ and completes the proof of Lemma \ref{lemma1}.
\end{proof}

Our next result is a direct corollary of Lemma \ref{lemma1}; it shows that the Jordan chains of $\cL^{\Theta_2}$ and $R_{1,2}$ are in one-to-one correspondence.

\begin{theorem}\label{Rchain}
	Assume that $\Theta_1$ and $\Theta_2$ satisfy Hypothesis \ref{hypT}. Let $\lambda_0\in\sigma(\cL^{\Theta_2})\cap\rho(\cL^{\Theta_1})$ and consider the analytic function
		$\lambda\mapsto R_{1,2}(\lambda)$ from $\rho(\cL^{\Theta_1})$ into $\mathcal{B}\big(H^{-1/2}(\pO)\big)$.
	\begin{enumerate}
		\item Let $u_0,\ldots,u_{k-1}\in\dom(\cL^{\Theta_2})$ be a Jordan chain of length $k$ for $\cL^{\Theta_2}$ at $\lambda_0$, and define 
		\begin{equation*}
			f_j:=(\gaN^L+\Theta_1\gaD) u_j\in H^{-1/2}(\pO),\quad 0\leq j\leq k-1. 
		\end{equation*}
	Then the vectors $f_0,\ldots,f_{k-1}$ form a Jordan chain of length $k$ for $R_{1,2}$ at $\lambda_0$.
		\item Let  $f_0,\ldots,f_{k-1}\in H^{-1/2}(\pO)$ be a Jordan chain of length $k$ for $R_{1,2}$ at $\lambda_0$. Set $u_{-1}=0$ and for $0\leq j\leq k-1$ let $u_j\in\cD_L^1(\Omega)$ be the unique solution to the boundary value problem
		\begin{equation}\label{bvpu}
			(L-\lambda_0)u_j=u_{j-1},\quad (\gaN^L+\Theta_1\gaD) u_j=f_j.
		\end{equation}
		Then the vectors $u_0,\ldots,u_{k-1}$ form a Jordan chain of length $k$ for $\cL^{\Theta_2}$ at $\lambda_0$. 
	\end{enumerate}
\end{theorem}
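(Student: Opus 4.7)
The plan is to use Lemma \ref{lemma1} as a bridge: it says precisely that if $u_j \in D^1_L(\Omega)$ and $f_j \in H^{-1/2}(\pO)$ satisfy
\[
(L-\lambda_0)u_j = u_{j-1}, \qquad (\gaN^L + \Theta_1 \gaD) u_j = f_j, \qquad u_{-1} = 0,
\]
then
\[
(\gaN^L + \Theta_2 \gaD) u_j \;=\; \sum_{l=0}^j \frac{1}{l!} R_{1,2}^{(l)}(\lambda_0) f_{j-l}, \qquad 0 \le j \le k-1.
\]
Observe that the right-hand side is exactly the expression appearing in the Jordan chain relation \eqref{kchain} for the pencil $R_{1,2}$, and the left-hand side vanishes precisely when $u_j \in \dom(\cL^{\Theta_2})$. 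This symmetry is what makes both directions of the theorem immediate corollaries.

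For part (1), assume $u_0, \ldots, u_{k-1} \in \dom(\cL^{\Theta_2})$ form a Jordan chain for $\cL^{\Theta_2}$ at $\lambda_0$, so $(L-\lambda_0) u_j = u_{j-1}$ with $u_{-1} = 0$ and $(\gaN^L + \Theta_2 \gaD) u_j = 0$. Setting $f_j := (\gaN^L + \Theta_1 \gaD) u_j$, the hypotheses of Lemma \ref{lemma1} are met, and the left-hand side of \eqref{Ruj} vanishes, yielding
\[
\sum_{l=0}^j \frac{1}{l!} R_{1,2}^{(l)}(\lambda_0) f_{j-l} = 0, \qquad 0 \le j \le k-1.
\]
It remains to check $f_0 \neq 0$. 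Since $u_0 \in \ker(\cL^{\Theta_2} - \lambda_0)$ is nonzero and $\lambda_0 \in \rho(\cL^{\Theta_1})$, Lemma \ref{LNFred} says that $(\gaN^L + \Theta_1\gaD)$ is a bijection from $\ker(\cL^{\Theta_2}-\lambda_0)$ onto $\ker R_{1,2}(\lambda_0)$, so $f_0 \neq 0$.

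For part (2), let $f_0, \ldots, f_{k-1}$ be a Jordan chain for $R_{1,2}$ at $\lambda_0$. Because $\lambda_0 \in \rho(\cL^{\Theta_1})$, Proposition \ref{TH4.11} produces unique $u_j \in H^1(\Omega)$ solving \eqref{bvpu} recursively (the source $u_{j-1} \in L^2(\Omega) \subset H^1(\Omega)^*$ is available at each step because the previous $u_{j-1}$ lies in $H^1(\Omega)$); then $u_j \in D^1_L(\Omega)$ since $L u_j = \lambda_0 u_j + u_{j-1} \in L^2(\Omega)$. Applying Lemma \ref{lemma1} gives
\[
(\gaN^L + \Theta_2 \gaD) u_j = \sum_{l=0}^j \frac{1}{l!} R_{1,2}^{(l)}(\lambda_0) f_{j-l} = 0,
\]
the last equality being the Jordan chain relation for the $f_j$. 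Combined with $L u_j \in L^2(\Omega)$, this shows $u_j \in \dom(\cL^{\Theta_2})$ via Proposition \ref{prop:domLmu}, and rewriting \eqref{bvpu} as $(\cL^{\Theta_2} - \lambda_0) u_j = u_{j-1}$ gives the Jordan chain relation for $\cL^{\Theta_2}$. Finally, $u_0 \neq 0$ because $f_0 = (\gaN^L + \Theta_1 \gaD) u_0 \neq 0$.

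The main work has already been carried out in Lemma \ref{lemma1}; the only remaining subtlety is the verification of nontriviality of $u_0$ and $f_0$, which is handled cleanly by Lemma \ref{LNFred}, and the recursive solvability of \eqref{bvpu} inside $D^1_L(\Omega)$, which follows from Proposition \ref{TH4.11} together with the fact that $u_{j-1} \in L^2(\Omega)$ at each stage.
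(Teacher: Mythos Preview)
Your proof is correct and follows essentially the same route as the paper's: both parts are deduced directly from Lemma~\ref{lemma1}, with the Jordan chain relation for $R_{1,2}$ matching the vanishing of $(\gaN^L+\Theta_2\gaD)u_j$. The only cosmetic differences are that you invoke Lemma~\ref{LNFred} for $f_0\neq 0$ (the paper argues this directly from $\lambda_0\in\rho(\cL^{\Theta_1})$) and you spell out the recursive solvability via Proposition~\ref{TH4.11} and domain membership via Proposition~\ref{prop:domLmu}, which the paper leaves implicit.
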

\begin{proof}
	(1)	We need to prove that $f_0 \neq 0$ and 
	\begin{align}\label{nonR}
		\sum_{l=0}^j\frac{1}{l!}R_{1,2}^{(l)}(\lambda_0)f_{j-l}&=0
	\end{align}
for $0\leq j\leq k-1.$
Since $\lambda_0\in\rho(\cL^{\Theta_1})$, we have $f_0\neq0$ as otherwise $u_0\in\cD_L^1(\Omega)$  would have solved the boundary value problem $(L-\lambda_0)u_0=0$, $(\gaN^L+\Theta_1\gaD) u_0=0$, implying that $\lambda_0\in\sigma(\cL^{\Theta_1})$. Since \eqref{chain1} holds, assertion \eqref{nonR} follows directly from \eqref{Ruj} in Lemma \ref{lemma1} because $u_j\in\dom(\cL^{\Theta_2})$.

	(2) From the definition of a Jordan chain for $R_{1,2}$ we have \eqref{nonR}, while \eqref{bvpu} is \eqref{chain1}. The right-hand side of formula  \eqref{Ruj} vanishes, implying that  $(\gaN^{L}+\Theta_2\gaD)u_j=0$  and so $u_j\in\dom(\cL^{\Theta_2})$ for $0\leq j\leq k-1$, thus finishing the proof. 
\end{proof}

We will now relate the Jordan chains for $\cL^{D}$ and $N_{\Theta_1}$. We start with the following lemma, which is an analogue and an easy consequence of Lemma \ref{lemma1}.

\begin{lemma}\label{lemma2-2}
	Assume that $\Theta_1$ satisfies Hypothesis \ref{hypT} and $\lambda_0\in\rho(\cL^{\Theta_1})$.  If $u_{-1}=0$, $u_0,\ldots,u_{k-1}\in D_L^1(\Omega)$ and $f_0,\ldots,f_{k-1}\in H^{-1/2}(\pO)$ satisfy
%
%
	\begin{equation}\label{chain4}
		(L-\lambda_0)u_j=u_{j-1},\quad (\gaN^L+\Theta_1\gaD) u_j=f_j
	\end{equation}
	for $0 \leq j \leq k-1$, then
		 \begin{equation}\label{Ruj2}
			\gaD u_j=\sum_{l=0}^j\frac{1}{l!}N^{(l)}_{\Theta_1}(\lambda_0)f_{j-l}
		\end{equation}
for all $0\leq j\leq k-1$.
\end{lemma}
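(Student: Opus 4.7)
The plan is to sidestep a direct induction by reducing this to Lemma \ref{lemma1}. The crucial observation is that Lemma \ref{lemma1} is stated for an arbitrary bounded perturbation $\Theta_2 \in \cB(\Hp,\Hm)$, without requiring it to satisfy Hypothesis \ref{hypT}. I will therefore shift $\Theta_1$ by a bounded injective operator $A \colon \Hp \to \Hm$—the Riesz isomorphism $\R$ is a natural choice—and set $\Theta_2 := \Theta_1 + A$.

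With this choice, Proposition \ref{propRtoR}(i) factors the Robin-to-Robin map as $R_{1,2}(\lambda) = I_{\Hm} + A N_{\Theta_1}(\lambda)$. Since $A$ is $\lambda$-independent, each derivative $R^{(l)}_{1,2}(\lambda_0)$ for $l \geq 1$ is simply $A N^{(l)}_{\Theta_1}(\lambda_0)$, while at $l=0$ one has $R_{1,2}(\lambda_0) = I + A N_{\Theta_1}(\lambda_0)$. Substituting into the right-hand side of \eqref{Ruj} and pulling $A$ out of the sum yields $f_j + A \sum_{l=0}^j \tfrac{1}{l!} N^{(l)}_{\Theta_1}(\lambda_0) f_{j-l}$.

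On the other hand, the left-hand side of \eqref{Ruj} evaluates, via the definition of $\Theta_2$ and hypothesis \eqref{chain4}, to $(\gaN^L + \Theta_1 \gaD)u_j + A \gaD u_j = f_j + A \gaD u_j$. Equating the two sides, cancelling the common $f_j$, and invoking injectivity of $A$ (indeed $\R$ is an isomorphism onto $\Hm$) produces exactly \eqref{Ruj2}.

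I do not anticipate any real obstacle here, as the heavy lifting has already been done in Lemma \ref{lemma1}; the content of the present lemma is encoded in the special form of $R_{1,2}$ established in Proposition \ref{propRtoR}(i). Any chain-level identity for $R_{1,2}$ translates at once into one for $N_{\Theta_1}$ whenever the difference $\Theta_2 - \Theta_1$ is chosen to be an injective bounded operator.
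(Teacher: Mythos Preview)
Your proof is correct and follows essentially the same route as the paper: both reduce to Lemma~\ref{lemma1} by choosing an auxiliary $\Theta_2$ and exploiting the factorization $R_{1,2}(\lambda) = I + (\Theta_2-\Theta_1)N_{\Theta_1}(\lambda)$ from Proposition~\ref{propRtoR}(i). The paper takes $\Theta_2 = \nu\J$, divides through by $\nu$, and passes to the limit $\nu\to\infty$ to kill the stray $\Theta_1$-terms before invoking injectivity of $\J$; your choice $\Theta_2 = \Theta_1 + \R$ makes the difference $\Theta_2-\Theta_1 = \R$ directly, so the result follows from a single cancellation and injectivity of $\R$ without any limiting argument --- a small but genuine simplification.
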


\begin{proof}
Fix $\nu>0$ and let $\Theta_2=\nu\J$. We will apply Lemma \ref{lemma1} with this $\Theta_2$.  From Proposition \ref{propRtoR} we have $R_{1,2}(\lambda_0)= I_{H^{-1/2}}+(\nu\J - \Theta_1) N_{\Theta_1}(\lambda_0)$. Using Lemma \ref{lemma1}, we compute for $0\leq j\leq k-1$
\begin{align*}
	(\gaN^L+\nu\J\gaD) u_j&=\sum_{l=0}^j\frac{1}{l!}R^{(l)}_{12}(\lambda_0)f_{j-l}\\
	&=\big(I_{H^{-1/2}}+(\nu\J - \Theta_1) N_{\Theta_1}(\lambda_0)\big)f_j+\sum_{l=1}^j\frac{1}{l!}(\nu\J - \Theta_1)N^{(l)}_{\Theta_1}(\lambda_0)f_{j-l}.
\end{align*}
Dividing both sides by $\nu$, passing to the limit as $\nu\to\infty$ and using injectivity of $\J$ gives \eqref{Ruj2}.
\end{proof}

As before, the following one-to-one correspondence of the Jordan chains for $\cL^{D}$ and $N_{\Theta_1}$ is an easy consequence of Lemma \ref{lemma2-2}. We omit the proof as it is identical to the proof of Theorem \ref{Rchain} with Lemma \ref{lemma1} replaced by Lemma \ref{lemma2-2}.

\begin{theorem}\label{Nchain}
	Assume that $\Theta_1$ satisfies Hypothesis \ref{hypT}. Let $\lambda_0\in\sigma(\cL^D)\cap\rho(\cL^{\Theta_1})$ and consider the analytic function
		$\lambda\mapsto N_{\Theta_1}(\lambda)$
	from $\rho(\cL^{\Theta_1})$ into $\mathcal{B}(H^{-1/2}(\pO),H^{1/2}(\pO))$.
	\begin{enumerate}
		\item Let $u_0,\ldots,u_{k-1}\in\dom(\cL^{D})$ be a Jordan chain of length $k$ for $\cL^{D}$ at $\lambda_0$, and define 
		\begin{equation*}
			f_j:=(\gaN^L+\Theta_1\gaD) u_j\in H^{-1/2}(\pO)\quad \hbox{for}\,\, 0\leq j\leq k-1.
		\end{equation*}
		Then the vectors $f_0,\ldots,f_{k-1}$ form a Jordan chain of length $k$ for $N_{\Theta_1}(\cdot)$ at $\lambda_0$.
		\item Let  $f_0,\ldots,f_{k-1}\in H^{-1/2}(\pO)$ be a Jordan chain of length $k$ for $N_{\Theta_1}(\cdot)$ at $\lambda_0$. Set $u_{-1}=0$ and for $0 \leq j \leq k-1$ let $u_j\in\cD_L^1(\Omega)$ be the unique solution of the boundary value problem
		\begin{equation}\label{bvpuN}
			(L-\lambda_0)u_j=u_{j-1},\quad (\gaN^L+\Theta_1\gaD) u_j=f_j.
		\end{equation}
		Then the vectors $u_0,\ldots,u_{k-1}$ form a Jordan chain of length $k$ for $\cL^{D}$ at $\lambda_0$. 
	\end{enumerate}
\end{theorem}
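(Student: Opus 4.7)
The plan is to mirror the proof of Theorem~\ref{Rchain} step by step, using Lemma~\ref{lemma2-2} in place of Lemma~\ref{lemma1}. The conceptual point is that a function $u \in D_L^1(\Omega)$ lies in $\dom(\cL^D)$ precisely when $\gaD u = 0$, so the Dirichlet condition plays exactly the role that the condition $(\gaN^L + \Theta_2 \gaD) u = 0$ played for $\dom(\cL^{\Theta_2})$ in the Robin case. Since Lemma~\ref{lemma2-2} converts the identity $\gaD u_j = \sum_{l=0}^j \tfrac{1}{l!} N^{(l)}_{\Theta_1}(\lambda_0) f_{j-l}$ into a direct link between the Dirichlet traces of the $u_j$ and the Jordan chain relation for $N_{\Theta_1}$, both implications follow almost immediately.

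For part (1), a Jordan chain $u_0, \ldots, u_{k-1}$ in $\dom(\cL^D)$ at $\lambda_0$ satisfies $(L-\lambda_0) u_j = u_{j-1}$ (with $u_{-1} = 0$) together with $\gaD u_j = 0$. Setting $f_j := (\gaN^L + \Theta_1 \gaD) u_j = \gaN^L u_j$, the hypotheses of Lemma~\ref{lemma2-2} are met, and \eqref{Ruj2} yields $\sum_{l=0}^j \tfrac{1}{l!} N^{(l)}_{\Theta_1}(\lambda_0) f_{j-l} = \gaD u_j = 0$, which is precisely the Jordan chain condition \eqref{kchain} for the pencil $N_{\Theta_1}(\cdot)$. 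It remains to verify $f_0 \neq 0$: if $f_0 = 0$ then $u_0$ would be a nonzero solution of $(L-\lambda_0) u_0 = 0$ satisfying $(\gaN^L + \Theta_1 \gaD) u_0 = 0$, contradicting $\lambda_0 \in \rho(\cL^{\Theta_1})$.

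For part (2), Proposition~\ref{TH4.11} (applied iteratively, using $u_{j-1} \in L^2(\Omega) \subset H^1(\Omega)^*$ as the interior source) furnishes unique solutions $u_j \in \cD_L^1(\Omega)$ to \eqref{bvpuN}. Lemma~\ref{lemma2-2} applied with this chain of data, combined with the Jordan chain condition on $f_0, \ldots, f_{k-1}$, gives $\gaD u_j = 0$ for each $0 \leq j \leq k-1$; thus $u_j \in H^1_0(\Omega)$ and hence $u_j \in \dom(\cL^D)$. The interior equation in \eqref{bvpuN} then reads $(\cL^D - \lambda_0) u_j = u_{j-1}$, which is the Jordan chain condition for $\cL^D$. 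Nontriviality $u_0 \neq 0$ is forced by $f_0 \neq 0$ via the boundary equation $(\gaN^L + \Theta_1 \gaD) u_0 = f_0$.

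No step presents a real obstacle: Lemma~\ref{lemma2-2} has already encoded the essential analytic content, and the only items requiring separate verification are (i) that the condition $\gaD u_j = 0$ translates correctly between the two sides of the correspondence, which is immediate, and (ii) the nontriviality of $f_0$ in (1) and of $u_0$ in (2), each of which follows in one line from $\lambda_0 \in \rho(\cL^{\Theta_1})$ and from the hypothesis $f_0 \neq 0$, respectively.
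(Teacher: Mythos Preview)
Your proposal is correct and matches the paper's approach exactly: the paper omits the proof, stating only that it is identical to the proof of Theorem~\ref{Rchain} with Lemma~\ref{lemma1} replaced by Lemma~\ref{lemma2-2}, which is precisely what you have carried out.
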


We will now relate the Jordan chains for $\cL^{\Theta_2}$ and $M_{\Theta_2}$. Again, we begin with an analogue and a consequence of Lemmas \ref{lemma1} and \ref{lemma2-2}.

\begin{lemma}\label{lemma3}
	Assume that $\Theta_2$ satisfies Hypothesis \ref{hypT} and $\lambda_0\in\rho(\cL^{D})$. If $u_{-1}=0$, $u_0,\ldots,u_{k-1}\in D_L^1(\Omega)$ and $f_0,\ldots,f_{k-1}\in H^{-1/2}(\pO)$ satisfy
%
%
	\begin{equation}\label{chain3}
		(L-\lambda_0)u_j=u_{j-1},\,\,\,\,\gaD u_j=f_j
	\end{equation}
	for $0 \leq j \leq k-1$, then,
	\begin{equation}\label{Muj}
			(\gaN^L+\Theta_2\gaD) u_j=\sum_{l=0}^j\frac{1}{l!}M^{(l)}_{\Theta_2}(\lambda_0)f_{j-l}
		\end{equation}
holds for $0\leq j\leq k-1$.
\end{lemma}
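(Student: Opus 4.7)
The plan is to derive Lemma~\ref{lemma3} from Lemmas~\ref{lemma1} and~\ref{lemma2-2} by introducing an auxiliary boundary operator $\Theta_1$ whose associated Robin realization has $\lambda_0$ in its resolvent set. The key observation is the factorization $R_{\Theta_1,\Theta_2}(\lambda)=M_{\Theta_2}(\lambda)N_{\Theta_1}(\lambda)$, which follows from Proposition~\ref{propRtoR}(ii) together with $M_{\Theta_1}(\lambda)N_{\Theta_1}(\lambda)=I$, combined with the Leibniz rule for Taylor coefficients of an operator product; these two tools let us translate the ``$R$-form'' identity provided by Lemma~\ref{lemma1} into the ``$M$-form'' identity~\eqref{Muj}.

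First, by Lemma~\ref{T0} I pick a finite-rank operator $\Theta_1\colon\Hp\to\Hm$ with $\lambda_0\in\rho(\cL^{\Theta_1})$; being compact, $\Theta_1$ satisfies Hypothesis~\ref{hypT}. Setting $g_j:=(\gaN^L+\Theta_1\gaD)u_j\in\Hm$ for $0\leq j\leq k-1$ turns $\{u_j\}$ into a chain of the form handled by Lemmas~\ref{lemma1} and~\ref{lemma2-2}. Applying Lemma~\ref{lemma1} to this chain yields
\[
(\gaN^L+\Theta_2\gaD)u_j \;=\; \sum_{l=0}^j \frac{1}{l!}\,R_{\Theta_1,\Theta_2}^{(l)}(\lambda_0)\,g_{j-l},
\]
and applying Lemma~\ref{lemma2-2} gives
\[
f_j=\gaD u_j \;=\; \sum_{l=0}^j \frac{1}{l!}\,N_{\Theta_1}^{(l)}(\lambda_0)\,g_{j-l}.
\]

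Since $\lambda_0\in\rho(\cL^D)\cap\rho(\cL^{\Theta_1})$, both $M_{\Theta_2}(\lambda)$ and $N_{\Theta_1}(\lambda)$ are analytic in a neighborhood of $\lambda_0$, and the factorization $R_{\Theta_1,\Theta_2}(\lambda)=M_{\Theta_2}(\lambda)N_{\Theta_1}(\lambda)$ holds there. The Leibniz rule then gives
\[
\frac{1}{l!}\,R_{\Theta_1,\Theta_2}^{(l)}(\lambda_0) \;=\; \sum_{s+t=l}\frac{1}{s!}\,M_{\Theta_2}^{(s)}(\lambda_0)\cdot\frac{1}{t!}\,N_{\Theta_1}^{(t)}(\lambda_0).
\]
Substituting this into the first displayed identity, interchanging the order of summation, and using the second displayed identity to recognize the inner $t$-sum as $f_{j-s}$, I obtain
\[
(\gaN^L+\Theta_2\gaD)u_j \;=\; \sum_{s=0}^j \frac{1}{s!}\,M_{\Theta_2}^{(s)}(\lambda_0)\,f_{j-s},
\]
which is exactly~\eqref{Muj}. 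The only nontrivial ingredient beyond Lemmas~\ref{lemma1} and~\ref{lemma2-2} is the existence of the auxiliary $\Theta_1$, which is precisely what Lemma~\ref{T0} provides; no further analytic obstacle is anticipated, as the rest of the argument is a purely formal manipulation of Taylor expansions.
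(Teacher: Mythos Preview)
Your proof is correct and follows the same overall strategy as the paper: pick an auxiliary $\Theta_1$ via Lemma~\ref{T0}, set $g_j=(\gaN^L+\Theta_1\gaD)u_j$, invoke Lemmas~\ref{lemma1} and~\ref{lemma2-2}, and then combine them via a product factorization and the Leibniz rule.

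The one place you differ is in the choice of factorization. You use $R_{\Theta_1,\Theta_2}=M_{\Theta_2}N_{\Theta_1}$ and expand it directly, so that after interchanging sums the inner $N_{\Theta_1}$-sum is immediately recognized as $f_{j-s}$ by Lemma~\ref{lemma2-2}. The paper instead uses $M_{\Theta_2}=R_{\Theta_1,\Theta_2}M_{\Theta_1}$, which reduces the problem to showing the intermediate identity $g_j=\sum_{l}\tfrac{1}{l!}M_{\Theta_1}^{(l)}(\lambda_0)f_{j-l}$; that identity is then verified by a second Leibniz argument using $M_{\Theta_1}N_{\Theta_1}=I$. Your route is therefore one step shorter and arguably cleaner, while the paper's route has the minor advantage of isolating the $\Theta_1$-version of the lemma as a separate statement. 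Both are equally valid.
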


\begin{proof}
	Pick an auxiliary operator $\Theta_1$ satisfying Hypothesis \ref{hypT} and such that $\lambda_0\in\rho(\cL^{\Theta_1})$, using Lemma \ref{T0}. Then the operators $R_{1,2}$, $N_{\Theta_1}$ and $M_{\Theta_1}$ are all well-defined for $\lambda$ near $\lambda_0$, and
	\begin{equation}\label{M1N1}
		M_{\Theta_2}(\lambda)=R_{1,2}(\lambda)M_{\Theta_1},\quad M_{\Theta_1}N_{\Theta_1}=I_{\Hm}
	\end{equation}
by Proposition \ref{propRtoR}. We introduce auxiliary vectors 
\begin{equation}\label{gi}
	g_{j}:=(\gaN^L+\Theta_1\gaD)u_j=\gaN^L u_j+\Theta_1f_j,\quad 0\leq j\leq k-1.
\end{equation}
By Lemma \ref{lemma1} with $f_j$ replaced by $g_j$, we know that
\begin{equation}\label{tgi}
		(\gaN^L+\Theta_2\gaD) u_j=\sum_{l=0}^j\frac{1}{l!}R^{(l)}_{12}(\lambda_0)g_{j-l}, \quad 0\leq j\leq k-1.
\end{equation}
Thus, to verify \eqref{Muj} we have to show that the right-hand sides of  \eqref{Muj} and \eqref{tgi} are equal. Using the first equality in \eqref{M1N1}, the product rule, changing the order of summation, and relabeling yields
\begin{align}
	\begin{split}
		\sum_{l=0}^j\frac{1}{l!}M^{(l)}_{\Theta_2}(\lambda_0)f_{j-l}&=\sum_{l=0}^j\frac{1}{l!}\sum_{n=0}^l\binom{l}{n} R^{(n)}_{1,2}(\lambda_0)M^{(l-n)}_{\Theta_1}(\lambda_0)f_{j-l}\\
		&=\sum_{l=0}^j\frac{1}{l!}R^{(l)}_{1,2}(\lambda_0)\sum_{m=0}^{j-l}\frac{1}{m!} M^{(m)}_{\Theta_1}(\lambda_0)f_{j-l-m},\quad 0\leq j\leq k-1.
	\end{split}
\end{align}
This shows that  \eqref{Muj} holds provided we know that 
\begin{equation}\label{gifor}
	g_j=\sum_{l=0}^{j}\frac{1}{l!} M^{(l)}_{\Theta_1}(\lambda_0)f_{j-l},\quad 0\leq j\leq k-1;
\end{equation}
in other words, by \eqref{gi}, that an analogue of the lemma holds for $M_{\Theta_1}$ in place of $M_{\Theta_2}$. To show \eqref{gifor}, we recall that 
\begin{equation*}
	(L-\lambda_0)u_j=u_{j-1},\quad	(\gaN^L+\Theta_1\gaD) u_j=g_j,\quad 0\leq j\leq k-1,
\end{equation*}
and thus Lemma \ref{lemma2} applies with $f_j$ replaced by $g_j$, giving
\begin{equation}\label{2.5}
	f_j=\gaD u_j=\sum_{m=0}^j\frac{1}{m!}N^{(m)}_{\Theta_1}(\lambda_0)g_{j-m}, \quad1\leq j\leq k-1.
\end{equation}
Plugging \eqref{2.5} with $j-l$ in the right-hand side of \eqref{gifor}, changing the order of summation,  relabeling and using the second equality in \eqref{M1N1} yields
\begin{align*}
		\sum_{l=0}^{j}\frac{1}{l!} M^{(l)}_{\Theta_1}(\lambda_0)f_{j-l}&=\sum_{l=0}^{j}\frac{1}{l!} M^{(l)}_{\Theta_1}(\lambda_0)\sum_{m=0}^{j-l}\frac{1}{m!}N^{(m)}_{\Theta_1}(\lambda_0)g_{j-l-m}\\
		&=\sum_{m=0}^{j}\frac{1}{(j-m)!} \Big(\sum_{l=0}^{j-m}\binom{j-m}{l}M^{(l)}_{\Theta_1}(\lambda_0)N^{(j-m-l)}_{\Theta_1}(\lambda_0)\Big)g_{m}\\
		&=\sum_{m=0}^{j}\frac{1}{(j-m)!}(M_{\Theta_1}(\lambda)N_{\Theta_1}(\lambda))^{(j-m)}\big|_{{}_{\lambda=\lambda_0}}g_{m}=g_j,
\end{align*}
as needed in \eqref{gifor}.
\end{proof}

Our next result shows that the Jordan chains of $\cL^{\Theta_2}$ and $M_{\Theta_2}$ are in one-to-one correspondence.
The proof is again omitted as it is identical to the proof of Theorem \ref{Rchain} with Lemma \ref{lemma1} replaced by Lemma \ref{lemma3}.

\begin{theorem}
	\label{Jordan chainsM}
	Assume that $\Theta_2$ satisfies Hypothesis \ref{hypT}. Let 
	$\lambda_0\in\sigma(\cL^{\Theta_2})\cap\rho(\cL^D)$ and 
	consider the analytic function $\lambda\mapsto M_{\Theta_2}(\lambda)$ from $\rho(\cL^D)$ to $\cB\big(\Hp,\Hm\big)$.
	\begin{enumerate}
		\item Let $u_0,\ldots,u_{k-1}\in\dom(\cL^{\Theta_2})$ be a Jordan chain of length $k$ for $\cL^{\Theta_2}$ at $\lambda_0$, and define 
		\begin{equation*}
		f_j:=\gaD u_j\in H^{1/2}(\pO)\,\,\,\hbox{for}\,\,0\leq j\leq k-1.
		\end{equation*}
		Then the vectors $f_0,\ldots,f_{k-1}$ form a Jordan chain of length $k$ for $M_{\Theta_2}$ at $\lambda_0$.
		\item Let $f_0,\ldots,f_{k-1}\in H^{1/2}(\pO)$ be a Jordan chain of length $k$ for $M_{\Theta_2}$ at $\lambda_0$. Set $u_{-1}=0$ and for $0 \leq j \leq k-1$ let $u_j\in H^1(\Omega)$ be the unique solution of the boundary value problem
		\begin{equation*}
			(L-\lambda_0)u_j=u_{j-1},\quad \gaD u_j=f_j.
		\end{equation*}
		Then the vectors $u_0,\ldots,u_{k-1}$ form a Jordan chain of length $k$ for $\cL^{\Theta_2}$ at $\lambda_0$. 
	\end{enumerate}
\end{theorem}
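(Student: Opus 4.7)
The plan is to mimic verbatim the proof of Theorem \ref{Rchain}, invoking Lemma \ref{lemma3} in place of Lemma \ref{lemma1}. The content of Lemma \ref{lemma3} is precisely what converts the defining equations of a Jordan chain for $\cL^{\Theta_2}$ into the Jordan-chain identity for the pencil $M_{\Theta_2}(\cdot)$ (and vice versa), so the theorem becomes essentially a corollary.

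For Part (1), I would fix a Jordan chain $u_0,\ldots,u_{k-1}\in\dom(\cL^{\Theta_2})$ at $\lambda_0$, set $u_{-1}=0$, and put $f_j=\gaD u_j$. By definition of a Jordan chain for a linear pencil, $(L-\lambda_0)u_j=(\cL^{\Theta_2}-\lambda_0)u_j=u_{j-1}$, so the hypotheses of Lemma \ref{lemma3} hold. The conclusion \eqref{Muj} reads
\[
    (\gaN^L+\Theta_2\gaD)u_j \;=\; \sum_{l=0}^{j}\frac{1}{l!}\,M_{\Theta_2}^{(l)}(\lambda_0)\,f_{j-l}.
\]
The left-hand side vanishes because $u_j\in\dom(\cL^{\Theta_2})$, so the right-hand side vanishes as well — this is precisely the Jordan chain condition \eqref{kchain} for the pencil $M_{\Theta_2}(\cdot)$ at $\lambda_0$. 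To finish one only needs $f_0\neq 0$: if $f_0=\gaD u_0=0$, then $u_0\in H^1_0(\Omega)$ with $Lu_0=\lambda_0 u_0$, hence $u_0\in\ker(\cL^D-\lambda_0)=\{0\}$ since $\lambda_0\in\rho(\cL^D)$, contradicting $u_0\neq 0$.

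For Part (2), given a Jordan chain $f_0,\ldots,f_{k-1}\in H^{1/2}(\pO)$ for $M_{\Theta_2}$ at $\lambda_0$, I use $\lambda_0\in\rho(\cL^D)$ to solve the Dirichlet problem $(L-\lambda_0)u_j=u_{j-1}$, $\gaD u_j=f_j$ uniquely in $H^1(\Omega)$ (by lifting the boundary datum $f_j$ to some $\tilde f_j\in H^1(\Omega)$ and inverting $\cL^D-\lambda_0$ on the $H^1_0$ remainder, which is legitimate by Proposition \ref{domD}). These $u_j$ satisfy the hypothesis of Lemma \ref{lemma3}, and now the right-hand side of \eqref{Muj} vanishes by the Jordan chain relations for $M_{\Theta_2}(\cdot)$. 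Hence $(\gaN^L+\Theta_2\gaD)u_j=0$, placing $u_j\in\dom(\cL^{\Theta_2})$, and $(\cL^{\Theta_2}-\lambda_0)u_j=(L-\lambda_0)u_j=u_{j-1}$ gives the Jordan chain condition. Non-triviality $u_0\neq 0$ is immediate since $\gaD u_0=f_0\neq 0$.

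There is no real obstacle here: the technical heart of the statement is already packaged inside Lemma \ref{lemma3}, and both directions just read its conclusion against the appropriate Jordan chain relation. The only minor point to verify is that the Dirichlet problem in Part (2) is solvable at the correct level of regularity, namely $u_j\in\cD^1_L(\Omega)$, which is standard since $u_{j-1}\in H^1(\Omega)\subset L^2(\Omega)$ and $\cL^D-\lambda_0$ is boundedly invertible from its domain into $L^2(\Omega)$.
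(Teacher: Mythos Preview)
Your proposal is correct and is precisely the approach the paper takes: the paper omits the proof, stating it is identical to that of Theorem~\ref{Rchain} with Lemma~\ref{lemma1} replaced by Lemma~\ref{lemma3}. Your write-up fills in exactly those details, including the appropriate verification that $f_0\neq0$ in Part~(1) via $\lambda_0\in\rho(\cL^D)$ and that the Dirichlet problems in Part~(2) are uniquely solvable.
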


\begin{proof}[Proof of Theorem \ref{multR}]
	Item (1)  follows from the fact that the geometric multiplicities $\dim \ker(\cL^{\Theta_2}-\lambda_0)$ and $\dim \ker R_{1,2}(\lambda_0)$ are equal by Lemma \ref{LNFred}, while Theorem \ref{Rchain} gives a one-to-one correspondence between the Jordan chains of $\cL^{\Theta_2}$ and $R_{1,2}$.
	Similarly using Lemma \ref{LNFred}, Theorem \ref{Nchain},  and Theorem \ref{Jordan chainsM}, one can prove items (2) and (3).
\end{proof}

\begin{rem}\label{rNFred1} Sometimes it is convenient to use the Riesz map $\R$ in order to replace operator pencils acting between $H^{-1/2}(\pO)$ and $H^{1/2}(\pO)$ by pencils acting in a single space  $H^{-1/2}(\pO)$. In this context we formulate the following easy consequence of Theorem \ref{multR} and Lemma \ref{LNFred}.
If $\Theta_1$ satisfies Hypothesis \ref{hypT}, then the operator  $\R N_{\Theta_1}(\lambda)$ is Fredholm of index $0$ for all $\lambda\in\rho(\cL^{\Theta_1})$. Moreover, for any $\lambda_0\in\rho(\cL^{\Theta_1})$ we have equality of the geometric and algebraic multiplicities: $m_g(\lambda_0,\cL^D) = m_g\big(\lambda_0, \R N_{\Theta_1}(\cdot)\big)$ and $m_a(\lambda_0,\cL^D)=m_a\big(\lambda_0, \R N_{\Theta_1}(\cdot)\big)$.
\end{rem}

\section{The Evans function and operator pencils}
\label{sec:Evans}

We are now ready to construct the promised multi-dimensional Evans function, thus proving Theorem ~\ref{thm:WEvans}. In Section~\ref{sec:prelimBp} we recall definitions and important properties of the Schatten--von Neumann ideals and $p$-modified Fredholm determinants. In Section \ref{sec:Bp} we show that Hypothesis~\ref{hyp:Bp} implies $E(\lambda) - I \in \cB_p\big(\Hp\big)$ for sufficiently large $p$, where $E(\lambda)$ is the operator defined in \eqref{eq:Edef}. This guarantees that the $p$-modified Fredholm determinant $\cE(\lambda) = \det_p E(\lambda)$ is a well-defined analytic function on $\rho(\cL^\Theta) \cap \rho(\widehat\cL^D)$. The proof relies on elliptic estimates, and hence is sensitive to the smoothness of the boundary and the coefficients of $L$; see Remark \ref{rem:betterp} and Proposition \ref{propBp}. Finally, in Section \ref{sec:det} we study the function $E$ and its determinant $\cE$ in more detail, proving that $E$ is completely meromorphic in $\bbC$ and relating the order of zeros, poles and essential singularities of its determinant to the eigenvalues of $\cL^D$, $\widehat\cL^D$, $\cL^\Theta$ and $\widehat\cL^\Theta$, as stated in \eqref{detorder2} and \eqref{detorder}.

\subsection{Schatten--von Neumann ideals and modified determinants}
\label{sec:prelimBp}
We first recall some definitions and basic facts that are needed for the statement and proof of Theorem ~\ref{thm:WEvans}. Let $\cH$ and $\cK$ be separable Hilbert spaces, and $A \colon \cH \to \cK$ a compact linear operator. The singular values of $A$, denoted $s_k(A)$, are the eigenvalues of the compact, selfadjoint operator $\sqrt{A^*A}$. For $p \geq 1$, the Schatten--von Neumann ideal $\cB_p(\cH,\cK)$ is defined to be the set of all compact $A$ for which the $p$th Schatten norm
\[
	\|A\|_p = \left(\sum_{k=1}^\infty (s_k(A))^p\right)^{1/p}
\]
is finite. This is a Banach space with respect to $\| \cdot \|_p$. It is a two-sided ideal, in the sense that if $A \in \cB_p(\cH,\cK)$, $C \in \cB(\cK,\cK_1)$ and $B \in \cB(\cH_1,\cH)$ for some separable Hilbert spaces $\cH_1$ and $\cK_1$, then $CAB \in \cB_p(\cH_1,\cK_1)$. 
If $\cH=\cK$ we abbreviate $\cB_p(\cH)=\cB_p(\cH, \cK)$. We denote by $\cF(\cH)$ the two-sided ideal of finite rank operators in $\cH$, and note that $\cF(\cH) \subset \cB_p(\cH)$ for all $p$.

We next recall some basic notions and facts about modified Fredholm determinants. If $B\in\cB_p(\cH)$ for some integer $p \geq 1$, the $p$-modified Fredholm determinant $\det_p(I+B)$ is defined by the formula
\begin{equation}\label{defpdet}
{\det}_p(I+B)=\prod_{n=1}^\infty\left((1+\mu_n)\exp\left(\sum_{k=1}^{p-1}
k^{-1}(-1)^k \mu_n^k\right)\right),
\end{equation}
where $\{\mu_n\}$ are the eigenvalues of $B$, repeated according to their algebraic multiplicity. We refer to \cite[Section I.7]{Yaf} for the properties of determinants that we will need. In particular, we recall from \cite[(I.7.17)]{Yaf} that if $F\in\cF(\cH)$ is a finite rank operator, then 
\begin{equation}\label{detpdet}
	{\det}_p(I+F)=\det(I+F)\exp\left(\sum_{k=1}^{p-1}k^{-1}(-1)^k \tr \big(F^k\big) \right)
\end{equation}
for any $p \geq 1$, where $\det$ stands for the usual determinant, that is, the product of the respective eigenvalues. We also recall, see \cite[p.~44]{Yaf}, that
\begin{equation}
\label{detprod}
	\operatorname{det}_p(I + A_1A_2) = \operatorname{det}_p(I + A_2A_1)
\end{equation}
for any bounded operators $A_1$ and $A_2$ such that both $A_1
A_2$ and $A_2A_1$ are in $\cB_p(\cH)$. (Note that $A_1$ and $A_2$ are not individually required to be $\cB_p$.)
We will also need a more advanced result from \cite[Lemma~4.1]{GMZ1}, generalizing \cite[(I.7.19)]{Yaf} from $p=2$ and saying that if both $B_1$ and $B_2$ are in $\cB_p(\cH)$, then
\begin{equation}\label{PRODdet}
	{\det}_p\big((I+B_1)(I+B_2)\big)={\det}_p(I+B_1){\det}_p(I+B_2)\exp\big(\tr T_p(B_1,B_2)\big),
\end{equation}
where $T_p(\cdot,\, \cdot)$ is a polynomial function with
$T_p(B_1,B_2)\in\cB_1(\cH)$, so that its trace is well defined.

We also need to take determinants of operators that depend analytically or meromorphically on a parameter. If $\mathcal{V}$ is an open subset of $\bbC$ and $B\colon\mathcal{V}\to \cB(\cH)$ is a $\cB_p(\cH)$-valued analytic function, then the function $\lambda\mapsto\det_p(I+B(\lambda))$ is analytic; see \cite[Section~I.7]{Yaf} and also \cite{GLMZ05,Howland}. Moreover, if $B_1\colon\mathcal{V}\to \cB(\cH)$ and $B_2\colon\mathcal{V}\to \cB(\cH)$ are two $\cB_p(\cH)$-valued analytic functions, then there exists an analytic function $\varphi:\mathcal{V}\to\bbC$  such that 
\begin{equation}\label{prodform}
{\det}_p\big((I+B_1(\lambda))(I+B_2(\lambda))\big) = e^{\varphi(\lambda)} {\det}_p\big(I+B_1(\lambda)\big){\det}_p\big(I+B_2(\lambda)\big) 
\end{equation}
for all $\lambda\in \mathcal V$.
This is a direct consequence of \eqref{PRODdet} with $\varphi(\lambda) = \tr T_p(B_1(\lambda),B_2(\lambda))$.

The situation for meromorphic functions is slightly more complicated.
If $F\colon\mathcal{V}\to\cF(\cH)$ is meromorphic, then the function $\lambda\mapsto\det(I+F(\lambda))$ is also meromorphic. However, if $B\colon\mathcal{V}\to\cB(\cH)$ is a meromorphic function with values in $\cB_p(\cH)$ (but not necessarily $\cF(\cH)$), then the poles of $B(\cdot)$ may produce essential singularities of the function $\det_p(I+B(\cdot))$, as can be seen from \eqref{defpdet}. If $B_1\colon\mathcal{V}\to \cB(\cH)$ and $B_2\colon\mathcal{V}\to \cB(\cH)$ are two $\cB_p(\cH)$-valued meromorphic functions, then there exists a meromorphic function $\varphi\colon\mathcal{V}\to\bbC$ such that 
\begin{equation}\label{prodformMER}
{\det}_p\big((I+B_1(\lambda))(I+B_2(\lambda))\big) = e^{\varphi(\lambda)}{\det}_p\big(I+B_1(\lambda)\big){\det}_p\big(I+B_2(\lambda)\big)
\end{equation}
for all $\lambda\in\mathcal{V}$.
This is a direct consequence of \eqref{PRODdet} with $\varphi(\lambda)=\tr \big(T_p(B_1(\lambda),B_2(\lambda))\big)$.

To prove Theorem \ref{thm:WEvans}, we need to understand the poles and singularities of functions of type $\det_p\big(I+B(\cdot)\big)$, where $B(\cdot)$ is $\cB_p(\cH)$-valued and meromorphic. Following \cite{GLMZ05} and \cite{Howland}, for a function $f\colon\mathcal{V}\to\bbC$ that is analytic except at a discrete set of singularities (which could be either poles or essential singularities) and whose zeros do not accumulate in $\bbC$, we define by \eqref{m:def} the multiplicity function $m(\lambda_0; f)$, and recall the formula \eqref{m:def2} that holds for meromorphic $f$. 
The multiplicity $m(\lambda_0,f)$, however,  is defined even when $\lambda_0$ is an essential singularity of $f$, in which case it can assume any integer value. For instance the function $f(\lambda) = \lambda^k e^{1/\lambda}$ has an essential singularity at the origin with $m(0;f) = k$ for any $k \in \bbZ$. As a result, one must be careful when interpreting the multiplicity. If we know a priori that $f$ is analytic at $\lambda_0$, then $m(\lambda_0;f) > 0$ if and only if $f$ has a zero at $\lambda_0$. Without this a priori knowledge, however, we can only conclude from $m(\lambda_0;f) > 0$ that $f$ has a zero or an essential singularity at $\lambda_0$. 

We finally note that if $f_1$ and $f_2$ are two functions of this type, then $m(\lambda_0; f_1 f_2)=m(\lambda_0; f_1)+m(\lambda_0; f_2)$. In particular, if $\varphi$ is meromorphic near $\lambda_0$ (and hence analytic in a punctured neighborhood of $\lambda_0$), then $m(\lambda_0; e^\varphi f)=m(\lambda_0; f)$, since
\[
	m(\lambda_0; e^\varphi) = \frac{1}{2\pi i} \int_{\p D(\lambda_0;\epsilon)} \varphi'(\lambda)\,d\lambda = 0
\]
by the fundamental theorem of calculus.

\subsection{$\cB_p$ properties of Robin-to-Dirichlet maps}
\label{sec:Bp}

We now return to the elliptic setting described in the Introduction and Section \ref{sec:prelim}. Before stating the main result of this section, we mention an additional set of hypotheses on $\Omega$, $L$ and $\Theta$; cf. Hypothesis~\ref{hyp:Bp}.

\begin{hyp}
\label{hyp:smallerp}
Assume, in addition to Hypothesis~\ref{hyp:L}, that:
\begin{enumerate}
	\item $\pO$ is of class $C^{1,1}$;
	\item $a_{jk} = a_{kj}$ for $1 \leq j,k \leq n$;
	\item $a_{jk}$, $b_j$, $d_j$ and $q$ are Lipschitz;
	\item $\Theta = \J \tilde\Theta$, where $\tilde\Theta \in \cB\big(\Hp\big)$ and $\J \colon \Hp \to \Hm$ is inclusion.

\end{enumerate}
\end{hyp}

These stronger assumptions are not needed for the statement or proof of Theorem~\ref{thm:WEvans}. However, they give improved $\cB_p$ properties for $E(\lambda)-I$, and hence lead to a refinement of the theorem, as explained in Remark~\ref{rem:betterp}.

\begin{prop}
\label{propBp}
Under the  hypotheses of Theorem~\ref{thm:WEvans},
\begin{enumerate}\item if $\lambda \in \rho(\cL^\Theta) \cap \rho(\widehat \cL^D)$, then $E(\lambda) - I \in \cB_p\big(\Hm\big)$,
\item if $\lambda \in \rho(\cL^\Theta) \cap \rho(\widehat \cL^{\widehat\Theta})$, then
$N_\Theta(\lambda)-\widehat{N}_{\widehat\Theta}(\lambda)\in\cB_p\big(H^{-1/2}(\pO),H^{1/2}(\pO)\big)$
\end{enumerate}
 for any $p>2(n-1)$. If Hypothesis~\ref{hyp:smallerp} also holds for $L$, $\Theta$ and $\widehat L$, $\widehat\Theta$, then $p>n-1$ suffices.
\end{prop}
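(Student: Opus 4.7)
The plan is to show that $E(\lambda)-I$ and $N_\Theta(\lambda)-\widehat N_{\widehat\Theta}(\lambda)$ each factor through a Sobolev space on $\pO$ of strictly higher regularity, then invoke the Weyl-type singular-value asymptotics for the inclusion $H^s(\pO)\hookrightarrow H^t(\pO)$ with $s>t$: on the $(n-1)$-dimensional manifold $\pO$ the $k$th singular value is comparable to $k^{-(s-t)/(n-1)}$, so the inclusion lies in $\cB_p$ iff $p(s-t)>n-1$. A regularity gain of $\tfrac12$ on $\pO$ will therefore force $p>2(n-1)$, while a gain of $1$ will suffice with $p>n-1$.

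For (1), fix $g\in\Hm$, let $u\in H^1(\Omega)$ solve $Lu=\lambda u$, $\gaN^L u+\Theta\gaD u=g$, and let $\hat u\in H^1(\Omega)$ be the unique solution of the Dirichlet problem $\widehat L\hat u=\lambda\hat u$, $\gaD\hat u=\gaD u$. Since $E(\lambda)g=\gaN^{\widehat L}\hat u+\widehat\Theta\gaD\hat u$ and $\gaD\hat u=\gaD u$, a direct calculation gives
\begin{equation*}
(E(\lambda)-I)g \;=\; \gaN^{\widehat L}(\hat u-u) \;+\; (\gaN^{\widehat L}-\gaN^L)u \;+\; (\widehat\Theta-\Theta)\gaD u .
\end{equation*}
I would then verify that each summand lies in $L^2(\pO)\subset\Hm$ with norm controlled by $\|g\|_{\Hm}$. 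Lemma \ref{lem:gaNdiff} handles the middle term using $a_{jk}=\hat a_{jk}$ and $d_j-\hat d_j$ Lipschitz. The third lies in $\iota^*(L^2(\pO))$ because Hypothesis \ref{hyp:Bp}(2) gives the factorizations $\Theta=\iota^*\tilde\Theta$ and $\widehat\Theta=\iota^*\tilde{\widehat\Theta}$. For the first, set $z=\hat u-u\in H^1_0(\Omega)$; then $\widehat Lz=\lambda z+(L-\widehat L)u\in L^2(\Omega)$ because the Lipschitz condition on $d_j-\hat d_j$ keeps $\p_j((d_j-\hat d_j)u)$ in $L^2$, after which $\gaN^{\widehat L}z\in L^2(\pO)$ by the standard Jerison--Kenig/McLean $L^2$-boundary regularity for the Dirichlet problem on a Lipschitz domain with symmetric Lipschitz principal part. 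Consequently $E(\lambda)-I$ factors as $\Hm\to L^2(\pO)\xrightarrow{\iota^*}\Hm$, and the Weyl estimate gives $\iota^*\in\cB_p$ for $p>2(n-1)$.

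The argument for (2) is completely parallel: let $u$ and $\hat u$ solve the two Robin problems with the same datum $g$ and set $w=u-\hat u$. A direct computation gives $(L-\lambda)w=-(L-\widehat L)\hat u\in L^2(\Omega)$ and $\gaN^L w+\Theta\gaD w=(\gaN^{\widehat L}-\gaN^L)\hat u+(\widehat\Theta-\Theta)\gaD\hat u\in L^2(\pO)$ by the same two reductions used above, so Proposition \ref{TH4.24} applied directly to $w$ yields $(N_\Theta-\widehat N_{\widehat\Theta})g=\gaD w\in H^1(\pO)$ with estimate; the factorization $\Hm\to H^1(\pO)\hookrightarrow\Hp$ together with the $\cB_p$ membership of the inclusion for $p>2(n-1)$ completes the proof. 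Under Hypothesis \ref{hyp:smallerp}, $\pO$ is $C^{1,1}$ and the stronger factorization $\Theta=\J\tilde\Theta$ with $\tilde\Theta\in\cB(\Hp)$ is assumed, and standard global elliptic regularity then upgrades each of $u,\hat u,z,w$ to $H^2(\Omega)$; this lifts the boundary regularity gain from $\tfrac12$ to $1$ in both (1) and (2), so the same Weyl estimate yields $p>n-1$. The principal obstacle is the Neumann-trace bound $\gaN^{\widehat L}z\in L^2(\pO)$ for $z\in H^1_0(\Omega)$ with $\widehat Lz\in L^2(\Omega)$ on a Lipschitz domain: this is not stated as a proposition in the excerpt but is the Dirichlet-side companion of Proposition \ref{TH4.24}, relies on the same Rellich identity, and is precisely where the real symmetric Lipschitz principal part in Hypothesis \ref{hyp:Bp}(1) is used.
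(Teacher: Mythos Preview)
Your approach to part~(2) is essentially the same as the paper's Lemma~\ref{lem:RtDBp}: compute the boundary value problem satisfied by $w=u-\hat u$, observe that its data lie in $L^2(\Omega)$ and $L^2(\pO)$, and apply Proposition~\ref{TH4.24}.

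For part~(1) your route is genuinely different. The paper first proves (2), then writes $E(\lambda)-I=\widehat M_{\widehat\Theta}(\lambda)\bigl(N_\Theta(\lambda)-\widehat N_{\widehat\Theta}(\lambda)\bigr)$, which is only valid when $\lambda$ is additionally in $\rho(\widehat\cL^{\widehat\Theta})$; the full statement is then obtained by a density/continuity argument. Your three-term decomposition $(E(\lambda)-I)g=\gaN^{\widehat L}z+(\gaN^{\widehat L}-\gaN^L)u+(\widehat\Theta-\Theta)\gaD u$ works directly on all of $\rho(\cL^\Theta)\cap\rho(\widehat\cL^D)$ with no density step, which is cleaner. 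The price is that the term $\gaN^{\widehat L}z$ requires the Dirichlet-side companion of Proposition~\ref{TH4.24} (if $z\in H^1_0(\Omega)$ and $\widehat Lz\in L^2(\Omega)$ then $\gaN^{\widehat L}z\in L^2(\pO)$), which you correctly identify as standard but not quoted in the paper. So the paper trades one extra regularity citation for a density argument; you do the reverse.

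One minor inaccuracy: under Hypothesis~\ref{hyp:smallerp} it is not true that $u$ and $\hat u$ themselves are in $H^2(\Omega)$, since the Robin datum $g$ is only in $\Hm$. What is true (and all you need) is that $z\in H^1_0(\Omega)$ with $\widehat Lz\in L^2(\Omega)$ gives $z\in H^2(\Omega)$ by interior-plus-boundary regularity on a $C^{1,1}$ domain, and similarly $w\in H^2(\Omega)$; the boundary terms $(\gaN^{\widehat L}-\gaN^L)u$ and $(\widehat\Theta-\Theta)\gaD u$ land in $\Hp$ because $\nu_j$ is Lipschitz and $\Theta=\J\tilde\Theta$ with $\tilde\Theta\in\cB(\Hp)$. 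With that correction your argument for the $p>n-1$ case goes through.
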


To motivate the proof, let $\lambda \in \rho(\cL^\Theta) \cap \rho(\widehat \cL^D)$, so that $E(\lambda) = \widehat M_{\widehat\Theta} (\lambda) N_\Theta(\lambda) $ is defined. If we additionally assume that $\lambda \in\rho(\widehat\cL^{\widehat\Theta})$, then $\widehat N_{\widehat\Theta}(\lambda)$ is also defined, and we can write
\begin{equation}
\label{Edecomp}
	E(\lambda) - I = \widehat M_{\widehat\Theta}(\lambda) \big(N_\Theta(\lambda) - \widehat N_{\widehat\Theta}(\lambda) \big).
\end{equation}
Therefore, we start by studying the difference of Robin-to-Dirichlet maps. The key observation is that the principal parts of $L$ and $\widehat L$ coincide, so the difference $L - \widehat L$ is first order. As a result, $N_\Theta - \widehat N_{\widehat\Theta}$ has better mapping properties than $N_\Theta$ and $\widehat N_{\widehat\Theta}$ on their own; cf. Lemma \ref{lemRtoD}.

\begin{lemma}\label{lem:RtDBp}
If the hypotheses of Theorem ~\ref{thm:WEvans} are satisfied and $\lambda \in \rho(\cL^\Theta) \cap \rho(\widehat \cL^{\widehat\Theta})$, then $N_\Theta(\lambda) - \widehat N_{\widehat\Theta}(\lambda)$ is in $\cB\big(\Hm,H^1(\pO) \big)$ and depends continuously on $\lambda$. If Hypothesis~\ref{hyp:smallerp} also holds, then $N_\Theta(\lambda) - \widehat N_{\widehat\Theta}(\lambda)$ is in $\cB\big(\Hm,H^{3/2}(\pO) \big)$ and depends continuously on $\lambda$.
\end{lemma}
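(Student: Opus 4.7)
The plan is to exploit the fact that the principal parts of $L$ and $\widehat L$ coincide, so that the difference $w := u - \hat u$ of the defining potentials satisfies a Robin problem for $\widehat L$ with much more regular data than either $u$ or $\hat u$ alone. Concretely, for $g \in \Hm$ we let $u$ solve $Lu = \lambda u$, $\gaN^L u + \Theta \gaD u = g$ and $\hat u$ solve $\widehat L \hat u = \lambda \hat u$, $\gaN^{\widehat L} \hat u + \widehat\Theta \gaD \hat u = g$, so that $(N_\Theta - \widehat N_{\widehat\Theta})(\lambda) g = \gaD w$. I would then apply Proposition~\ref{TH4.24} (the improved regularity statement for the Robin problem for $\widehat L$) to the function $w$, which is legitimate because $\lambda \in \rho(\widehat\cL^{\widehat\Theta})$ makes $w$ the unique solution of its own Robin problem.

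The substance of the argument is computing, and controlling, the data for this auxiliary Robin problem. Using $Lu = \lambda u$ we get
\begin{equation*}
    \widehat L w - \lambda w = (\widehat L - L) u,
\end{equation*}
a first-order expression in $u$ with Lipschitz coefficients (since $a_{jk} = \hat a_{jk}$ and $d_j - \hat d_j$ is Lipschitz), so $(\widehat L - L) u \in L^2(\Omega)$ with norm controlled by $\|u\|_{H^1(\Omega)}$. For the boundary data, using Lemma~\ref{lem:gaNdiff} to compare Neumann traces, a direct manipulation of the Robin conditions for $u$ and $\hat u$ gives
\begin{equation*}
    \gaN^{\widehat L} w + \widehat\Theta \gaD w = (\widehat\Theta - \Theta)\gaD u - \sum_{j=1}^n \nu_j \gaD\bigl((d_j - \hat d_j)u\bigr).
\end{equation*}
Under Hypothesis~\ref{hyp:Bp} both summands are in $L^2(\pO)$: the first because $\Theta = \iota^* \tilde\Theta$ and $\widehat\Theta = \iota^* \widetilde{\widehat\Theta}$ have $L^2$-valued factorizations, and the second by Lemma~\ref{lem:gaNdiff} together with the boundedness of $\nu_j$. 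Both are bounded by $\|u\|_{H^1(\Omega)} \le c\|g\|_{\Hm}$, the latter from Proposition~\ref{TH4.11}. Proposition~\ref{TH4.24} then delivers $\gaD w \in H^1(\pO)$ with $\|\gaD w\|_{H^1(\pO)} \le c\|g\|_{\Hm}$, proving the mapping property.

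For continuity in $\lambda$ I would apply exactly the same machine to $v = w(\lambda) - w(\lambda')$. The function $v$ satisfies an analogous Robin problem for $\widehat L$ with interior datum $(\widehat L - L)(u(\lambda) - u(\lambda')) + (\lambda' - \lambda)w(\lambda')$ and boundary datum built linearly from $u(\lambda) - u(\lambda')$. A standard resolvent-identity computation, followed by Proposition~\ref{TH4.11}, gives $\|u(\lambda) - u(\lambda')\|_{H^1(\Omega)} \le c|\lambda - \lambda'|\|g\|_{\Hm}$ locally uniformly, and feeding this back through Proposition~\ref{TH4.24} yields the desired operator-norm continuity. For the refinement under Hypothesis~\ref{hyp:smallerp}, $\pO$ being $C^{1,1}$ makes $\nu_j$ Lipschitz, the coefficients are Lipschitz, and $\Theta = \J\tilde\Theta$, $\widehat\Theta = \J\widetilde{\widehat\Theta}$ provide $\Hp$-valued factorizations; consequently the Robin datum for $w$ upgrades from $L^2(\pO)$ to $\Hp$, and standard $H^2$ elliptic regularity on a $C^{1,1}$ domain forces $w \in H^2(\Omega)$, hence $\gaD w \in H^{3/2}(\pO)$, with norm still bounded by $\|g\|_{\Hm}$.

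The main obstacle I anticipate is the bookkeeping around the boundary operators: one must carefully track when $\Theta \gaD u$ is regarded as an element of $\Hm$ versus as the image under $\iota^*$ of an $L^2(\pO)$ (or $\Hp$) function, since it is precisely this factorization in Hypothesis~\ref{hyp:Bp} (resp.\ Hypothesis~\ref{hyp:smallerp}) that provides the extra half-derivative of boundary regularity needed to invoke Proposition~\ref{TH4.24} (resp.\ to obtain $H^2$ interior regularity). Everything else reduces to applying already-established elliptic estimates.
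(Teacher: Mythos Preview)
Your proposal is correct and follows essentially the same approach as the paper: write a Robin boundary value problem for $w = u - \hat u$, observe that the interior datum is first order in one of the potentials and the boundary datum lies in $L^2(\pO)$ (resp.\ $\Hp$) thanks to Hypothesis~\ref{hyp:Bp} (resp.\ Hypothesis~\ref{hyp:smallerp}) and Lemma~\ref{lem:gaNdiff}, and then apply Proposition~\ref{TH4.24} (resp.\ $H^2$ regularity). The only cosmetic difference is that you set up the problem for $w$ using $\widehat L$ and $\widehat\Theta$ with data involving $u$, while the paper uses $L$ and $\Theta$ with data involving $\hat u$; since both pairs satisfy the relevant hypotheses symmetrically, this choice is immaterial.
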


\begin{proof}
	Fix $g \in \Hm$ and let $u, \widehat u$ be the unique solutions to the boundary value problems
	\begin{equation}
		\label{uBVP}
		Lu = \lambda u, \quad \gaN^L u + \Theta \gaD u = g
	\end{equation}
	and
	\begin{equation}\label{hatuBVP}
		\widehat L \widehat u = \lambda \widehat u, \quad \gaN^{\widehat{L}} \widehat u + \widehat\Theta \gaD \widehat u = g
	\end{equation}
	respectively, so that $N_\Theta(\lambda)g = \gaD u$ and $\widehat N_\Theta(\lambda)g = \gaD \widehat u$. From Proposition \ref{TH4.11} we have $\|u\|_{H^1(\Omega)} \leq c \|g\|_{\Hm}$, and similarly for $\widehat u$.

	The difference $w:= u-\widehat u$ satisfies the boundary value problem
	\begin{equation}\label{eqn:w}
	(L - \lambda) w = (\widehat L - L)\widehat u, \quad \gaN^L w + \Theta \gaD w=(\gaN^{\widehat{L}}-\gaN^{{L}})\widehat u + (\widehat\Theta - \Theta) \gaD \widehat u.
	\end{equation}
	Since $\widehat L - L$ is a first-order differential expression with bounded coefficients, we have
	\begin{equation}\label{est1}
	\| (\widehat L - L)\widehat u \|_{L^2(\Omega)} \leq c \|\widehat u\|_{H^1(\Omega)}.
	\end{equation}
	Moreover, from Lemma~\ref{lem:gaNdiff} we get $(\gaN^{\widehat{L}}-\gaN^{{L}})\widehat u = \sum_{j}\nu_j\gaD\big((\widehat{d}_j-d_j)\widehat{u}\big) \in L^2(\pO)$, and we easily obtain the estimate
	\begin{equation}\label{est2}
		\|(\gaN^{\widehat{L}}-\gaN^{{L}})\widehat u\|_{L^2(\partial\Omega)}
	\leq c \|\widehat u\|_{H^1(\Omega)}.
	\end{equation}
	Finally, the assumptions on $\Theta$ and $\widehat\Theta$ in Hypothesis~\ref{hyp:Bp} imply $\Theta \gaD \widehat u,  \widehat\Theta \gaD \widehat u \in L^2(\pO)$, with
	\begin{equation}\label{est3}
		\big\| (\widehat\Theta - \Theta) \gaD \widehat u \big\|_{L^2(\pO)}
		\leq c \|\widehat u\|_{H^1(\Omega)}.
	\end{equation}
Applying Proposition~\ref{TH4.24} to \eqref{eqn:w} and using \eqref{est1}--\eqref{est3}, it follows that $\gaD w \in H^1(\pO)$, with
\[	
	\| \gaD w\|_{H^1(\pO)} 
	\leq c \| \widehat u\|_{H^1(\Omega)}.
\]	
We therefore obtain
\begin{equation}
	\big\| N_\Theta(\lambda)g - \widehat N_{\widehat\Theta}(\lambda)g \big\|_{H^1(\pO)} = \| \gaD w\|_{H^1(\pO)} 
	\leq c \|\widehat u\|_{H^1(\Omega)} 
	\leq c \|g\|_{\Hm}
\end{equation}
and conclude that $N_\Theta(\lambda) - \widehat N_{\widehat\Theta}(\lambda)$ is bounded from $\Hm$ to $H^1(\pO)$.
	
	To prove continuity, we vary $\lambda$ in a neighborhood of a fixed $\lambda_0 \in \rho(\cL^\Theta) \cap \rho(\widehat \cL^\Theta)$, letting $u(\lambda)$ and $\widehat u(\lambda)$ denote the correspond solutions to \eqref{uBVP} and \eqref{hatuBVP}, respectively. Define
\[
	v(\lambda) := w(\lambda) - w(\lambda_0) = u(\lambda) - u(\lambda_0) + \widehat u(\lambda_0) - \widehat u(\lambda),
\]
so that
\[
	\big(N_\Theta(\lambda) - \widehat N_{\widehat\Theta}(\lambda)\big)g - \big(N_\Theta(\lambda_0) - \widehat N_{\widehat\Theta}(\lambda_0)\big)g = \gaD v(\lambda).
\]
A calculation shows that $v(\lambda)$ solves the boundary value problem
\begin{align*}		
	\big(L - \lambda_0\big)v(\lambda) &= (\lambda - \lambda_0) \big(u(\lambda) - \widehat u(\lambda) \big) + (L - \widehat L) \big(\widehat u(\lambda_0) - \widehat u(\lambda) \big) \\
	\gaN^L v(\lambda) + \Theta \gaD v(\lambda) &= (\gaN^{\widehat{L}}-\gaN^{{L}})(\widehat u(\lambda) - \widehat u(\lambda_0)) + (\widehat\Theta - \Theta) \gaD (\widehat u(\lambda) - \widehat u(\lambda_0)).
\end{align*}
As in the first part of the proof, we use Proposition~\ref{TH4.24} to obtain
\begin{align*}		
	\| \gaD v(\lambda) \|_{H^1(\pO)}
	&\leq c \left( |\lambda - \lambda_0| \big\|u(\lambda) - \widehat u(\lambda) \big\|_{L^2(\Omega)} + 
	\big\| \widehat u(\lambda_0) - \widehat u(\lambda)\big\|_{H^1(\Omega)} \right).
\end{align*}
For the first term we note that $\|u(\lambda) - \widehat u(\lambda) \|_{L^2(\Omega)} \leq \|u(\lambda) - \widehat u(\lambda) \|_{H^1(\Omega)} \leq c \|g\|_{\Hm}$ in a neighborhood of $\lambda_0$. For the second term we write, as in the proof of Lemma~\ref{lemRtoD},
\[
	\widehat u(\lambda) - \widehat u(\lambda_0) = \big(\cL^\Theta - \lambda\big)^{-1}\big((\lambda-\lambda_0) \widehat u(\lambda_0)\big).
\]
It follows that
\[
	\big\|\widehat u(\lambda) - \widehat u(\lambda_0)\big\|_{H^1(\Omega)} 
	\leq c \big\| (\lambda-\lambda_0) \widehat u(\lambda_0) \big\|_{L^2(\Omega)} 
	\leq c |\lambda - \lambda_0| \big\|g\big\|_{\Hm}
\]
and so we obtain
\begin{equation}
	\| \gaD v(\lambda) \|_{H^1(\pO)} \leq c |\lambda - \lambda_0| \big\|g\big\|_{\Hm}
\end{equation}
for all $g \in \Hm$ and all $\lambda$ sufficiently close to $\lambda_0$.

The second statement is proved similarly so we just sketch the argument, defining $u$, $\widehat u$ and $w$ as above. The fact that $\pO$ is $C^{1,1}$ implies each $\nu_j$ is Lipschitz, so Lemma~\ref{lem:gaNdiff} implies $(\gaN^{\widehat{L}}-\gaN^{{L}})\widehat u = \sum_{j}\nu_j\gaD\big((\widehat{d}_j-d_j)\widehat{u}\big) \in \Hp$, and the additional assumptions on $\Theta$ and $\widehat\Theta$ imply $\Theta\gaD w \in \Hp$ and $(\widehat\Theta - \Theta) \gaD \widehat u \in \Hp$.
Then $w$ satisfies \eqref{eqn:w}, with $\gaN^L w \in \Hp$, so \cite[Theorem~4.18]{McL00} implies $w \in H^2(\Omega)$, with the estimate
\[
	\|w\|_{H^2(\Omega)} \leq c \|\gaN^L w\|_{\Hp} \leq c \|g\|_{\Hm}.
\]
It follows from the trace theorem that $\gaD w \in H^{3/2}(\pO)$ and $\| \gaD w \|_{H^{3/2}(\pO)} \leq c \|g\|_{\Hm}$, so $N_\Theta - \widehat N_{\widehat\Theta}$ is bounded from $\Hm$ to $H^{3/2}(\pO)$ as claimed. Similar estimates can be used to prove continuity in $\lambda$, as above.
\end{proof}

The proof of Proposition \ref{propBp} is an easy consequence of Lemma \ref{lem:RtDBp} and the following result.

\begin{lemma}\label{compact}
Let $\Omega \subset \bbR^n$ be a bounded Lipschitz domain and fix $-1 \leq s < t \leq 1$. The inclusion $H^t(\pO) \subset H^s(\pO)$ is of class $\cB_p$ for each $p > (n-1)/(t-s)$.
\end{lemma}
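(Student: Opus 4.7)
The plan is to localize via a partition of unity on $\pO$, reduce to a model problem for multiplication by a compactly supported function on $\bbR^{n-1}$, and then invoke the Birman--Solomyak singular value estimate.

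First I would cover the compact Lipschitz manifold $\pO$ by finitely many bi-Lipschitz coordinate charts $\Phi_i \colon V_i \to U_i$ with $V_i \subset \bbR^{n-1}$ a bounded open set, together with a subordinate smooth partition of unity $\{\chi_i\}$, $\sum_i \chi_i = 1$. For $|\sigma| \leq 1$, multiplication by $\chi_i$ is bounded on $H^\sigma(\pO)$, and the pullback by a bi-Lipschitz map is a topological isomorphism between the corresponding $H^\sigma$ spaces; this invariance is only available in the range $|\sigma|\le 1$, which is exactly why the lemma restricts to $-1 \le s < t \le 1$. Writing the inclusion $\iota_{t,s} \colon H^t(\pO) \hookrightarrow H^s(\pO)$ as $\iota_{t,s} = \sum_i M_{\chi_i} \iota_{t,s}$ and using that $\cB_p$ is a linear subspace stable under composition with bounded operators, the matter reduces to the following Euclidean statement: for every $\varphi \in C_c^\infty(\bbR^{n-1})$, the multiplication operator $M_\varphi \colon H^t(\bbR^{n-1}) \to H^s(\bbR^{n-1})$ lies in $\cB_p$ for each $p > (n-1)/(t-s)$.

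Second, via the Bessel potential isomorphisms this model claim is equivalent to
\[
(1-\Delta)^{s/2} M_\varphi (1-\Delta)^{-t/2} \in \cB_p\bigl(L^2(\bbR^{n-1})\bigr).
\]
Modulo commutator terms, which involve derivatives of $\varphi$, are of strictly lower order, and so lie in a smaller Schatten class by iteration, this is in turn equivalent to $M_\varphi (1-\Delta)^{-(t-s)/2} \in \cB_p(L^2(\bbR^{n-1}))$. I would then apply the classical Birman--Solomyak inequality: for $p\ge 2$ and $f,g \in L^p(\bbR^d)$,
\[
\| f(X) g(D) \|_{\cB_p(L^2(\bbR^d))} \leq C \|f\|_{L^p} \|g\|_{L^p}.
\]
Taking $d = n-1$, $f = \varphi$ (compactly supported, hence in every $L^p$), and $g(\xi) = (1+|\xi|^2)^{-(t-s)/2}$ (which lies in $L^p(\bbR^{n-1})$ precisely when $p(t-s) > n-1$) yields exactly the threshold $p > (n-1)/(t-s)$. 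For the residual range $1 \le p < 2$ (relevant only when $n$ is small) one can alternatively appeal to a direct singular value computation: on any bounded domain of $\bbR^{n-1}$, the eigenvalues of $(1-\Delta)^{-(t-s)/2}$ compressed by $M_\varphi$ decay like $k^{-(t-s)/(n-1)}$ by Weyl's law, giving the same conclusion.

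The principal technical obstacle is the low regularity of $\pO$: because the flattening maps are only bi-Lipschitz, Sobolev invariance under pullback is restricted to $|\sigma| \le 1$, which is precisely the endpoint of the hypothesis. A secondary issue is controlling the commutator $[(1-\Delta)^{s/2}, M_\varphi]$, but this is routine from pseudodifferential calculus since $\varphi$ is smooth and compactly supported, and the resulting error is of strictly smaller order.
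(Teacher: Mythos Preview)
Your overall strategy is correct and reaches the right threshold, but it differs from the paper's at the Euclidean step. After the same localization, the paper does not use the Birman--Solomyak inequality on $\bbR^{n-1}$; instead it observes that each localized piece is supported in a fixed cube, identifies the cube with a torus $\bbT^{n-1}$, and factors the embedding $H^t(\pO)\hookrightarrow H^s(\pO)$ through the direct sum of inclusions $H^t(\bbT^{n-1})\hookrightarrow H^s(\bbT^{n-1})$, citing \cite[Lemma~4.7]{BLL} for the $\cB_p$ property of the latter. On the torus this is just the statement that the singular values $(1+|k|^2)^{-(t-s)/2}$, $k\in\bbZ^{n-1}$, are $p$-summable exactly when $p(t-s)>n-1$, so no commutator analysis or separate treatment of $p<2$ is needed. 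Your route is more self-contained in that it does not outsource the core estimate, at the price of the commutator bookkeeping and the case split for small $p$.

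One point in your argument needs repair. You assert that the localized cutoff $\varphi$ is smooth and then invoke pseudodifferential calculus for the commutator $[(1-\Delta)^{s/2},M_\varphi]$. But a smooth partition of unity on $\bbR^n$, restricted to $\pO$ and pulled back through a merely bi-Lipschitz chart, yields only a Lipschitz function on $\bbR^{n-1}$; your iteration on derivatives of $\varphi$ is then unavailable. The fix is short: choose a genuinely smooth $\psi\in C_c^\infty(\bbR^{n-1})$ with $\psi\equiv 1$ on $\operatorname{supp}\varphi$, factor $M_\varphi = M_\varphi\, M_\psi$ as maps $H^t\to H^s$, run your Birman--Solomyak/commutator argument on $M_\psi$ (where smoothness is now legitimate), and absorb the Lipschitz factor $M_\varphi$ as a bounded multiplier on $H^s$ for $|s|\le 1$. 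The paper's torus detour sidesteps this regularity issue entirely, which is its main practical advantage over your approach.
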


When $\pO$ is smooth this follows from \cite[Lemma 4.7]{BLL}. For the Lipschitz case we use the definition of $H^s(\pO)$, in terms of local coordinates and a partition of unity, to reduce the problem to that of a smooth, compact manifold without boundary, where the result of \cite{BLL} then applies. 

\begin{proof}
By \cite[Definition 3.28]{McL00} there exist finite collections of open sets $\{W_j\}$ and $\{\Omega_j\}$ in $\bbR^n$ with the following properties:
\begin{enumerate}
	\item $\pO \subset \bigcup W_j$,
	\item $W_j \cap \Omega = W_j \cap \Omega_j$ for each $j$,
	\item each $\Omega_j$ is equivalent (via rigid motion) to a Lipschitz hypograph.
\end{enumerate}
By (3) we mean that there exists a rigid motion $\kappa_j$ of $\bbR^n$ and a Lipschitz function $\zeta_j \colon \bbR^{n-1} \to \bbR$ so that
\[
	\kappa_j(\Omega_j) = \{x = (x',x_n) \in \bbR^n : x_n < \zeta_j(x')\}.
\]
Next, let $\{\phi_j\}$ be a partition of unity subordinate to $\{W_j\}$. Given a function $u \colon \pO \to \bbC$, we define functions $u_j \colon \bbR^{n-1} \to \bbC$ by
\[
	u_j(x') = (\phi_j u)(\kappa_j^{-1}(x', \zeta_j(x')))
\]
for each $j$. The $H^s(\pO)$ norm is then defined for $0 \leq s \leq 1$ by
\[
	\|u\|_{H^s(\pO)} = \sum_j \|u_j\|_{H^s(\bbR^{n-1})}.
\]
It can be shown that any other choice of $\{W_j\}$, $\{\Omega_j\}$ and $\{\phi_j\}$ will yield an equivalent norm.

Note that $\operatorname{supp}(\phi_j u) \subset \pO \cap W_j$, hence $u_j$ is supported in $P\kappa_j(W_j \cap \pO)$, where $P \colon \bbR^n \to \bbR^{n-1}$ denotes projection onto the first $n-1$ coordinates. Since there are only finitely many $W_j$, we can find a large hypercube $C = [-R,R]^{n-1}$ so that $\operatorname{supp} \phi_j \subset C$ for each $j$. Identifying opposing faces of $C$, we can thus view each $u_j$ as a function on an $(n-1)$-torus. As a result, we can write the embedding $H^t(\pO) \subset H^s(\pO)$ as a composition
\begin{align}
	H^t(\pO) \longrightarrow \bigoplus_j H^t(\bbT^{n-1}) \longrightarrow \bigoplus_j H^s(\bbT^{n-1}) 
	\longrightarrow H^s(\pO).
\end{align}
The definition of $H^s(\pO)$ and $H^t(\pO)$ implies boundedness of the first and last maps, and \cite[Lemma~4.7]{BLL} says that the embedding $H^t(\bbT^{n-1}) \subset H^s(\bbT^{n-1})$ is of class $\cB_p$ for $p > (n-1)/(t-s)$.
\end{proof}

\begin{rem}
The topology of $\bbT^{n-1}$ is irrelevant to the above argument; it is simply a device to identify a bounded domain in $\bbR^{n-1}$ with a subset of a compact manifold without boundary, allowing us to apply the result of \cite{BLL} without modification.
\end{rem}

We are now ready to prove the main result of this section.

\begin{proof}[Proof of Proposition \ref{propBp}]
If $\lambda \in \rho(\cL^\Theta) \cap \rho(\widehat\cL^{\widehat\Theta}) \cap \rho(\widehat \cL^D)$, then using \eqref{Edecomp} we can write $E(\lambda) - I$ as the composition of three operators,
\[
	\Hm \xrightarrow{N_\Theta(\lambda) - \widehat N_{\widehat\Theta}(\lambda)} H^1(\pO) \xrightarrow{\text{inclusion}} \Hp \xrightarrow{\widehat M_{\widehat\Theta}(\lambda)} \Hm,
\]
where the first and last are bounded (by Lemmas \ref{lem:RtDBp} and \ref{lemDtoR}, respectively), and the second one is $\cB_p$ for any $p > 2(n-1)$, by Lemma \ref{compact} with $s=1/2$ and $t=1$. Similarly, if Hypothesis~\ref{hyp:smallerp} holds we have
\[
	\Hm \xrightarrow{N_\Theta(\lambda) - \widehat N_{\widehat\Theta}(\lambda)} H^{3/2}(\pO) \xrightarrow{\text{inclusion}} \Hp \xrightarrow{\widehat M_{\widehat\Theta}(\lambda)} \Hm,
\]
and the result follows from Lemma \ref{compact} with $s=1/2$ and $t=3/2$.

In either case, for an appropriate choice of $p$ we have $E(\lambda) - I \in \cB_p\big(\Hm\big)$ for all $\lambda \in \rho(\cL^\Theta) \cap \rho(\widehat\cL^{\widehat\Theta}) \cap \rho(\widehat \cL^D)$. Moreover, from Lemma \ref{lem:RtDBp}, we get that $E(\lambda)-I$ depends continuously on $\lambda$ in the $\cB_p\big(\Hm\big)$ norm. Since $\cB_p\big(\Hm\big)$ is a Banach space, and hence is closed, and  $\rho(\cL^\Theta) \cap \rho(\widehat\cL^{\widehat\Theta}) \cap \rho(\widehat \cL^D)$ is dense in $\rho(\cL^\Theta) \cap \rho(\widehat \cL^D)$, we find that $E(\lambda) - I \in \cB_p\big(\Hm\big)$ for all $\lambda \in \rho(\cL^\Theta) \cap \rho(\widehat \cL^D)$, which completes the proof of (1) in the proposition. The proof of (2) is analogous (and simpler).
\end{proof}

\begin{rem}\label{MNNM} Proposition \ref{propBp} says $ \widehat M_{\widehat\Theta}(\lambda) N_\Theta(\lambda)-I\in\cB_p\big(H^{-1/2}(\pO)\big)$
for all $\lambda\in\rho(\widehat{\cL}^D)\cap\rho(\cL^{\Theta})$. Analogous arguments show that $N_\Theta(\lambda)\widehat M_{\widehat\Theta}(\lambda) -I \in \cB_p\big(H^{1/2}(\pO)\big)$ for the same set of $\lambda$.
\end{rem}

Finally, we establish $\cB_p$ properties of the Robin-to-Robin map. The proof is much simpler than the above results, and only depends on the properties of the boundary operators.

\begin{prop}\label{RBp}
If Hypothesis \ref{hyp:Bp} holds and $\lambda \in\rho(\cL^{\Theta_1})$, then $R_{\Theta_1,\Theta_2}(\lambda)-I_{\Hm}$  is of class $\cB_p$ for each $p > 2(n-1)$. Moreover, if there exist $\Theta' \in \cB\big(\Hp\big)$ such that  $\Theta_2 - \Theta_1 = \J\Theta'$, then $R_{\Theta_1,\Theta_2}(\lambda)-I_{\Hm}$  is of class $\cB_p$ for each $p > n-1$.
\end{prop}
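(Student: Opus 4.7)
The plan is to start from the identity
\begin{equation*}
R_{\Theta_1,\Theta_2}(\lambda) - I_{\Hm} = (\Theta_2 - \Theta_1) N_{\Theta_1}(\lambda)
\end{equation*}
provided by Proposition~\ref{propRtoR}(i). By Lemma~\ref{lemRtoD}, $N_{\Theta_1}(\lambda)$ is a bounded operator from $\Hm$ to $\Hp$ whenever $\lambda \in \rho(\cL^{\Theta_1})$, so the two-sided ideal property of the Schatten classes reduces the entire proposition to controlling the Schatten class of the operator $\Theta_2 - \Theta_1 \colon \Hp \to \Hm$.

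For the first statement, I would invoke Hypothesis~\ref{hyp:Bp}(2) to write $\Theta_j = \iota^*\tilde\Theta_j$ with $\tilde\Theta_j \in \cB(\Hp, L^2(\pO))$, so that $\Theta_2 - \Theta_1 = \iota^*(\tilde\Theta_2 - \tilde\Theta_1)$ factors as
\begin{equation*}
\Hp \xrightarrow{\tilde\Theta_2 - \tilde\Theta_1} L^2(\pO) \xrightarrow{\iota^*} \Hm.
\end{equation*}
The first arrow is bounded, while Lemma~\ref{compact} applied with $s=0$ and $t=1/2$ shows that the inclusion $\iota \colon \Hp \to L^2(\pO)$ belongs to $\cB_p$ for every $p > 2(n-1)$, and hence so does its adjoint $\iota^*$. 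Combining this with the ideal property of $\cB_p$ gives the first claim.

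For the refinement, I would exploit the identity $\J = \iota^*\iota$ to rewrite
\begin{equation*}
R_{\Theta_1,\Theta_2}(\lambda) - I_{\Hm} = \iota^*\iota\,\Theta' N_{\Theta_1}(\lambda),
\end{equation*}
with $\Theta' N_{\Theta_1}(\lambda) \colon \Hm \to \Hp$ bounded. The extra factor of $\iota$ is the crux: since both $\iota$ and $\iota^*$ lie in $\cB_{p_0}$ for every $p_0 > 2(n-1)$, the H\"older inequality for Schatten norms with exponents satisfying $1/q = 2/p_0$ places the composition $\iota^*\iota$ in $\cB_{p_0/2}$. Letting $p_0 \downarrow 2(n-1)$ yields $\iota^*\iota \in \cB_q$ for every $q > n-1$, which combined with the ideal property finishes the proof.

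No serious obstacle is expected: once Proposition~\ref{propRtoR}(i) and Lemma~\ref{compact} are in hand, the argument is essentially a careful bookkeeping of Schatten exponents through the factorizations dictated by Hypothesis~\ref{hyp:Bp}. The only subtle point is that the improvement from $2(n-1)$ to $n-1$ in the second part relies on the composition $\iota^*\iota$ being strictly better than either of $\iota$ or $\iota^*$ individually, which is precisely the content of the Schatten H\"older inequality.
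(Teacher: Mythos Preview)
Your proposal is correct and follows essentially the same route as the paper: both start from Proposition~\ref{propRtoR}(i), factor $\Theta_2-\Theta_1$ through $\iota^*$ via Hypothesis~\ref{hyp:Bp}(2), and invoke Lemma~\ref{compact} together with the ideal property to get the $p>2(n-1)$ bound.

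The only difference is in the refinement. The paper simply observes that $\J$ itself is the inclusion $H^{1/2}(\pO)\hookrightarrow H^{-1/2}(\pO)$ and applies Lemma~\ref{compact} directly with $s=-1/2$, $t=1/2$ to get $\J\in\cB_p$ for $p>n-1$. Your route via the Schatten H\"older inequality for the product $\iota^*\iota$ reaches the same conclusion but is slightly more roundabout; the direct application of Lemma~\ref{compact} to $\J$ avoids the limiting argument in $p_0$.
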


The second condition holds if both $\Theta_1$ and $\Theta_2$ satisfy Hypothesis~\ref{hyp:smallerp}(4). This observation suffices for the improved version of Theorem~\ref{thm:WEvans} that was promised in Remark~\ref{rem:betterp}. Stronger conclusions are possible with additional conditions on the boundary operators. For instance, if $\Theta_2$ is a finite rank perturbation of $\Theta_1$, then $R_{\Theta_1,\Theta_2} - I$ is of class $\cB_p$ for every $p$.

\begin{proof}
It follows from Proposition \ref{propRtoR} that  $R_{\Theta_1,\Theta_2}(\lambda) - I =(\Theta_2 - \Theta_1) N_1(\lambda)$. Using Hypothesis~\ref{hyp:Bp}, we have $\Theta_2 - \Theta_1 = \iota^*(\tilde\Theta_2 - \tilde\Theta_1)$ for $\tilde\Theta_1,\tilde\Theta_2 \in \cB(\Hp,L^2(\pO)\big)$, and so $\Theta_2 - \Theta_1$ is $\cB_p$ for $p>2(n-1)$ because $\iota^*$ is. Since $N_{\Theta_1}(\lambda)\in \cB\big(H^{-1/2}(\pO),H^{1/2}(\pO)\big)$, this proves the first statement. The second statement follows immediately since $\J$ is of class $\cB_p$ for any $p > n-1$.
\end{proof}

\subsection{The determinant}
\label{sec:det}

In this section we complete the proof of Theorem \ref{thm:WEvans} by analyzing the zeros and singularities of $\cE = \det_p E$, thus establishing the index formulas~\eqref{detorder2} and \eqref{detorder}.

Our strategy is as follows. Using Lemma \ref{lemRtoD}, Lemma \ref{lemDtoR} and Proposition  \ref{propBp}, we see that the function $\lambda\mapsto E(\lambda)=\widehat{M}_{\widehat{\Theta}}(\lambda)N_\Theta(\lambda)$ can be extended from the set $\rho(\widehat{\cL}^D)\cap\rho(\cL^\Theta)$ (where it is analytic) to the entire complex plane $\bbC$ as a meromorphic function whose values are operators 
of the type $I+B$ with  $B\in\cB_p\big(H^{-1/2}(\partial\Omega)\big)$. Thus, the $p$-modified Fredholm determinant $\cE(\lambda)=\det_p E(\lambda)$ is defined and analytic for all $\lambda$ except the poles of $E$. The poles of $E$ produce essential singularities of $\cE$, which is why Theorem \ref{thm:WEvans} involves the meromorphic function $\varphi$ and the multiplicity $m(\lambda_0; \cE)$. 

Our main task is thus to relate $m(\lambda_0; \cE)$ to the algebraic multiplicity of $\lambda_0$ as an eigenvalue of the operators $\cL^D$, $\widehat{\cL}^D$, $\cL^\Theta$ and $\cL^{\widehat{\Theta}}$. To do this we relate the eigenvalues of these operators to the eigenvalues of the respective Robin-to-Robin and Robin-to-Dirichlet operator pencils using results obtained in Section \ref{sec:multiplicity}, but we must overcome the following obstacle. While $N_\Theta(\cdot)$ fails to be invertible at the eigenvalues of $\cL^D$, it is not defined at the eigenvalues of $\cL^\Theta$. As a result, it is not immediately clear what happens to the determinant in the intersection of these two spectra.
To resolve this,
%
%
%
%
we factor $N_\Theta(\cdot)$ through the Robin-to-Dirichlet map $N_{\Theta_1}(\cdot)$ and Robin-to-Robin map $R_{\Theta_1,\Theta}(\cdot)$ associated with an auxiliary boundary operator $\Theta_1$, so that one factor has zeros but no poles, and vice versa for the other.
%
A similar factorization is used for the pencil $\widehat{M}_{\widehat{\Theta}}(\cdot)$.

As a result, we obtain a local description of the determinant, namely \eqref{detorder2}, around \emph{every} point in the complex plane. The ODE example in Section \ref{sec:interval} illustrates this phenomena, since the Dirichlet-to-Neuman map $M(\lambda)$ from \eqref{defMLAM} has singularities at the points of the Dirichlet spectrum while the determinant of $M(\lambda)$ is equal to $-\lambda$ and thus has removable singularities.

We recall that an operator-valued function $B\colon\mathcal{V}\to\cB(\cH)$ is \emph{meromorphic} at $\lambda_0 \in \mathcal V$ if it has a Laurent expansion $B(\lambda)=\sum_{k=-N}^{\infty}B_k(\lambda-\lambda_0)^k$ near $\lambda_0$, with $B_k\in\cB(\cH)$ for each $k$. It is said to be \emph{completely meromorphic} at $\lambda_0$ if, in addition, the coefficients in the principal part of the Laurent expansion satisfy the finite rank conditions $B_k\in\cF(\cH)$ for $-N \leq k \leq -1$.

We will need the following fact, cf.\ \cite[Lemma 4.2]{Howland} or \cite[Lemma 5.3]{GLMZ05}.

\begin{lemma}\label{lem:lem5.3}
Assume that $B\colon\mathcal{V}\to\cB(\cH)$ is a completely meromorphic $\cB_p(\cH)$-valued function such that
\begin{equation}\label{eq1detdetp}
	I+B(\lambda)=S(\lambda)\big(I+F(\lambda)\big)
\end{equation}
for all $\lambda\in\mathcal{V}$, where $S\colon\mathcal{V}\to\cB(\cH)$ is an analytic function whose values are invertible operators and $F\colon\mathcal{V}\to\cF(\cH)$ is a completely meromorphic function whose values are finite rank operators. Then
\begin{equation}\label{eqdetdetp}
	m\big(\lambda_0; {\det}_p(I+B(\cdot))\big)=m\big(\lambda_0; \det(I+F(\cdot))\big)
\end{equation}
for each $\lambda_0\in\mathcal{V}$, where $m(\lambda_0; \cdot)$ is the multiplicity function defined in \eqref{m:def}. If, in addition, $S(\lambda)-I\in\cB_p(\cH)$ for all $\lambda\in\mathcal{V}$, then there exists a meromorphic function $\varphi\colon\mathcal V\to\bbC$ such that
\begin{equation}\label{eqdetsphi}
	{\det}_p\big(I+B(\lambda)\big) = e^{\varphi(\lambda)} \det\big(I+F(\lambda)\big)
\end{equation}
for all $\lambda\in\mathcal{V}$.
\end{lemma}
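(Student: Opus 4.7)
The strategy is to apply the product formula for $p$-modified Fredholm determinants, specifically \eqref{prodformMER}, to the factorization $I + B(\lambda) = S(\lambda)(I + F(\lambda))$. To invoke that formula I must first recast the analytic factor $S(\lambda)$ as a perturbation of the identity by a $\cB_p$-valued function.

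The first observation is that the hypothesis ``$S(\lambda) - I \in \cB_p(\cH)$ for all $\lambda$'' appearing in part (2) is automatic under the assumptions of part (1). Rewriting the factorization as $S(\lambda) - I = B(\lambda) - S(\lambda) F(\lambda)$, both terms on the right lie in $\cB_p(\cH)$: the first by hypothesis on $B$, and the second because $F(\lambda)$ is finite rank and $S(\lambda)$ is bounded, so $S(\lambda) F(\lambda) \in \cF(\cH) \subset \cB_p(\cH)$. Hence $S - I$ is a meromorphic $\cB_p(\cH)$-valued function on $\mathcal V$, and both parts can be handled together.

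With this in hand, applying \eqref{prodformMER} to the decomposition $I + B(\lambda) = \bigl(I + (S(\lambda) - I)\bigr)(I + F(\lambda))$ produces a meromorphic function $\varphi_1 \colon \mathcal V \to \bbC$ with
\[
{\det}_p\bigl(I + B(\lambda)\bigr) = e^{\varphi_1(\lambda)} \, {\det}_p\bigl(S(\lambda)\bigr) \, {\det}_p\bigl(I + F(\lambda)\bigr).
\]
Since $S$ is analytic with invertible values and $S - I$ is analytic $\cB_p$-valued, the factor $\lambda \mapsto {\det}_p(S(\lambda))$ is analytic and nowhere vanishing on $\mathcal V$, using the standard property that ${\det}_p(I + K) \neq 0$ if and only if $I + K$ is invertible for $K \in \cB_p$. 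For the finite rank factor, formula \eqref{detpdet} yields
\[
{\det}_p\bigl(I + F(\lambda)\bigr) = \det\bigl(I + F(\lambda)\bigr) \exp\Bigl(\sum_{k=1}^{p-1} k^{-1}(-1)^{k} \tr\bigl(F(\lambda)^{k}\bigr)\Bigr),
\]
whose exponential prefactor is meromorphic because $F$ is completely meromorphic with finite rank values. Collecting the three exponential pieces gives the identity ${\det}_p(I + B(\lambda)) = e^{\varphi(\lambda)} \det(I + F(\lambda))$ asserted in \eqref{eqdetsphi}.

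The main obstacle is producing a \emph{single-valued} meromorphic $\varphi$ on all of $\mathcal V$, which requires defining a branch of $\log{\det}_p(S(\lambda))$ globally. This is immediate when $\mathcal V$ is simply connected, which is the relevant setting for the applications of the lemma in this paper (neighborhoods of a fixed point); in general it reduces to verifying that the periods of the meromorphic $1$-form $d\log{\det}_p(S(\cdot))$ vanish. Once the factorization is in place, part (1) follows at once from the identity $m\bigl(\lambda_0; e^{\varphi} f\bigr) = m(\lambda_0; f)$ for meromorphic $\varphi$, a purely local statement unaffected by the global logarithm question, applied with $f = \det(I + F(\cdot))$.
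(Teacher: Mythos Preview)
Your argument is correct and takes a somewhat different route from the paper. The paper proves \eqref{eqdetdetp} by citing Howland's Lemma~4.2 directly, and only under the \emph{additional} hypothesis $S(\lambda)-I\in\cB_p(\cH)$ does it invoke \eqref{prodformMER} to obtain \eqref{eqdetsphi}. You instead observe that this extra hypothesis is automatic: from $S(\lambda)-I=B(\lambda)-S(\lambda)F(\lambda)$ one sees $S(\lambda)-I\in\cB_p(\cH)$ at every regular point of $B$ and $F$, and a short analysis of the Laurent expansions (the principal parts of $B$ and $SF$ must cancel, and all remaining pieces are finite rank or $\cB_p$) extends this to the poles as well. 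This lets you treat both conclusions at once via the product formula, which is cleaner. Two small remarks: first, $S-I$ is in fact \emph{analytic} (not merely meromorphic) as a $\cB_p$-valued function, since $S$ is analytic; second, your justification at the poles deserves one more sentence, since $\cB_p$ is not closed in the operator norm and one needs the explicit Laurent decomposition rather than density alone.

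The global-logarithm issue you flag is genuine and applies equally to the paper's proof, which asserts without comment that the nonvanishing analytic function $\det_p S(\lambda)$ can be written as $e^{\psi(\lambda)}$ for analytic $\psi$ on all of $\mathcal V$. As you note, this is unproblematic for simply connected $\mathcal V$, and in any case the multiplicity statement \eqref{eqdetdetp} is purely local and unaffected.
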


The statement \eqref{eqdetdetp} about multiplicities does not require the assumption that $S(\lambda)-I\in\cB_p(\cH)$. With this extra assumption, however, we get \eqref{eqdetsphi}, which tells us more about the structure of the determinant itself, not just its multiplicity.

\begin{proof} We refer to \cite[Lemma 4.2]{Howland} for the proof of \eqref{eqdetdetp}.
Under the additional assumption $S(\lambda)-I\in\cB_p(\cH)$, \eqref{prodformMER} implies
\[
	{\det}_p\big(I+B(\lambda)\big) = e^{\tilde\varphi(\lambda)}{\det}_p\big(S(\lambda)\big){\det}_p\big(I+F(\lambda)\big)
\]
for some meromorphic function $\tilde\varphi$. Since ${\det}_p S(\lambda)$ is nonzero, it can be written as $e^{\psi(\lambda)}$ for some analytic function $\psi$, and the result follows from \eqref{detpdet}.
\end{proof}

\begin{rem}\label{rem-invcl} The sets \begin{equation}\label{BFalg}
I+\cB_p(\cH)=\big\{ I + B : B\in\cB_p(\cH)\big\}, \quad I+\cF(\cH)=\big\{ I + F : F\in\cF(\cH)\big\},
\end{equation} are inverse-closed sub-semigroups of $\cB(\cH)$ by multiplication: If $B_1,B_2\in\cB_p(\cH)$, then $(I+B_1)(I+B_2)-I\in\cB_p(\cH)$, and if $B\in\cB_p(\cH)$ and $I+B$ is invertible in $\cB(\cH)$, then $(I+B)^{-1}-I\in\cB_p(\cH)$, and analogously for $\cF(\cH)$. Indeed, 
\[
	(I+B_1)(I+B_2)-I=B_2+B_1(I+B_2)\in\cB_p(\cH)
\]
and
\[
	(I+B)^{-1}-I=(I+B)^{-1}\big(I - (I+B)\big)=-(I+B)^{-1}B\in\cB_p(\cH)
\]
since $\cB_p(\cH)$ is an ideal. For future reference we remark that if $D=I+F$ for some $F\in\cF(\cH)$ and $S$ is an invertible operator in $\cB(\cH)$, then $S^{-1}DS-I\in\cF(\cH)$ and the identity
\begin{equation}\label{DSid}
	DS=S\widetilde{D}
\end{equation}
holds, where $\widetilde{D}:=S^{-1}DS$ and $\widetilde{D}-I$ is in $\cF(\cH)$. Moreover,  $\widetilde{D}$ has the same determinant as $D$.
\end{rem}

One of the main tools used in what follows is a ``canonical'' representation 
of  nonlinear operator pencils and their local equivalence, a theory that goes back to \cite{GohSig}; see also \cite[Chapter IX]{GGK90}, \cite{BTG20}, \cite{LS10} and the vast literature therein. We now briefly recall some relevant facts.

Let $\cH$ be a Hilbert space and $T\colon\mathcal{V}\rightarrow\mathcal{B}(\mathcal{H})$ be an analytic operator-valued
function on an open subset $\mathcal{V}$ of the complex plane. Assume that $T(\lambda_0)$
is Fredholm of index zero for some
$\lambda_0\in\mathcal{V}$. Then there exists an operator
$F\in\mathcal{B}(\mathcal{H})$ of finite rank such that
$T(\lambda_0)+F$ is invertible. Since $T(\lambda)\in\mathcal{B}(\mathcal{H})$ and $T(\cdot)$ is continuous
in $\lambda$, the operator $G(\lambda):=T(\lambda)+F$ is invertible for all
$\lambda$ in some open neighborhood $\mathcal{U}$ of $\lambda_0$ in $\mathcal V$, and thus we can write
\begin{equation}
	T(\lambda)=G(\lambda)-F=G(\lambda)\big(I-G(\lambda)^{-1}F\big),\quad \lambda\in\mathcal{U}.
\end{equation}
Since  the operator $F$ is of finite rank, $\ker F$ has a finite-dimensional complement $\mathcal{H}_0$ in $\mathcal{H}$. Let $P$ be
the projection in $\mathcal{H}$ along $\ker F$ onto
$\mathcal{H}_0$. 
It follows that
\begin{equation}
	I-G(\lambda)^{-1}F=\big(I-PG(\lambda)^{-1}FP\big)\big(I-(I-P)G(\lambda)^{-1}FP\big).
\end{equation}
We put $G_1(\lambda):=I-(I-P)G(\lambda)^{-1}FP$ and note that $G_1$ is
defined and analytic in 
$\mathcal U$. Furthermore, the values of $G_1$ are
invertible operators on $\mathcal{H}$; in fact
\[
	G_1(\lambda)^{-1}=I+(I-P)G(\lambda)^{-1}FP.
\]
We thus obtain
\begin{equation}\label{eq2}
T(\lambda)=G(\lambda)\big(I-PG(\lambda)^{-1}FP\big)G_1(\lambda), \quad
\lambda \in \mathcal U,
\end{equation}
where $G$ and $G_1$ are analytic operator-valued functions on $\mathcal{U}$ and their values are invertible.

This motivates the following definition.

\begin{define}
\label{def:equivalent}
	Let $\mathcal{V}$ be an open set in $\mathbb{C}$ and let $T\colon \mathcal{V}\to\mathcal{B}(\mathcal{H})$ and $D\colon \mathcal{V}\to\mathcal{B}(\mathcal{H})$ be analytic operator-valued functions. We say that $T$ and $D$ are {\em equivalent} at a point $\lambda_0 \in \mathcal V$
	if there exists an open neighborhood $\mathcal{U}$ of $\lambda_0$ in $\mathcal{V}$ such that
	\begin{equation}
	T(\lambda)=S_1(\lambda)D(\lambda)S_2(\lambda)
	\end{equation}
	for all $\lambda\in\mathcal{U}$, where $S_1(\lambda), S_2(\lambda)\in\mathcal{B}(\mathcal{H})$ are invertible operators that depend analytically on $\lambda$.
\end{define}

Based on formula \eqref{eq2}, one can further decompose the middle term and obtain the following well-known result that goes back to \cite{GohSig}; see also \cite[Theorem XI.8.1]{GGK90}, \cite{LS10}, \cite[Theorem 3.10]{BTG20} and references therein.

\begin{theorem}\label{t1.1}
	Let $T \colon \mathcal{V}\rightarrow\mathcal{B}(\mathcal{H})$ be an analytic operator-valued
	function, and assume that $T(\lambda_0)$ is Fredholm with index zero for some $\lambda_0\in\mathcal{V}$.
	Then $T$ is equivalent at $\lambda_0$ to an analytic operator-valued
	function $D$, defined in a neighborhood $\mathcal{U}$ of $\lambda_0$, of the form
	\begin{equation}\label{indices}
	D(\lambda)=P_0+(\lambda-\lambda_0)^{k_1}P_1+\cdots+(\lambda-\lambda_0)^{k_r}P_r,
	\end{equation}
	where $r\in\mathbb{N}\cup\{0\}$, $P_0, P_1,\ldots,P_r$ are mutually disjoint projections such that $I-P_0$ is of finite rank and $P_1,\ldots,P_r$ have rank
	one, and $0<k_1\leq k_2\ldots\leq
	k_r$. Moreover, there exist operators $G(\lambda), G_1(\lambda)$ and $G_2(\lambda)$  so that
	\begin{equation}\label{eq5}
	T(\lambda)=G(\lambda)G_2(\lambda)D(\lambda)G_1(\lambda)
	\end{equation}
	for $\lambda \in \mathcal U$, where $G$, $G_1$ and $G_2$ are analytic with invertible values, and $G(\lambda)=T(\lambda)+F$, where $F$ is of finite rank. Furthermore, the operators $G_1(\lambda)-I$, $D(\lambda)-I$ and $G_2(\lambda)-I$ in \eqref{eq5} are of finite rank, and hence belong to $\mathcal{B}_p(\cH)$ for every $p$.
\end{theorem}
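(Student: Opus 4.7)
The plan is to build on the preparatory reduction \eqref{eq2} already carried out in the paragraph preceding the theorem. That identity gives
\[
T(\lambda) = G(\lambda)\,M(\lambda)\,H(\lambda), \qquad M(\lambda) := I - PG(\lambda)^{-1}FP,
\]
on a neighbourhood $\mathcal U$ of $\lambda_0$, where $G(\lambda) = T(\lambda)+F$ and $H(\lambda)$ (the operator denoted by $G_1(\lambda)$ in \eqref{eq2}) are analytic with invertible values on $\mathcal U$. It therefore suffices to further factor the middle term as $M(\lambda) = G_2(\lambda)\,D(\lambda)\,\widetilde G_1(\lambda)$ with $D$ of the canonical shape \eqref{indices}, and then absorb the inner factor into $H$ by setting $G_1 := \widetilde G_1 H$, to obtain \eqref{eq5}.

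The next step is to reduce the factorization of $M(\lambda)$ to a finite-dimensional problem. Since $P$ is the projection along $\ker F$ onto the finite-dimensional subspace $\mathcal H_0 = \ran P$, and since $FP = F$, the operator $PG(\lambda)^{-1}FP$ preserves both $\mathcal H_0$ and $\ker P$ and annihilates $\ker P$. Consequently $M(\lambda)$ splits, with respect to $\cH = \mathcal H_0 \oplus \ker P$, as $M(\lambda) = A(\lambda) \oplus I_{\ker P}$, where $A(\cdot)$ is an analytic matrix-valued function on the finite-dimensional space $\mathcal H_0$.

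At this point I would invoke the classical local Smith normal form for holomorphic matrix-valued functions (\cite{GohSig}, \cite[Chapter~XI]{GGK90}, or \cite[Theorem~3.10]{BTG20}): after possibly shrinking $\mathcal U$, there exist analytic families $E_1, E_2 \colon \mathcal U \to \cB(\mathcal H_0)$ with invertible values, mutually orthogonal rank-one projections $Q_1,\dots,Q_r$ in $\mathcal H_0$, and integers $0 < k_1 \leq \cdots \leq k_r$ such that
\[
E_1(\lambda)\,A(\lambda)\,E_2(\lambda) = Q_0 + \sum_{i=1}^{r} (\lambda-\lambda_0)^{k_i}\,Q_i,\qquad Q_0 := I_{\mathcal H_0} - \sum_{i=1}^{r} Q_i.
\]
Defining $P_0 := Q_0 \oplus I_{\ker P}$ and $P_i := Q_i$ for $1 \leq i \leq r$, together with $G_2(\lambda) := E_1(\lambda)^{-1} \oplus I_{\ker P}$ and $\widetilde G_1(\lambda) := E_2(\lambda)^{-1} \oplus I_{\ker P}$, yields $M(\lambda) = G_2(\lambda)\,D(\lambda)\,\widetilde G_1(\lambda)$ with $D$ as in \eqref{indices}; substituting this into the first display and setting $G_1 := \widetilde G_1\,H$ gives \eqref{eq5}.

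The finite-rank statements are then immediate: $D(\lambda)-I = \sum_{i=1}^{r}\bigl((\lambda-\lambda_0)^{k_i}-1\bigr)P_i$ has rank at most $r$; $G_2(\lambda)-I$ and $\widetilde G_1(\lambda)-I$ vanish on $\ker P$ and act on the finite-dimensional subspace $\mathcal H_0$; and $G_1(\lambda)-I = \bigl(\widetilde G_1(\lambda)-I\bigr)H(\lambda) + \bigl(H(\lambda)-I\bigr)$ is of finite rank because $\cF(\cH)$ is an ideal in $\cB(\cH)$ and $H(\lambda)-I = -(I-P)G(\lambda)^{-1}FP$ is already finite-rank by construction. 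The only nontrivial ingredient of the proof is the local Smith normal form for holomorphic matrix-valued functions, for which the main subtlety is to perform row and column reductions by \emph{holomorphic} invertible factors; this can be done by an analytic version of Gaussian elimination (equivalently, an inductive argument based on Weierstrass preparation applied to the entry of lowest vanishing order at $\lambda_0$). Once that classical fact is in place, the rest of the proof is bookkeeping.
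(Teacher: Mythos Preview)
Your approach is exactly what the paper suggests: the paper does not prove Theorem~\ref{t1.1} at all but simply states it with references, after setting up the preliminary reduction \eqref{eq2} and remarking that ``one can further decompose the middle term.'' Your proposal fills in precisely that decomposition, reducing to the finite-dimensional block on $\mathcal H_0$ and invoking the holomorphic Smith form there, which is the standard route in the cited sources.

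There is one small slip. You define $Q_0 := I_{\mathcal H_0} - \sum_{i=1}^{r} Q_i$, which forces $P_0 + P_1 + \cdots + P_r = I$. But the theorem does not assert this; Proposition~\ref{invertT}(2) shows that equality holds if and only if $T(\lambda)$ is invertible for $\lambda$ near but not equal to $\lambda_0$, which is not assumed here. In general the Smith form of $A(\lambda)$ may have identically-zero diagonal entries (when $\det A \equiv 0$), and $Q_0$ should be the projection onto the diagonal entries equal to $1$, not the full complement of $\sum Q_i$. The fix is trivial---just drop your formula for $Q_0$ and let the Smith form dictate it---but as written your $D(\lambda)$ would be invertible for $\lambda \neq \lambda_0$, which need not hold.
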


Note that for $T(\lambda)$ as in \eqref{eq5}, the operator $T(\lambda)-I$ is not necessarily in $\mathcal{B}_p(\cH)$, and $D(\lambda)$ from \eqref{indices} is not necessarily invertible for $\lambda\neq\lambda_0$.

\begin{prop}\label{invertT}
	Let $T \colon \mathcal V \to \cB(\cH)$ satisfy the hypotheses of Theorem \ref{t1.1}.
		\begin{enumerate}
	\item For each $\lambda\in\mathcal{U}$, the operator $T(\lambda)$ in \eqref{eq5} is invertible if and only if $D(\lambda)$ is invertible. 

		\item $T(\lambda)$ is invertible for some (and hence all) $\lambda$ in $\mathcal{U}\setminus\{\lambda_0\}$ if and only if
		\begin{equation}\label{eq1}
			P_0+P_1+\cdots+P_r=I.
		\end{equation}
		
		\item $T(\lambda_0)$ is invertible  if and only if $r=0$ and $P_0 = I$.	\end{enumerate}
\end{prop}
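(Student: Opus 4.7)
The proposition follows once we analyze the invertibility of the simple operator $D(\lambda)$ and push the conclusion through the factorization $T(\lambda) = G(\lambda) G_2(\lambda) D(\lambda) G_1(\lambda)$ supplied by Theorem \ref{t1.1}. For part (1), the three factors $G$, $G_1$, $G_2$ all take invertible values on $\mathcal{U}$, so $T(\lambda)$ is invertible if and only if $D(\lambda)$ is. This reduces the remaining parts to the study of $D(\lambda) = P_0+(\lambda-\lambda_0)^{k_1}P_1+\cdots+(\lambda-\lambda_0)^{k_r}P_r$.

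For part (2), I would first set $P := P_0 + P_1 + \cdots + P_r$ and observe that mutual disjointness $P_iP_j=\delta_{ij}P_j$ makes $P$ a projection satisfying $\ker P \subseteq \bigcap_{j=0}^{r}\ker P_j$: indeed, if $Px=0$, then applying each $P_i$ to the identity $\sum_j P_jx=0$ and using $P_iP_j=\delta_{ij}P_j$ yields $P_ix=0$. Consequently every $x\in\ker P$ is annihilated by $D(\lambda)$, so $D(\lambda)$ cannot be invertible unless $P=I$. Conversely, if $P=I$, then for any $\lambda\in\mathcal{U}\setminus\{\lambda_0\}$ I would verify directly that
\begin{equation*}
D(\lambda)^{-1} = P_0 + (\lambda-\lambda_0)^{-k_1}P_1 + \cdots + (\lambda-\lambda_0)^{-k_r}P_r
\end{equation*}
is a two-sided inverse, using mutual disjointness to collapse the cross terms and obtain $D(\lambda)D(\lambda)^{-1} = \sum_j P_j = P = I$.

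For part (3), I would specialize to $\lambda=\lambda_0$, where $D(\lambda_0)=P_0$ is a projection. A projection is invertible precisely when it equals the identity, so invertibility of $T(\lambda_0)$ forces $P_0=I$. Combined with mutual disjointness, $P_j = P_j P_0 = 0$ for $j\geq 1$, which contradicts the rank-one assumption on $P_1,\dots,P_r$ unless $r=0$. The converse direction is trivial: $r=0$ together with $P_0=I$ give $D\equiv I$, and so $T(\lambda_0)$ is invertible by part (1).

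I do not anticipate any serious obstacle; the entire argument reduces to elementary linear algebra with mutually disjoint projections. The one point that requires a little care is the inclusion $\ker P \subseteq \bigcap_j\ker P_j$, since the $P_j$ are not assumed to be orthogonal projections, but this follows cleanly from mutual disjointness as indicated above.
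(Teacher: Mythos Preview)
Your proof is correct and follows essentially the same approach as the paper: both reduce invertibility of $T(\lambda)$ to that of $D(\lambda)$ via the invertible factors $G, G_1, G_2$, and then analyze $D(\lambda)$ using the mutual disjointness of the projections. The paper compresses parts (2) and (3) into the single observation that $\ker D(\lambda)=\ker(P_0+\cdots+P_r)$ for $\lambda\neq\lambda_0$ and $\ker D(\lambda_0)=\ker P_0$, whereas you spell out the inclusion $\ker P\subseteq\bigcap_j\ker P_j$ and exhibit the explicit inverse, but the substance is the same.
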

\begin{proof}
	
Since $G, G_1$ and $G_2$ in \eqref{eq5} take invertible values, the required assertions directly follow from Theorem \ref{t1.1} and the fact that 
	\begin{align*}
		\dim\ker D(\lambda)&=\dim\ker(P_0+P_1\cdots +P_r) \ \text{ for } \lambda\neq\lambda_0,\\
		\dim\ker D(\lambda_0)&=\dim\ker P_0,
	\end{align*}
by formula \eqref{indices}.\end{proof}
 As seen in the following simple example, the pencil $D$  in \eqref{indices} is obtained from $T$ by elementary transformations on rows and columns.
\begin{example}
Consider the linear pencil $T(\lambda) = \left[\begin{smallmatrix}\lambda&1\\0&\lambda\end{smallmatrix}\right]$, for which $\lambda=0$ is an eigenvalue with $m_a\big(0,T(\cdot)\big) = 2$. One can show that $T$ is equivalent at $\lambda_0 = 0$ to the nonlinear pencil $D(\lambda) = \left[\begin{smallmatrix}1&0\\0&0\end{smallmatrix}\right] + \lambda^2 \left[\begin{smallmatrix}0&0\\0&1\end{smallmatrix}\right]$. Indeed, using the elementary matrices
\[E_1=\begin{bmatrix}0&1\\1&0\end{bmatrix}, \ \
E_2=\begin{bmatrix}1&0\\-\lambda&1\end{bmatrix},\ \
E_3=\begin{bmatrix}1&-\lambda\\0&1\end{bmatrix}, \ \
E_4=\begin{bmatrix}1&0\\0&-1\end{bmatrix},
\]
one verifies the identity $E_4E_2T(\lambda)E_1E_3=D(\lambda)$ needed for \eqref{eq5}.
As discussed in Example \ref{ex1}, one also has $m_a\big(0,D(\cdot)\big)=2$.
\end{example}

We next explain how the algebraic multiplicity of an eigenvalue for a nonlinear pencil $T(\cdot)$ is encoded in the indices $k_1,\ldots,k_r$ in \eqref{indices}. This require an extra condition on $T(\cdot)$.

\begin{hyp}\label{mainT}
	Assume that
	\begin{enumerate}
		\item $T \colon \mathcal{V}\rightarrow\mathcal{B}(\mathcal{H})$ is an analytic operator-valued function,
		\item $T(\lambda)$ is Fredholm with index zero for each $\lambda\in\mathcal{V}$,
		\item The set of $\lambda\in\mathcal{V}$ for which $T(\lambda)$ is not invertible is discrete.
	\end{enumerate}
\end{hyp}
\noindent We stress that if $T$ satisfies Hypothesis \ref{mainT}, then \eqref{eq1} holds for each $\lambda_0\in\mathcal{V}$.
The following well-known result goes back to \cite{GohSig}; see also \cite[Chapter XI]{GGK90} and \cite[Theorem 3.10]{BTG20}.

\begin{lemma}\label{Fr}
Suppose $T \colon \mathcal V \to \cB(\cH)$ satisfies Hypothesis \ref{mainT} and  let $\lambda_0\in\mathcal{V}$. Then the algebraic multiplicity $m_{a}\big(\lambda_0,T(\cdot)\big)$, in the sense of Definition \ref{JC}, is equal to $k_1+\cdots+k_r$ for the indices in Theorem \ref{t1.1}.
\end{lemma}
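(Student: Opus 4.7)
The plan is to reduce to the canonical form provided by Theorem~\ref{t1.1} and then compute the algebraic multiplicity of the much simpler pencil $D$ directly. Theorem~\ref{t1.1} gives the local factorization $T(\lambda) = S_1(\lambda) D(\lambda) S_2(\lambda)$ for $\lambda$ near $\lambda_0$, where $S_1 := G G_2$ and $S_2 := G_1$ are analytic with invertible values and $D$ has the form in \eqref{indices}. Under Hypothesis~\ref{mainT} the non-invertibility locus of $T$ is discrete, so Proposition~\ref{invertT}(2) further guarantees $P_0 + P_1 + \cdots + P_r = I$, and in particular $\ker D(\lambda_0) = \ran(P_1) \oplus \cdots \oplus \ran(P_r)$.

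The next step is to show that the algebraic multiplicity is invariant under this local equivalence, i.e., $m_a\big(\lambda_0, T(\cdot)\big) = m_a\big(\lambda_0, D(\cdot)\big)$. I would use the root-function reformulation of Definition~\ref{JC}: vectors $f_0, \ldots, f_{k-1}$ form a Jordan chain of length $k$ for $T$ at $\lambda_0$ if and only if the polynomial $u(\lambda) := \sum_{j=0}^{k-1} (\lambda - \lambda_0)^j f_j$ satisfies $T(\lambda) u(\lambda) = O\big((\lambda - \lambda_0)^k\big)$ with $u(\lambda_0) = f_0 \neq 0$; this is a direct rearrangement of \eqref{kchain}. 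Using $T u = S_1 \cdot D(S_2 u)$ and the invertibility of $S_1$ near $\lambda_0$, the chain condition transfers to $D(\lambda)(S_2 u)(\lambda) = O\big((\lambda - \lambda_0)^k\big)$ with $(S_2 u)(\lambda_0) = S_2(\lambda_0) f_0 \neq 0$. Truncating the analytic function $S_2 u$ to its degree $k-1$ Taylor polynomial yields a Jordan chain of length $k$ for $D$ starting at $S_2(\lambda_0) f_0$, and the procedure reverses. Hence $r_T(f_0) = r_D\big(S_2(\lambda_0) f_0\big)$, and because $S_2(\lambda_0)$ restricts to a linear bijection $\ker T(\lambda_0) \to \ker D(\lambda_0)$, summing ranks over a basis yields the equality.

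It remains to compute $m_a\big(\lambda_0, D(\cdot)\big)$ directly. Choose a basis $\{e_1, \ldots, e_r\}$ of $\ker D(\lambda_0)$ with $e_i \in \ran(P_i) \setminus \{0\}$. The Taylor coefficients of $D$ at $\lambda_0$ are $D(\lambda_0) = P_0$ and $\frac{1}{l!} D^{(l)}(\lambda_0) = \sum_{m : k_m = l} P_m$ for $l \geq 1$, so applying $P_i$ to \eqref{kchain} at the index $j = k_i$ would force $P_i g_0 = 0$; but $P_i e_i = e_i \neq 0$, so no Jordan chain starting at $e_i$ can have length exceeding $k_i$. On the other hand, setting $g_0 := e_i$ and $g_1 := 0, \ldots, g_{k_i - 1} := 0$ trivially verifies \eqref{kchain} for $0 \leq j \leq k_i - 1$, so $r_D(e_i) = k_i$. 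Summing gives $m_a\big(\lambda_0, D(\cdot)\big) = k_1 + \cdots + k_r$. The main obstacle is the bookkeeping in the equivalence step: one must carefully translate between the polynomial and analytic descriptions of Jordan chains under the analytic change of variables $u \mapsto S_2 u$ and confirm that truncation does not affect the rank. Once this is handled, the final computation is direct thanks to the mutual disjointness of the projections $P_j$.
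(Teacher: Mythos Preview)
Your argument is correct and supplies a self-contained proof where the paper gives none: Lemma~\ref{Fr} is stated with only a citation to \cite{GohSig}, \cite[Chapter XI]{GGK90}, and \cite[Theorem 3.10]{BTG20}. The root-function reformulation of Jordan chains, the transfer of ranks under the analytic equivalence $T = S_1 D S_2$, and the direct computation $r_D(e_i) = k_i$ all go through exactly as you describe.

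One minor point worth noting: Definition~\ref{JC}(3) as literally stated sums ranks over an arbitrary basis of $\ker T(\lambda_0)$, and this sum is not basis-independent in general. For instance, for $D(\lambda) = P_0 + (\lambda - \lambda_0) P_1 + (\lambda - \lambda_0)^2 P_2$ with $e_i \in \ran P_i \setminus \{0\}$, one has $r(e_1) + r(e_2) = 1 + 2 = 3$ but $r(e_1) + r(e_1 + e_2) = 1 + 1 = 2$. The standard convention, implicit in the cited references, is to use a canonical system of eigenvectors obtained by successively maximizing ranks; your basis $\{e_i\}$ is precisely such a system for $D$, and since your equivalence step preserves the rank function pointwise, it also preserves the maximum over all bases. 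So nothing in your argument is affected, though you might remark that $\{e_i\}$ is canonical in this sense.
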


Next, we describe how the algebraic multiplicity is encoded in the $p$-modified determinant.

\begin{prop}\label{detcalc}
Suppose $T \colon \mathcal V \to \cB(\cH)$ satisfies Hypothesis \ref{mainT} and let $\lambda_0\in\mathcal{V}$.
	\begin{enumerate}
		\item In a neighborhood $\mathcal{U}$ of $\lambda_0$ the operator $T(\lambda)$  is represented by \eqref{eq5}, with
		\begin{align}\label{detD1}
				\operatorname{det} D(\lambda)&=(\lambda-\lambda_0)^{m_{a}(\lambda_0,T(\cdot))},\\
		\label{detD2}	\operatorname{det}_p D(\lambda)&=(\lambda-\lambda_0)^{m_{a}(\lambda_0,T(\cdot))}\exp\left({\sum_{k=1}^{p-1}k^{-1}(-1)^k\bigg(\sum_{j=1}^r \big((\lambda-\lambda_0)^{k_j}-1\big)\bigg)^k}\right),
		\end{align}
		and $\operatorname{det}_p G_1(\lambda)$, $\operatorname{det}_p G_2(\lambda)$ do not vanish in $\mathcal U$.
		\item If, in addition, the operator $T(\lambda)-I$ is in $\cB_p\left(\mathcal{H}\right)$ for all $\lambda\in\mathcal{U}$, then
			\begin{align*}
				\operatorname{det}_p T(\lambda)&= e^{\varphi(\lambda)} (\lambda-\lambda_0)^{{m_{a}(\lambda_0,T(\cdot))}}
		\end{align*}
		for some analytic function $\varphi$ on $\mathcal U$, hence
		\begin{align*}
				m(\lambda_0; \operatorname{det}_p T)&=m_{a}\big(\lambda_0,T(\cdot)\big),
		\end{align*}
		where $m(\lambda_0; \cdot)$ is the multiplicity function from \eqref{m:def}.
	\end{enumerate}
\end{prop}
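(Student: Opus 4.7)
The plan is to exploit the canonical factorization provided by Theorem~\ref{t1.1}: in a neighbourhood $\mathcal{U}$ of $\lambda_0$ we have $T(\lambda)=G(\lambda)G_2(\lambda)D(\lambda)G_1(\lambda)$, where $D$ has the form \eqref{indices}, the factors $G,G_1,G_2$ depend analytically on $\lambda$ and take invertible values, and each of $G_1(\lambda)-I$, $G_2(\lambda)-I$, $D(\lambda)-I$ is of finite rank. Under Hypothesis~\ref{mainT}, $T(\lambda)$ is invertible on $\mathcal{U}\setminus\{\lambda_0\}$, so Proposition~\ref{invertT}(2) gives $P_0+P_1+\cdots+P_r=I$. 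Since the projections are mutually disjoint,
\begin{equation*}
D(\lambda)-I=\sum_{j=1}^r\big((\lambda-\lambda_0)^{k_j}-1\big)P_j
\end{equation*}
is a finite-rank operator whose nonzero eigenvalues are $(\lambda-\lambda_0)^{k_j}-1$ for $j=1,\dots,r$, each of algebraic multiplicity one (coming from the rank-one range of $P_j$). Consequently $\det D(\lambda)=\prod_{j=1}^r(\lambda-\lambda_0)^{k_j}=(\lambda-\lambda_0)^{k_1+\cdots+k_r}$, which by Lemma~\ref{Fr} equals $(\lambda-\lambda_0)^{m_a(\lambda_0,T(\cdot))}$, proving \eqref{detD1}. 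The formula \eqref{detD2} for $\det_p D(\lambda)$ then follows from \eqref{detpdet} once one computes $\tr\big((D(\lambda)-I)^k\big)$ from the same spectral decomposition.

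For the remaining assertion of~(1), note that $G_j(\lambda)-I$ is finite rank and $G_j(\lambda)$ is invertible for every $\lambda\in\mathcal{U}$, so the ordinary determinant $\det G_j(\lambda)$ is a nonvanishing analytic function of $\lambda$; by \eqref{detpdet}, $\det_p G_j(\lambda)$ differs from $\det G_j(\lambda)$ only by a nonvanishing analytic exponential factor, hence is itself nonvanishing on $\mathcal{U}$.

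To prove~(2), first observe that since $G(\lambda)=T(\lambda)+F$ for a fixed finite-rank $F$, the hypothesis $T(\lambda)-I\in\cB_p(\cH)$ forces $G(\lambda)-I\in\cB_p(\cH)$; the other three factors $G_2-I$, $D-I$, $G_1-I$ are finite rank and hence automatically in $\cB_p(\cH)$. Iterating the product formula \eqref{prodform} on the factorization $T=G\cdot G_2\cdot D\cdot G_1$ yields
\begin{equation*}
{\det}_p T(\lambda)=e^{\tilde\varphi(\lambda)}\,{\det}_p G(\lambda)\,{\det}_p G_2(\lambda)\,{\det}_p D(\lambda)\,{\det}_p G_1(\lambda)
\end{equation*}
for some analytic $\tilde\varphi$. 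Writing each of the nonvanishing analytic functions $\det_p G$, $\det_p G_1$, $\det_p G_2$ as the exponential of an analytic function, and absorbing the analytic exponential factor from \eqref{detD2}, collapses the above to $\det_p T(\lambda)=e^{\varphi(\lambda)}(\lambda-\lambda_0)^{m_a(\lambda_0,T(\cdot))}$ for some analytic $\varphi$ on $\mathcal{U}$. The multiplicity assertion $m(\lambda_0;\det_p T)=m_a(\lambda_0,T(\cdot))$ then follows at once from \eqref{m:def}, since $m(\lambda_0;e^\varphi)=0$ for any $\varphi$ analytic on $\mathcal{U}$. The main piece of bookkeeping is the iteration of \eqref{prodform}: at each step one must verify that both operators being multiplied are $\cB_p(\cH)$-valued perturbations of the identity, which follows from the closure of $I+\cB_p(\cH)$ under multiplication noted in Remark~\ref{rem-invcl}.
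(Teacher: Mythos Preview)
Your proof is correct and follows essentially the same route as the paper: both arguments invoke Proposition~\ref{invertT}(2) to obtain $P_0+\cdots+P_r=I$, compute the eigenvalues of $D(\lambda)-I$ from the disjoint rank-one projections, apply Lemma~\ref{Fr} and \eqref{detpdet} to get \eqref{detD1}--\eqref{detD2}, then for part~(2) observe $G(\lambda)-I\in\cB_p$ via $G=T+F$ and iterate \eqref{prodform} to extract the power of $(\lambda-\lambda_0)$ up to a nonvanishing analytic factor. One small imprecision: your phrase ``each of algebraic multiplicity one'' is only literally true when the $k_j$ are distinct, but the product formula for $\det D(\lambda)$ is unaffected since the eigenvalues are being listed with repetition indexed by $j$.
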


\begin{proof}
	 (1)  \,  The operator $T(\lambda)$ is invertible in some punctured neighborhood of $\lambda_0$ by Hypothesis \ref{mainT}, so we can assume it is invertible in $\mathcal{U}\setminus\{\lambda_0\}$, and hence \eqref{eq1} holds by Proposition \ref{invertT}. Therefore, \eqref{indices} yields 
	 $D(\lambda)-I=\sum_{j=1}^r\big((\lambda-\lambda_0)^{k_j}-1\big)P_j$, with mutually disjoint rank one projections $P_j$. Thus the spectrum of the operator $D(\lambda)-I$ consists of $\mu_0=0$ and the eigenvalues $\mu_j=(\lambda-\lambda_0)^{k_j}-1$ for $1\le j\le r$. It follows that 
	  	\begin{align*}
			\operatorname{det} D(\lambda)&=\prod_{j=0}^r(1+\mu_j)=(\lambda-\lambda_0)^{\sum_{j=1}^rk_j}=(\lambda-\lambda_0)^{m_{a}(\lambda_0,T(\cdot))},
	\end{align*}
where in the last equality we applied  Lemma \ref{Fr}.

Next, using \eqref{detD1}, formula \eqref{detpdet} relating $\det_p(I+F)$ and $\det(I+F)$, and the formulas just obtained for the eigenvalues $\mu_j$ of $D(\lambda)-I$, we arrive at  \eqref{detD2}.
The statements for $\operatorname{det}_p G_1(\lambda)$ and $\operatorname{det}_p G_2(\lambda)$ follow from Theorem \ref{t1.1} since both $G_1(\lambda)$ and $G_2(\lambda)$ are invertible, and $\det_p G_1(\lambda)$ and $\det_p G_2(\lambda)$
 are well defined because $G_{1}(\lambda)-I$ and  $G_{2}(\lambda)-I$ are in $\cB_p(\cH)$ for all $\lambda\in\mathcal{U}$.
  
	(2) \, Since $T(\lambda)-G(\lambda)\in\cF(\cH)$ by Theorem \ref{t1.1} and $T(\lambda)-I\in\cB_p\left(\mathcal{H}\right)$ by assumption, we conclude that  $G(\lambda)-I\in\cB_p\left(\mathcal{H}\right)$ for $\lambda\in\mathcal{U}$. Using the fact that $G_1(\lambda)-I$, $G_2(\lambda)-I$ and $D(\lambda)-I$ are in $\cB_p(\cH)$ by Theorem \ref{t1.1},
	applying formula \eqref{prodform} twice and utilizing \eqref{detD1}, we have
	\begin{align*}
			\operatorname{det}_p T(\lambda)&=\operatorname{det}_p \big(G(\lambda)G_2(\lambda)D(\lambda)G_1(\lambda)\big)=\operatorname{det}(D(\lambda)) S(\lambda)=(\lambda-\lambda_0)^{m_{a}(\lambda_0,T(\cdot))} S(\lambda)
	\end{align*}
with some nowhere vanishing analytic function $S$, as required.	
\end{proof}

We now consider what happens to the {\em ratio} of $T(\cdot)$ from Theorem \ref{t1.1} and another analytic operator-valued function $\widehat{T}(\cdot)$, whose representation \eqref{eq5} from the theorem is 
\begin{equation}\label{eq5.bis}
	\widehat{T}(\lambda)=\widehat{G}(\lambda)\widehat{G}_2(\lambda)\widehat{D}(\lambda)\widehat{G}_1(\lambda), \quad \lambda\in\widehat{\mathcal{U}}.
	\end{equation}
	
\begin{prop}\label{propM}
Suppose $\widehat T\colon\widehat{\mathcal{V}}\rightarrow\mathcal{B}(\mathcal{H})$  satisfies Hypothesis \ref{mainT} and  let $\lambda_0\in\widehat{\mathcal{V}}$, so $\widehat T$ is given by \eqref{eq5.bis} in a neighborhood $\widehat{\mathcal U}$ of $\lambda_0$.
	\begin{enumerate}
\item The operator-valued function $\widehat D$ from \eqref{eq5.bis} and the projections $\widehat P_j$ from \eqref{indices} satisfy
		\begin{align}\label{invD}
		\widehat D(\lambda)^{-1}=\widehat P_0+(\lambda-\lambda_0)^{-\widehat k_1}\widehat P_1+\cdots+(\lambda-\lambda_0)^{-\widehat k_{\widehat r}}\widehat P_{\widehat r}
		\end{align}
		for $\lambda\in\mathcal{\widehat U}\setminus\{\lambda_0\}$,
		and the function $\widehat{T}(\cdot)^{-1}$ is completely meromorphic near $\lambda_0$.
Moreover, the functions $\widehat G(\cdot)^{-1}$,  $\widehat G_1(\cdot)^{-1}$,  $\widehat G_2(\cdot)^{-1}$ from \eqref{eq5.bis} are analytic with
		invertible values, and the operators $\widehat G_1(\lambda)^{-1}-I$, $ \widehat G_2(\lambda)^{-1}-I$, respectively $\widehat D(\lambda)^{-1}-I$, are all of finite rank and therefore belong to $\mathcal{B}_p\left(\mathcal{H}\right)$ for every $p$ and all $\lambda\in\mathcal{\widehat U}$,  respectively $\lambda\in\mathcal{\widehat U}\setminus\{\lambda_0\}$. Furthermore, for all 
		$\lambda\in\mathcal{\widehat U}\setminus\{\lambda_0\}$ we have
		\begin{align}\label{detDinv1}
		\operatorname{det} \widehat D(\lambda)^{-1}&=(\lambda-\lambda_0)^{-m_{a}(\lambda_0,\widehat T(\cdot))},\\
		\label{detDinv2}
		\operatorname{det}_p \widehat D(\lambda)^{-1}&=(\lambda-\lambda_0)^{-m_{a}(\lambda_0,\widehat T(\cdot))}\exp\left(\sum_{k=1}^{p-1}k^{-1}(-1)^k\bigg(\sum_{j=1}^{\widehat{r}}\big((\lambda-\lambda_0)^{-\widehat{k}_j}-1\big)\bigg)^k\right),\end{align}
		and $\operatorname{det}_p \big(\widehat{G}_1(\lambda)^{-1}\big)$, $\operatorname{det}_p \big(\widehat{G}_2(\lambda)^{-1}\big)$ do not vanish in $\widehat{\mathcal U}$.
		
		\item In addition, let $T \colon \mathcal V \to \cB(\cH)$ satisfy Hypothesis \ref{mainT} and  let $\lambda_0\in\mathcal{V}\cap\widehat{\mathcal{V}}$. If  $\widehat T(\lambda)^{-1}T(\lambda)-I$ is in $\cB_p\left(\mathcal{H}\right)$ for $\lambda$ in a punctured neighborhood of $\lambda_0$, then the function $\lambda\mapsto\widehat T(\lambda)^{-1}T(\lambda)$ is completely meromorphic in a neighborhood of $\lambda_0$, and there exists a meromorphic function $\varphi$ in a neighbourhood of $\lambda_0$ such that 
\begin{equation}\label{thatt}
{\det}_p\big(\widehat T(\lambda)^{-1}T(\lambda)\big) = e^{\varphi(\lambda)}(\lambda-\lambda_0)^{m_{a}(\lambda_0,T(\cdot))-m_{a}(\lambda_0,\widehat T(\cdot))} ,
\end{equation}
and hence
		\begin{align*}
			m\big(\lambda_0;\operatorname{det}_p \big(\widehat T(\cdot)^{-1}T(\cdot)\big)\big)= m_{a}\big(\lambda_0,T(\cdot)\big)-m_{a}\big(\lambda_0,\widehat T(\cdot)\big),
		\end{align*}
		where $m(\lambda_0; \cdot)$ is the multiplicity function from \eqref{m:def}.
	\end{enumerate}	
\end{prop}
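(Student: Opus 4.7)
The plan is to handle the two parts separately, with part (1) being a computational consequence of the factorization from Theorem~\ref{t1.1} and part (2) requiring a careful rearrangement to bring it into a form where Lemma~\ref{lem:lem5.3} applies.

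For part (1), I would first note that Hypothesis~\ref{mainT} together with Proposition~\ref{invertT}(2) forces $\widehat P_0 + \widehat P_1 + \cdots + \widehat P_{\widehat r} = I$. Since the $\widehat P_j$ are mutually disjoint projections, the operator $\widehat D(\lambda)$ from \eqref{indices} has inverse
\[
\widehat D(\lambda)^{-1} = \widehat P_0 + (\lambda-\lambda_0)^{-\widehat k_1}\widehat P_1 + \cdots + (\lambda-\lambda_0)^{-\widehat k_{\widehat r}}\widehat P_{\widehat r}
\]
by direct verification. Because each $\widehat P_j$ (for $j \geq 1$) has rank one, the principal part of this Laurent expansion has finite-rank coefficients, and inverting \eqref{eq5.bis} as $\widehat T^{-1} = \widehat G_1^{-1} \widehat D^{-1} \widehat G_2^{-1} \widehat G^{-1}$ (where the three $G$-type inverses are analytic with invertible values by Theorem~\ref{t1.1}) then shows $\widehat T(\cdot)^{-1}$ is completely meromorphic. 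The determinant formulas are obtained exactly as in the proof of Proposition~\ref{detcalc}(1): the nonzero eigenvalues of $\widehat D(\lambda)^{-1} - I$ are $\mu_j = (\lambda-\lambda_0)^{-\widehat k_j} - 1$ for $1 \leq j \leq \widehat r$, so $\det \widehat D^{-1} = \prod_j (\lambda-\lambda_0)^{-\widehat k_j} = (\lambda-\lambda_0)^{-m_a(\lambda_0, \widehat T(\cdot))}$ by Lemma~\ref{Fr}, and \eqref{detDinv2} follows from \eqref{detpdet}.

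For part (2), the plan is to factor $\widehat T(\lambda)^{-1}T(\lambda) = S(\lambda)(I + F(\lambda))$ with $S$ analytic and invertible-valued and $F$ completely meromorphic of finite rank. Using the representations from Theorem~\ref{t1.1} and part (1),
\[
\widehat T(\lambda)^{-1}T(\lambda) = \widehat G_1(\lambda)^{-1} \widehat D(\lambda)^{-1} H(\lambda) D(\lambda) G_1(\lambda),
\]
where $H(\lambda) := \widehat G_2(\lambda)^{-1} \widehat G(\lambda)^{-1} G(\lambda) G_2(\lambda)$ is analytic with invertible values. The key maneuver is to conjugate $\widehat D^{-1}$ through $H$: setting $\widetilde F(\lambda) := H(\lambda)^{-1} \widehat D(\lambda)^{-1} H(\lambda) D(\lambda) - I$, the identity $AB - I = A(B-I) + (A-I)$ together with the fact that both $D - I$ and $H^{-1}(\widehat D^{-1} - I)H$ are finite-rank shows that $\widetilde F$ is completely meromorphic with finite-rank values. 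Then
\[
\widehat T^{-1}T = \widehat G_1^{-1} H G_1 \cdot \bigl(I + G_1^{-1} \widetilde F G_1\bigr) =: S(\lambda)\bigl(I + F(\lambda)\bigr),
\]
where $S$ is analytic with invertible values and $F$ is completely meromorphic of finite rank. The hypothesis $\widehat T^{-1}T - I \in \cB_p$ combined with $SF \in \cF(\cH) \subset \cB_p(\cH)$ and the identity $S - I = (\widehat T^{-1}T - I) - SF$ forces $S - I \in \cB_p$, so the second conclusion of Lemma~\ref{lem:lem5.3} yields \eqref{thatt} modulo computing $\det(I + F)$.

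To compute $\det(I + F(\lambda))$ I would use that conjugation by $G_1$ preserves the finite-rank determinant, and that the determinant is multiplicative on the inverse-closed semigroup $I + \cF(\cH)$ from Remark~\ref{rem-invcl}:
\[
\det(I+F) = \det\bigl(H^{-1}\widehat D^{-1}H \cdot D\bigr) = \det\bigl(H^{-1}\widehat D^{-1}H\bigr)\det(D) = \det(\widehat D^{-1})\det(D),
\]
whereupon \eqref{detD1} and \eqref{detDinv1} from part~(1) give $\det(I+F(\lambda)) = (\lambda-\lambda_0)^{m_a(\lambda_0,T(\cdot)) - m_a(\lambda_0,\widehat T(\cdot))}$. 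The multiplicity statement is then immediate from \eqref{thatt}, since any meromorphic function $e^\varphi$ satisfies $m(\lambda_0; e^\varphi) = 0$. The main obstacle is cleanly separating the analytic invertible factor $S$ from the meromorphic finite-rank factor $I + F$ despite the two chains of $G$-type factors becoming entangled after composition; the conjugation of $\widehat D^{-1}$ by $H$ is what resolves this, and is the technical heart of the argument.
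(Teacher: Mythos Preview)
Your proof is correct and follows essentially the same approach as the paper's: both reduce $\widehat T^{-1}T$ to a product of an analytic invertible factor and a finite-rank meromorphic factor by conjugating $\widehat D^{-1}$ and $D$ through the intermediate analytic pieces, then invoke Lemma~\ref{lem:lem5.3}. Your single conjugation by $H$ is exactly what the paper accomplishes by applying \eqref{DSid} twice to $S_1\widehat D^{-1}S_2DS_3$, yielding the same $S$ and $I+F$; you are also slightly more explicit than the paper in verifying $S-I\in\cB_p$ via the identity $S-I=(\widehat T^{-1}T-I)-SF$, which is precisely what is needed to obtain the $e^{\varphi}$ form in \eqref{thatt}.
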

\begin{proof}
	
	(1)\, By Theorem \ref{t1.1}, $\widehat T(\cdot)$ is equivalent at $\lambda_0$ to the analytic operator-valued
	function \begin{equation}\label{indices.bis}
	\widehat D(\lambda)=\widehat P_0+\sum_{j=1}^{\widehat r}(\lambda-\lambda_0)^{\widehat k_j}\widehat P_j
	\end{equation}
	for $\lambda\in\mathcal{\widehat U}$. That is, there exist analytic and
	invertible operators $\widehat G(\lambda), \widehat G_1(\lambda)$ and $\widehat G_2(\lambda)$  so that \eqref{eq5.bis} holds, 
where the operators $\widehat G_1(\lambda)-I, \widehat D(\lambda)-I, \widehat G_2(\lambda)-I$ are of finite rank and $\widehat G(\lambda)=\widehat T(\lambda)+\widehat F$ for some $\widehat F\in\cF(\cH)$.

By Hypothesis \ref{mainT}, $\widehat T(\lambda)$ is invertible for $\lambda$ in a punctured neighborhood of $\lambda_0$, so the same is true of $\widehat D(\lambda)$ and the identity \eqref{eq1} holds for $\widehat{P}_j$ by Proposition~\ref{invertT}.
Inverting \eqref{indices.bis}, we arrive at  \eqref{invD}. Since the coefficients of the singular terms in \eqref{invD} are of rank one, the function
 \begin{equation}\label{invN}
	\widehat T(\lambda)^{-1}= \widehat G_1(\lambda)^{-1}  \widehat D(\lambda)^{-1} \widehat G_2(\lambda)^{-1} \widehat G(\lambda)^{-1}
\end{equation}
is completely meromorphic.
All required assertions regarding the $\cB_p$ and finite rank properties follow from Remark \ref{rem-invcl}, while the determinant calculation for $\widehat D(\cdot)^{-1}$ is similar to that in the proof of Proposition \ref{detcalc}.

	(2)\,  Let $\lambda_0\in\mathcal{V}\cap\widehat{\mathcal{V}}$ and let $\mathcal{ U}$ and $\mathcal{\widehat U}$ be the neighborhoods of $\lambda_0$ as in Theorem \ref{t1.1}.  	
	For the rest of the proof we assume that $\lambda\in(\mathcal{\widehat U}\cap\mathcal{U})\setminus\{\lambda_0\}$ and suppress $\lambda$-dependence in operator-valued functions. Note that $\widehat{T}^{-1}T$ is completely meromorphic because $\widehat{T}^{-1}$ is and $T$ is analytic. Using \eqref{eq5} and \eqref{eq5.bis} we can write
		\begin{equation}\label{TT1}
	\widehat T^{-1}T=S_1\widehat D^{-1}S_2DS_3,
	\end{equation}
where $S_1, S_2, S_3$ are analytic with invertible values. Applying \eqref{DSid} twice, we can rewrite \eqref{TT1} as 
\begin{equation}\label{TT2}
	\widehat T^{-1}T=S_1S_2S_3\widetilde{D}_1\widetilde{D}_2,
\end{equation}
where $\widetilde{D}_1=(S_2S_3)^{-1}\widehat D^{-1}(S_2S_3)$ and $\widetilde{D}_2=S_3^{-1}DS_3$.
We observe that all five operators $\widehat D^{-1}-I$, $D-I$, $\widetilde{D}_1-I$, $\widetilde{D}_2-I$ and $\widetilde{D}_1\widetilde{D}_2-I$ are of finite rank by Remark \ref{rem-invcl}, and that $\det \widetilde{D}_1 = \det(\widehat D^{-1})$ and  $\det D=\det \widetilde{D}_2$. We now apply Lemma \ref{lem:lem5.3} with $I+B=\widehat T^{-1}T$,
$S=S_1S_2S_3$ and $I+F=\widetilde{D}_1\widetilde{D}_2$ to obtain the desired result \eqref{thatt}, 
\begin{align*}
	 {\det}_p(\widehat T^{-1}T) &=e^{\varphi(\lambda)} \det(\widetilde{D}_1)\det(\widetilde{D}_2)\\&=e^{\varphi(\lambda)}
\det(\widehat D^{-1})\det(D) = e^{\varphi(\lambda)} (\lambda-\lambda_0)^{m_{a}(\lambda_0,T(\cdot))-m_{a}(\lambda_0,\widehat T(\cdot))},
\end{align*} 
where in the last equality we used \eqref{detD1} and \eqref{detDinv1}.
\end{proof}

We now return to the elliptic setting described in the Introduction and in Section 4. 
We fix an operator $\Theta=\Theta_2$, let $\lambda_0\in\bbC$, and use Lemma \ref{T0} to choose $\Theta_1$ such that $\lambda_0\in\rho(\cL^{\Theta_1})$. We then consider the Robin-to-Dirichlet map $N_{\Theta_1}$ and Robin-to-Robin map $R_{1,2}$ defined in Section \ref{sec:Robin}.

\begin{prop}\label{multLN} Assume that $\Theta_1$ and $\Theta_2$ satisfy Hypothesis \ref{hypT}, and let $\lambda_0\in\rho(\cL^{\Theta_1})$.
\begin{enumerate}
	\item $\R N_{\Theta_1}\colon\rho(\cL^{\Theta_1})\rightarrow\cB\big(\Hm\big)$ and $R_{1,2}\colon\rho(\cL^{\Theta_1})\rightarrow\cB\big(\Hm\big)$ satisfy Hypothesis \ref{mainT}, and so are equivalent at $\lambda_0$ to analytic operator-valued functions of the form \eqref{indices}.

\item The projections $P_j$ from \eqref{indices} corresponding to the operator family $\R N_{\Theta_1}$ satisfy \eqref{eq1}, with $r=0$ for $\lambda_0 \in \rho(\cL^D)$ and $r>0$ for $\lambda_0 \in \sigma(\cL^D)$.

\item The projections $P_j$ from \eqref{indices} corresponding to the operator family $R_{1,2}$ satisfy \eqref{eq1}, with $r=0$ for $\lambda_0 \in \rho(\cL^{\Theta_2})$ and $r>0$ for $\lambda_0 \in \sigma(\cL^{\Theta_2})$.
\end{enumerate}
\end{prop}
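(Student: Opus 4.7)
The plan is to verify Hypothesis \ref{mainT} for each of the two pencils, apply Theorem \ref{t1.1} to obtain the canonical form \eqref{indices}, and then read off the structure of the projections using Proposition \ref{invertT}.

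For part (1), I would check the three conditions of Hypothesis \ref{mainT} in turn, on the domain $\mathcal{V} = \rho(\cL^{\Theta_1})$. Analyticity for the family $R_{1,2}$ is Lemma \ref{lemRtoR}(ii); for $\R N_{\Theta_1}$, analyticity follows from Lemma \ref{lemRtoD}(ii) composed with the bounded (constant) operator $\R$. The Fredholm index-zero property is precisely Lemma \ref{LNFred} for $R_{1,2}$, and Remark \ref{rNFred1} for $\R N_{\Theta_1}$. For discreteness of the non-invertibility set, I invoke Lemma \ref{LNFred} once more: inside $\rho(\cL^{\Theta_1})$, the operator $R_{1,2}(\lambda)$ fails to be invertible exactly when $\lambda \in \sigma(\cL^{\Theta_2})$, and $\R N_{\Theta_1}(\lambda)$ fails to be invertible exactly when $\lambda \in \sigma(\cL^D)$; both spectra are discrete by Propositions \ref{domD} and \ref{prop:domLmu}. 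With Hypothesis \ref{mainT} verified, Theorem \ref{t1.1} applied at $\lambda_0$ delivers the equivalence with a pencil $D(\lambda)$ of the form \eqref{indices}.

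For parts (2) and (3), I would directly invoke Proposition \ref{invertT}. Since Hypothesis \ref{mainT}(3) guarantees invertibility of each pencil in a punctured neighborhood of $\lambda_0$, assertion (2) of Proposition \ref{invertT} yields the decomposition \eqref{eq1} in both cases. The dichotomy between $r=0$ and $r>0$ is then assertion (3) of Proposition \ref{invertT}: namely $r=0$ (with $P_0 = I$) if and only if the pencil is invertible at $\lambda_0$ itself. For $\R N_{\Theta_1}$, Lemma \ref{LNFred} (together with the standing assumption $\lambda_0 \in \rho(\cL^{\Theta_1})$) says this is equivalent to $\lambda_0 \in \rho(\cL^D)$, while for $R_{1,2}$ it is equivalent to $\lambda_0 \in \rho(\cL^{\Theta_2})$.

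There is no substantive obstacle; the proposition is essentially a bookkeeping consequence of the earlier abstract results on local equivalence of operator pencils combined with the Fredholm and invertibility statements in Lemma \ref{LNFred}. The only mild subtlety is keeping track of the distinction between the two candidate ``spectral'' conditions (membership in $\sigma(\cL^D)$ versus $\sigma(\cL^{\Theta_2})$) and making sure that in each of the two parallel statements we are citing the correct clause of Lemma \ref{LNFred}; once that is done, the proof is immediate.
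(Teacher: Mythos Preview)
Your proposal is correct and follows essentially the same route as the paper: verify Hypothesis \ref{mainT} via Lemma \ref{LNFred} and the analyticity lemmas, invoke Theorem \ref{t1.1}, and then read off the projection structure from Proposition \ref{invertT}. The paper's proof is terser (it compresses part (1) into a single sentence citing Lemma \ref{LNFred} and Theorem \ref{t1.1}), and in part (2) it additionally invokes Theorem \ref{multR} to assert $m_a>0$ when $\lambda_0\in\sigma(\cL^D)$, but this is not strictly needed---your argument via Proposition \ref{invertT}(3) alone suffices, since once \eqref{eq1} holds, $r=0$ forces $P_0=I$ and hence invertibility at $\lambda_0$.
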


Analogous results hold for the differential expression $\widehat L$. 

\begin{proof}
	(1) \, This follows directly from Lemma \ref{LNFred} and Theorem \ref{t1.1}.
	
	(2) \, If $\lambda \in \rho(\cL^D)$, then Lemma \ref{LNFred} implies that the operator $\R N_{\Theta_1}(\lambda)$ is invertible in a neighborhood of $\lambda_0$, including $\lambda_0$, and so by Proposition \ref{invertT}, we have $r=0$ and $P_0=I$. That is, \eqref{eq1} holds with $r=0$. On the other hand, if $\lambda \in \sigma(\cL^D)$, then by Theorem \ref{multR} (see also Remark \ref{rNFred1}) we have $m_a\big(\lambda_0, \R N_{\Theta_1}(\cdot)\big) = m_a(\lambda_0,\cL^D) > 0$. Thus by Lemma \ref{LNFred} the operator $\R N_{\Theta_1}(\lambda)$ is invertible for $\lambda$ in a punctured neighborhood of $\lambda_0$, but not at $\lambda_0$, so by Proposition \ref{invertT} the identity \eqref{eq1} holds and $r>0$. 
	
	The proof of (3) is analogous to item (2) so we omit it.
\end{proof}


We are ready to present the proof of Theorem~\ref{thm:WEvans}.
For all $\lambda\in\rho(\widehat\cL^{D})\cap\rho(\cL^{\Theta})$ we recall the definition
\begin{equation}
	E(\lambda) = \widehat M_{\widehat\Theta} (\lambda) N_{\Theta}(\lambda) \in \cB\big(\Hm\big),
\end{equation}
where we assume that ${\widehat\Theta}$ and ${\Theta}$ satisfy Hypotheses~\ref{hypT} (this is implied by the assumptions in Theorem \ref{thm:WEvans}).  In the course of proof we will also show that the function $E$ is completely meromorphic.

\begin{proof}[Proof of Theorem \ref{thm:WEvans}]
For convenience, we will use notations ${\widehat\Theta_2}={\widehat\Theta}$
and $\Theta_2=\Theta$. 
It suffices to prove the result locally. Fix $\lambda_0\in  \bbC$. For all $\lambda$ in a punctured neighborhood of $\lambda_0$ we have $\lambda\in\rho(\widehat\cL^{D})\cap\rho(\cL^{\Theta_2})$ 
and thus $E(\lambda)$ is well-defined.  By Proposition \ref{propBp} we know that  $E(\lambda)-I\in\cB_p\left(\Hm\right)$ 
for any $p > 2(n-1)$. 
In particular, $\operatorname{det}_p E(\cdot)=\operatorname{det}_p \big(I+E(\cdot)-I\big)$  is defined in a punctured neighborhood of $\lambda_0$.

We will now factorize $E$ as a product of four operator pencils.  To this end,	
 we use Lemma \ref{T0} to pick $\Theta_1$ and $\widehat\Theta_1$ such that $\lambda_0\in\rho(\cL^{\Theta_1}) \cap \rho(\widehat\cL^{\widehat\Theta_1})$. By Proposition \ref{propRtoR}, $E(\lambda)$ can be written
	\begin{equation}\label{maindecom}
	E(\lambda) = \widehat M_{\widehat\Theta_2} (\lambda) N_{\Theta_2}(\lambda)=\widehat R_{{\widehat\Theta_1,\widehat\Theta_2}}(\lambda)\big(\R \widehat N_{{{\widehat\Theta_1}}}(\lambda)\big)^{-1}\R N_{\Theta_1}(\lambda)\big(R_{{\Theta_1,\Theta_2}}(\lambda)\big)^{-1}.
	\end{equation}
	By Proposition \ref{multLN} the pencils $\widehat R_{{\widehat\Theta_1,\widehat\Theta_2}}$, $\R \widehat N_{{{\widehat\Theta_1}}}$,  $\R N_{\Theta_1}$ and  $R_{{\Theta_1,\Theta_2}}$ all satisfy Hypothesis \ref{mainT} in some neighborhood of $\lambda_0$. Thus, Theorem \ref{t1.1} and Propositions \ref{detcalc} and \ref{propM} apply. In particular, the pencils $\widehat R_{{\widehat\Theta_1,\widehat\Theta_2}}$ and $\R N_{\Theta_1}$ can be represented as in \eqref{eq5}, while the pencils $\big(\R \widehat N_{{{\widehat\Theta_1}}}\big)^{-1}$ and $\big(R_{{\Theta_1,\Theta_2}}\big)^{-1}$ can be represented as in \eqref{invN}.
By \eqref{indices} and \eqref{invD},  we conclude that $E(\cdot)$ is completely meromorphic in some neighborhood of $\lambda_0$, and hence over $\bbC$, as the coefficients of singular terms in \eqref{invD} have rank one while all other factors are analytic. 

It remains to prove formulas  \eqref{detorder2} and \eqref{detorder}. To simplify notation we rewrite \eqref{maindecom}
as 
\begin{equation}\label{maindecom2}
E(\lambda)=S_1(\lambda)D_1(\lambda)S_2(\lambda)D_2(\lambda)S_3(\lambda)D_3(\lambda)S_4(\lambda)D_4(\lambda)S_5(\lambda), \quad \lambda\in\mathcal{U}\setminus\{\lambda_0\},
\end{equation}
where $\mathcal{U}$ is the intersection of the neighborhoods of $\lambda_0$ where the conclusion of Theorem \ref{t1.1} holds for each of the four pencils in \eqref{maindecom}. From now on we will suppress $\lambda$ in the notation. In \eqref{maindecom2} the pencils $D_1$ and $D_3$ are as in \eqref{indices} and correspond to $\widehat R_{{\widehat\Theta_1,\widehat\Theta_2}}$ and $\R N_{\Theta_1}$, respectively, while the pencils $D_2$ and $D_4$ 
are as in \eqref{invD} and correspond to $\big(\R \widehat N_{{{\widehat\Theta_1}}}\big)^{-1}$ and $\big(R_{{\Theta_1,\Theta_2}}\big)^{-1}$, respectively. All five pencils $S_i$ in \eqref{maindecom2} are analytic in $\lambda$ and their  values are invertible operators, since they are obtained as products of various $G$-terms from the representations \eqref{eq5} and \eqref{invN}.

We claim that that $S_j-I$, $1\le j\le5$, and $D_i-I$, $1\le i\le4$, are $\cB_p$. 
For $D_i$ this follows from two formulas of type \eqref{indices} and two formulas of type 
\eqref{invD}. For $S_1$ we temporarily denote $T=\widehat{R}_{\widehat{\Theta}_1,\widehat{\Theta}_2}$ so that $S_1=GG_2$ by \eqref{eq5}. Since $G-T$ and $G_2-I$ are of finite rank by Theorem \ref{t1.1}, and $T-I$ is of class $\cB_p$ by Proposition \ref{RBp}, we may use Remark \ref{rem-invcl} to conclude that $S_1-I$ is of $\cB_p$ class because $G-I$ is. An analogous argument with $\widehat{T}=R_{\Theta_1,\Theta_2}$ works for $S_5$. To deal with $S_2$ we temporarily let $T=\widehat{R}_{\widehat{\Theta}_1,\widehat{\Theta}_2}$ and $\widehat{T}=\R\widehat{N}_{\widehat{\Theta}_1}$. Using \eqref{eq5} and \eqref{eq5.bis} we have $S_2=G_1\widehat{G}^{-1}_1$ and so $S_2-I$ is of $\cB_p$-class by Remark \ref{rem-invcl} because both $G_1-I$ and $\widehat{G}_1-I$ are by Theorem \ref{t1.1}. The argument for $S_4$ is analogous. It remains to deal with $S_3=\widehat{G}^{-1}_2\widehat{G}^{-1}GG_2$ with the $G$-terms from \eqref{eq5} and \eqref{eq5.bis}, where this time we denote $T=\R N_{\Theta_1}$ and $\widehat{T}=\R \widehat{N}_{\widehat{\Theta}_1}$. The operator
\[
	\widehat{T}^{-1}T-I = \big(\widehat{N}_{\widehat{\Theta}_1}\big)^{-1}N_{\Theta_1}-I
	= \big(\widehat{N}_{\widehat{\Theta}_1}\big)^{-1}\big(N_{\Theta_1}-\widehat{N}_{\widehat{\Theta}_1}\big)
\]
is $\cB_p$ because $N_{\Theta_1}-\widehat{N}_{\widehat{\Theta}_1}$ is by Proposition \ref{propBp}(2). Since $\widehat{T}^{-1}T=\widehat{G}^{-1}_1\widehat{D}^{-1}S_3DG_1$, using Theorem \ref{t1.1} and Remark \ref{rem-invcl} we conclude that $S_3-I$ is $\cB_p$, thus finishing the proof of the claim.

We now apply formula \eqref{DSid} to \eqref{maindecom2} four times to obtain
\[
	E=\big(S_1S_2S_3S_4S_5\big)\widetilde{D}_1\widetilde{D}_2\widetilde{D}_3\widetilde{D}_4,
\]
where we have defined
\begin{align*}
\widetilde{D}_1&=\big(S_2S_3S_4S_5\big)^{-1}D_1\big(S_2S_3S_4S_5\big), \qquad
\widetilde{D}_2=\big(S_3S_4S_5\big)^{-1}D_2\big(S_3S_4S_5\big)),\\
\widetilde{D}_3&=\big(S_4S_5\big)^{-1}D_3\big(S_4S_5\big),\qquad
\widetilde{D}_4=(S_5)^{-1}D_4S_5.
\end{align*}
Remark \ref{rem-invcl} tells us that $F:=\widetilde{D}_1\widetilde{D}_2\widetilde{D}_3\widetilde{D}_4-I$ is of finite rank. Moreover, $F$ is completely meromorphic and
\begin{equation}\label{prfDD}
\det(I+F)=\det\big(\widetilde{D}_1\widetilde{D}_2\widetilde{D}_3\widetilde{D}_4\big)=\prod_{i=1}^4\det D_i
\end{equation} as $\widetilde{D_i}$ and $D_i$ are similar and thus have equal determinants. The function $B=E-I$ is completely meromorphic because $E$ is. We can now apply \eqref{eqdetsphi} from Lemma \ref{lem:lem5.3} with $S=S_1S_2S_3S_4S_5$, because $S-I$ is of $\cB_p$-class since each $S_j-I$ is by the claim proved above. This and \eqref{prfDD} yield 
\begin{align*}
{\det}_p E &= e^\varphi \det(I+F)=e^\varphi \prod_{i=1}^4\det(D_i)\\
&=e^\varphi(\lambda-\lambda_0)^{m_a(\lambda_0, \widehat R_{{\widehat\Theta_1\widehat\Theta_2}}(\cdot)) - m_a(\lambda_0, \R \widehat N_{{{\widehat\Theta_1}}}(\cdot))+m_a(\lambda_0, \R N_{\Theta_1}(\cdot))-m_a(\lambda_0, R_{{\Theta_1\Theta_2}}(\cdot))},
\end{align*}
where for $\det D_i(\cdot)$ we used formula \eqref{detD1} when $i=1, 3$ and formula \eqref{detDinv1} when $i=2,4$. Now Theorem \ref{multR} implies the desired formulas \eqref{detorder2} and \eqref{detorder}, recalling that $\Theta=\Theta_2$ while  $\widehat\Theta=\widehat\Theta_2$.
\end{proof}

\begin{rem}\label{MNNM2} We now address the issue raised in Remark \ref{MNNM0} regarding switching the order of factors in $E$. As mentioned in Remark \ref{MNNM}, both inclusions 
\[
	\widehat M_{\widehat\Theta}(\lambda) N_\Theta(\lambda)-I \in \cB_p\big(H^{-1/2}(\pO)\big), \qquad N_\Theta(\lambda)\widehat M_{\widehat\Theta}(\lambda) -I \in \cB_p\big(H^{1/2}(\pO)\big)
\]
hold provided $\lambda\in\rho(\widehat{\cL}^D)\cap\rho({\cL}^\Theta)$. Therefore, both determinants
\[
	{\det}_p\big(\widehat M_{\widehat\Theta}(\lambda) N_\Theta(\lambda)\big), \qquad
{\det}_p\big(N_\Theta(\lambda)\widehat M_{\widehat\Theta}(\lambda)\big)
\]
are well-defined. It is easy to see that the two determinants are equal. Indeed, temporarily assuming that $\lambda\in\rho(\widehat{\cL}^{\widehat\Theta})$ also holds, so that $\widehat{N}_{\widehat{\Theta}}(\lambda)=\widehat M_{\widehat\Theta}(\lambda)^{-1}$ is bounded, and letting $A_1=\widehat M_{\widehat\Theta}(\lambda)$
and $A_2= N_\Theta(\lambda) - \widehat M_{\widehat\Theta}(\lambda)^{-1}$, we see that $\widehat M_{\widehat\Theta}(\lambda) N_\Theta(\lambda)-I=A_1A_2$ while $ N_\Theta(\lambda)\widehat M_{\widehat\Theta}(\lambda) -I=A_2A_1$, and thus formula \eqref{detprod} yields the result for all $\lambda\in\rho(\widehat{\cL}^D)\cap\rho({\cL}^\Theta)$ by continuity.
\end{rem}


\section{The complex Souriau map}
\label{sec:Sou}

In Section~\ref{sec:SA} we will explore the connection between our generalized Evans function and the Maslov index, assuming $L$ and $\widehat L$ are symmetric. This amounts to a detailed study of the so-called \emph{Souriau map} $W$, which was defined in \eqref{Souriaudef1}. Many relevant properties of $W$ are also satisfied by a more general object, which we call the \emph{complex Souriau map} and denote by $\gW$. The definition of $\gW$ does not require $L$ to be symmetric, so it generalizes $W$ to many other situations of interest. We thus devote this section to a detailed study of $\gW$. 

\subsection{Definition and continuation}
Assume that $L$ satisfies Hypothesis~\ref{hyp:L} and let $\lambda \in \rho(\cL^N)$, so that the Neumann-to-Dirichlet map $N(\lambda)$ is defined. Letting $\cL^{i\R}$ denote the realization of $L$ with the Robin boundary condition $\gaN^L u + i \R \gaD u = 0$, we will see in Theorem~\ref{thm:cont} that $N(\lambda)\R - i$ is invertible if and only if $\lambda \in \rho(\cL^{i\R})$. Therefore, the map
\begin{equation}
\label{genWdef}
	\gW_0(\lambda) := \big(N(\lambda)\R + i \big)\big(N(\lambda)\R - i\big)^{-1} \in \cB\big(\Hp\big)
\end{equation}
is analytic on $\rho(\cL^N) \cap \rho(\cL^{i\R})$. We claim that points in $\rho(\cL^N) \setminus \rho(\cL^{i\R})$ are removable singularities, so this extends to an analytic function on $\rho(\cL^{i\R})$.
We recall notation $N_\Theta$ and $R_{\Theta_1,\Theta_2}$ for the Robin-to-Dirichlet and Robin-to-Robin maps from Section~\ref{sec:Robin}.

\begin{theorem}
\label{thm:cont}
The function $\gW_0$ defined in  \eqref{genWdef} is given by $\gW_0(\lambda)=2iN_{i\R}(\lambda)\R-I_{\Hp}$ and hence can be analytically continued to $\rho(\cL^{i\R})$. For any $\Theta$ satisfying Hypothesis~\ref{hypT}, the continuation, which we denote by $\gW$, is given by
\begin{align}
\label{eq:WJmu'}
	\gW(\lambda) & = -\R^{-1} R_{\Theta,-i\R}(\lambda) R_{i\R,\Theta}(\lambda) \R
	\\ & = 2iN_\Theta(\lambda)R_{i\R,\Theta}(\lambda)\R-I_{\Hp}\label{eq:WJmu'2}
\end{align}
for all $\lambda \in \rho(\cL^{i\R}) \cap \rho(\cL^\Theta)$.
\end{theorem}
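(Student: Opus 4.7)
The plan is to reduce everything to identities among Robin-to-Robin and Robin-to-Dirichlet maps established in Proposition~\ref{propRtoR}. First I would verify that $i\R$ satisfies Hypothesis~\ref{hypT}: since $\R$ is bounded $\Hp\to\Hm$ and $\llangle i\R g,g\rrangle = i\|g\|_{\Hp}^2$ is purely imaginary, the form $\Phi_{i\R}$ is elliptic by the lemma following Hypothesis~\ref{hypT}. Consequently $\cL^{i\R}$, $N_{i\R}$, and the Robin-to-Robin maps $R_{0,i\R}$, $R_{i\R,\Theta}$, $R_{\Theta,-i\R}$ are all well-defined objects. For the claim that $\gW_0$ is genuinely defined on $\rho(\cL^N)\cap\rho(\cL^{i\R})$, I would combine Proposition~\ref{propRtoR}(i), which gives $R_{0,i\R}(\lambda) = I + i\R N(\lambda)$, with the elementary identity $I + iN(\lambda)\R = i\bigl(N(\lambda)\R - i\bigr)$ to write $N(\lambda)\R - i = -i\R^{-1} R_{0,i\R}(\lambda)\R$; Lemma~\ref{LNFred} then shows this is invertible precisely when $\lambda\in\rho(\cL^N)\cap\rho(\cL^{i\R})$.

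The heart of the argument is Proposition~\ref{propRtoR}(iv) with $\Theta_1=0$ and $\Theta_2=i\R$, which reads $N - N_{i\R} = iN_{i\R}\R N$ and hence $N = N_{i\R}(I + i\R N)$. Right-multiplying by $\R$ and using $I + iN\R = i(N\R - i)$ again yields
\[
iN_{i\R}\R \;=\; N\R\,(N\R - i)^{-1} \;=\; I + i(N\R - i)^{-1}.
\]
Substituting into $\gW_0 = (N\R+i)(N\R-i)^{-1} = I + 2i(N\R - i)^{-1}$ gives the identity $\gW_0(\lambda) = 2iN_{i\R}(\lambda)\R - I_{\Hp}$ for $\lambda\in\rho(\cL^N)\cap\rho(\cL^{i\R})$. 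Since $N_{i\R}$ is analytic on $\rho(\cL^{i\R})$ by Lemma~\ref{lemRtoD}, the right-hand side furnishes the desired analytic continuation $\gW$ to all of $\rho(\cL^{i\R})$.

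It remains to derive the two formulas for $\gW$ on $\rho(\cL^{i\R})\cap\rho(\cL^\Theta)$. Formula \eqref{eq:WJmu'2} follows immediately by applying Proposition~\ref{propRtoR}(iii) with $\Theta_1 = i\R$ and $\Theta_2 = \Theta$, giving $N_{i\R} = N_\Theta R_{i\R,\Theta}$, so that $\gW = 2iN_\Theta R_{i\R,\Theta}\R - I$. For \eqref{eq:WJmu'}, I would invoke Proposition~\ref{propRtoR}(v) with $\Theta_1 = \Theta$ and $\Theta_2 = i\R$ to obtain $R_{\Theta,-i\R}R_{i\R,\Theta} = I - 2i\R N_{i\R}$; conjugating by $\R$ and negating then yields
\[
-\R^{-1}R_{\Theta,-i\R}(\lambda)R_{i\R,\Theta}(\lambda)\R \;=\; -I + 2iN_{i\R}(\lambda)\R \;=\; \gW(\lambda).
\]

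The main obstacle is really just bookkeeping with the Riesz map $\R$, which interchanges $\Hp$ and $\Hm$, and with the subscript conventions in Proposition~\ref{propRtoR}. Substantively, the only nontrivial analytic input is the invertibility of $N\R - i$ on $\rho(\cL^N)\cap\rho(\cL^{i\R})$, which is reduced via the identity $N\R - i = -i\R^{-1}R_{0,i\R}\R$ to Lemma~\ref{LNFred}; once this is in hand, the rest of the proof is purely algebraic manipulation of the five identities in Proposition~\ref{propRtoR}.
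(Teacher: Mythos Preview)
Your proposal is correct and follows essentially the same approach as the paper: both reduce the theorem to the identities in Proposition~\ref{propRtoR}, use part~(i) together with Lemma~\ref{LNFred} for the invertibility of $N(\lambda)\R - i$, part~(iii) for \eqref{eq:WJmu'2}, part~(v) for \eqref{eq:WJmu'}, and Lemma~\ref{lemRtoD} for the analytic continuation. The only minor variation is in deriving the key identity $\gW_0 = 2iN_{i\R}\R - I$: the paper first rewrites $\gW_0 = -\R^{-1}R_{0,-i\R}R_{i\R,0}\R$ via part~(i) and then applies part~(v) with $\Theta_1=0$, whereas you go through part~(iv) and a Cayley-type manipulation; these are equivalent algebraic routes through the same proposition.
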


\begin{proof}
Proposition \ref{propRtoR}(i) yields
\begin{equation}\label{eq2.10}
i\R^{-1}R_{0,\pm i\R}(\lambda)\R=i\R^{-1}\big(I\pm i\R N(\lambda)\big)\R=i\mp N(\lambda)\R
\end{equation}
for $\lambda\in\rho(\cL^N)$, and so $i-N(\lambda)\R$ is invertible if and only if $R_{0, i\R}(\lambda)$ is, which is the case if and only if $\lambda\in\rho(\cL^N)\cap\rho(\cL^{i\R})$. This shows that 
$\gW_0$ is well defined and analytic on $\rho(\cL^N)\cap\rho(\cL^{i\R})$. 

For $\lambda\in\rho(\cL^N)\cap\rho(\cL^{i\R})$ equation \eqref{eq2.10} yields
\begin{equation}\label{eq2.11}
	\gW_0(\lambda) = \big(N(\lambda)\R + i \big)\big(N(\lambda)\R - i\big)^{-1} =
	-\R^{-1}R_{0,-i\R}(\lambda)R_{i\R,0}(\lambda)\R.
\end{equation}
By Proposition \ref{propRtoR}(v) with $\Theta_1=0$ and $\Theta_2=i\R$ we infer that
\begin{equation}\label{eq2.13}
R_{0,-i\R}(\lambda)R_{i\R,0}(\lambda)=I-2i\R N_{i\R}(\lambda),
\end{equation}
and so
\begin{equation}\label{eq2.14}
	\gW_0(\lambda) = -\R^{-1}\big(I-2i\R N_{i\R}(\lambda)\big)\R=2iN_{i\R}(\lambda)\R-I
\end{equation}
as claimed. In particular, $\gW_0$ is analytic in $\rho(\cL^{i\R})$ by Lemma \ref{lemRtoD}(ii). We now fix any $\Theta$ satisfying Hypothesis \ref{hypT}. Applying Proposition \ref{propRtoR}(v) with $\Theta_1=\Theta$ and $\Theta_2=i\R$ yields
\[R_{\Theta, - i\R}(\lambda)R_{i\R, \Theta}(\lambda)=I-2i\R N_{i\R}(\lambda)=R_{0,-i\R}(\lambda)R_{i\R, 0}(\lambda),\]
where in the last equality we used \eqref{eq2.13}. This and \eqref{eq2.11} show  \eqref{eq:WJmu'}. By Proposition \ref{propRtoR}(iii) with $\Theta_1=i\R$ and $\Theta_2=\Theta$ we have $N_\Theta(\lambda)R_{i\R,\Theta}(\lambda)\R=N_{i\R}(\lambda)\R$, and  \eqref{eq:WJmu'2} follows from \eqref{eq2.14}.
\end{proof}

Given an auxiliary differential expression $\widehat L$ also satisfying Hypothesis~\ref{hyp:L}, we can define $\widehat\gW$ analogously on $\rho(\widehat\cL^{i\R})$.

\subsection{The determinant}
We next relate the operators $E(\lambda)$, $\gW(\lambda)$ and $\widehat \gW(\lambda)$, and the corresponding determinants.

\begin{theorem}
\label{thm:WEequal}
Assuming the hypotheses of Theorem~\ref{thm:WEvans} are satisfied and letting
\[
	\mathcal U := \rho(\cL^{i\R}) \cap \rho(\widehat\cL^{i\R}) \cap \rho(\cL^\Theta) \cap \rho(\widehat \cL^{\widehat\Theta}),
\]
we have
\begin{equation}
\label{eq:WEequiv}
	\big(I + \widehat \gW(\lambda)\big)^{-1} \big(I + \gW(\lambda)\big) = S_1(\lambda) E(\lambda) S_2(\lambda)
\end{equation}
for $\lambda \in \mathcal U \cap \rho(\widehat \cL^D)$, where $S_1(\lambda) \in \cB\big(\Hm,\Hp\big)$ and  $S_2(\lambda) \in \cB\big(\Hp,\Hm\big)$ are invertible and depend analytically on $\lambda \in \mathcal U$. Moreover, $\big(I + \widehat \gW(\lambda)\big)^{-1} \big(I + \gW(\lambda)\big)-I$ is in $\cB_p\big(H^{1/2}(\pO)\big)$ for any integer  $p>2(n-1)$, and for each $\lambda_0 \in \bbC$ there is a meromorphic function $\varphi:\mathcal{U}_0\to\bbC$ in a neighborhood $\mathcal{U}_0$ of $\lambda_0$ such that the determinant
\begin{equation}
\label{def:omega}
	\mathcal{W}(\lambda) := \operatorname{det}_p \big( \big(I + \widehat \gW(\lambda)\big)^{-1} (I + \gW(\lambda))\big)
\end{equation}
satisfies
\begin{equation}
\label{WEequal}
	\mathcal{W}(\lambda) = e^{\varphi(\lambda)} \cE(\lambda)
\end{equation}
for $\lambda \in \mathcal U_0 \setminus \{\lambda_0\}$ and hence
\begin{equation}
\label{WEequal2}
	m(\lambda_0;\mathcal{W}) = m(\lambda_0;\cE).
\end{equation}

\end{theorem}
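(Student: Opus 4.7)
The plan is to derive an explicit algebraic identity expressing $(I+\widehat\gW)^{-1}(I+\gW)$ as a bilateral product $S_1 E S_2$ with analytic invertible factors, and then to invoke the local equivalence theory developed in Section \ref{sec:det} to relate the $p$-modified determinants.

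Starting from \eqref{eq:WJmu'2}, we have $I+\gW=2iN_\Theta R_{i\R,\Theta}\R$ and $I+\widehat\gW=2i\widehat N_{\widehat\Theta}\widehat R_{i\R,\widehat\Theta}\R$. On $\mathcal U\cap\rho(\widehat\cL^D)$, inverting $\widehat N_{\widehat\Theta}$ to $\widehat M_{\widehat\Theta}$ and $\widehat R_{i\R,\widehat\Theta}$ to $\widehat R_{\widehat\Theta,i\R}$ gives
\begin{equation*}
(I+\widehat\gW(\lambda))^{-1}(I+\gW(\lambda))
=\R^{-1}\widehat R_{\widehat\Theta,i\R}\,\widehat M_{\widehat\Theta}N_\Theta\,R_{i\R,\Theta}\R
=S_1(\lambda)E(\lambda)S_2(\lambda),
\end{equation*}
with $S_1:=\R^{-1}\widehat R_{\widehat\Theta,i\R}$ and $S_2:=R_{i\R,\Theta}\R$. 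By Lemma \ref{lemRtoR} and the reciprocity $R_{\Theta_1,\Theta_2}^{-1}=R_{\Theta_2,\Theta_1}$, both $S_j$ are analytic and invertible on $\mathcal U$. The $\cB_p$ claim follows by writing $S_1ES_2-I=S_1(E-I)S_2+(S_1S_2-I)$: the first summand is $\cB_p$ since $E-I$ is (Proposition \ref{propBp}) and $\cB_p$ is a two-sided ideal, while the second, $\R^{-1}(\widehat R_{\widehat\Theta,i\R}R_{i\R,\Theta}-I)\R$, is analyzed by expanding via Proposition \ref{propRtoR}(i), isolating the pieces containing $\Theta$ or $\widehat\Theta$ (which are $\cB_p$ by Hypothesis \ref{hyp:Bp} and Lemma \ref{compact}) from the residual $i\R$-cross-terms, which are then bootstrapped to $\cB_p$ using the identity $N_{i\R}-\widehat N_{i\R}=(N_\Theta-\widehat N_{\widehat\Theta})R_{i\R,\Theta}+\widehat N_{\widehat\Theta}(R_{i\R,\Theta}-\widehat R_{i\R,\widehat\Theta})$ together with Proposition \ref{propBp}(2).

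For the determinant identity, fix $\lambda_0\in\mathcal U$. Since $S_1$ and $S_2$ are analytic and invertible near $\lambda_0$, the pencils $S_1ES_2$ and $E$ are equivalent there in the sense of Definition \ref{def:equivalent}, so Theorem \ref{t1.1} and Lemma \ref{Fr} give equal algebraic multiplicities. Applying Proposition \ref{detcalc}(2) to both pencils produces analytic functions $\varphi_{\mathcal W}$ and $\varphi_{\cE}$ with $\mathcal W(\lambda)=e^{\varphi_{\mathcal W}(\lambda)}(\lambda-\lambda_0)^{m_a(\lambda_0,E(\cdot))}$ and $\cE(\lambda)=e^{\varphi_{\cE}(\lambda)}(\lambda-\lambda_0)^{m_a(\lambda_0,E(\cdot))}$ in a punctured neighborhood of $\lambda_0$; their quotient yields \eqref{WEequal} with analytic $\varphi=\varphi_{\mathcal W}-\varphi_{\cE}$, and \eqref{WEequal2} then follows from $m(\lambda_0;e^\varphi)=0$. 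For $\lambda_0\in\bbC\setminus\mathcal U$, we pick an auxiliary finite-rank $\Theta_0$ satisfying $\lambda_0\in\rho(\cL^{\Theta_0})\cap\rho(\widehat\cL^{\Theta_0})$ via Lemma \ref{T0} and reduce to the analytic case by factoring the Robin-to-Robin operators through $\Theta_0$, mirroring the bookkeeping in the proof of Theorem \ref{thm:WEvans}. The main obstacle is the $\cB_p$ verification of $S_1S_2-I$: because the Riesz map $\R$ itself is not $\cB_p$, individual pieces in the expansion fail to be $\cB_p$, and the cancellation of the non-compact $i\R$-contributions has to be carried out algebraically using the factorization identities of Proposition \ref{propRtoR}.
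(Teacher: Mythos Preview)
Your derivation of the factorization \eqref{eq:WEequiv} is correct and coincides with the paper's. Your approach to the $\cB_p$ property is different but workable: if one expands $\widehat R_{\widehat\Theta,i\R}R_{i\R,\Theta}-I$ via Proposition~\ref{propRtoR}(i) and collects the terms carrying a leading factor $i\R$, they combine to $i\R(\widehat N_{\widehat\Theta}-N_\Theta)R_{i\R,\Theta}$, which is $\cB_p$ by Proposition~\ref{propBp}(2); the remaining terms carry a leading $\Theta$ or $\widehat\Theta$ and are $\cB_p$ by Hypothesis~\ref{hyp:Bp}. The paper instead proves $\cB_p$ for $A_1A_2=S_1ES_2-I$ by rewriting it as $(S_1\widehat M_{\widehat\Theta}-S_2^{-1}M_\Theta)N_\Theta S_2$ and computing $S_1-S_2^{-1}$ explicitly; this is a bit more direct but neither route is essentially harder.

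The genuine gap is in your determinant argument. You invoke Proposition~\ref{detcalc}(2) for both pencils $S_1ES_2$ and $E$ near $\lambda_0\in\mathcal U$, but that proposition requires the pencil to be \emph{analytic} (Hypothesis~\ref{mainT}) in a full neighborhood of $\lambda_0$. However, $E$ has poles on $\sigma(\widehat\cL^D)$, and such points can lie in $\mathcal U$ since $\mathcal U$ does not involve $\rho(\widehat\cL^D)$. Your case split ``$\lambda_0\in\mathcal U$ versus $\lambda_0\notin\mathcal U$'' is therefore the wrong dichotomy, and the proposed fix of passing to an auxiliary $\Theta_0$ does not help: the pole of $E$ at $\lambda_0\in\sigma(\widehat\cL^D)$ comes from $\widehat M_{\widehat\Theta}$ and cannot be removed by any choice of Robin boundary operator.

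The paper avoids this entirely by a cyclic trick: with $A_1=S_1E-S_2^{-1}$ and $A_2=S_2$ one has $I+A_1A_2=S_1ES_2$ and $I+A_2A_1=S_2S_1E$, so \eqref{detprod} gives $\det_p(S_1ES_2)=\det_p(S_2S_1E)$. Since $S_2S_1-I$ is analytic $\cB_p$-valued on $\mathcal U$ and $E-I$ is meromorphic $\cB_p$-valued, the meromorphic product formula \eqref{prodformMER} yields $\det_p(S_2S_1E)=e^{\varphi}\det_p(S_2S_1)\det_pE$ with meromorphic $\varphi$, and the nonvanishing analytic factor $\det_p(S_2S_1)$ is absorbed into the exponential. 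This works uniformly for all $\lambda_0$, including the poles of $E$, without any case analysis.
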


The formula \eqref{eq:WEequiv} says that $\big(I + \widehat \gW(\lambda)\big)^{-1} \big(I + \gW(\lambda)\big)$ is equivalent to $E(\lambda)$ in the sense of Definition~\ref{def:equivalent}.

\begin{proof}
Applying  \eqref{eq:WJmu'2} for $\gW$ and $\widehat{\gW}$ yields
\begin{align}
\begin{split}\label{S1S2def}
	\big(I + \widehat\gW\big)^{-1}\big(I + \gW\big)
	&= \R^{-1} \widehat R_{\widehat\Theta,i\R} \widehat M_{\widehat\Theta} N_\Theta R_{i\R,\Theta} \R \\
	&= \underbrace{\R^{-1} \widehat R_{\widehat\Theta,i\R}}_{S_1} E \underbrace{R_{i\R,\Theta} \R}_{S_2},
\end{split}
\end{align}
which verifies \eqref{eq:WEequiv}.

To complete the proof we must relate $\det_p (S_1 E S_2)$ to $\det_p E$. This is complicated by the fact that $\det_p S_1$ and $\det_p S_2$ are not defined, so we cannot directly relate $\det_p (S_1 E S_2)$ to a product of three determinants. We claim, however, that
\begin{equation}
\label{S1ES2}
	\operatorname{det}_p (S_1 E S_2) = \operatorname{det}_p (S_2 S_1 E).
\end{equation}
This is useful because the determinant of $S_2S_1$ is well defined, as we will see below.

To prove \eqref{S1ES2} we use the identity \eqref{detprod} with 
with $A_1 = S_1 E - S_2^{-1}$ and $A_2 = S_2$, for which we easily compute $I + A_1A_2 = S_1 E S_2$ and $I + A_2A_1 = S_2 S_1 E$. Therefore, the assertion $\big(I + \widehat \gW(\lambda)\big)^{-1} \big(I + \gW(\lambda)\big)-I\in\cB_p(H^{1/2}(\pO))$ will be proved and 
\eqref{S1ES2} will follow from \eqref{detprod} once we verify that $A_1A_2$ and $A_2A_1$ are $\cB_p$. In turn, this and Remark \ref{rem-invcl} then imply that $S_2S_1-I$ is of class $\cB_p$ since $E-I$ is by Proposition \ref{propBp}(1).

Under the additional assumption that $\lambda \in \rho(\cL^D)$, so that $M_\Theta(\lambda)$ is defined, we can write
\begin{equation}
\label{AB}
	A_1A_2 = S_1 \widehat M_{\widehat\Theta} N_\Theta S_2 - I 
	= \big(S_1 \widehat M_{\widehat\Theta}  - S_2^{-1} M_{\Theta}\big) N_\Theta S_2
\end{equation}
and
\begin{equation}
\label{BA}
	A_2A_1 = S_2 S_1 \widehat M_{\widehat\Theta} N_\Theta - I 
	= S_2 \big(S_1 \widehat M_{\widehat\Theta}  - S_2^{-1} M_{\Theta}\big) N_\Theta.
\end{equation}
Next, we observe that
\begin{equation}
\label{Sdiff}
	S_1 \widehat M_{\widehat\Theta}  - S_2^{-1} M_{\Theta} = S_1 \widehat M_{\widehat\Theta}
	\big(N_\Theta - \widehat N_{\widehat\Theta} \big)M_{\Theta} + \big(S_1 - S_2^{-1}\big) M_\Theta.
\end{equation}
Recalling the definitions of $S_1$ and $S_2$ in \eqref{S1S2def} and using Proposition~\ref{propRtoR}, we have
\[
	S_1 = \R^{-1} \widehat R_{\widehat\Theta,i\R} = \R^{-1}\big(I + (i\R - \widehat\Theta)\widehat N_{\widehat\Theta}\big)
\]
and
\[
	S_2^{-1} = \R^{-1} R_{\Theta,i\R} = \R^{-1} \big(I + (i\R - \Theta) N_{\Theta}\big)
\]
so that
\[
	S_1 - S_2^{-1} = (i\R - \widehat\Theta)\big(\widehat N_{\widehat\Theta} - N_{\Theta}\big)
	+ (\Theta - \widehat\Theta)N_{\Theta}.
\]
It follows from Proposition \ref{propBp}(2) and our assumptions on $\Theta$ and $\widehat\Theta$ in Hypothesis~\ref{hyp:Bp} that $S_1 - S_2^{-1}$ is $\cB_p$ for any $p>2(n-1)$. Substituting this in \eqref{Sdiff}, we get that $S_1 \widehat M_{\widehat\Theta}  - S_2^{-1} M_{\Theta}$ is $\cB_p$, so it follows from \eqref{AB} and \eqref{BA} that $A_1A_2$ and $A_2A_1$ are $\cB_p$ for all $\lambda \in \mathcal U \cap \rho(\cL^D) \cap \rho(\widehat\cL^D)$. The continuity argument in the proof of Proposition~\ref{propBp} shows that this in fact holds for all $\lambda \in \mathcal U \cap \rho(\widehat\cL^D)$.

We can thus use \eqref{detprod} to conclude \eqref{S1ES2} 
for all $\lambda \in \mathcal U \cap \rho(\widehat\cL^D)$, as claimed above. To complete the proof we use \eqref{prodformMER} to get
\[
	\operatorname{det}_p (S_2 S_1 E) = e^\varphi \operatorname{det}_p(S_2 S_1)\operatorname{det}_p E,
\]
where $\varphi$ is a meromorphic function near $\lambda_0$ that can be computed explicitly in terms of $E$ and $S_2 S_1$. Using \eqref{eq:WEequiv} and \eqref{S1ES2} and changing  $\varphi$ by incorporating $\det_p(S_1S_2)\neq0$ into the exponential factor  yields \eqref{WEequal}.
\end{proof}

We next relate the winding of $\cE(\lambda)$ to the eigenvalues of $I + \gW(\lambda)$ and $I + \widehat \gW(\lambda)$; cf. Corollary~\ref{cor:argument}.

\begin{cor}
\label{cor:winding}
Let $K \subset \rho(\cL^{i\R}) \cap \rho(\widehat\cL^{i\R})$ be a compact set with a rectifiable boundary. If $\p K$ is disjoint from  $\sigma(\cL^D)  \cup \sigma(\widehat\cL^D) \cup \sigma(\cL^\Theta) \cup \sigma(\widehat\cL^{\widehat\Theta})$, then
\begin{equation}
\label{corWind}
	\frac{1}{2\pi}\int_{\p K} \frac{\cE'(\lambda)}{\cE(\lambda)} \,d\lambda = \sum_{\lambda \in K} m_a\big(\lambda,I+\gW(\cdot)\big) - m_a\big(\lambda,I + \widehat\gW(\cdot)\big).
\end{equation}
\end{cor}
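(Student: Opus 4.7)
The plan is to convert the contour integral into a sum of local multiplicities of $\cE$, use Theorem~\ref{thm:WEequal} to pass from $\cE$ to the determinant $\mathcal{W}$, and then apply Proposition~\ref{propM}(2) to rewrite $m(\lambda;\mathcal{W})$ as the difference $m_a(\lambda, I+\gW(\cdot)) - m_a(\lambda, I+\widehat\gW(\cdot))$.

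First I would observe that, by Theorem~\ref{thm:WEvans}, every zero, pole, or essential singularity of $\cE$ lies in $\sigma(\cL^D) \cup \sigma(\widehat\cL^D) \cup \sigma(\cL^\Theta) \cup \sigma(\widehat\cL^{\widehat\Theta})$, since elsewhere $\cE$ is analytic and the integer $d$ in \eqref{detorder} vanishes. The assumption on $\partial K$ therefore guarantees that $\cE$ is analytic and nonvanishing on $\partial K$, so a standard contour decomposition combined with definition \eqref{m:def} yields
$$
\frac{1}{2\pi i}\int_{\partial K}\frac{\cE'(\lambda)}{\cE(\lambda)}\,d\lambda = \sum_{\lambda \in K} m(\lambda;\cE).
$$
At each $\lambda_0 \in \bbC$, Theorem~\ref{thm:WEequal} supplies a meromorphic $\varphi$ with $\mathcal{W}(\lambda) = e^{\varphi(\lambda)}\cE(\lambda)$ locally, and since $m(\lambda_0;e^{\varphi}) = 0$ for any meromorphic $\varphi$, we conclude $m(\lambda_0;\mathcal{W}) = m(\lambda_0;\cE)$ at every point.

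The crux is then to compute $m(\lambda_0;\mathcal{W})$ by means of the canonical-form machinery of Section~\ref{sec:Evans}. Setting $T(\lambda) := I+\gW(\lambda)$ and $\widehat T(\lambda) := I+\widehat\gW(\lambda)$, both analytic on $\rho(\cL^{i\R}) \cap \rho(\widehat\cL^{i\R})$, I would apply Proposition~\ref{propM}(2) to the quotient pencil $\widehat T^{-1}T$. Hypothesis~\ref{mainT} for $T$ is verified as follows: Theorem~\ref{thm:cont} identifies $T = 2iN_{i\R}\R$ analytically on $\rho(\cL^{i\R})$, and Remark~\ref{rNFred1} says this pencil is Fredholm of index zero with discretely-scattered non-invertibility set, equal to $\sigma(\cL^D) \cap \rho(\cL^{i\R})$; the same reasoning applies verbatim to $\widehat T$. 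The $\cB_p$ property of $\widehat T^{-1}T - I$ on a punctured neighbourhood of any $\lambda_0$ is exactly what is established in the proof of Theorem~\ref{thm:WEequal} via the factorization $\widehat T^{-1}T = S_1 E S_2$. Proposition~\ref{propM}(2) therefore yields
$$
m(\lambda_0; \mathcal{W}) = m_a\big(\lambda_0, T(\cdot)\big) - m_a\big(\lambda_0, \widehat T(\cdot)\big),
$$
and assembling the three displays delivers \eqref{corWind}.

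The main technical point, though a mild one, is to confirm the $\cB_p$ hypothesis of Proposition~\ref{propM}(2) at those points $\lambda_0 \in K$ that happen to lie in $\sigma(\cL^\Theta) \cup \sigma(\widehat\cL^{\widehat\Theta})$, where $E$ itself may have a pole and so the equivalence $\widehat T^{-1}T = S_1 E S_2$ does not hold at $\lambda_0$ itself. This is not a genuine obstruction: the equivalence holds on $\mathcal{U} \cap \rho(\widehat\cL^D)$, which contains a punctured neighbourhood of every such $\lambda_0$, and this is all the proposition requires. The remaining work is pure bookkeeping within the framework of Section~\ref{sec:Evans}.
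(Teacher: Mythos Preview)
Your proof is correct and follows essentially the same route as the paper: reduce the contour integral to a sum of multiplicities $m(\lambda;\cE)$, pass to $m(\lambda;\mathcal W)$ via Theorem~\ref{thm:WEequal}, and evaluate these by Proposition~\ref{propM}(2). The only cosmetic difference is in the Fredholm verification: you use the identification $I+\gW = 2iN_{i\R}\R$ from Theorem~\ref{thm:cont} together with Remark~\ref{rNFred1}, whereas the paper uses the factorization $I+\gW = 2iN_\Theta R_{i\R,\Theta}\R$ from \eqref{eq:WJmu'2} and Lemma~\ref{LNFred}; both establish Hypothesis~\ref{mainT} for $T$ and $\widehat T$.
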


\begin{proof}
The hypotheses on $K$ imply that $\cE$ has no zeros or singularities on $\p K$. Defining $\mathcal{W}$ as in \eqref{def:omega}, we conclude from \eqref{WEequal} that
\[
	\int_{\p K} \frac{\cE'(\lambda)}{\cE(\lambda)} \,d\lambda 
	= \sum_{\lambda \in K} m(\lambda;\mathcal{W}).
\]
To complete the proof we will evaluate the right-hand side using Proposition~\ref{propM}, so we need to verify its hypotheses.

Theorem~\ref{thm:cont} and Lemma~\ref{lemRtoR} imply that $\gW(\lambda)$ is analytic in a neighborhood of $K$. Lemma~\ref{LNFred} implies that $N_\Theta(\lambda)$ and $R_{i\R,\Theta}(\lambda)$ are Fredholm of index zero, so it follows from \eqref{eq:WJmu'2} that $I + \gW(\lambda)$ is also Fredholm of index zero. The same is true of $\widehat \gW(\lambda)$. Finally, we have from Theorem~\ref{thm:WEequal} that $\big(I + \widehat \gW\big)^{-1}(I + \gW) - I$ is of class $\cB_p$. We can thus use Proposition~\ref{propM}(2) to conclude that 
\begin{align*}
	m(\lambda_0;\mathcal{W})=m_a\big(\lambda_0,I+\gW(\cdot)\big) - m_a\big(\lambda_0,I + \widehat\gW(\cdot)\big)
\end{align*}
for any point $\lambda_0 \in K$, and the result follows.
\end{proof}

\section{The Maslov index}
\label{sec:SA}

In this final section we explain how our multi-dimensional Evans function is related to the Maslov index when $L$ and $\widehat L$ are symmetric. Our main goal is to prove the equality \eqref{eq:MasEvans1} directly from the definitions of $\cE$ and $W$, without relating either side to the eigenvalue counting functions for $\cL^D$ and $\widehat \cL^D$.

In Section~\ref{sec:Maslovpre} we review the definition of the Maslov index. For simplicity we only describe the spaces and constructions arising in our particular setting; a general survey of the Maslov index in infinite dimensions is given in \cite{F04}, and a shorter summary with an emphasis on applications to boundary value problems can be found in \cite[Appendix B]{CJM15}. In Section~\ref{sec:Wcomp} we explicitly compute the Souriau map and relate it to the operator $\gW$ defined in Section~\ref{sec:Sou}, thus obtaining its analytic continuation from Theorem~\ref{thm:cont}. Finally, in Section \ref{ssec:Maslov} we use the symmetry of $L$ to prove a spectral mapping property for the analytic continuation, and use this to obtain a crucial monotonicity property for the Maslov index.

\subsection{Preliminaries}
\label{sec:Maslovpre}

The Maslov index is only defined for self-adjoint boundary value problems. We therefore assume that $L$ is symmetric for the remainder of the section. This implies $\cL^D$ is self-adjoint, hence $\sigma(\cL^D) \subset \bbR$ and all eigenfunctions can be assumed to be real-valued.

Abusing notation slightly, we let $\R$ denote both the real and complex Riesz maps, $\HpR \to \HmR$ and $\Hp \to \Hm$, respectively, where we recall our standing convention that all function spaces are complex-valued unless explicitly stated otherwise. The real Riesz map is the restriction of the complex one, so there is no ambiguity in our notation. Similarly, if $\lambda$ is real and $L$ is symmetric, then $N(\lambda) \in \cB\big(\Hm,\Hp\big)$ 
restricts to an operator in $\cB\big(\HmR,\HpR\big)$ which we again denote by $N(\lambda)$. 

We next define the real Hilbert space
\begin{equation}
	\cH = \HpR \oplus \HmR,
\end{equation}
with the symplectic form
	\[\omega\big((f_1,g_1), (f_2,g_2)\big) = \llangle g_2,f_1\rrangle - \llangle g_1,f_2\rrangle= \left<J(f_1,g_1), (f_2,g_2)\right>_\cH,\] where $J \colon \cH \to \cH$ is the almost complex structure\footnote{Cf. \cite[Example 2.1]{F04}, which gives the correct $\omega$ but has a sign error in $J$.}
\begin{equation}
	J(f,g) = (-\R^{-1} g, \R f).
\end{equation}
Using this, we can give $\cH$ the structure of a complex vector space, which we denote $\cH_J$, by defining the scalar multiplication $(a + ib)v = av + i bJv$ for all $a,b \in \bbR$ and $v \in \cH$. Note that $\cH_J$ and $\cH$ are isomorphic as real vector spaces; $\cH_J$ is not the complexification of $\cH$.

For $\lambda \in \bbR$, consider the subspaces
\begin{equation}
	\cG(\lambda) = \big\{ (\gaD u, \gaN^L u) : u \in D^1_L(\Omega,\bbR), \ Lu = \lambda u \big\}
\end{equation}
and
\begin{equation}
	D = \big\{(0,g) : g \in \HmR \}.
\end{equation}
These are both Lagrangian subspaces of $\cH$, and $\cG(\lambda)$ depends continuously (in fact analytically) on $\lambda$; see \cite{CJLS16} and \cite{CJM15} for details. Letting $P_\bullet$ denote the corresponding $\cH$-orthogonal projections, we define the \emph{Souriau map}
\begin{equation}
\label{Souriaudef}
	W(\lambda) = - (I-2P_{\cG(\lambda)})(I - 2P_D).
\end{equation}
The fact that both $D$ and $\cG$ are Lagrangian implies $WJ = JW$, hence $W$ defines a complex linear operator on the space $\cH_J$. Moreover, it is unitary, and hence has spectrum only on the unit circle. Up to a minus sign, this is simply reflection about $D$ followed by reflection about $\cG(\lambda)$. It follows that $\cG(\lambda)$ intersects $D$ nontrivially if and only if $-1 \in \sigma(W(\lambda))$, and
\begin{equation}
\label{eq:mult}
	\dim_\bbC \ker \big(I + W(\lambda) \big) = \dim_\bbR \big(D \cap \cG(\lambda)\big) = \dim_\bbR \ker\big(\cL^D - \lambda \big).
\end{equation}
That is, the multiplicity of $-1$ as an eigenvalue of $W(\lambda)$ equals the multiplicity of $\lambda$ as a Dirichlet eigenvalue. (Since $\cL^D$ is selfadjoint, the geometric and algebraic multiplicity coincide, so we can use the term ``multiplicity" unambiguously.)

 Finally, the fact that $D$ and $\cG$ form a Fredholm pair implies that the spectrum of $W$ cannot accumulate at $-1$. The \emph{Maslov index of $\cG(\lambda)$ with respect to $D$} is then defined to be the spectral flow of $W(\lambda)$ through the point $-1$ on the unit circle, in a counterclockwise direction, as $\lambda$ varies. 

From \eqref{eq:mult} we see that this gives a signed count of the eigenvalues of $\cL^D$, where the sign depends on whether the corresponding eigenvalues of $W$ passes through $-1$ in the clockwise or counterclockwise direction. However, it is well known that this index satisfies a monotonicity property, in the sense that eigenvalues of $W(\lambda)$ always pass though $-1$ in the negative (clockwise) direction as $\lambda$ increases. As a result, for any real numbers $\lambda_1 < \lambda_2$ both contained in $\rho(\cL^D)$ we have
\begin{equation}
	\text{number of eigenvalues of } \cL^D \text{ in } (\lambda_1,\lambda_2) = -\sflow \left(W \big|_{\lambda_1}^{\lambda_2} , -1 \right),
\end{equation}
where the eigenvalues are counted with multiplicity. Below we will give an independent proof of this monotonicity, using complex analytic methods.

\subsection{Computing $W_J$}
\label{sec:Wcomp}
To understand the connection between the Evans function and the Maslov index, we must relate the Souriau map $W(\lambda)$ to the operator $E(\lambda)$ defined in \eqref{eq:Edef}. We do this by computing an equivalent version of $W(\lambda)$ which is defined on $\Hp$ instead of $\cH_J$. We first describe the relationship between these spaces.

\begin{lemma}
\label{lem:psi}
The map $\psi(f,g) = f + i \R^{-1} g$ gives a (complex) linear isomorphism from $\cH_J$ to $\Hp$.
\end{lemma}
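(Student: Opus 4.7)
The plan is to verify three things in turn: (i) $\psi$ is well-defined and (real) linear, (ii) $\psi$ respects the complex structure $J$, and (iii) $\psi$ is a bicontinuous bijection. Since $\R \colon \HpR \to \HmR$ is a bounded linear isomorphism (being the Riesz map of the real Hilbert space $\HpR$), $\R^{-1}g \in \HpR$ is well-defined whenever $g \in \HmR$, and the assignment $(f,g)\mapsto f+i\R^{-1}g$ produces an element of $\Hp$ when we regard $\Hp$ as the complexification $\HpR \oplus i\HpR$ via the canonical decomposition $h = \Re h + i\Im h$.

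The key computation is step (ii). I would write $(a+ib)\cdot(f,g) = a(f,g) + bJ(f,g) = (af - b\R^{-1}g,\, ag + b\R f)$ using the definition of scalar multiplication in $\cH_J$ and the formula for $J$. Applying $\psi$ gives
\begin{equation*}
\psi\big((a+ib)\cdot(f,g)\big) = (af - b\R^{-1}g) + i\R^{-1}(ag + b\R f) = af - b\R^{-1}g + ia\R^{-1}g + ibf,
\end{equation*}
while on the other side
\begin{equation*}
(a+ib)\psi(f,g) = (a+ib)(f + i\R^{-1}g) = af - b\R^{-1}g + ibf + ia\R^{-1}g,
\end{equation*}
and the two expressions agree. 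This verifies $\bbC$-linearity of $\psi$.

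For (iii), injectivity is immediate: if $f + i\R^{-1}g = 0$ with $f \in \HpR$ and $\R^{-1}g \in \HpR$, separating real and imaginary parts forces $f = 0$ and $\R^{-1}g = 0$, hence $g = 0$. Surjectivity follows because any $h \in \Hp$ has a unique decomposition $h = h_1 + ih_2$ with $h_1,h_2 \in \HpR$, and then $(h_1,\R h_2) \in \cH$ satisfies $\psi(h_1, \R h_2) = h$. Boundedness of $\psi$ and $\psi^{-1}(h) = (\Re h,\, \R\, \Im h)$ is a consequence of the boundedness of $\R$ and $\R^{-1}$.

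There is no real obstacle here — the only subtlety to flag is a careful bookkeeping of which $\R$ is being used (the real Riesz map on $\HpR$, identified with the restriction of the complex one as noted earlier in the paper) and the convention that $\Hp = \HpR \oplus i\HpR$ as a real vector space so that the formula $f + i\R^{-1}g$ makes sense and uniquely specifies an element of $\Hp$. Once these identifications are stated explicitly, the verification is essentially two short computations.
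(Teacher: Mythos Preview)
Your proof is correct and follows essentially the same approach as the paper: the paper's proof checks only compatibility with multiplication by $i$ (which, together with real linearity, gives full $\bbC$-linearity), whereas you verify compatibility with a general scalar $a+ib$ directly and also spell out bijectivity and boundedness, which the paper leaves implicit. The core computation is identical.
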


\begin{proof}
The map is obviously real linear, so we just need to check multiplication by $i$. We have $i(f,g)  = (-\R^{-1} g, \R f)$ and hence
\[
	\psi\big(i(f,g) \big) = \psi\big(  (-\R^{-1} g, \R f) \big) = -\R^{-1} g + i f = i(f + i \R^{-1} g) = i \psi(f,g).
\]
This completes the proof.
\end{proof}

A real linear map $T \colon \cH \to \cH$ is complex linear if and only if it commutes with $J$, in which case it defines a map on $\cH_J$. Using the isomorphism $\psi$ from Lemma \ref{lem:psi}, we obtain a complex linear map $T_J := \psi T \psi^{-1}$ on $\Hp$.

\begin{lemma}
\label{lem:block}
The real linear operator
\[
	T = \begin{bmatrix} A & B \\ C & D \end{bmatrix} \colon \cH \longrightarrow \cH
\]
commutes with $J$ if and only if $C = - \R B \R$ and $D = \R A \R^{-1}$.
In this case, the corresponding complex linear operator $T_J$ on $\Hp$ is
\begin{equation}
	T_J = A - iB \R.
\end{equation}
\end{lemma}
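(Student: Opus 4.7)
The plan is to verify both statements by direct computation using the definitions of $J$ and $\psi$. First, I would compute $TJ$ and $JT$ in block form: applying the definition $J(f,g) = (-\R^{-1}g,\R f)$ gives
\[
	TJ(f,g) = \big( -A\R^{-1}g + B\R f,\ -C\R^{-1}g + D\R f \big),
\]
while
\[
	JT(f,g) = \big( -\R^{-1}(Cf+Dg),\ \R(Af+Bg) \big).
\]
Equating the first components and comparing the coefficients of $f$ and $g$ separately yields $B\R = -\R^{-1}C$ and $A\R^{-1} = \R^{-1}D$, i.e. $C = -\R B\R$ and $D = \R A\R^{-1}$. The second components give the same two equations, so these conditions are both necessary and sufficient for $TJ=JT$.

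For the second statement, I would first describe $\psi^{-1}$ explicitly. If $h = f + i\R^{-1}g \in \Hp$ with $f,g$ real, then $f = \Re h \in \HpR$ and $g = \R\, \Im h \in \HmR$, so $\psi^{-1}(h) = (\Re h,\, \R\,\Im h)$. Then
\[
	T\psi^{-1}(h) = \big( A\Re h + B\R\,\Im h,\ C\Re h + D\R\,\Im h \big),
\]
and applying $\psi$ gives
\[
	T_J h = A\Re h + B\R\,\Im h + i\R^{-1}\big(C\Re h + D\R\,\Im h\big).
\]
Substituting $\R^{-1}C = -B\R$ and $\R^{-1}D\R = A$ from the commutativity conditions, this becomes
\[
	T_J h = A\Re h + B\R\,\Im h - i B\R\,\Re h + i A\,\Im h = A(\Re h + i\,\Im h) - iB\R(\Re h + i\,\Im h) = (A-iB\R)h.
\]
Both steps are direct linear algebra, so no real obstacle arises; the only subtlety is being careful with the identification $\psi^{-1}$ between $\cH_J$ and $\Hp$, since $\Re h$ and $\Im h$ live in $\HpR$ whereas the second coordinate in $\cH_J$ lives in $\HmR$, which is why the factor of $\R$ appears in $\psi^{-1}$.
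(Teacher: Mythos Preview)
Your proof is correct and follows essentially the same approach as the paper's: both arguments verify the commutativity conditions by direct block computation (the paper simply says ``direct computation'' without spelling it out), and both compute $T_J$ by writing $\psi^{-1}(x+iy) = (x,\R y)$, applying $T$, then $\psi$, and simplifying using the relations $C = -\R B\R$ and $D = \R A\R^{-1}$. Your version is slightly more explicit in the first part and uses $\Re h, \Im h$ rather than $x,y$, but the substance is identical.
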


\begin{proof}
The first claim follows from a direct computation, using the definition of $J$. Now assume this holds, so that
\begin{equation}
	T = \begin{bmatrix} A & B \\ - \R B \R & \R A \R^{-1} \end{bmatrix}.
\end{equation}
Let $x + iy \in \Hp$, so that $x,y \in \HpR$. From the definition of $\psi$ (in Lemma \ref{lem:psi}) have $\psi^{-1}(x + iy) = (x,\R y)$, so that
\[
	T\psi^{-1}(x + iy) = \begin{bmatrix} A & B \\ - \R B \R & \R A \R^{-1} \end{bmatrix} \begin{bmatrix} x \\ \R y \end{bmatrix} 
	= \begin{bmatrix} Ax + B\R y \\ - \R B \R x + \R Ay \end{bmatrix}
\]
and hence
\begin{align*}
	\psi T\psi^{-1}(x + iy) &= (Ax + B\R y) + i \R^{-1} (- \R B \R x + \R Ay) \\
	&= (Ax + B\R y) + i(-B \R x + Ay) = \big(A - iB \R \big)(x + iy),
\end{align*}
as was to be shown.
\end{proof}

We now apply this lemma to the Souriau map.

\begin{prop}
\label{prop:WN}
The map $W(\lambda)$ defined in \eqref{Souriaudef} commutes with $J$ for all real $\lambda$, and the corresponding map $W_J(\lambda)$ on $\Hp$ is given by
\begin{equation}
\label{WJdef}
	W_J(\lambda) = \big(N(\lambda)\R + i \big)\big(N(\lambda)\R - i\big)^{-1}
\end{equation}
for all $\lambda \in \rho(\cL^N) \cap \bbR$.
\end{prop}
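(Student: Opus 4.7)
The plan is to treat the two claims separately: commutativity of $W(\lambda)$ with $J$ follows from abstract symplectic considerations (Lagrangianness of $D$ and $\cG(\lambda)$), while the explicit formula for $W_J(\lambda)$ requires a direct block computation that exploits the symmetry of $L$ via Green's second identity.

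For the commutativity, I would first observe that for any Lagrangian subspace $L \subset \cH$ one has $JL = L^\perp$, hence $JP_L = P_{L^\perp}J = (I-P_L)J$, so the reflection $I - 2P_L$ \emph{anticommutes} with $J$. Applying this to both Lagrangian subspaces $D$ and $\cG(\lambda)$, the product $(I - 2P_{\cG(\lambda)})(I - 2P_D)$ commutes with $J$, and hence so does $W(\lambda)$. This argument works for \emph{all} real $\lambda$, with no invertibility assumption on $N(\lambda)$.

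For the explicit formula when $\lambda \in \rho(\cL^N) \cap \bbR$, I would parametrize $\cG(\lambda) = T(\HmR)$ by the real Neumann-to-Dirichlet map via $T(h) = (N(\lambda)h, h)$. A direct computation with $T^*(f,g) = N_R^* f + g$ and $A := (I + N_R^*N)^{-1}$ yields the block form
\begin{equation*}
	W = \begin{bmatrix} 2NAN_R^* - I & -2NA \\ 2AN_R^* & I - 2A \end{bmatrix}
\end{equation*}
on $\cH = \HpR \oplus \HmR$, where $N_R^*\colon\HpR\to\HmR$ is the real Hilbert-space adjoint. The crucial algebraic identity, which I would derive next, is $N_R^* = \R N \R$. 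To prove it, apply Green's second identity \eqref{e13} to real eigenfunctions $u_1, u_2$ at the real eigenvalue $\lambda$ of the symmetric operator $L = L^\dag$ to obtain $\llangle g_1, N g_2\rrangle = \llangle g_2, N g_1 \rrangle$ for $g_i \in \HmR$; converting the pairings to $\HpR$-inner products via $\llangle \R f, g\rrangle = \langle f,g\rangle_{\HpR}$ and using $(\R^{-1})^* = \R$ shows that $N_R^*\R^{-1}$ is self-adjoint on $\HmR$, which is equivalent to $N_R^* = \R N\R$.

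Substituting this identity and setting $K := N\R \colon \HpR \to \HpR$, every entry of the block matrix becomes a function of $K$ and $\R$:
\begin{equation*}
	W = \begin{bmatrix} (K^2 - I)(I+K^2)^{-1} & -2(I+K^2)^{-1}K\R^{-1} \\ 2\R(I+K^2)^{-1}K & \R(K^2-I)(I+K^2)^{-1}\R^{-1} \end{bmatrix}.
\end{equation*}
A quick check confirms the block structure of Lemma \ref{lem:block} (giving a second verification of $WJ = JW$). Applying that lemma then yields
\begin{equation*}
	W_J = (K^2 - I)(I+K^2)^{-1} + 2iK(I+K^2)^{-1} = (K+i)^2 \bigl[(K+i)(K-i)\bigr]^{-1} = (N\R + i)(N\R - i)^{-1},
\end{equation*}
using that $I + K^2 = (K+i)(K-i)$ and that $K \pm i$ are invertible because $K = N\R$ is self-adjoint on $\HpR$ (and hence on its complex-linear extension to $\Hp$), so its spectrum is real. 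The main obstacle I foresee is keeping the various adjoint maps and Riesz identifications straight, particularly the derivation of $N_R^* = \R N\R$; once that identity is in hand, the remaining computation is essentially algebraic.
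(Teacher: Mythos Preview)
Your proposal is correct and follows essentially the same route as the paper: compute $W(\lambda)$ in block form on $\cH = \HpR \oplus \HmR$, verify it has the structure of Lemma~\ref{lem:block}, and then read off $W_J = A - iB\R$ as the Cayley transform of $N\R$. The paper does this by quoting the block formula for $P_{\cG(\lambda)}$ from \cite[Proposition~4.11]{CJLS16} (which already has the symmetry $N_R^* = \R N\R$ built in), whereas you derive $P_\cG = T(T^*T)^{-1}T^*$ from scratch and establish $N_R^* = \R N\R$ directly via Green's second identity; this makes your argument more self-contained. Your separate abstract proof that $W$ commutes with $J$ (reflections about Lagrangian subspaces anticommute with $J$) is a clean addition that covers all real $\lambda$, not just $\lambda \in \rho(\cL^N)$; the paper only verifies commutativity implicitly through the block structure.
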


In other words, $W_J(\lambda)$ is the Cayley transform of $N(\lambda)\R$.

\begin{proof}
Fixing $\lambda \in \rho(\cL^N) \cap \bbR$, we abbreviate $\cG = \cG(\lambda)$ and $N = N(\lambda)$ for convenience. From \cite[Proposition~4.11]{CJLS16} we know that the orthogonal projection onto $\cG$ is given by\footnote{The operators $N$ and $\R$ here correspond to $-M_s$ and $J_\bbR^{-1}$ in \cite{CJLS16}, cf.\ also Remark \ref{signconv}.}
\[
	P_\cG = \begin{bmatrix} (N\R)^2 & N \\ \R N \R & I \end{bmatrix} \begin{bmatrix} \big(I + (N\R)^2 \big)^{-1} & 0 \\ 0 &  \big(I + (\R N)^2 \big)^{-1} \end{bmatrix},
\]
from which we obtain
\begin{align*}
	2 P_\cG - I 
	&=  \begin{bmatrix} (N\R)^2 - I  & 2N \\ 2\R N \R & I - (\R N)^2 \end{bmatrix} \begin{bmatrix} \big(I + (N\R)^2 \big)^{-1} & 0 \\ 0 &  \big(I + (\R N)^2 \big)^{-1} \end{bmatrix}.
\end{align*}
For the projection onto $D$ we have
\[
	P_D = \begin{bmatrix} 0 & 0 \\ 0 & I \end{bmatrix}, \qquad I - 2 P_D = \begin{bmatrix} I & 0 \\ 0 & -I \end{bmatrix},
\]
and so
\[
	W = (2 P_\cG - I)(I - 2 P_D) = \begin{bmatrix} (N\R)^2 - I  & 2N \\ 2\R N \R & I - (\R N)^2 \end{bmatrix} \begin{bmatrix} \big(I + (N\R)^2 \big)^{-1} & 0 \\ 0 & -\big(I + (\R N)^2 \big)^{-1} \end{bmatrix}.
\]
This is of the form given in Lemma \ref{lem:block}, with
\[
	A = \big((N\R)^2 - I\big) \big(I + (N\R)^2 \big)^{-1}, \qquad B = -2 N \big(I + (\R N)^2 \big)^{-1}.
\]
Using the easily verified identity
\[
	\big(I + (\R N)^2 \big)^{-1} \R = \R \big(I + (N\R)^2 \big)^{-1}
\]
we compute
\begin{align*}
	A  - i B \R 
	&= \big( (N\R)^2 - I + 2i N \R \big) \big(I + (N\R)^2 \big)^{-1} \\
	&= (N\R + i)^2 \big((N\R - i)(N\R + i)\big)^{-1} = (N\R + i)(N\R - i)^{-1}
\end{align*}
as claimed.
\end{proof}

For $\lambda \in \rho(\cL^N) \cap \bbR$ the right-hand side of \eqref{WJdef} is precisely the operator $\gW(\lambda)$ defined in \eqref{genWdef}. Therefore, by Theorem~\ref{thm:cont}, $W_J$ can be continued to an analytic function on $\rho(\cL^{i\R})$, which we will continue to denote by $W_J$.  Given $\Theta$ satisfying Hypothesis~\ref{hypT}, we use \eqref{eq:WJmu'2} to obtain
\begin{equation}
\label{eq:WJmu''}
	I + W_J(\lambda) = 2i N_\Theta(\lambda) R_{i\R,\Theta}(\lambda)\R
\end{equation}
for all $\lambda \in \rho(\cL^{i\R}) \cap \rho(\cL^\Theta)$, 
To apply this result in practice, we therefore need to understand the resolvent sets of $\cL^\Theta$ and $\cL^{i\R}$.

\begin{lemma}
\label{resolventLTheta}
If $\Theta$ is non-real, then the resolvent set $\rho(\cL^\Theta)$ contains an open neighborhood of the real axis. In particular, $\rho(\cL^{i\R})$ contains an open neighborhood of the real axis.
\end{lemma}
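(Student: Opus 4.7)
The plan is to show directly that $\sigma(\cL^\Theta)\cap\bbR=\emptyset$; since the resolvent set of any closed operator is open in $\bbC$, this immediately yields an open neighborhood of the real axis contained in $\rho(\cL^\Theta)$.

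First I would take a real number $\lambda\in\bbR$ and suppose, for contradiction, that $\lambda$ is an eigenvalue of $\cL^\Theta$, with eigenfunction $u\in\dom(\cL^\Theta)$, $u\neq 0$. Writing $\lambda\|u\|^2_{L^2(\Omega)}=\langle\cL^\Theta u,u\rangle=\Phi_\Theta[u]=\Phi[u]+\llangle\Theta\gaD u,\gaD u\rrangle$, and using that $L$ is symmetric (so $\Phi$ is Hermitian and $\Phi[u]\in\bbR$), I would take imaginary parts to get
\begin{equation*}
0=\Im\llangle\Theta\gaD u,\gaD u\rrangle.
\end{equation*}
The non-reality hypothesis on $\Theta$ then forces $\gaD u=0$. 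Combined with the Robin boundary condition $\gaN^L u+\Theta\gaD u=0$ this also gives $\gaN^L u=0$, and since $Lu=\lambda u$, the unique continuation principle \eqref{UCP} yields $u=0$, a contradiction. Hence $\bbR\cap\sigma(\cL^\Theta)=\emptyset$, i.e., $\bbR\subset\rho(\cL^\Theta)$. Because $\rho(\cL^\Theta)$ is open, it is itself an open neighborhood of $\bbR$, finishing the main claim.

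For the ``in particular'' assertion, I would verify that $i\R$ is non-real by using the defining property $\llangle\R g,g\rrangle=\langle g,g\rangle_{\Hp}=\|g\|_{\Hp}^2$, which gives
\begin{equation*}
\Im\llangle i\R g,g\rrangle=\|g\|_{\Hp}^2>0\quad\text{for every nonzero }g\in\Hp.
\end{equation*}
One should also check that Hypothesis \ref{hypT} holds for $\Theta=i\R$, i.e. that $\Phi_{i\R}$ is elliptic; this is immediate because $\Re\Phi_{i\R}[u]=\Re\Phi[u]$, so the G\r{a}rding inequality for $\Phi$ transfers directly, and boundedness of $i\R$ (hence of the boundary term) is clear. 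The main claim then applies.

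There is no real obstacle here; the only subtle point is the role of symmetry of $L$, which is precisely what allows one to isolate the contribution of $\Theta$ in the imaginary part. Without symmetry one would pick up an extra imaginary term from $\Phi[u]$ and the argument would fail.
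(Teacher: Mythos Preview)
Your proof is correct and follows essentially the same approach as the paper: both argue that a real eigenvalue would force $\Im\llangle\Theta\gaD u,\gaD u\rrangle=0$ via symmetry of $L$, hence $\gaD u=0$ by non-reality of $\Theta$, and then invoke unique continuation. The paper cites \eqref{UCP2} directly rather than first deducing $\gaN^L u=0$ and using \eqref{UCP}, and it omits your explicit verification that $i\R$ satisfies Hypothesis~\ref{hypT}, but these are cosmetic differences.
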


\begin{proof}
Since the resolvent set is open, it suffices to prove that it contains the real axis. If $\cL^\Theta u = \lambda u$, then
\begin{equation}
\label{evalquad}
	\lambda \|u\|_{L^2(\Omega)}^2 = \Phi_\Theta[u] = \Phi[u] + \llangle\Theta\gaD u, \gaD u\rrangle.
\end{equation}
Since $L$ is symmetric, the quadratic form $\Phi[u]$ is real for any $u \in H^1(\Omega)$. Assuming $\lambda$ is real and taking the imaginary part of \eqref{evalquad}, we obtain $\Im \llangle\Theta\gaD u, \gaD u\rrangle = 0$ and hence $\gaD u = 0$. It follows from the unique continuation principle (see \eqref{UCP2}) that $u=0$, and so $\cL^\Theta$ has no real eigenvalues. To prove the second claim, we observe that $\Im \llangle i\R f,f \rrangle = \|f\|_{\Hp}^2$, so $i\R$ is non-real.
\end{proof}

\subsection{Connection to the Maslov index}
\label{ssec:Maslov}

We are now ready to prove Theorem \ref{thm:MorseEvans}, relating the Maslov index to the winding of the Evans function for a selfadjoint boundary value problem. Given real numbers $\lambda_1 < \lambda_2$ in $\rho(\cL^D) \cap \rho(\widehat\cL^D)$, Lemma~\ref{resolventLTheta} guarantees that $K = [\lambda_1,\lambda_2] \times [-\delta,\delta]$ is contained in $\rho(\cL^{i\R}) \cap \rho(\widehat\cL^{i\R}) \cap \rho(\cL^\Theta) \cap \rho(\widehat\cL^\Theta)$ as long as $\delta>0$ is sufficiently small.

As described in the introduction, we want to show that
\begin{equation}
\label{eq:MasEvans2}
	\frac{1}{2\pi i} \int_{\p K} \frac{\cE'(\lambda)}{\cE(\lambda)} \,d\lambda = 
	\sflow \left(\widehat W_J \big|_{\lambda_1}^{\lambda_2} , -1 \right)
	- \sflow \left(W_J \big|_{\lambda_1}^{\lambda_2} , -1 \right),
\end{equation}
where we have used the fact that $W$ and $W_J$ are unitarily equivalent and hence have the same spectral flow, and likewise for $\widehat W$ and $\widehat W_J$. In light of Corollary~\ref{cor:winding}, it is enough to show that
\begin{equation}
\label{eq:sfW}
	\sum_{\lambda \in K} m_a\big(\lambda,I + W_J(\cdot)\big) = -\sflow \left(W_J \big|_{\lambda_1}^{\lambda_2} , -1 \right),
\end{equation}
and similarly for $\widehat W_J$. 

We first deal with the left-hand side of \eqref{eq:sfW}, proving that the algebraic multiplicity of $\lambda$ as an eigenvalue of the nonlinear pencil $I + W_J(\cdot)$ coincides with the geometric multiplicity of $-1$ as an eigenvalue of the linear operator $W_J(\lambda)$.

\begin{prop}
\label{prop:Wmult}
If $L$ is symmetric, then
\begin{equation}
\label{W:ma}
	m_a\big(\lambda_0, I + W_J(\cdot)\big) = \dim \ker \big(I + W_J(\lambda_0)\big)
\end{equation}
for all $\lambda_0 \in \rho(\cL^{i\R})$. In particular, both quantities are zero if $\lambda_0$ is not real.
\end{prop}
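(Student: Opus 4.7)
The plan is to reduce the multiplicity statement for the pencil $I + W_J(\cdot)$ to the corresponding multiplicity statement for the Robin-to-Dirichlet pencil $N_{i\R}(\cdot)$, to which the machinery of Section~\ref{sec:multiplicity} applies. The starting point is Theorem~\ref{thm:cont}, which gives the identity
\[
	I + W_J(\lambda) = 2i\, N_{i\R}(\lambda)\,\R
\]
on $\rho(\cL^{i\R})$. Note that $i\R$ satisfies Hypothesis~\ref{hypT}, since $\Re\llangle i\R\gaD u,\gaD u\rrangle = 0$ and the criterion after Hypothesis~\ref{hypT} applies. Thus Theorem~\ref{multR}(2) and Lemma~\ref{LNFred} are available with $\Theta_1 = i\R$, and Lemma~\ref{resolventLTheta} ensures $\rho(\cL^{i\R})\supset\bbR$ so that the reduction is meaningful throughout.

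Next, I would observe that because $\R\colon\Hp\to\Hm$ is a $\lambda$-independent isomorphism, the substitution $g_j = \R f_j$ carries the Jordan chain condition
\[
	\sum_{l=0}^{j}\frac{1}{l!}\bigl(I+W_J\bigr)^{(l)}(\lambda_0)f_{j-l} = 0
\]
into the Jordan chain condition for $N_{i\R}(\cdot)$ at $\lambda_0$, and vice versa. This gives a bijection between Jordan chains of the two pencils that preserves lengths, so $m_a\bigl(\lambda_0, I+W_J(\cdot)\bigr) = m_a\bigl(\lambda_0, N_{i\R}(\cdot)\bigr)$; likewise $\ker\bigl(I+W_J(\lambda_0)\bigr) = \R^{-1}\ker N_{i\R}(\lambda_0)$, so the geometric multiplicities coincide.

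Now I would invoke Theorem~\ref{multR}(2) to get $m_a\bigl(\lambda_0, N_{i\R}(\cdot)\bigr) = m_a(\lambda_0, \cL^D)$, and use self-adjointness of $\cL^D$ (which follows from the symmetry of $L$) to replace algebraic multiplicity with geometric multiplicity: $m_a(\lambda_0,\cL^D) = \dim\ker(\cL^D-\lambda_0)$. Finally, Lemma~\ref{LNFred} identifies this with $\dim\ker N_{i\R}(\lambda_0)$, which by the previous paragraph equals $\dim\ker\bigl(I+W_J(\lambda_0)\bigr)$. Chaining these equalities proves \eqref{W:ma}. For the ``in particular'' assertion, any non-real $\lambda_0$ lies in $\rho(\cL^D)$ by self-adjointness, so $m_a(\lambda_0,\cL^D) = 0$ and both sides of \eqref{W:ma} vanish.

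There is no real obstacle in this argument; its only subtlety is the bookkeeping in verifying that the change of variable $g_j = \R f_j$ genuinely preserves Jordan chains, which follows at once from $\R$ being constant in $\lambda$ and invertible. Everything else is an application of results already established earlier in the paper.
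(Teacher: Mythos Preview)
Your argument is correct and in fact slightly more direct than the paper's. Both proofs reduce \eqref{W:ma} to the equality $m_a(\lambda_0,\cL^D)=\dim\ker(\cL^D-\lambda_0)$, which holds by self-adjointness of $\cL^D$, via the Jordan chain correspondence of Section~\ref{sec:multiplicity}. The difference is in the choice of auxiliary boundary operator: you work with $\Theta=i\R$ throughout, using the identity $I+W_J(\lambda)=2iN_{i\R}(\lambda)\R$ from Theorem~\ref{thm:cont} directly, which is valid on all of $\rho(\cL^{i\R})$ and requires only the observation that $i\R$ satisfies Hypothesis~\ref{hypT}. The paper instead splits into cases: for general $\lambda_0$ it invokes Lemma~\ref{T0} to find some $\Theta_0$ with $\lambda_0\in\rho(\cL^{\Theta_0})$, and for real $\lambda_0$ it uses Remark~\ref{rem:Rohleder} to select $\Theta=\mu\J$ with $\mu\in\bbR$, so that $N_\Theta(\lambda_0)$ is self-adjoint. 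The payoff of the paper's detour is that self-adjointness of $N_\Theta(\lambda_0)$ permits an elementary direct proof (Lemma~\ref{lem:Nprime}) that every Jordan chain of $N_\Theta(\cdot)$ has length one, bypassing Theorem~\ref{Nchain}. Your route avoids this machinery but relies squarely on Theorem~\ref{multR}(2); since that result is already established, your path is the shorter one.
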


\begin{proof}
Fix $\lambda_0 \in \rho(\cL^{i\R})$ and choose $\Theta_0$ such that $\lambda_0 \in \rho(\cL^{\Theta_0})$, by Lemma~\ref{T0}. It follows from \eqref{eq:WJmu''} and Theorem~\ref{multR}(2) that
\begin{align*}
	\text{$I + W_J(\lambda_0)$ is invertible } \ \Longleftrightarrow \ \text{$N_{\Theta_0}(\lambda_0)$ is invertible }
	 \ \Longleftrightarrow \ \lambda_0 \in \rho(\cL^D).
\end{align*}
Since $\cL^D$ is self-adjoint, we conclude that $I + W_J(\lambda_0)$ is invertible if $\lambda_0$ is not real, in which case both sides of \eqref{W:ma} vanish.

It remains to prove \eqref{W:ma} when $\lambda_0$ is real. In this case we can use \cite[Theorem~3.2]{MR3206692} (cf. Remark~\ref{rem:Rohleder}) to find a real number $\mu$ such that $\lambda_0 \in \rho(\cL^\Theta)$, where $\Theta = \mu \J$. This means $N_\Theta(\lambda)$ is defined for $\lambda$ in a neighborhood of $\lambda_0$, and \eqref{eq:WJmu''} says that $I+W_J(\lambda)$ is equivalent to $N_\Theta(\lambda)$, in the sense of Definition~\ref{def:equivalent}.  It follows that
\begin{equation}
\label{W:ma1}
	m_a\big(\lambda_0, I + W_J(\cdot)\big) = m_a\big(\lambda_0, N_\Theta(\cdot)\big)
\end{equation}
and
\begin{equation}
\label{W:ma2}
	\dim \ker \big(I + W_J(\lambda_0)\big) = \dim \ker N_\Theta(\lambda_0).
\end{equation}
To prove \eqref{W:ma}, we therefore need to show that
\begin{equation}
\label{W:ma3}
	m_a\big(\lambda_0, N_{\Theta}(\cdot)\big) = \dim \ker N_\Theta(\lambda_0).
\end{equation}
This is precisely the statement that every Jordan chain for the nonlinear pencil $N_\Theta(\cdot)$ has length one. The fact that $N_{\Theta}(\lambda_0)$ is self-adjoint for this particular choice of $\Theta$ and $\lambda_0\in\bbR$ is not enough to imply \eqref{W:ma3}, however, since for nonlinear pencils $m_a\big(\lambda_0, N_{\Theta}(\cdot)\big)$ does not necessarily coincide with the algebraic multiplicity of $0$ as an eigenvalue of the linear operator $N_{\Theta}(\lambda_0)$; see Remark~\ref{rem:mult_notation} and Example~\ref{ex1}.

We can deduce \eqref{W:ma3} immediately from Theorem~\ref{Nchain} and the fact that $\cL^D$ is self-adjoint, since this implies that every Jordan chain for $\cL^D$ has length one. For convenience we also give a more elementary proof that does not rely on the technical results of Section~\ref{sec:multiplicity}.

Suppose that $N_\Theta(\cdot)$ has a Jordan chain of length greater than one at $\lambda_0$, so there exist $f_0, f_1 \in \Hm$ satisfying the equations
\[
	N_\Theta(\lambda_0) f_0 = 0, \qquad N_\Theta'(\lambda_0) f_0 + N_\Theta(\lambda_0) f_1 = 0,
\]
with $f_0 \neq 0$. Since $\Theta = \mu \J$ with $\mu \in \bbR$, and $\lambda_0 \in \bbR$, the Robin-to-Dirichlet map $N_\Theta(\lambda_0)$ is self-adjoint, and we get
\begin{equation}
\label{J:contradiction}
	\llangle f_0, N_\Theta'(\lambda_0) f_0 \rrangle = -\llangle f_0, N_\Theta(\lambda_0) f_1 \rrangle
	= -\llangle f_1, N_\Theta(\lambda_0)f_0 \rrangle = 0.
\end{equation}
It follows from Lemma~\ref{lem:Nprime} that  $f_0 = 0$, a contradiction.
\end{proof}

\begin{lemma}
\label{lem:Nprime}
Suppose $L$ is symmetric, $\Theta$ satisfies Hypothesis~\ref{hypT} and $\lambda_0 \in \rho(\cL^\Theta) \cap \bbR$. If $f \in \Hm$ satisfies $N_\Theta(\lambda_0)f = 0$ and $\llangle f, N_\Theta'(\lambda_0) f\rrangle = 0$, then $f = 0$.
\end{lemma}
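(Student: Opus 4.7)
The plan is to compute $\llangle f, N_\Theta'(\lambda_0) f\rrangle$ explicitly in terms of an $L^2(\Omega)$ norm and then conclude that this forces $f=0$. To this end, let $u(\lambda) \in H^1(\Omega)$ denote the unique solution to $(L-\lambda)u(\lambda) = 0$ with $\gaN^L u(\lambda) + \Theta \gaD u(\lambda) = f$, so that $N_\Theta(\lambda) f = \gaD u(\lambda)$. Following the proof of Lemma~\ref{lemRtoD}(ii), we may write $u(\lambda) = v + (\lambda - \lambda_0)(\cL^\Theta - \lambda)^{-1} v$, where $v := u(\lambda_0)$. Differentiating at $\lambda_0$ gives $u'(\lambda_0) = (\cL^\Theta - \lambda_0)^{-1} v \in \dom(\cL^\Theta) \subset D^1_L(\Omega)$, which therefore satisfies
\begin{equation*}
	(L-\lambda_0)u'(\lambda_0) = u(\lambda_0), \qquad \gaN^L u'(\lambda_0) + \Theta \gaD u'(\lambda_0) = 0,
\end{equation*}
and $N_\Theta'(\lambda_0) f = \gaD u'(\lambda_0)$.

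Next, the assumption $N_\Theta(\lambda_0) f = 0$ says $\gaD u(\lambda_0) = 0$, hence $f = \gaN^L u(\lambda_0)$. Writing $u = u(\lambda_0)$ and $w = u'(\lambda_0)$, we would apply Green's second identity \eqref{e13} to the pair $(u,w)$, using that symmetry of $L$ gives $L^\dag = L$ and that $\gaD u = 0$:
\begin{equation*}
	\langle Lu, w\rangle_{L^2(\Omega)} - \langle u, Lw\rangle_{L^2(\Omega)} = -\llangle \gaN^L u, \gaD w\rrangle.
\end{equation*}
Substituting $Lu = \lambda_0 u$ and $Lw = \lambda_0 w + u$ and exploiting that $\lambda_0 \in \bbR$ causes the $\lambda_0\langle u, w\rangle$ terms to cancel, the left-hand side collapses to $-\|u\|^2_{L^2(\Omega)}$. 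Consequently
\begin{equation*}
	\llangle f, N_\Theta'(\lambda_0) f\rrangle = \llangle \gaN^L u, \gaD w\rrangle = \|u(\lambda_0)\|^2_{L^2(\Omega)}.
\end{equation*}

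The assumption $\llangle f, N_\Theta'(\lambda_0) f\rrangle = 0$ therefore forces $u(\lambda_0) = 0$, and then $f = \gaN^L u(\lambda_0) + \Theta\gaD u(\lambda_0) = 0$, as desired. The only nontrivial ingredients are the representation of the derivative $u'(\lambda_0)$ via the resolvent of $\cL^\Theta$ (to ensure $w \in D^1_L(\Omega)$ so that Green's identity is applicable) and the sign bookkeeping in Green's identity combined with the reality of $\lambda_0$; neither step presents a serious obstacle.
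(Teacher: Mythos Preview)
Your proof is correct and follows essentially the same approach as the paper: both arguments establish the key identity $\llangle f, N_\Theta'(\lambda_0) f\rrangle = \|u(\lambda_0)\|_{L^2(\Omega)}^2$ via Green's identity and the symmetry of $L$, and then conclude $u(\lambda_0)=0$ and hence $f=0$. The only cosmetic difference is that you apply Green's second identity directly to the pair $(u,w)$ (having first identified $w=u'(\lambda_0)=(\cL^\Theta-\lambda_0)^{-1}u(\lambda_0)\in\dom(\cL^\Theta)$), whereas the paper differentiates Green's first identity in $\lambda$ and compares two evaluations; your explicit resolvent representation of $u'(\lambda_0)$ is a nice touch that makes the applicability of Green's identity transparent.
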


\begin{proof}
Given such an $f$, let $u(\lambda)$ denote the unique solution to
\[
	Lu = \lambda u, \quad \gaN^L u + \Theta \gaD u = f,
\]
so that $N_\Theta(\lambda_0)f = \gaD u(\lambda_0) = 0$, and hence $\gaN^L u(\lambda_0) = f$. It follows from the proof of Lemma~\ref{lemRtoD}  that $\lambda \mapsto u(\lambda) \in H^1(\Omega)$ is analytic in a neighborhood of $\lambda_0$, so we have $N_\Theta'(\lambda_0)f = \gaD u'(\lambda_0)$, and hence
\begin{equation}
\label{fN'f}
	\llangle \gaN^L u(\lambda_0), \gaD u'(\lambda_0)\rrangle
	= \llangle f, N_\Theta'(\lambda_0) f \rrangle = 0.
\end{equation}

From Green's first identity \eqref{e14} we have
\begin{equation}
\label{GreenSA}
	\Phi\big(u(\lambda),v\big) = \lambda \left<u(\lambda),v\right> + \llangle \gaN^L u(\lambda),\gaD v \rrangle
\end{equation}
for all $v \in H^1(\Omega)$ and $\lambda$ in a neighborhood of $\lambda_0$. Differentiating \eqref{GreenSA}, evaluating at $\lambda_0$ and choosing $v = u(\lambda_0)$, we obtain
\begin{equation}
\label{GreenSA1}
	\Phi\big(u'(\lambda_0),u(\lambda_0) \big) = \| u(\lambda_0)\|^2 + \lambda_0 \left<u'(\lambda_0), u(\lambda_0)\right>,
\end{equation}
where we have used the fact that $\gaD u(\lambda_0) = 0$.
On the other hand, evaluating \eqref{GreenSA} at $\lambda_0$ and choosing $v = u'(\lambda_0)$ gives
\begin{equation}
\label{GreenSA2}
	\Phi\big(u(\lambda_0),u'(\lambda_0) \big) = \lambda_0 \left<u(\lambda_0), u'(\lambda_0) \right> + \llangle \gaN^L u(\lambda_0),\gaD u'(\lambda_0) \rrangle.
\end{equation}
Since $\lambda_0$ is real and $L$ is symmetric, \eqref{GreenSA1} and \eqref{GreenSA2} together yield
\[
	\| u(\lambda_0)\|^2 = \llangle \gaN^L u(\lambda_0),\gaD u'(\lambda_0) \rrangle.
\]
Combined with \eqref{fN'f}, this gives $u(\lambda_0) = 0$ and hence $f = \gaN^L u(\lambda_0) = 0$, as claimed.
\end{proof}

As a result of Proposition~\ref{prop:Wmult}, equation \eqref{eq:sfW} can be rewritten as
\begin{equation}
\label{eq:sfW2}
	\sum_{\lambda \in K} \dim \ker \big(I + W(\lambda)\big) = -\sflow \left(W_J \big|_{\lambda_1}^{\lambda_2} , -1 \right).
\end{equation}
The left-hand side counts, with multiplicity, the points $\lambda \in (\lambda_1,\lambda_2)$ for which $-1$ is an eigenvalue of $W_J(\lambda)$, while the spectral flow on the right-hand side depends on the \emph{direction} in which these eigenvalues passes through $-1$ as $\lambda$ increases. Therefore, \eqref{eq:sfW2} holds provided the  eigenvalues of $W_J(\lambda)$ only pass through $-1$ in the negative (clockwise) direction.

We give a sufficient condition for this to happen in terms of the location of the spectrum of $W_J(\lambda)$ relative to the unit circle.

\begin{theorem}\label{thm:abstractSF}
Let $X$ be a Hilbert space and suppose that  $\lambda\mapsto U(\lambda)$ is an analytic $\cB(X)$-valued function, defined on an open neighborhood $\mathcal U \subset \bbC$ containing the segment $[\lambda_1, \lambda_2] \subset \bbR$. If $I + U(\lambda_1)$ and $I + U(\lambda_2)$ are invertible and the following conditions hold for all $\lambda \in \mathcal U$,
\begin{enumerate}
	\item[(i)] $I + U(\lambda)$ is Fredholm,
	\item[(ii)] $U(\lambda)$ is unitary for $\lambda \in \bbR$,
	\item[(iii)] $\sigma(U(\lambda)) \subset \{ z : |z| > 1 \}$ for $\Im \lambda > 0$,
	\item[(iv)] $\sigma(U(\lambda)) \subset \{ z : |z| < 1 \}$ for $\Im \lambda < 0$,
\end{enumerate}
then
\begin{equation}
\label{eq:sfU}
	\operatorname{sf} \left(U \big|_{\lambda_1}^{\lambda_2} , -1 \right) =
	-\sum_{\lambda_1 < \lambda < \lambda_2} \dim\ker \big(I + U(\lambda)\big).
\end{equation}
\end{theorem}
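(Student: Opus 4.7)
My plan is to reduce the problem to a local analysis at each eigenvalue crossing, conjugate via a Cayley/Möbius transform around $-1$ to a self-adjoint problem, and then use conditions (iii), (iv) as a positivity condition that forces strict monotonicity.

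\textbf{Step 1 (Reduction to finitely many crossings).} The hypotheses imply that the set of $\lambda \in [\lambda_1,\lambda_2]$ with $-1 \in \sigma(U(\lambda))$ is discrete, and hence finite by compactness. Indeed, $I + U(\lambda)$ is Fredholm and depends analytically on $\lambda$; the analytic Fredholm theorem says that either $I + U(\lambda)$ is not invertible for all $\lambda$ in a connected component of $\mathcal{U}$, or else the noninvertibility set is discrete. The endpoint assumption excludes the first case on $[\lambda_1,\lambda_2]$. The spectral flow is additive over subintervals, so it suffices to compute the local contribution at each crossing $\lambda_0 \in (\lambda_1,\lambda_2)$ with $-1 \in \sigma(U(\lambda_0))$ and show this contribution equals $-\dim\ker(I + U(\lambda_0))$.

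\textbf{Step 2 (Local reduction to finite dimensions).} Fix such a $\lambda_0$. Choose $\varepsilon > 0$ small enough that the only eigenvalue of $U(\lambda_0)$ in the closed disc $\overline{D(-1,\varepsilon)}$ is $-1$. By upper semicontinuity of the spectrum, for $\lambda$ in a neighborhood $\mathcal{U}_0$ of $\lambda_0$, no spectrum of $U(\lambda)$ crosses $\partial D(-1,\varepsilon)$. The Riesz projection
\[
P(\lambda) = -\frac{1}{2\pi i}\oint_{|z+1|=\varepsilon}(U(\lambda) - z)^{-1}\,dz
\]
is then an analytic family of finite-rank projections with constant rank $k = \dim\ker(I + U(\lambda_0))$. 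Using a Kato transformation operator we may conjugate to a fixed $k$-dimensional subspace $X_0$ and obtain an analytic family $U_0(\lambda) \in \cB(X_0)$ whose entire spectrum sits in $D(-1,\varepsilon)$ and whose eigenvalues coincide (with multiplicity) with those of $U(\lambda)$ near $-1$. All conditions (ii)--(iv) pass to $U_0(\lambda)$.

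\textbf{Step 3 (Möbius transform to a half-plane).} Let $\phi(z) = (z+1)/(1-z)$, which maps $-1 \mapsto 0$, the unit circle to the imaginary axis, the interior of the disc to $\{\Re w > 0\}$, and the exterior to $\{\Re w < 0\}$. Since $1 \notin \sigma(U_0(\lambda))$ when $\varepsilon < 2$, the operator $V(\lambda) := \phi(U_0(\lambda)) = (U_0(\lambda) + I)(I - U_0(\lambda))^{-1}$ is an analytic family in $\mathcal U_0$. For real $\lambda$, $V(\lambda)$ is skew-Hermitian by (ii), so $H(\lambda) := iV(\lambda)$ is a self-adjoint analytic family on $X_0$. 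Conditions (iii), (iv) translate to: the eigenvalues $\nu_j(\lambda)$ of $H(\lambda)$ satisfy $\Im\nu_j(\lambda) < 0$ for $\Im\lambda > 0$ and $\Im\nu_j(\lambda) > 0$ for $\Im\lambda < 0$, while $\nu_j(\lambda_0) = 0$.

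\textbf{Step 4 (Monotonicity from Kato--Rellich).} Because $H(\lambda)$ is a self-adjoint analytic family on a finite-dimensional space, the Rellich theorem provides an analytic labelling $\nu_1(\lambda),\dots,\nu_k(\lambda)$ of its eigenvalues for $\lambda$ in a real neighborhood of $\lambda_0$, extending analytically to complex $\lambda$ near $\lambda_0$. Writing $\nu_j(\lambda) = c_j(\lambda - \lambda_0) + O((\lambda-\lambda_0)^2)$ with $c_j \in \bbR$ (by reality on the real axis), the sign constraint from Step 3 forces $\Im(c_j \Im\lambda) \le 0$ for $\Im\lambda > 0$ at leading order, i.e., $c_j \le 0$. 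Strict inequality $c_j < 0$ follows because $\nu_j$ cannot be identically zero (else $-1 \in \sigma(U(\lambda))$ on an entire neighborhood, contradicting Step 1). Hence each eigenvalue branch $\mu_j(\lambda) = \phi^{-1}(-i\nu_j(\lambda))$ of $U_0(\lambda)$ satisfies $\mu_j'(\lambda_0) = -2i c_j$, a strictly positive imaginary number, meaning $\mu_j(\lambda)$ crosses $-1$ in the direction of $+i$, which is the clockwise sense along the unit circle at $-1$.

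\textbf{Step 5 (Conclusion).} With the standard convention that a clockwise crossing of $-1$ contributes $-1$ to the spectral flow through $-1$, Steps 1--4 give
\[
\operatorname{sf}\!\left(U\big|_{\lambda_0 - \delta}^{\lambda_0 + \delta},-1\right) = -k = -\dim\ker(I + U(\lambda_0))
\]
for sufficiently small $\delta > 0$. Summing over all (finitely many) crossings in $(\lambda_1,\lambda_2)$ yields \eqref{eq:sfU}.

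\textbf{Main obstacle.} The delicate point is Step 4: ensuring the eigenvalue branches are genuinely analytic (no Puiseux branching) so that the linear-order coefficient $c_j$ controls the direction. This is precisely where self-adjointness of $H(\lambda)$ on the real axis is essential, via Rellich's theorem; without the Cayley conversion to a self-adjoint family, one would only get fractional Puiseux expansions and would have to argue more carefully that the sign condition from (iii)--(iv) still forces a consistent crossing direction.
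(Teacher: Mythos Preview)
Your overall strategy is sound and, apart from one step, the argument goes through. It is also genuinely different from the paper's proof. The paper likewise reduces to a single crossing via the analytic Fredholm theorem and invokes Kato's theorem to obtain analytic eigenvalue branches $\mu_j(\lambda)$ for the unitary family, but then argues purely geometrically: it tracks the image of the circle $\lambda(t) = \lambda_* + \epsilon e^{it}$ under $\mu_j$, observes that this image is a positively oriented closed curve lying outside the unit circle for $t\in(0,\pi)$ and inside for $t\in(\pi,2\pi)$ (by (iii) and (iv)), and reads off from the picture that $\mu_j$ moves clockwise through $-1$ as $\lambda$ increases. Your Cayley-transform route trades this topological picture for an explicit derivative computation, which is pleasant; the paper's argument avoids the extra change of variables and any Taylor expansion.

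There is, however, a real gap in Step~4. You assert that $c_j \ne 0$ ``because $\nu_j$ cannot be identically zero,'' but $c_j = 0$ does not force $\nu_j \equiv 0$; it only says the linear coefficient vanishes. If the leading term were $c_j^{(n)}(\lambda-\lambda_0)^n$ with $n\ge 2$, then along the single direction $\lambda = \lambda_0 + i s$ you test one gets $\Im\nu_j \approx c_j^{(n)} s^n \sin(n\pi/2)$, which gives no contradiction for even $n$. Worse, an even-order touching would contribute to the right-hand side of \eqref{eq:sfU} but not to the spectral flow, so the theorem would actually fail in that scenario --- you must rule it out.

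The fix is to use \emph{all} directions in the upper half-plane rather than just the imaginary one. For $\lambda = \lambda_0 + re^{i\theta}$ with $\theta\in(0,\pi)$, condition (iii) gives $\Im\nu_j(\lambda) < 0$, hence at leading order $c_j^{(n)}\sin(n\theta) \le 0$ for every $\theta\in(0,\pi)$. For $n\ge 2$ the function $\sin(n\theta)$ changes sign on $(0,\pi)$, forcing $c_j^{(n)} = 0$, which contradicts its being the leading coefficient. Thus $n=1$, and then $c_j\sin\theta \le 0$ with $\sin\theta > 0$ gives $c_j < 0$. With this correction your proof is complete.
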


Given this abstract result (which we prove later), the equality \eqref{eq:sfW2}, and hence Theorem \ref{thm:MorseEvans}, is an immediate consequence of the following.

\begin{prop}\label{prop8.8}
If $L$ is symmetric and $\lambda_1,\lambda_2 \in \rho(\cL^D)$, then $W_J(\lambda)$ satisfies the hypotheses of Theorem \ref{thm:abstractSF}.
\end{prop}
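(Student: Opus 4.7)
The proof will check the five hypotheses of Theorem \ref{thm:abstractSF} (analyticity on a complex neighborhood $\mathcal U$ of $[\lambda_1,\lambda_2]$; invertibility of $I+W_J$ at the endpoints; (i)--(iv)) in order.

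\emph{Analyticity, endpoint invertibility, and (i).} By Theorem \ref{thm:cont}, $W_J$ admits an analytic extension to $\rho(\cL^{i\R})$. Since $i\R$ is non-real, Lemma \ref{resolventLTheta} ensures that $\rho(\cL^{i\R})$ contains an open neighborhood $\mathcal U$ of the real segment $[\lambda_1,\lambda_2]$. For any $\lambda\in\mathcal U$, use Lemma \ref{T0} to pick an auxiliary $\Theta$ (depending locally on $\lambda$) satisfying Hypothesis \ref{hypT} with $\lambda\in\rho(\cL^\Theta)$, and invoke formula \eqref{eq:WJmu''} to write $I+W_J(\lambda)=2iN_\Theta(\lambda)R_{i\R,\Theta}(\lambda)\R$. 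Since $N_\Theta$ and $R_{i\R,\Theta}$ are Fredholm of index zero by Lemma \ref{LNFred}, and $\R$ is an isomorphism, $I+W_J(\lambda)$ is Fredholm of index zero, giving (i). Specializing to $\lambda=\lambda_j$: since $\lambda_j\in\rho(\cL^D)$, Lemma \ref{LNFred} upgrades ``Fredholm of index zero'' to ``invertible,'' so $I+W_J(\lambda_j)$ is invertible.

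\emph{(ii) Unitarity.} For real $\lambda\in\mathcal U$, both $\cG(\lambda)$ and $D$ are Lagrangian subspaces of $\cH$, so $W(\lambda)=-(I-2P_{\cG(\lambda)})(I-2P_D)$ is a product of $\cH$-orthogonal reflections, hence $\cH$-unitary; the Lagrangian condition also forces $W$ to commute with $J$, so the associated $W_J(\lambda)\in\cB(\Hp)$ is unitary by Lemma \ref{lem:block} and the isomorphism $\psi$. (Alternatively, as a sanity check, the key identity from the next step collapses when $\Im\lambda=0$ to $\|W_J(\lambda)u\|=\|u\|$, confirming isometry; combined with the analytic continuation $W_J(\lambda)^{-1}=W_J(\bar\lambda)^*$ obtained by propagating $W_J(\lambda)^*W_J(\lambda)=I$ from the real axis, this yields unitarity.)

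\emph{(iii) and (iv) Spectral mapping.} The crux is the Green-type identity
\begin{equation*}
\|W_J(\lambda)u\|_{\Hp}^{2}-\|u\|_{\Hp}^{2}=4\,\Im\lambda\,\|v\|_{L^2(\Omega)}^{2},
\end{equation*}
where $v\in H^1(\Omega)$ is the unique solution of $(L-\lambda)v=0$ with $\gaN^L v+i\R\gaD v=\R u$. This is obtained by writing $W_J(\lambda)u = 2i\gaD v - u$ from \eqref{eq:WJmu'2}, expanding the norm, and then using the first Green formula \eqref{e14} applied to $v$; the symmetry of $L$ makes $\Phi[v]$ real, and the imaginary part of the Green identity gives $\Im\langle u,\gaD v\rangle_{\Hp}=\|\gaD v\|_{\Hp}^2-\Im\lambda\,\|v\|^2$, which rearranges to the claim. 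By unique continuation \eqref{UCP}, $v=0$ forces $u=0$. For $\Im\lambda>0$ and any $\mu$ with $|\mu|\le1$, if $W_J(\lambda)u=\mu u$ with $u\neq0$, then $|\mu|\,\|u\|=\|W_Ju\|>\|u\|$, forcing $|\mu|>1$; combined with the Fredholm-of-index-zero property of $I+W_J(\lambda)$ from (i) and of $W_J(\lambda)-\mu I$ for $\mu$ near $-1$ (obtained by writing $W_J-\mu I = (W_J+I)-(1+\mu)I$ and using stability of the Fredholm index), this yields $\mu\notin\sigma(W_J(\lambda))$ for $|\mu|\le1$, proving (iii). Condition (iv) then follows by the duality $\sigma(W_J(\bar\lambda))=\{1/\bar z:z\in\sigma(W_J(\lambda))\}$ coming from $W_J(\lambda)^{-1}=W_J(\bar\lambda)^*$.

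\emph{Main obstacle.} The pointwise strict inequality $\|W_J(\lambda)u\|>\|u\|$ does not upgrade automatically to a uniform contraction estimate for $W_J^{-1}$, since the map $u\mapsto v$ is not bounded below in the $L^2(\Omega)$ norm (high-frequency boundary data produce boundary-concentrated $v$'s with small $L^2$ mass). The main work is therefore to promote the pointwise inequality to exclusion of every $\mu$ with $|\mu|\le1$ from the spectrum, not just from the point spectrum; this is where the Fredholm structure afforded by \eqref{eq:WJmu''} is essential, playing the same role as Fredholm alternative in the usual proof of spectrum-in-half-plane for maximal accretive operators.
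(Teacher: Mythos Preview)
Your treatment of analyticity, the endpoint conditions, (i), and (ii) is correct and matches the paper. For (iii) and (iv) you take a different route: the paper works through the Cayley transform, writing $z\in\sigma(W_J(\lambda))\setminus\{1\}$ as $z=(\nu+i)/(\nu-i)$ with $\nu\in\sigma(N(\lambda)\R)$, then applies Green's identity to an eigenvector of $N(\lambda)\R$ to get $(\Im\lambda)\|u\|^2=(\Im\nu)\|f\|^2$. Your identity $\|W_J(\lambda)u\|^2-\|u\|^2=4\,\Im\lambda\,\|v\|^2$ is a correct and attractive reformulation of the same Green-formula computation, and your duality $W_J(\lambda)^{-1}=W_J(\bar\lambda)^*$ correctly reduces (iv) to (iii).

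There is, however, a genuine gap in your argument for (iii). Your norm identity only excludes \emph{point} spectrum with $|\mu|\le1$; to pass to the full spectrum you invoke Fredholm stability, but $W_J(\lambda)-\mu I=(I+W_J(\lambda))-(1+\mu)I$ is guaranteed Fredholm only for $\mu$ in a small neighborhood of $-1$, not for every $\mu$ in the closed unit disk (the operator $N_{i\R}(\lambda)\R$ is not compact, so $W_J(\lambda)-\mu I$ need not be Fredholm away from $\mu=-1$). The jump from ``Fredholm near $-1$'' to ``$\mu\notin\sigma(W_J(\lambda))$ for all $|\mu|\le1$'' is therefore unjustified. A partial repair: your norm identity and duality give $\|W_J(\lambda)^{-1}\|=\|W_J(\bar\lambda)\|\le1$ for $\Im\lambda>0$, hence $\sigma(W_J(\lambda))\subset\{|z|\ge1\}$, but strictness on the circle still needs either Fredholmness for all $|\mu|=1$ or the Nevanlinna-function framework the paper alludes to just before the proposition. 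That said, since the proof of Theorem~\ref{thm:abstractSF} only uses (iii)--(iv) for analytic eigenvalue branches emanating from $-1$, your weaker conclusion already suffices for the downstream application, even though it does not establish Proposition~\ref{prop8.8} as literally stated.
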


Indeed, as indicated in Remark \ref{signconv}, $N(\cdot)\R$ is a Nevanlinna function, and so $\Im\lambda>0$ yields $\Im\sigma(N(\lambda)\R)>0$; therefore, $\sigma(W_J(\lambda))$ is outside of the unite circle since $W_J(\lambda)$ is the Cayley transform of $N(\lambda)\R$. We provide, however, a direct proof of the proposition.

\begin{proof}
It was already observed in the proof of Corollary~\ref{cor:winding} that $W_J(\lambda)$ is analytic in a neighborhood of the real axis, $I + W_J(\lambda_1)$ and $I + W_J(\lambda_2)$ are invertible, and $I + W_J(\lambda)$ is Fredholm of index zero. Moreover, when $\lambda$ is real we have that $W_J(\lambda)$ is a unitary operator on $\Hp$, since $W(\lambda)$ is unitary on $\cH_J$.  
Therefore, it only remains to verify the spectral inclusions (iii) and (iv).

Let $z \in \sigma(W_J(\lambda))$ for some $\lambda$ with $\Im\lambda \neq 0$. This guarantees $\lambda \in \rho(\cL^N)$, so \eqref{WJdef} gives
$z = \frac{\nu + i}{\nu - i}$
for some $\nu \in \sigma(N(\lambda)\R)$, and hence $|z| > 1$ is equivalvent to $ \Im \nu > 0$,  while $|z| = 1$ is equivalent to  $\Im \nu = 0$ and 	$|z| < 1$ is equivalent to $ \Im \nu < 0$.

Let $f \in \Hp$ be an eigenfunction corresponding to $\nu \in \sigma(N(\lambda)\R)$. By definition we have $N(\lambda)\R f = \gaD u$, where $u \in H^1(\Omega)$ solves $Lu = \lambda u$ and $\gaN^L u = \R f$. The eigenvalue equation is $\gaD u = \nu f$, so it follows from \eqref{e14} that
\begin{align*}
	\Phi(u,u) &= \lambda \|u\|_{L^2(\Omega)}^2 +\llangle {\gaN^L}u,\gaD u\rrangle \\
&= \lambda \|u\|_{L^2(\Omega)}^2 + \llangle \R f,\nu f \rrangle = \lambda \|u\|_{L^2(\Omega)}^2 + \bar\nu \| f \|_{\Hp}^2.
\end{align*}
The symmetry of $L$ implies that $\Phi(u,u)$ is real, so we get
$(\Im\lambda) \|u\|_{L^2(\Omega)}^2 = (\Im\nu) \| f \|_{\Hp}^2$.
We conclude that $|z| > 1 \,\Leftrightarrow\, \Im \nu > 0 \,\Leftrightarrow\, \Im \lambda > 0$, and similarly when $|z| = 1$ and $|z| < 1$, which completes the proof.
\end{proof}

We conclude by proving the abstract result in Theorem \ref{thm:abstractSF}.

\begin{proof}[Proof of Theorem \ref{thm:abstractSF}]
The set of $\lambda$ such that $I + U(\lambda)$ is not invertible does not contain $\lambda_1$ or $\lambda_2$, and hence is discrete, by the analytic Fredholm theorem. Therefore, it suffices to prove the result when there is a single point $\lambda_* \in (\lambda_1, \lambda_2)$ for which $\dim \ker\big(I + U(\lambda_*)\big) = m > 0$.

Since $U(\lambda)$ is unitary for $\lambda$ in the set $(\lambda_* - \epsilon, \lambda_* + \epsilon)$, which has $\lambda_*$ as a limit point, \cite[Theorem II-1.10~(p.71)]{K76} gives the existence of analytic eigenvalue curves $\mu_1(\lambda), \ldots, \mu_m(\lambda)$ in $\sigma(U(\lambda))$, with $\mu_1(\lambda_*) = \cdots = \mu_m(\lambda_*) = -1$. The right-hand side of \eqref{eq:sfU} is equal to $-m$, so it suffices to show that as $\lambda$ increases from $\lambda_* - \epsilon$ to $\lambda_* + \epsilon$, the curves $\mu_k(\lambda)$ all pass through $-1$ in a negative (clockwise) direction, since this implies
\[
	\sflow \left(U \big|_{\lambda_* - \epsilon}^{\lambda_* + \epsilon}, -1\right) = -m.
\]
It is therefore enough to consider one of these curves, which we denote by $\mu(\lambda)$, for $|\lambda - \lambda_*| \leq \epsilon$.

Consider the curve $\lambda(t) = \lambda_* + \epsilon e^{it}$, $0 \leq t \leq 2\pi$, which parameterizes the circle $C_\epsilon(\lambda_*) = \{\lambda : |\lambda - \lambda_*| = \epsilon\}$. It follows that $t \mapsto \mu(\lambda(t))$ defines a positively oriented curve in the complex plane, with $|\mu(\lambda(0))| = |\mu(\lambda(\pi))| = 1$ because $\lambda(0) = \lambda_* + \epsilon$ and $\lambda(\pi) = \lambda_* - \epsilon$ are real. For $t \in (0,\pi)$, $\lambda(t)$ is in the upper half plane, so item (iii) of the hypotheses implies $|\mu(\lambda(t))| > 1$. Similarly, by (iv) we have $|\mu(\lambda(t))| < 1$ for $t \in (\pi,2\pi)$.

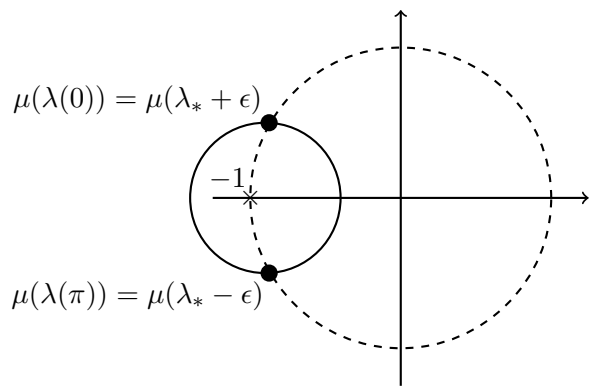
\begin{figure}
\begin{tikzpicture}
	\draw[->,thick] (0,-2.5) to (0,2.5);
	\draw[->,thick] (-2.5,0) to (2.5,0);
	\node at (-3.5, 1.3) {$\mu(\lambda(0)) = \mu(\lambda_* + \epsilon)$};
	\node at (-3.5, -1.3) {$\mu(\lambda(\pi)) = \mu(\lambda_* - \epsilon)$};
	\node at (-2.3, 0.25) {$-1$};
	\node at (-2,0) {$\times$};
	\draw[thick,dashed] (0,0) circle [radius=2];
	\draw[thick] (-1.8,0) circle [radius=1];
	\filldraw[thick] (-1.75,1) circle [radius=0.1];
	\filldraw[thick] (-1.75,-1) circle [radius=0.1];
\end{tikzpicture}
\caption{The curve $\mu(\lambda(t))$ is positively oriented, lies outside the unit circle for $0 < t < \pi$, and inside the circle for $\pi < t < 2\pi$. This implies that the lower point of intersection with the circle is $\mu(\lambda(0)) = \mu(\lambda_* - \epsilon)$, and the upper one is $\mu(\lambda(\pi)) = \mu(\lambda_* + \epsilon)$.}
\label{fig:mucurve}
\end{figure}

In other words, $t \mapsto \mu(\lambda(t))$ is a positively oriented closed curve that intersects the unit circle at $t = 0,\pi$, is outside the circle for $t \in (0,\pi)$, and inside for $t \in (\pi, 2\pi)$. Therefore, it must be as shown in Figure \ref{fig:mucurve}. This implies that $\mu(\lambda)$ passes through $-1$ in a negative (clockwise) direction as $\lambda$ increases through $\lambda_*$, which completes the proof.
%
\end{proof}


\subsection*{Acknowledgements}
The authors acknowledge the support of the BIRS FRG program \emph{Stability Indices for Nonlinear Waves and Patterns in Many Space Dimensions}, where some of this work was done. G.C. acknowledges the support of NSERC grant RGPIN-2017-04259. Y.L. was supported by NSF grants  DMS-1710989 and DMS-2106157, and would like to thank the Courant Institute of Mathematical Sciences and especially Prof.\ Lai-Sang Young for the opportunity to visit CIMS. A.S. was supported by NSF grant DMS-1910820.

\bibliographystyle{amsplain}
\bibliography{MaslovEvans}

\end{document}